\let\ori@idxitem\@idxitem
\def\@idxitem{\clear@penalties\ori@idxitem}
\def\clear@penalties{\subitem@count=3 }
\def\subitem{%
  \advance\subitem@count -1
  \par
  \ifnum\subitem@count>0 \penalty10000 \fi
  \ori@idxitem%
}
\newtheorem{theorem}{Theorem}[subsection]
\newtheorem{lemma}[theorem]{Lemma}
\newtheorem{corollary}[theorem]{Corollary}
\newtheorem{proposition}[theorem]{Proposition}
\newtheorem{hypothesis}[theorem]{Hypothesis}
\providecommand{\customgenericname}{}
\newcommand{\newcustomtheorem}[2]{%
  \newenvironment{#1}[1]
  {%
   \renewcommand\customgenericname{#2}%
   \renewcommand\theinnercustomgeneric{\kern-0.3em ##1}%
   \innercustomgeneric
  }
  {\endinnercustomgeneric}
}
\crefname{specialtheorem}{}{}
\theoremstyle{definition}
\newtheorem{definition}[theorem]{Definition}
\theoremstyle{remark}
\newtheorem{remark}[theorem]{Remark}
\newcommand{\N}{\mathbb{N}}
\newcommand{\Z}{\mathbb{Z}}
\newcommand{\Q}{\mathbb{Q}}
\renewcommand{\C}{\mathbb{C}}
\newcommand{\F}{\mathbb{F}}
\newcommand{\m}{\mathfrak{m}}
\renewcommand{\O}{\mathcal{O}}
\newcommand{\rightiso}{\xrightarrow{\sim}}
\newcommand{\iso}{\simeq}
\newcommand{\defeq}{\vcentcolon=}
\newcommand{\isequal}{\stackrel{?}{=}}
\newcommand{\qbinom}[3]{\genfrac{[}{]}{0pt}{}{#1}{#2}_{#3}}
\begin{document}

\title{Spherical functions on symmetric spaces of Friedberg--Jacquet type}
\author{Murilo Corato-Zanarella}

\begin{abstract}
We give explicit models for spherical functions on $p$-adic symmetric spaces $X=H\backslash G$ for pairs of $p$-adic groups $(G,H)$ of the form $(\mathrm{U}(2r),\mathrm{U}(r)\times \mathrm{U}(r)),$ $(\mathrm{O}(2r),\mathrm{O}(r)\times \mathrm{O}(r)),$ $(\mathrm{Sp}(4r),\mathrm{Sp}(2r)\times\mathrm{Sp}(2r))),$ $(\mathrm{U}(2r+1),\mathrm{U}(r+1)\times \mathrm{U}(r)),$ and $ (\mathrm{O}(2r+1),\mathrm{O}(r+1)\times \mathrm{O}(r)).$ As an application, we completely describe their Hecke module structure.
\end{abstract}
\maketitle
\setcounter{tocdepth}{2}
\let\oldtocsection=\tocsection
\let\oldtocsubsection=\tocsubsection
\let\oldtocsubsubsection=\tocsubsubsection
\renewcommand{\tocsection}[2]{\hspace{0em}\oldtocsection{#1}{#2}}
\renewcommand{\tocsubsection}[2]{\hspace{1em}\oldtocsubsection{#1}{#2}}
\renewcommand{\tocsubsubsection}[2]{\hspace{2em}\oldtocsubsubsection{#1}{#2}}
\tableofcontents

\newcommand{\sub}[1]{\stackrel{#1}{\subseteq}}
\newcommand{\inv}{\mathrm{inv}}
\newcommand{\Lat}{\mathrm{Lat}}
\newcommand{\Typ}{\mathrm{Typ}}
\newcommand{\Rel}{\mathrm{Rel}}

\newenvironment{case}[1]{\begin{description}[leftmargin=0pt]\item[Case \boldmath\ensuremath{#1}]}{\end{description}}

\newcommand{\cfactor}{\gamma}
\newcommand{\Dfactor}{d}

\newcommand{\smallarray}[2]{\ifx\dummy#1\dummy #2\else\ifx\dummy#2\dummy #1\else\begin{array}{@{\,}c@{\,}}#1\\[-5pt]#2\end{array}\fi\fi}
\newcommand{\smallarrayl}[2]{\ifx\dummy#1\dummy #2\,\else\ifx\dummy#2\dummy #1\,\else\begin{array}{@{}c@{\,}}#1\\[-5pt]#2\end{array}\fi\fi}
\newcommand{\smallarrayr}[2]{\ifx\dummy#1\dummy\,#2\else\ifx\dummy#2\dummy\,#1\else\begin{array}{@{\,}c@{}}#1\\[-5pt]#2\end{array}\fi\fi}
\newcommand{\smallarraylr}[2]{\ifx\dummy#1\dummy\,#2\,\else\ifx\dummy#2\dummy\,#1\,\else\begin{array}{@{}c@{}}#1\\[-5pt]#2\end{array}\fi\fi}

\newcommand{\TypeCC}[2]{\left(\smallarraylr{#1}{#2}\right)}
\newcommand{\TypeC}[4]{\left(\smallarrayl{#1}{#2},\smallarrayr{#3}{#4}\right)}
\newcommand{\Typec}[6]{\left(\smallarrayl{#1}{#2},\smallarray{#3}{#4},\smallarrayr{#5}{#6}\right)}
\newcommand{\Typecc}[8]{\left(\smallarrayl{#1}{#2},\smallarray{#3}{#4},\smallarray{#5}{#6},\smallarrayr{#7}{#8}\right)}
\newcommand{\Type}[2]{\left(\cdots,\smallarray{#1}{#2},\cdots\right)}
\newcommand{\Typee}[4]{\left(\cdots,\smallarray{#1}{#2},\smallarray{#3}{#4},\cdots\right)}
\newcommand{\Typeee}[6]{\left(\cdots,\smallarray{#1}{#2},\smallarray{#3}{#4},\smallarray{#5}{#6},\cdots\right)}
\newcommand{\typeC}[2]{\left(#1,#2\right)}
\newcommand{\typec}[3]{\left(#1,#2,#3\right)}
\newcommand{\typecc}[4]{\left(#1,#2,#3,#4\right)}
\newcommand{\type}[1]{\left(\cdots,#1,\cdots\right)}
\newcommand{\typee}[2]{\left(\cdots,#1,#2,\cdots\right)}

\newcommand{\intert}{{\{\circ\bullet\}}}

\newsavebox{\pullback}
\sbox\pullback{%
\begin{tikzpicture}%
\draw (0,0) -- (1ex,0ex);%
\draw (1ex,0ex) -- (1ex,1ex);%
\end{tikzpicture}}
\newcommand{\join}{\bullet}
\newcommand{\meet}{\bullet}

\newcommand{\Ugen}{\ensuremath{\mathrm{(uH)}}\xspace}
\newcommand{\Uf}{\ensuremath{\Ugen^\flat}\xspace}
\newcommand{\Us}{\ensuremath{\Ugen^\sharp}\xspace}
\newcommand{\Un}{\ensuremath{\Ugen^\natural}\xspace}

\newcommand{\Ogen}{\ensuremath{\mathrm{(S)}}\xspace}
\newcommand{\Of}{\ensuremath{\Ogen^\flat}\xspace}
\newcommand{\Os}{\ensuremath{\Ogen^\sharp}\xspace}
\newcommand{\On}{\ensuremath{\Ogen^\natural}\xspace}

\newcommand{\SPgen}{\ensuremath{\mathrm{(A)}}\xspace}
\newcommand{\SPf}{\ensuremath{\SPgen^\flat}\xspace}
\newcommand{\SPn}{\ensuremath{\SPgen^\natural}\xspace}

\newcommand{\gengen}{\ensuremath{\mathrm{(?)}}\xspace}
\newcommand{\genf}{\ensuremath{\gengen^\flat}\xspace}
\newcommand{\gens}{\ensuremath{\gengen^\sharp}\xspace}
\newcommand{\genn}{\ensuremath{\gengen^\natural}\xspace}

\newcommand{\citetypdef}{Proposition 2.1}
\newcommand{\citetypFinite}{Definition 2.3}
\newcommand{\citeRprop}{Proposition 2.5}
\newcommand{\citeSprop}{Proposition 2.6}
\newcommand{\citemainLemmaFinite}{Theorem 2.7}
\newcommand{\citeTypzerodef}{Definition 3.1}
\newcommand{\citelatticeClassification}{Proposition 3.2}
\newcommand{\citemainLemma}{Theorem 3.3}
\newcommand{\citemainLemmaCount}{Corollary 3.4}
\newcommand{\citeXoverKclassification}{Proposition 4.2}
\newcommand{\citeTypdef}{Definition 4.5}
\newcommand{\citeReldef}{Definition 4.6}
\newcommand{\citetdef}{Definition 4.9}
\newcommand{\citepreserveshalf}{Proposition 4.10}
\newcommand{\citeBergmanProp}{Proposition 4.11}
\newcommand{\citeDeltadefinition}{Definition 4.13}
\newcommand{\citeDeltapreserves}{Proposition 4.15}
\newcommand{\citeinducedThm}{Theorem 4.16}
\newcommand{\citeuHsphremark}{Remark 5.8}

\section{Introduction}
\subsection{Main results}
Let $F_0$ be a finite extension of $\Q_p$ for some prime $p>2.$ We will consider a symmetric space of the form $X=\mathbb{H}(F_0)\backslash \mathbb{G}(F_0)$ for one of the following pairs of reductive groups over $F_0.$
\begin{equation*}
    (\mathbb{G},\mathbb{H})\in\left\{\begin{array}{l}(\mathrm{U}(2r),\mathrm{U}(r)\times \mathrm{U}(r)),\quad(\mathrm{O}(2r),\mathrm{O}(r)\times \mathrm{O}(r)),\quad(\mathrm{Sp}(4r),\mathrm{Sp}(2r)\times\mathrm{Sp}(2r))),\\ (\mathrm{U}(2r+1),\mathrm{U}(r+1)\times \mathrm{U}(r)),\quad(\mathrm{O}(2r+1),\mathrm{O}(r+1)\times \mathrm{O}(r))\end{array}\right\}.
\end{equation*}
We denote $G=\mathbb{G}(F_0)$ and $H=\mathbb{H}(F_0).$ The embedding $H\hookrightarrow G$ is induced by a decomposition $V=V_1\oplus V_2$ of the corresponding Hermitian/symmetric/alternating spaces of ranks given as above. The unitary groups are with respect to an unramified quadratic extension $F/F_0$ of $p$-adic fields. In the case $(\mathrm{O}(2r),\mathrm{O}(r)\times\mathrm{O}(r)),$ we assume that the quadratic space $V$ has Hasse invariant $1.$ We will abbreviate the above cases by \Uf, \Of, \SPf, \Us, \Os respectively.

We assume $\mathbb{G}$ is unramified over $F_0,$ so we have a hyperspecial subgroup $K\subseteq G.$ We consider the space $\mathcal{S}(X/K)=\mathcal{S}(X)^K$ of ($\C$-valued) spherical functions on the spherical variety $X.$ It carries an action of the Hecke algebra $\mathcal{H}(G,K)=\mathcal{C}_0^\infty(K\backslash G/K).$ In this paper, we give an explicit description of the $\mathcal{H}(G,K)$-module $\mathcal{S}(X/K)$ in the above cases. In particular, we have
\begin{specialtheorem}{Theorem A}[\Cref{rankThm}]\label{ThmA}
    In the cases \Uf, \Us, \Of, \Os, $\mathcal{S}(X/K)$ is a free $\mathcal{H}(G,K)$-module of rank
    \begin{equation*}
        \begin{array}{c|c|c|c}
            \Uf&\Us&\Of&\Os\\\hline
            2^{r-1}&2^{r-1}&4^{r-1}&4^{r-1}
        \end{array}
    \end{equation*}
\end{specialtheorem}
Moreover, if we let $X_G\defeq\bigsqcup_{H}(H\backslash G)$ where, given $G,$ we sum over all appropriate isomorphism classes of $H$ as above (that is, $1, 2, 4$ isomorphism classes in the case \SPgen, \Ugen, \Ogen respectively), then we construct the following spherical transforms.

\begin{specialtheorem}{Theorem B}[\Cref{sphtransu,sphtranssp}]\label{ThmB}
In the cases \SPf, \Uf, \Us there is an identification of $\mathcal{S}(X_G/K)$ with the algebra $\C[\boldsymbol{\nu}_1,\ldots\boldsymbol{\nu}_r]^{\mathrm{sym}}$ in such a way that $\mathcal{H}(G,K)$ acts as follows: if $\mathcal{H}(G,K)\iso\C[\boldsymbol{\mu}_1,\ldots\boldsymbol{\mu}_n]^{\mathrm{sym}}$ (where $n=r$ in the case \Ugen, $n=2r$ in the case \SPgen) is the Satake transform, then the action $\mathcal{H}(G,K)\to\C[\boldsymbol{\nu}_1,\ldots\boldsymbol{\nu}_r]^{\mathrm{sym}}$ is induced by
\begin{equation*}
\begin{array}{c|c|c}
\SPf&\Uf&\Us\\
\hline
    \begin{array}{rl}\boldsymbol{\mu}_{2k-1}+\boldsymbol{\mu}_{2k}&\mapsto\left(\sqrt{q}+\frac{1}{\sqrt{q}}\right)\boldsymbol{\nu}_k\\
    \boldsymbol{\mu}_{2k-1}\boldsymbol{\mu}_{2k}&\mapsto \left(\sqrt{q}-\frac{1}{\sqrt{q}}\right)^2+\boldsymbol{\nu}_k^2\end{array}&\boldsymbol{\mu}_k\mapsto-\boldsymbol{\nu}_k^2-2&\boldsymbol{\mu}_k\mapsto\boldsymbol{\nu}_k^2+2
\end{array}
\end{equation*}
\end{specialtheorem}

\begin{remark}
We note that the cases \Uf and \Us have been previously settled by Hironaka--Komori in \cite{uHflat} and \cite{uHsharp}, and the case \SPf is a special case of the results of Sakellaridis \cite{Sakellaridis}.
\end{remark}

We expect to be able to obtain analogous spherical transforms in the cases \Ogen, and this will be pursued in forthcoming work. One of the expectations arising from the work of Ben-Zvi--Sakellaridis--Venkatesh \cite{BZSV} is that the $\mathcal{H}(G,K)$-module of spherical functions $\mathcal{S}(X_G/K)$ should be identified with functions on a twisted inertia stack of the dual Hamiltonian $\check{G}$-variety $\check{M},$ namely
\begin{equation*}
    \mathcal{S}(X_G/K)\iso\C\left[g\in\check{G},\ m\in\check{M}\colon (g,q^{-1/2})\cdot m=m\right]^{\check{G}}.
\end{equation*}
This should be in such a way that the Hecke action is given by the composition of the Satake transform $\mathcal{H}(G,K)\iso\C[g\in\check{G}]^{\check{G}}$ with the natural map into the above right hand side (see \cite[Section 9.4]{BZSV}). In the cases \Ogen, the dual object $\check{M}$ is not yet defined--this is due to the presence of roots of type $N$ in $X_G$--and we hope that our explicit descriptions in these toy cases can be used as a testing ground for investigating this issue.

In another direction, the results of this article will be used in the author's upcoming work on a bipartite Euler system involving unitary Friedberg--Jacquet periods \cite{CoratoZanarellaEuler}. A key ingredient in such construction is a local computation on the Hecke module $\mathcal{S}(X/K)$ in the case \Uf. We hope that our results in the case \Os can also be used in similar constructions. This type of relationship between local representation theory and Euler systems has been observed before in the non-bipartite setting. For example, \cite{LSZ-U21,LSZ-SP4,Graham-Shah} use local multiplicity one results to reduce the verification of horizontal Euler system relations to the computation of certain local zeta integrals. Following an idea of Cornut \cite{Cornut1,Cornut2}, similar verification in cases of higher local multiplicity (see \cite{Cornut,Jetchev,BBJHorizontal,BBJVertical,Boumasmoud}) rely on the analysis of certain Hecke modules closely related to the ones considered in this article.

\subsection{Strategy and structure of the paper}
We explain the main steps of our analysis. We warn the reader that some of the notation in this subsection is \emph{not} consistent with the notation in the main text.

We follow the general strategy outlined in the author's previous work \cite{CoratoZanarella}, which analyzed the cases
\begin{equation*}
    (\mathbb{G},\mathbb{H})\in\{(\mathrm{Res}_{F/F_0}\mathrm{GL}(r),\mathrm{U}(r)),(\mathrm{GL}(r),\mathrm{O}(r)),(\mathrm{GL}(2r),\mathrm{Sp}(2r))\}.
\end{equation*}
These will henceforth be denoted by \Un, \On, \SPn respectively. We refer to the reader to the introduction of \cite{CoratoZanarella} for more details on this strategy. Roughly, it consists of:
\begin{enumerate}[leftmargin=*]
    \item We identify $G/K$ with a certain subset $\Lat^\circ(V)\subseteq\Lat(V)$ of lattices in the space $V.$ The Hecke algebra $\mathcal{H}(G,K)$ acts on $\Lat^\circ(V),$ and there is a basis $(T_i)_i\in\mathcal{H}(G,K)$ described via explicit lattice-theoretic correspondences $T_i^{cor}\subseteq\Lat^\circ(V)\times\Lat^\circ(V).$
    \item We parametrize the $H$-orbits in a combinatorial fashion via an invariant $\mathrm{typ}\colon H\backslash \Lat^\circ(V)\hookrightarrow\Typ^0.$ This is a relative Cartan decomposition of $X,$ and allows us to describe (the adjoint of) the action of $\mathcal{H}(G,K)$ on $\mathcal{S}(X/K)$ in terms of some \emph{lattice-theoretic counting questions} of the form: given $e,f\in\Typ^0$ and $\Lambda_1\in\Lat^\circ(V)$ with $\mathrm{typ}(\Lambda_1)=e,$ what is the cardinality of $\{\Lambda_2\in\Lat^\circ(V)\colon \mathrm{typ}(\Lambda_2)=f,\ (\Lambda_1,\Lambda_2)\in T_i^{cor}\}$?
    \item We observe that the operators $T_i^*\colon\C[\Typ^0]\to\C[\Typ^0]$ are, \emph{generically} (or \emph{assymptotically}), given by linear combinations $\Delta_i=\sum_\varepsilon a_\varepsilon\cdot t(\varepsilon)$ for certain partially defined \emph{translations} $t(\varepsilon)\colon\C[\Typ^0]\dashrightarrow\C[\Typ^0].$
    \item We prove that the operators $T_i^*$ are fully determined by the $\Delta_i$ above, in the following way: we embed $\Typ^0$ inside a larger set $\Typ$ (in practice, $\Typ^0$ will be a cone inside $\Typ$) which satisfies i) the translations $t(\varepsilon)\colon\C[\Typ]\to\C[\Typ]$ extend to well-defined operators, ii) we have a submodule $\Rel\subseteq\C[\Typ]$ of \emph{straightening relations} with
    \begin{equation*}
        \mathrm{str}\colon\C[\Typ]/\Rel\rightiso\C[\Typ^0],
    \end{equation*}
    iii) $\Delta_i$ preserves $\Rel,$ and the induced operator $\mathrm{str}^*\Delta_i\colon\C[\Typ^0]\to\C[\Typ^0]$ agrees with $T_i^*.$
\end{enumerate}

However, the current situations are substantially more involved than the ones in \cite{CoratoZanarella}. This stems from the fact that $G$ is not a general linear group. Some of the key differences that we have to overcome are:
\begin{enumerate}[label=(\alph*),leftmargin=*]
    \item Directly computing the generic behavior of the ``natural'' lattice-theoretic basis $T_i$ is difficult. Instead, we compute them for suitably chosen linear combinations $T_{\le i}.$
    \item While the ``natural'' lattice-theoretic operators $T_i$ form a basis of the Hecke algebra $\mathcal{H}(G,K),$ they do not agree with the basis given by the Satake transform. The relationship between these bases is given by special cases of the Lusztig--Kato formula. For the unitary cases, these have been made explicit in \cite{LTXZZ} via geometric Satake. For the orthogonal and symplectic cases, we make them explicit using recent results on the combinatorics of generalized Kostka--Foulkes polynomials \cite{Lecouvey-Lenart} and \cite{Jang-Kwon}.
    \item\label{hardpart} Verifying that $T_i^*$ agrees with $\Delta_i$ directly, as done for the cases \Un, \On, \SPn in \cite{CoratoZanarella}, seems to be not feasible.
\end{enumerate}

Overcoming \labelcref{hardpart} is the main technical hurdle of this paper. The rough idea for this is as follows: Suppose we have two nontrivial operators $T_{\text{small}},T_{\text{big}}$ which are generically given by $\Delta_{\text{small}}$ and $\Delta_{\text{big}}$ such that i) $T_{\text{small}}$ is induced by $\Delta_{\text{small}}$ in the sense above, ii) $T_{\text{small}}$ and $T_{\text{big}}$ commute. Then $\Delta_{\text{small}}$ and $\Delta_{\text{big}}$ must also commute, and we can evaluate the expression $T_{\text{small}}T_{\text{big}}=T_{\text{big}}T_{\text{small}}$ at well chosen elements in order to increase the set where we know that $T_{\text{big}}$ and $\Delta_{\text{big}}$ agree. By repeating this, we can hope to prove that $T_{\text{big}}$ and $\Delta_{\text{big}}$ agree everywhere. We call such procedures \emph{extension arguments}, and the details depend on the shapes of $\Delta_{\text{small}}$ and $\Rel.$

This is how we produce the appropriate $T_{\text{small}}$ to run the above argument.
\begin{itemize}[leftmargin=*]
    \item For a case \genf, we exploit the fact that $G$ has two conjugacy classes of hyperspecial subgroups\footnote{This is why we restrict the Hasse invariant in the case \Of.}, which we denote $K^\circ,K^\bullet.$ We have the $\mathcal{H}(G,K^\circ)$-module $\mathcal{S}(X/K^\circ)$ and the $\mathcal{H}(G,K^\bullet)$-module $\mathcal{S}(X/K^\bullet),$ and intertwining operators $T^{\circ\bullet}$ and $T^{\bullet\circ}$ between them. We use such intertwining operators to play the role of $T_{\text{small}}.$ We show that $T^{\circ\bullet}$ and $T^{\bullet\circ}$ are described by $\Rel^\flat$ via an explicit computation similar to the one done in \cite{CoratoZanarella}.
    \item For a case \gens, we will use the ``smallest'' operator $T_1^\sharp$ as $T_{\text{small}}.$ To show that it is described by $\Rel^\sharp,$ we will exploit that  $T_1^\sharp$ is close to $T_1^\flat,$ and that both $\Rel^\sharp$ and $\Rel^\flat$ contain $\Rel^\natural$ as a ``large'' submodule. Namely, we will have $\Typ^\sharp=\Typ^\flat$ and $\Typ^{0,\sharp}=\Typ^{0,\flat}$ and if we write $T_1^\sharp=T_1^\flat+T_1^{\text{err}}$ and $\Delta_1^\sharp=\Delta_1^\flat+\Delta_1^{\text{err}},$ it is possible to explicitly compute $T_1^{\text{err}}$ and verify that
    \begin{equation*}
        T_1^{\text{err}}=(\mathrm{str}^\sharp-\mathrm{str}^\flat)(\Delta_1^\flat)+\mathrm{str}^\sharp(\Delta_1^{\text{err}}).
    \end{equation*}
    Together with the analysis in the case \genf, this will imply that $T_1^\sharp$ is indeed described by $\Rel^\sharp.$
\end{itemize}
\begin{remark}
    See \Cref{reltable} for a summary of the different straightening relations in this text.
\end{remark}
\begin{remark}
    It should be possible to analyze the case \Of for the other isomorphism class of $V$ by running the above argument backwards.
\end{remark}
\begin{remark}\label{nonlinear}
    As outlined above, the argument for the cases \gens relies on the end result for the cases \genf. This means that the logic in the paper will not be completely linear as written. We will be explicit about this when it happens.
\end{remark}

In \Cref{finFieldSection}, we collect several computations which are used throughout the text. In \Cref{LatticeSection}, we discuss the relative Cartan decomposition of $X$ in terms of lattices. In \Cref{Heckesection}, we fix some notation for the Hecke algebras that appear in this text and collect the explicit versions of the Lusztig--Kato formulas that we will need. \Cref{MainSection} is devoted to the statement and proof of the explicit model of the $\mathcal{H}(G,K)$-module $\mathcal{S}(X/K),$ following the strategy outline above. This is used in \Cref{ResultsSection} to deduce the main results \Cref{ThmA} and \Cref{ThmB}.
\subsection{Notations}
Throughout the whole paper, we will consider three cases: \Ugen, \Ogen and \SPgen, standing for \emph{unramified Hermitian}, \emph{symmetric} and \emph{alternating}. The three cases will be completely independent of each other. Whenever we consider a case \gens, we will implicitly assume that we are in one of the cases \Ugen, \Ogen.

\subsubsection*{$p$-adic fields}
We will denote $F_0$ to be a $p$-adic local field for some prime $p.$\index{p@$p$} We will consider\index{F@$F_0,\ F$}
\begin{case}{\Ugen}
$F/F_0$ an unramified quadratic extension,
\end{case}
\begin{case}{\Ogen}
$F=F_0,$ and we assume $p$ is odd,
\end{case}
\begin{case}{\SPgen}
$F=F_0.$
\end{case}
We denote by $\O_F$ and $\O_{F_0}$ the corresponding rings of integers, with maximal ideals $\m_F$ and $\m_{F_0}.$ We let $q\defeq\#\O_F/\m_F$ and $q_0\defeq\#\O_{F_0}/\m_{F_0}.$\index{q@$q_0,\ q$} Fix $\varpi\in\m_{F_0}$\index{1pi@$\varpi$} an uniformizer.

We denote $\mathrm{Sign}$\index{Sign@$\mathrm{Sign}$} to be the set
\begin{case}{\Ugen}
$\mathrm{Sign}=\{1\},$
\end{case}
\begin{case}{\Ogen}
$\mathrm{Sign}=\O_F^\times/(\O_F^\times)^2=\F_q^\times/(\F_q^\times)^2=\{\pm1\},$
\end{case}
\begin{case}{\SPgen}
$\mathrm{Sign}=\{1\},$
\end{case}
and $\mathrm{sign}\colon\F_q^\times\to\mathrm{Sign}$ and $\mathrm{sign}\colon\O_F^\times\to\mathrm{Sign}$\index{sign@$\mathrm{sign}$} to be the natural maps. We also denote $\epsilon\defeq\mathrm{sign}(-1).$\index{1epsilon@$\epsilon$}

We also denote
\begin{equation*}
\alpha\defeq\left\{\begin{array}{cl}1/2 & \text{in the case \Ugen,}\\
    0 & \text{in the case \Ogen,}\\
    1 & \text{in the case \SPgen,}\end{array}\right.
\quad\text{and}\quad
\cfactor\defeq \begin{cases}
    1 & \text{in the cases \Ugen, \Ogen,} \\
    2 & \text{in the case \SPgen.}
\end{cases}
\end{equation*}\index{1alpha@$\alpha$}\index{1gamma@$\cfactor$}
Moreover, denote
\index{d@$\Dfactor$}
\begin{equation*}
    \Dfactor(r)\defeq q^{\binom{r}{2}+\alpha r}=\begin{cases}q_0^{r^2} & \text{in the case \Ugen,}\\
    q^{\binom{r}{2}} & \text{in the case \Ogen,}\\
    q^{\binom{r+1}{2}} & \text{in the case \SPgen.}\end{cases}
\end{equation*}

\subsubsection*{Combinatorics}
For $n\in\Z_{\ge0},$ the $q$-Pochammer symbol and falling factorials are
\index{0(x;y)n@$(x;y)_n,\ (x)_n$}
\begin{equation*}
    (x;y)_n\defeq\prod_{i=1}^n(1-xy^{i-1}),\quad\text{and}\quad (x)_n\defeq(-1)^n(x;x)_n=\prod_{i=1}^n(x^i-1).
\end{equation*}
The $q$-analogues of binomial coefficients are
\index{0(nm)lambda@$\qbinom{n}{m}{\lambda}$}
\begin{equation*}
    \qbinom{n}{m}{\lambda}=\frac{(n)_\lambda}{(m)_\lambda\cdot(n-m)_\lambda}\quad\text{for integers }0\le m\le n.
\end{equation*}
For $n,m\in\Z,$ we will use the convention that $\qbinom{n}{m}{\lambda}=0$ if either $m<0,$ $m>n$ or $n<0.$ More generally, if $n,m_1,\ldots,m_r$ are integers with $n=m_1+\cdots+m_r,$ we also consider the $\lambda$-multinomials given by
\begin{equation*}
    \qbinom{n}{m_1,\ldots,m_r}{\lambda}=\frac{(n)_\lambda}{(m_1)_\lambda\cdots(m_r)_\lambda}\quad\text{if }m_1,\ldots,m_r\ge0,
\end{equation*}
and given by $0$ otherwise.

For $I$ a finite ordered set and $f\colon I\to\Z$ a function, we denote
\index{inv@$\inv,\ \widetilde{\inv}$}
\begin{equation*}
    \inv(f)\defeq\#\{(i,j)\in I^2\colon i<j\text{ and }f(i)>f(j)\}\quad\text{and}\quad \widetilde{\inv}(f)\defeq\sum_{\displaystyle\substack{(i,j)\in I^2\\i<j}}\max(0,f(i)-f(j)).
\end{equation*}
Moreover, for $J_1,J_2\subseteq I,$ we denote
\begin{equation*}
    \inv(J_1,J_2)\defeq\#\{(j_1,j_2)\in J_1\times J_2\colon j_1<j_2\}.
\end{equation*}

For a sequence of integers $e\in\Z^n,$ and an integer $m\in\Z,$ we denote
\index{1lambdame@$\lambda_m(e)$}\index{1Sigmae@$\Sigma(e)$}
\begin{equation*}
    \lambda_m(e)\defeq\#\{1\le i\le n\colon e_n=m\},\quad\text{and}\quad\Sigma(e)\defeq\sum_{i=1}^ne_i.
\end{equation*}

We also consider the following function:
\begin{equation*}
    W_\beta^{\lambda}(a,n)\defeq\sum_{\substack{\{1,\ldots,n\}=E_0\sqcup E\sqcup E_1\\ \lvert E\rvert=a}}\lambda^{\inv(E_0,E)-\inv(E_1,E)+\beta(\lvert E_0\rvert-\lvert E_1\rvert)},
\end{equation*}\index{W(a,n)@$W_\beta^\lambda(a,n)$}
and we will denote $W_\beta(a,n)\defeq W_\beta^q(a,n).$

\subsubsection*{Lattice diagrams}
For $V_1,V_2$ subspaces of a vector space, we will often use the shorthand $V_1\sub{a}V_2$\index{0sub@$\sub{k}$} to denote that $V_1\subseteq V_2$ and $\dim(V_2/V_1)=a.$ Similarly, for $L_1,L_2$ lattices, we use $L_1\sub{a}L_2$ to indicate that $L_1\subseteq L_2$ and $\mathrm{length}(L_2/L_1)=a.$

In diagrams of lattices or subspaces, containment will always be from left to right (and from bottom to top for vertical lines). Labels will indicate the length of the quotients. We will also use the symbol $\meet$\index{0meet@$\meet$} to denote joins and meets. For example, the following diagram
\begin{equation*}
\begin{tikzcd}
    L_2\arrow[r,dash]&L_4\arrow[dl,phantom,"\join",very near start]\\
    L_1\arrow[ur,phantom,"\meet",very near start]\arrow[u,dash]\arrow[r,dash,swap,"a"]&L_3\arrow[u,dash]
\end{tikzcd}
\end{equation*}
indicates that $L_1=L_2\cap L_3,$ that $L_4=L_2+L_3$ and that $L_1\sub{a}L_3.$

\subsection*{Acknowledgments}
The author would like to thank Andrew Graham, Siddharth Mahendraker and Zhiyu Zhang for their interest and helpful discussions. The author is grateful to Wei Zhang for his guidance and suggestions. This work was supported in part by the NSF grant DMS--1901642, and in part by the NSF grant DMS--1440140, while the author was in residence at the Simons Laufer Mathematical Sciences Institute in Berkeley, California, during the Spring Semester of 2023.

\section{Combinatorics}\label{finFieldSection}
We first record the following two recursions for the $W_\beta^\lambda(a,n).$ Thinking about the last element of $\{1,\ldots,n+1\},$ we obtain
\begin{equation}\label{Wrec1}
    W_\beta^\lambda(a,n+1)=(\lambda^\beta+\lambda^{-\beta})W_\beta^\lambda(a,n)+W_{\beta+1}^\lambda(a-1,n)\quad\text{for all }a,n\ge0.
\end{equation}
Similarly, thinking about the first element of $\{1,\ldots,n+1\},$ we obtain
\begin{equation}\label{Wrec2}
    W_\beta^\lambda(a,n+1)=(\lambda^{a+\beta}+\lambda^{-a-\beta})W_\beta^\lambda(a,n)+W_\beta^\lambda(a-1,n)\quad\text{for all }a,n\ge0.
\end{equation}

\subsection{\texorpdfstring{$q$}{q}-combinatorics identities}
\begin{lemma}\label{qidentityforR}
    For integers $a,b\ge0,$ we have
    \begin{equation*}
        \sum_{c=0}^{\min(a,b)}(\lambda;\lambda^2)_c\qbinom{a+b}{2c,a-c,b-c}{\lambda}\lambda^{(a-c)(b-c)}=\qbinom{a+b}{a}{\lambda^2}.
    \end{equation*}
\end{lemma}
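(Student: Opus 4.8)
The plan is to prove the identity by first reducing it to a cleaner form and then computing a bivariate generating function. Using $(\lambda;\lambda)_{2c}=(\lambda;\lambda^2)_c(\lambda^2;\lambda^2)_c$ together with the sign cancellations in the definition of the $\lambda$-multinomial, one has $(\lambda;\lambda^2)_c\qbinom{a+b}{2c,a-c,b-c}{\lambda}=\frac{(\lambda;\lambda)_{a+b}}{(\lambda^2;\lambda^2)_c(\lambda;\lambda)_{a-c}(\lambda;\lambda)_{b-c}}$; also $\qbinom{a+b}{a}{\lambda^2}=\frac{(\lambda^2;\lambda^2)_{a+b}}{(\lambda^2;\lambda^2)_a(\lambda^2;\lambda^2)_b}$ and $(\lambda^2;\lambda^2)_{a+b}=(\lambda;\lambda)_{a+b}(-\lambda;\lambda)_{a+b}$. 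Hence, after dividing the asserted identity through by $(\lambda;\lambda)_{a+b}$, it is equivalent to
\[
  \sum_{c\ge0}\frac{\lambda^{(a-c)(b-c)}}{(\lambda^2;\lambda^2)_c\,(\lambda;\lambda)_{a-c}\,(\lambda;\lambda)_{b-c}}=\frac{(-\lambda;\lambda)_{a+b}}{(\lambda^2;\lambda^2)_a\,(\lambda^2;\lambda^2)_b},
\]
a finite identity of power series in $\lambda$. I would prove this by showing that the two sides have the same bivariate generating function $\sum_{a,b\ge0}(\,\cdot\,)\,s^at^b$; here and below $(x;\mu)_\infty\defeq\prod_{i\ge1}(1-x\mu^{i-1})$.

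For the left side, the substitution $a=c+i$, $b=c+j$ (with $c,i,j\ge0$) converts $\lambda^{(a-c)(b-c)}$ into $\lambda^{ij}$ and factors the generating function as $\bigl(\sum_{c}\frac{(st)^c}{(\lambda^2;\lambda^2)_c}\bigr)\bigl(\sum_{i,j}\frac{\lambda^{ij}s^it^j}{(\lambda;\lambda)_i(\lambda;\lambda)_j}\bigr)$. The first factor is $(st;\lambda^2)_\infty^{-1}$ by the identity $\sum_n x^n/(\lambda^2;\lambda^2)_n=(x;\lambda^2)_\infty^{-1}$; in the second, summing over $j$ and then over $i$, each by the $q$-binomial theorem $\sum_n\frac{(A;\lambda)_n}{(\lambda;\lambda)_n}z^n=\frac{(Az;\lambda)_\infty}{(z;\lambda)_\infty}$, gives $\frac{(st;\lambda)_\infty}{(s;\lambda)_\infty(t;\lambda)_\infty}$. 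Since $(st;\lambda)_\infty=(st;\lambda^2)_\infty(st\lambda;\lambda^2)_\infty$, the product collapses to $\frac{(st\lambda;\lambda^2)_\infty}{(s;\lambda)_\infty(t;\lambda)_\infty}$.

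For the right side, using $(-\lambda;\lambda)_{a+b}=(-\lambda;\lambda)_a(-\lambda^{a+1};\lambda)_b$ and $(-\lambda;\lambda)_a/(\lambda^2;\lambda^2)_a=(\lambda;\lambda)_a^{-1}$, the generating function becomes $\sum_a\frac{s^a}{(\lambda;\lambda)_a}\sum_b\frac{(-\lambda^{a+1};\lambda)_b}{(\lambda^2;\lambda^2)_b}t^b$. Interchanging the sums and expanding $(-\lambda^{a+1};\lambda)_b=\sum_j\lambda^{\binom{j+1}{2}+aj}\qbinom bj{\lambda}$ (elementary symmetric functions of $\lambda^{a+1},\dots,\lambda^{a+b}$), one sums over $a$ by the $q$-binomial theorem, then expands the resulting $(s;\lambda)_j$ by the $q$-binomial theorem again; a double-sum manipulation using $\binom{k+\ell+1}{2}=\binom{k+1}{2}+\binom{\ell+1}{2}+k\ell$ and $\binom k2+\binom{k+1}2=k^2$ collapses the whole thing to $\frac{1}{(s;\lambda)_\infty(t;\lambda)_\infty}\sum_k\frac{(-1)^k\lambda^{k^2}(st)^k}{(\lambda^2;\lambda^2)_k}=\frac{(st\lambda;\lambda^2)_\infty}{(s;\lambda)_\infty(t;\lambda)_\infty}$, the last step by $\sum_k\frac{(-1)^k\mu^{\binom k2}x^k}{(\mu;\mu)_k}=(x;\mu)_\infty$ with $\mu=\lambda^2$. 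The two generating functions agree, so comparing coefficients of $s^at^b$ proves the reduced identity and hence the lemma. The main obstacle is this right-hand computation: the inner series $\sum_b(-\lambda^{a+1};\lambda)_bt^b/(\lambda^2;\lambda^2)_b$ has no one-step closed form, so the collapse has to be threaded through the chain of $q$-binomial-theorem expansions above while carefully tracking which Pochhammer symbols have base $\lambda$ and which base $\lambda^2$; the left side, by contrast, is essentially immediate after the substitution $a=c+i$, $b=c+j$. (Finiteness of the original sum is transparent once $(\lambda;\lambda)_{a+b}$ has been cleared: the terms with $c>\min(a,b)$ vanish by the stated conventions on $\qbinom{n}{m_1,\dots}{\lambda}$.)
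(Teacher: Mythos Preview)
Your proof is correct but takes a completely different route from the paper's. The paper argues that both sides are polynomials in $\lambda$, so it suffices to check the identity at $\lambda=-q$ for prime powers $q$; there, the right side counts $a$-dimensional $\F_{q^2}$-subspaces $L$ of a nondegenerate $(a+b)$-dimensional Hermitian space, and stratifying by $c=\dim(L\cap L^\perp)$ immediately gives the left side (the summand being the count $Q(a-c,c,b-c)$ of such $L$ with prescribed radical). This is essentially a one-line proof once one has the subspace-counting formulas already set up in the paper.

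Your generating-function argument, by contrast, is purely formal and self-contained: it needs no finite-field interpretation and works directly over $\Z[[\lambda]]$. The left-side computation is indeed immediate after the substitution $a=c+i$, $b=c+j$. Your right-side computation is correct but terse; to make the ``collapse'' explicit, after reaching
\[
\frac{1}{(s;\lambda)_\infty}\sum_{b,k,\ell}\frac{t^b}{(\lambda^2;\lambda^2)_b}\,\lambda^{\binom{k+1}{2}+\ell^2+k\ell}(-s)^\ell\qbinom{b}{\ell}{\lambda}\qbinom{b-\ell}{k}{\lambda}
\]
one sums over $k$ using $\sum_k\lambda^{\binom{k}{2}}(\lambda^{\ell+1})^k\qbinom{b-\ell}{k}{\lambda}=(-\lambda^{\ell+1};\lambda)_{b-\ell}$, and then the factor $\qbinom{b}{\ell}{\lambda}(-\lambda^{\ell+1};\lambda)_{b-\ell}/(\lambda^2;\lambda^2)_b$ simplifies to $1/\bigl((\lambda^2;\lambda^2)_\ell(\lambda;\lambda)_{b-\ell}\bigr)$, after which the sums in $b$ and $\ell$ separate. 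The paper's approach is shorter in context because the counting apparatus is already in place; yours has the virtue of not appealing to any enumerative input.
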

\begin{proof}
If suffices to prove this for $\lambda=-q$ for $q$ prime powers. Take $V$ a nondegenerate $\F_{q^2}/\F_q$-Hermitian space of dimension $a+b.$ The right hand side is the number of $\F_{q^2}$-subspaces $L$ of dimension $a.$ Summing over the possible $m=\dim(L\cap L^\perp),$ we have
\begin{equation*}
    \qbinom{a+b}{a}{q^2}=\sum_{m\ge0}Q(a-m,m,b-m),
\end{equation*}
in the notation of \cite[Theorem 2.7]{CoratoZanarella} (in the case \Ugen), which is precisely the identity we want.
\end{proof}
\begin{lemma}\label{qidentityforR2}
    For integers $a,b\ge0,$ we have
    \begin{equation*}
        \sum_{c=0}^{\min(a,b)}(\lambda;\lambda^2)_c\qbinom{a+b}{2c,a-c,b-c}{\lambda}\lambda^{(a-c)(b-c)+c}=\qbinom{a+b}{a}{\lambda^2}\frac{\lambda^a+\lambda^b}{\lambda^{a+b}+1}.
    \end{equation*}
\end{lemma}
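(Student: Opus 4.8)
The plan is to mimic the proof of \Cref{qidentityforR}, reducing to the case $\lambda=-q$ with $q$ a prime power and interpreting both sides as point counts in a Hermitian space over $\F_{q^2}/\F_q$. As before, fix a nondegenerate $\F_{q^2}/\F_q$-Hermitian space $V$ of dimension $a+b$. The quantity $(\lambda;\lambda^2)_c\qbinom{a+b}{2c,a-c,b-c}{\lambda}\lambda^{(a-c)(b-c)}$ appearing on the left, which in the proof of \Cref{qidentityforR} was written as $Q(a-c,c,b-c)$, counts the $\F_{q^2}$-subspaces $L\subseteq V$ of dimension $a$ with $\dim(L\cap L^{\perp})=c$. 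So the left-hand side of the present identity is $\sum_{L}\lambda^{\dim(L\cap L^{\perp})}$, the sum over all $a$-dimensional subspaces $L$ weighted by $(-q)^{\dim(\operatorname{rad}L)}$ where $\operatorname{rad}L=L\cap L^{\perp}$.

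The main step is therefore to evaluate $\sum_{L}(-q)^{\dim(\operatorname{rad}L)}$ and show it equals $\qbinom{a+b}{a}{q^2}\cdot\frac{(-q)^a+(-q)^b}{(-q)^{a+b}+1}=\qbinom{a+b}{a}{q^2}\cdot\frac{q^a\epsilon_a+q^b\epsilon_b}{q^{a+b}\epsilon_{a+b}+1}$ with appropriate signs $\epsilon$. I would group the $a$-dimensional subspaces by $m=\dim(\operatorname{rad}L)$ and use the known count $Q(a-m,m,b-m)$ of such $L$ (so the whole thing becomes $\sum_m(-q)^m Q(a-m,m,b-m)$), and then recognize the result as a hypergeometric-type $q$-sum. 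Concretely, I expect that the cleanest route is to prove the polynomial identity
\begin{equation*}
    \Bigl(\lambda^{a+b}+1\Bigr)\sum_{c=0}^{\min(a,b)}(\lambda;\lambda^2)_c\qbinom{a+b}{2c,a-c,b-c}{\lambda}\lambda^{(a-c)(b-c)+c}=\Bigl(\lambda^{a}+\lambda^{b}\Bigr)\qbinom{a+b}{a}{\lambda^2}
\end{equation*}
for $\lambda=-q$ and all prime powers $q$, hence as an identity of rational functions in $\lambda$ (both sides being manifestly Laurent polynomials in $\lambda$ of bounded degree, equality at infinitely many values of $q$ forces equality). Combined with \Cref{qidentityforR}, which handles the $c=0$-shifted analogue, this reduces the claim to an identity with one extra factor of $\lambda^c$ inside the sum; one can hope to derive it from \Cref{qidentityforR} itself by a generating-function manipulation, e.g. differentiating or applying the $q$-difference operator $\lambda\,\partial_\lambda$, since inserting $\lambda^c$ is exactly what such an operator does term-by-term.

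An alternative, and probably more robust, approach is a direct bijective/weighting argument: compute $\sum_L(-q)^{\dim(\operatorname{rad}L)}$ by a two-step count, first choosing the radical $R$ (a subspace on which the form vanishes, i.e.\ a totally isotropic subspace, of some dimension $m$), then choosing $L\supseteq R$ with $L/R$ nondegenerate inside $R^{\perp}/R$; the count of totally isotropic $m$-spaces in a Hermitian space and the count of nondegenerate subspaces of a given dimension are both standard, and the weight $(-q)^m$ should telescope the resulting double sum. The factor $\frac{q^a\epsilon_a+q^b\epsilon_b}{q^{a+b}\epsilon_{a+b}+1}$ strongly suggests that the answer packages as a ratio of Gaussian binomials times a correction reflecting the two ``ends'' $a$ and $b$, which is consistent with summing a geometric-like series in $m$.

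The main obstacle I anticipate is bookkeeping the signs: since $\lambda=-q$, the factors $\lambda^{a}$, $\lambda^{b}$, $\lambda^{a+b}$ carry signs $(-1)^a,(-1)^b,(-1)^{a+b}$, and the Hermitian counting formulas $(x;x^2)_n$ etc.\ are only clean after substituting $\lambda=-q$, so one must be careful that the point-count interpretation is valid for \emph{all} $(a,b)$ and not just those of a fixed parity. Verifying that the weighted sum $\sum_m(-q)^mQ(a-m,m,b-m)$ genuinely collapses to the claimed closed form — rather than to a messier expression that merely agrees numerically in small cases — is where the real work lies; I would first check $a=0$, $a=1$, and $a=b$ by hand to pin down the normalization, then push through the general telescoping.
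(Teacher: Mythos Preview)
Your reduction to $\lambda=-q$ and the interpretation of the left-hand side as $\sum_{L}(-q)^{\dim(L\cap L^{\perp})}$ over $a$-dimensional $\F_{q^2}$-subspaces is correct and matches how \Cref{qidentityforR} was proved. But neither of the two mechanisms you propose for actually evaluating this weighted sum works. The ``$q$-difference operator'' idea is off: applying $\lambda\,\partial_\lambda$ to a term multiplies it by its total $\lambda$-degree, not by $\lambda^c$; there is no differential or shift operator in $\lambda$ alone that inserts precisely the factor $\lambda^c$ into the $c$-th summand, because $c$ is not the $\lambda$-degree of that summand. And the ``two-step'' count (choose the radical $R$, then a nondegenerate $L/R\subseteq R^{\perp}/R$) is exactly how $Q(a-m,m,b-m)$ is defined in the first place as $S(m)\cdot R(a-m)$; rewriting the sum in that product form does not produce a telescoping series, and you have not indicated what identity among the $S$ and $R$ counts would force one. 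So the proposal correctly names the target $\sum_m(-q)^mQ(a-m,m,b-m)$ but supplies no method that evaluates it.

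The paper's proof is entirely different and purely algebraic. Writing $A(a,b)$ for the sum in \Cref{qidentityforR} and $B(a,b)$ for the present sum, one expands $\qbinom{a+b}{2c,a-c,b-c}{\lambda}$ via the three-term Pascal-type recurrence
\[
\qbinom{a+b}{2c,a-c,b-c}{\lambda}=\qbinom{a+b-1}{2c-1,a-c,b-c}{\lambda}+\lambda^{2c}\qbinom{a+b-1}{2c,a-c,b-1-c}{\lambda}+\lambda^{b+c}\qbinom{a+b-1}{2c,a-c-1,b-c}{\lambda},
\]
multiplies through by $(\lambda;\lambda^2)_c\,\lambda^{(a-c)(b-c)}$, and sums over $c$. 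The three pieces reassemble into shifted $A$'s and a single $B$, giving the linear relation
\[
A(a,b+1)=\lambda^{a}B(a,b)-(\lambda^{a+b}-1)A(a-1,b)+\lambda^{2b+2}A(a-1,b+1).
\]
Since $A(a,b)=\qbinom{a+b}{a}{\lambda^2}$ is already known from \Cref{qidentityforR}, this solves for $B(a,b)$ and a direct check gives the claimed closed form. The key idea you are missing is this recurrence linking $B$ to several values of $A$; no counting argument or operator trick is needed once you have it.
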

\begin{proof}
Denote
\begin{equation*}
\begin{split}
    A(a,b)&\defeq\sum_{c=0}^{\min(a,b)}(\lambda;\lambda^2)_c\qbinom{a+b}{2c,a-c,b-c}{\lambda}\lambda^{(a-c)(b-c)}.\\
    B(a,b)&\defeq\sum_{c=0}^{\min(a,b)}(\lambda;\lambda^2)_c\qbinom{a+b}{2c,a-c,b-c}{\lambda}\lambda^{(a-c)(b-c)+c}.
\end{split}
\end{equation*}
From the identity
\begin{equation*}
    \qbinom{a+b}{2c,a-c,b-c}{\lambda}=\qbinom{a+b-1}{2c-1,a-c,b-c}{\lambda}+\lambda^{2c}\qbinom{a+b-1}{2c,a-c,b-1-c}{\lambda}+\lambda^{b+c}\qbinom{a+b-1}{2c,a-c-1,b-c}{\lambda}
\end{equation*}
we get
\begin{equation*}
\begin{split}
    (\lambda;\lambda^2)_c\lambda^{(a-c)(b-c)}\qbinom{a+b}{2c,a-c,b-c}{\lambda}=\ &(\lambda;\lambda^2)_c\lambda^{(a-c)(b-1-c)+a+c}\qbinom{a+(b-1)}{2c,a-c,(b-1)-c}{\lambda}\\
    &-(\lambda;\lambda^2)_{c-1}\lambda^{(a-c)(b-c)}\qbinom{a+b}{2(c-1),a-c,b-c}{\lambda}(q^{a+b-1}-1)\\
    &+(\lambda;\lambda^2)_c\lambda^{(a-1-c)(b-c)+2b}\qbinom{(a-1)+b}{2c,(a-1)-c,b-c}{\lambda}.
\end{split}
\end{equation*}
Summing over and changing $b\mapsto b+1,$ this is
\begin{equation*}
    A(a,b+1)=\lambda^aB(a,b)-(\lambda^{a+b}-1)A(a-1,b)+\lambda^{2b+2}A(a-1,b+1).
\end{equation*}
and thus we can use the previous lemma to verify that $B(a,b)=\qbinom{a+b}{a}{\lambda^2}\frac{\lambda^a+\lambda^b}{\lambda^{a+b}+1}.$
\end{proof}

\begin{lemma}\label{qidentityforL}
For integers $a,b$ with $a+b\ge0,$ we have
\begin{equation*}
    \sum_{c\ge0}\qbinom{a+b}{c,a+c,b-2c}{\lambda^2}\lambda^{2c(a+c)}(-\lambda;\lambda)_{b-2c}=\qbinom{2(a+b)}{b}{\lambda}.
\end{equation*}
\end{lemma}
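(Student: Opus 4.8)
The plan is to reduce to the case where $\lambda=q$ is a prime power and $a,b\ge0$, and then to run a counting argument (with a recursion as a fallback). For the reduction: when $b<0$ both sides vanish, so assume $b\ge0$. For fixed such $b$ the right-hand side equals $(b)_\lambda^{-1}\prod_{j=1}^{b}(\lambda^{2a+b+j}-1)$, a polynomial in $\lambda^{2a}$ of degree $b$, and the left-hand side is, term by term, also a polynomial in $\lambda^{2a}$ of degree $\le b$ (the factor $\lambda^{2c(a+c)}=(\lambda^{2a})^c\lambda^{2c^2}$ and the ratio $(a+b)_{\lambda^2}/(a+c)_{\lambda^2}$ account for the degree). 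Hence it suffices to prove the identity for $a=0,1,\dots,b$, i.e.\ for $a\ge0$, and then we may specialize $\lambda=q$ a prime power.

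Now view $V\defeq\F_{q^2}^{a+b}$ as an $\F_q$-vector space of dimension $2(a+b)$ and fix $\iota\in\F_{q^2}\setminus\F_q$. Then $\qbinom{2(a+b)}{b}{q}$ counts the $b$-dimensional $\F_q$-subspaces $U\subseteq V$, and I would stratify these by $c\defeq\dim_{\F_{q^2}}(U\cap\iota U)$. Setting $U_0\defeq U\cap\iota U$ and $U_1\defeq U+\iota U$, these are $\F_{q^2}$-subspaces with $\dim_{\F_{q^2}}U_0=c$, $\dim_{\F_{q^2}}U_1=b-c$, and $U\mapsto(U_0\subseteq U_1,\ U/U_0)$ is a bijection from the stratum onto the set of pairs consisting of a flag $U_0\subseteq U_1\subseteq V$ of $\F_{q^2}$-subspaces of the indicated dimensions together with an $\F_q$-subspace $\overline U\subseteq U_1/U_0$ with $U_1/U_0=\overline U\oplus\iota\overline U$. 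The number of such flags is $\qbinom{a+b}{c,a+c,b-2c}{q^2}$, which matches the multinomial in the statement.

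The crux is the enumeration of the $\overline U$ and the resulting power of $q$. The straightforward count gives that the number of $\F_q$-forms $\overline U$ of a fixed $m$-dimensional $\F_{q^2}$-space is $|\mathrm{GL}_m(\F_{q^2})|/|\mathrm{GL}_m(\F_q)|=q^{\binom m2}(-q;q)_m$, which only yields the variant identity $\qbinom{2(a+b)}{b}{\lambda}=\sum_c\qbinom{a+b}{c,a+c,b-2c}{\lambda^2}\lambda^{\binom{b-2c}{2}}(-\lambda;\lambda)_{b-2c}$. To obtain the stated exponent $\lambda^{2c(a+c)}$ one must reorganize the count — for instance, bundling the choice of $U_0$ with that of a complement to $U_1$ relative to a fixed $\F_{q^2}$-flag, so that $\lambda^{2c(a+c)}$ emerges as the size of the relevant Schubert cell while $(-\lambda;\lambda)_{b-2c}$ appears as a ``Lagrangian''-type count. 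I expect this bookkeeping, rather than the bijection itself, to be the main obstacle.

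If the combinatorial reorganization proves awkward, I would instead argue by a recursion in the spirit of the proof of \Cref{qidentityforR2}. Decomposing $\F_q^{2(a+b)}=\F_q^{2(a+b)-2}\oplus\F_q^2$ gives
\[
\qbinom{2(a+b)}{b}{\lambda}=\qbinom{2(a-1+b)}{b}{\lambda}+(\lambda^{2a+b-1}+\lambda^{2a+b})\qbinom{2(a+b-1)}{b-1}{\lambda}+\lambda^{2(2a+b)}\qbinom{2(a+1+b-2)}{b-2}{\lambda},
\]
and one checks that the left-hand side of the lemma satisfies the same recursion by applying the $q$-multinomial Pascal identity to $\qbinom{a+b}{c,a+c,b-2c}{\lambda^2}$, telescoping via $(-\lambda;\lambda)_{b-2c}=(1+\lambda^{b-2c})(-\lambda;\lambda)_{b-2c-1}$ and reindexing the sum over $c$; the identity then follows by induction on $a+b$, the case $a+b=0$ being trivial. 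Verifying this recursion is, again, where the real computation lies.
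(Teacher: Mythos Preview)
Your reduction to $\lambda=q$ a prime power with $a,b\ge0$ is sound, and the $\F_{q^2}/\F_q$ count is correct---but, as you yourself acknowledge, it proves only the variant with exponent $\lambda^{\binom{b-2c}{2}}$, not the stated $\lambda^{2c(a+c)}$. These two sums are not equal term by term (check $a=0$, $b=2$), so your identity does not directly imply the lemma. The ``reorganization'' you allude to is not carried out, and the fallback recursion is only sketched; as written the proposal has a genuine gap.

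The missing idea is to replace the $\F_{q^2}$-linear structure on $\F_q^{2(a+b)}$ by a nondegenerate \emph{alternating} form. Stratify the $b$-dimensional $\F_q$-subspaces $L$ by $m=\dim(L\cap L^\perp)=b-2c$. The count then factors as (isotropic $M$ of dimension $m$) $\times$ (nondegenerate $2c$-dimensional subspaces of the symplectic space $M^\perp/M$, which has dimension $2(a+2c)$). In the symplectic case these are $S\TypeC{b-2c}{}{a+b}{}=(-q;q)_{b-2c}\qbinom{a+b}{b-2c}{q^2}$ and $R\TypeC{c}{}{a+2c}{}=q^{2c(a+c)}\qbinom{a+2c}{c}{q^2}$, whose product is exactly the $c$-th summand of the lemma; the exponent $q^{2c(a+c)}$ falls out of the $R$-count with no further bookkeeping. (This parallels \Cref{qidentityforR}, where a Hermitian form plays the analogous role.) Your recursion would presumably also go through, but this direct symplectic count renders it unnecessary.
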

\begin{proof}
If suffices to prove this for $\lambda=q$ for $q$ prime powers. Take $V$ a nondegenerate $\F_q$-alternating space of dimension $2(a+b).$ The right hand side is the number of $\F_{q}$-subspaces $L$ of dimension $b.$ Summing over the possible $2n=\dim(L/L\cap L^\perp),$ we have
\begin{equation*}
    \qbinom{2(a+b)}{b}{q}=\sum_{n\ge0}Q(n,b-2n,a+n),
\end{equation*}
in the notation of \cite[Theorem 2.7]{CoratoZanarella} (in the case \SPgen), which is precisely the identity we want.
\end{proof}

\subsection{Computations on incidence algebras}
\begin{definition}\label{circledastdef}\index{I(P,R)@$I(P,R)$}
    Let $P$ be a locally finite\footnote{This means that for any $x,y\in P,$ the set $[x,y]\defeq\{z\in P\colon x\le z\le y\}$ is finite.} poset and $R$ a value ring. The \emph{incidence algebra} $I(P,R)$ is the set of functions $f\colon P^2\to R$ such that $f(x,y)=0$ if $x>y.$ This is equipped with the convolution operator
    \index{0(*)@$\circledast$}
    \begin{equation*}
        (f\circledast g)(x,y)\defeq\sum_{z\in [x,y]}f(x,z)g(z,y),
    \end{equation*}
    and with the identity $\mathrm{id}(x,y)\defeq\begin{cases}
        1 & \text{if }x=y, \\
        0 & \text{otherwise.}
    \end{cases}$
\end{definition}

\begin{definition}
    Given a function $h\colon P\to R^\times,$ we consider the induced algebra automorphism $h\in\mathrm{Aut}(I(P,R))$ given by $(h\cdot f)(x,y)\defeq f(x,y)\cdot\frac{h(y)}{h(x)}.$
\end{definition}

The following is well-known.
\begin{proposition}\label{circledastinv}
    For $f\in I(P,R),$ the following are equivalent.
    \begin{enumerate}
        \item $f$ has a left inverse.
        \item $f$ has a right inverse.
        \item $f(x,x)\in R^\times$ for all $x\in P.$
    \end{enumerate}
    In such case, we denote $f^\circleddash\in I(P,R)$\index{0(-)@$\circleddash$} to be the two-sided inverse of $f.$
\end{proposition}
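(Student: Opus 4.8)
The plan is to funnel the two easy implications into condition~(3) by evaluating convolutions on the diagonal, and then to settle the one substantive implication---that (3) produces a genuine two-sided inverse---by an explicit recursion on the sizes of intervals in $P$.

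For $(1)\Rightarrow(3)$ and $(2)\Rightarrow(3)$: if $g\circledast f=\mathrm{id}$, evaluate at a pair $(x,x)$. Since $[x,x]=\{x\}$ the convolution sum collapses to the single term $g(x,x)f(x,x)$, so $g(x,x)f(x,x)=1$ and hence $f(x,x)$ is invertible in $R$; the case of a right inverse is symmetric.

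For the converse, suppose $f(x,x)\in R^\times$ for every $x\in P$. I will build a left inverse $g\in I(P,R)$ by strong induction on the cardinality of $[x,y]$, which is finite by local finiteness. Put $g(x,y)\defeq0$ for $x\not\le y$ and $g(x,x)\defeq f(x,x)^{-1}$. For $x<y$, the required identity $(g\circledast f)(x,y)=0$ reads $g(x,y)f(y,y)+\sum_{x\le z<y}g(x,z)f(z,y)=0$; each $z$ occurring in the sum satisfies $[x,z]\subsetneq[x,y]$ (as $y$ lies in $[x,y]$ but not in $[x,z]$), so $g(x,z)$ is already defined, and we set $g(x,y)\defeq-\bigl(\sum_{x\le z<y}g(x,z)f(z,y)\bigr)f(y,y)^{-1}$. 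By construction $g\circledast f=\mathrm{id}$. Running the mirror recursion---fixing $y$, inducting downward on $x$, and dividing by $f(x,x)$---yields a right inverse $h$ with $f\circledast h=\mathrm{id}$. Associativity of $\circledast$ then forces $g=g\circledast(f\circledast h)=(g\circledast f)\circledast h=h$, so $f^\circleddash\defeq g=h$ is a two-sided inverse. In particular this gives $(3)\Rightarrow(1)$ and $(3)\Rightarrow(2)$, closing the cycle of implications and exhibiting $f^\circleddash$.

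The only point needing (minimal) care is the well-foundedness of the recursion and the check that the piecewise prescription really assembles into a single element of $I(P,R)$ satisfying the convolution identity at every pair; both follow at once from local finiteness and from the strict inclusion $[x,z]\subsetneq[x,y]$ whenever $x\le z<y$. There is no genuine obstacle here---as the text indicates, this is the classical fact that the zeta element of an incidence algebra is invertible (with inverse the M\"obius element in the case $f(x,x)\equiv1$).
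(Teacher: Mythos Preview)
Your proof is correct. The approach differs from the paper's, though both are standard arguments for this classical fact.

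The paper handles $(3)\Rightarrow(1),(2)$ by introducing an auxiliary ``open-interval'' convolution $\widehat{\circledast}$ (summing only over $z\in(x,y)$) and writing down a closed-form candidate inverse as a formal alternating series $\tilde{f}=2\,\mathrm{id}-f+f\widehat{\circledast}f-f\widehat{\circledast}f\widehat{\circledast}f+\cdots$, which terminates pointwise by local finiteness. It first verifies this works when $f(x,x)=1$ for all $x$, and then reduces the general case to that one by rescaling. Your argument instead constructs the left and right inverses directly by strong induction on $\lvert[x,y]\rvert$, solving one equation at each step, and then invokes associativity to identify them. Your route is arguably more elementary and avoids both the auxiliary product and the case split; the paper's route has the minor advantage of exhibiting the inverse as an explicit (Neumann-type) series. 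Either way the substance is the same well-foundedness coming from local finiteness.
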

\begin{proof}
    Note that (1)$\implies$(3) since if $(g\circledast f)=\delta,$ then $1=(g\circledast f)(x,x)=g(x,x)f(x,x).$ Similarly, (2)$\implies$(3).

    Given $f,g\in I(P,R),$ we define $f\widehat{\circledast}g\in I(P,R)$ by
    \begin{equation*}
        (f\widehat{\circledast}g)(x,y)=\sum_{z\in(x,y)}f(x,z)g(z,y).
    \end{equation*}
    Note that
    \begin{equation*}
        (f\circledast g)(x,y)=(f\widehat{\circledast}g)(x,y) +f(x,x)g(x,y)+f(x,y)g(y,y)-\mathrm{id}(x,y)
    \end{equation*}
    Now given $f\in I(P,R),$ consider
    \begin{equation*}
        \tilde{f}\defeq 2\mathrm{id}-f+f\widehat{\circledast}f-f\widehat{\circledast}f\widehat{\circledast}f+\cdots
    \end{equation*}
    This is well-defined, since for any given $(x,y)\in R,$ the evaluation of the above expression only involves finitely many terms. By construction, we have $f\widehat{\circledast} \tilde{f}=\tilde{f}\widehat{\circledast} f=2\mathrm{id}-f-\tilde{f}.$

    Now assume that $f$ satisfies (3). Consider first the case that $f(x,x)=1$ for all $\in P.$ Note that $\tilde{f}(x,x)=1$ for all $x\in P$ as well. So we have
    \begin{equation*}
        f\circledast \tilde{f}=\tilde{f}\circledast f=f\widehat{\circledast}\tilde{f}+f+\tilde{f}-\mathrm{id}=\mathrm{id}.
    \end{equation*}
    and thus $f^\circleddash\defeq\tilde{f}$ is a two-sided inverse of $f.$ This verifies (1) and (2). For the general case, consider $g(x,y)\defeq f(y,y)^{-1}f(x,y).$ Then let $\tilde{f}(x,y)\defeq f(x,x)\tilde{g}(x,y).$ Then
    \begin{equation*}
        (\tilde{f}\circledast f)=\sum_{z\in[x,y]}f(x,x)\tilde{g}(x,z)f(y,y)^{-1}g(z,y)=f(x,x)f(y,y)^{-1}(\tilde{g}\circledast g)(x,y)=\delta(x,y).
    \end{equation*}
    This verifies (1). Similarly, we can verify (2) by taking $g=f(x,x)^{-1}f(x,y)$ and $\tilde{f}(x,y)\defeq f(y,y)\tilde{g}(x,y).$
\end{proof}

\begin{proposition}\label{qbinomialformula}
    We have that $\left(\qbinom{y}{x}{\lambda}\right)_{(x,y)}^\circleddash=\left((-1)^{y-x}\qbinom{y}{x}{\lambda}\lambda^{\binom{y-x}{2}}\right)_{(x,y)}$ in $I(\Z_{\ge0},\Z[\lambda]).$
\end{proposition}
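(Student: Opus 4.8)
The plan is to prove the identity $\left(\qbinom{y}{x}{\lambda}\right)^\circleddash = \left((-1)^{y-x}\qbinom{y}{x}{\lambda}\lambda^{\binom{y-x}{2}}\right)$ directly, by showing that the convolution of the two families is the identity of $I(\Z_{\ge0},\Z[\lambda])$. By \Cref{circledastinv}, since $\qbinom{x}{x}{\lambda}=1\in\Z[\lambda]^\times$ for all $x$, the family $\left(\qbinom{y}{x}{\lambda}\right)_{(x,y)}$ is invertible, so it suffices to verify that for all $x\le z$ in $\Z_{\ge0}$,
\begin{equation*}
    \sum_{x\le y\le z}(-1)^{y-x}\qbinom{y}{x}{\lambda}\lambda^{\binom{y-x}{2}}\qbinom{z}{y}{\lambda}=\mathrm{id}(x,z).
\end{equation*}
By the standard translation/reindexing $y\mapsto x+j$, $z\mapsto x+n$ (and using $\qbinom{x+j}{x}{\lambda}=\qbinom{x+j}{j}{\lambda}$, etc.), this reduces to the classical $\lambda$-analogue of the inclusion–exclusion identity $\sum_{j=0}^{n}(-1)^j\lambda^{\binom{j}{2}}\qbinom{n}{j}{\lambda}=[n=0]$, which is precisely the $x=1$, $a=0$, $t=1$ specialization (after a harmless substitution) of the $\lambda$-binomial theorem $\prod_{i=0}^{n-1}(1-\lambda^i t)=\sum_{j=0}^{n}(-1)^j\lambda^{\binom{j}{2}}\qbinom{n}{j}{\lambda}t^j$ evaluated at $t=1$, where the product vanishes for $n\ge1$ because of the $i=0$ factor.

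First I would state and, if desired, quickly recall the $q$-binomial theorem in the form above (it is elementary and can be proved by induction on $n$ using the Pascal recursion $\qbinom{n}{j}{\lambda}=\qbinom{n-1}{j-1}{\lambda}+\lambda^j\qbinom{n-1}{j}{\lambda}$, together with $\binom{j}{2}=\binom{j-1}{2}+(j-1)$). Then the main computation is the reindexing step: setting $n=z-x$ and $j=y-x$, and checking that the exponents and binomial coefficients transform correctly, so that the displayed sum becomes $\sum_{j=0}^{n}(-1)^j\lambda^{\binom{j}{2}}\qbinom{n}{j}{\lambda}$, which is $1$ if $n=0$ and $0$ if $n\ge1$. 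Finally, this gives a one-sided inverse, and \Cref{circledastinv} upgrades it to the two-sided inverse $f^\circleddash$, completing the proof.

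I do not expect any serious obstacle here: the only thing to be careful about is bookkeeping the exponent of $\lambda$ and the alternating sign through the reindexing, and making sure the convention $\qbinom{n}{j}{\lambda}=0$ for $j<0$ or $j>n$ is consistent with the sum ranging over $y\in[x,z]$. An equally valid alternative, which avoids invoking the $q$-binomial theorem as a black box, is to induct directly on $z-x$ using the Pascal recursion for $\qbinom{z}{y}{\lambda}$ inside the sum; I would present whichever is shorter, but the generating-function argument is cleanest.
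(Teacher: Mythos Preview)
Your approach is correct and is essentially identical to the paper's proof: both compute the convolution directly and reduce it to the $\lambda$-binomial identity $\sum_{j=0}^{n}(-1)^j\lambda^{\binom{j}{2}}\qbinom{n}{j}{\lambda}=[n=0]$. One minor clarification: your ``etc.'' in the reindexing step is really the trinomial revision $\qbinom{z}{y}{\lambda}\qbinom{y}{x}{\lambda}=\qbinom{z}{x}{\lambda}\qbinom{z-x}{y-x}{\lambda}$, which is what actually pulls out the prefactor $\qbinom{z}{x}{\lambda}$ and makes the remaining sum independent of $x$ (the symmetry $\qbinom{y}{x}{\lambda}=\qbinom{y}{y-x}{\lambda}$ alone does not); this is exactly how the paper factors the sum as well.
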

\begin{proof}
    Their $\circledast$-product is
    \begin{equation*}
        \sum_{z=x}^y\qbinom{y}{z}{\lambda}(-1)^{z-x}\qbinom{z}{x}{\lambda}\lambda^{\binom{z-x}{2}}=\qbinom{y}{x}{\lambda}\sum_{z=x}^y(-1)^{z-x}\lambda^{\binom{z-x}{2}}\qbinom{y-x}{z-x}{\lambda}=\qbinom{y}{x}{\lambda}\sum_{z=0}^{y-x}(-1)^z\lambda^{\binom{z}{2}}\qbinom{y-x}{z}{\lambda}.
    \end{equation*}
    By the $\lambda$-binomial theorem, this is $\qbinom{y}{x}{\lambda}(-1;\lambda)_{y-x}=\delta(x,y).$
\end{proof}

We consider the following functions $A_i^\lambda(x,y),$ $B_i^\lambda(x,y),$ $C_i^\lambda(x,y),$ $D_i^\lambda(x,y)$ and $E_i^\lambda(x,y)$ of $I(\Z_{\ge0},\Z(\lambda))$ for $i\in\{1,2,3,4,5\}.$\footnote{Whenever we write $\frac{y-x}{2},$ we are implicitly denoting that the function is $0$ unless $y\equiv x\mod 2.$}
\bgroup
\everymath{\displaystyle}
\begin{center}
\begin{tabular}{c|c|c|c}
    $i$&$A_i^\lambda(x,y)$&$B_i^\lambda(x,y)$&$C_i^\lambda(x,y)$\\
    \hline
    $1$&$(-\lambda;\lambda)_{y-x}\qbinom{y}{x}{\lambda^2}$&$W_{0}^\lambda(x,y)\lambda^{\frac{1}{2}(y^2+y-x^2-x)}$&$\qbinom{y}{\lfloor\frac{y-x}{2}\rfloor}{\lambda^2}$\\
    \hline
    $2$&$(-\lambda;\lambda^2)_{y-x}\qbinom{2y}{2x}{-\lambda}$&$W_{0}^{\lambda^2}(x,y)\lambda^{y^2-x^2}$&$\qbinom{2y}{y-x}{-\lambda}$\\
    \hline
    $3$&$(-\lambda;\lambda^2)_{y-x}\qbinom{2y+1}{2x+1}{-\lambda}$&$W_{1/2}^{\lambda^2}(x,y)\lambda^{y^2+y-x^2-x}$&$\qbinom{2y+1}{y-x}{-\lambda}$\\
    \hline
    $4$&$(-\lambda;\lambda)_{y-x}\qbinom{y}{x}{\lambda^2}\frac{\lambda^x+1}{\lambda^y+1}$&$W_{0}^\lambda(x,y)\lambda^{\frac{1}{2}(y^2-y-x^2+x)}$&$\qbinom{y}{\frac{y-x}{2}}{\lambda^2}\frac{\lambda^x+1}{\lambda^y+1}$\\
    \hline
    $5$&$(-\lambda;\lambda)_{y-x}\qbinom{y}{x}{\lambda^2}$&$W_{1/2}^\lambda(x,y)\lambda^{\frac{1}{2}(y^2-x^2)}$&$\qbinom{y}{\frac{y-x}{2}}{\lambda^2}$
\end{tabular}
\end{center}
\begin{center}
\begin{tabular}{c|c|c}
$i$&$D_i^\lambda(x,y)$&$E_i^\lambda(x,y)$\\
\hline
$1$&$\lambda^{\frac{1}{2}(y^2+y-x^2+x)}\binom{y}{\lfloor\frac{y-x}{2}\rfloor}$&$(-1)^{y-x}\qbinom{2y}{y-x}{\lambda}\frac{\lambda^{x+y+1}-\lambda^{y-x}}{\lambda^{x+y+1}-1}$\\
\hline
$2$&$\lambda^{y^2-x^2}\binom{y}{\frac{y-x}{2}}$&$(-1)^{y-x}\qbinom{2y}{y-x}{\lambda^2}\frac{\lambda^{x+y}+\lambda^{y-x}}{\lambda^{2y}+1}$\\
\hline
$3$&$\lambda^{y^2+y-x^2-x}\binom{y}{\lfloor\frac{y-x}{2}\rfloor}$&$(-1)^{y-x}\qbinom{2y+1}{y-x}{\lambda^2}\frac{\lambda^{x+y+1}-\lambda^{y-x}}{\lambda^{2y+1}-1}$\\
\hline
$4$&$\lambda^{\frac{1}{2}(y^2-y-x^2+x)}\binom{y}{\frac{y-x}{2}}$&$(-1)^{y-x}\qbinom{2y}{y-x}{\lambda}\frac{\lambda^x+1}{\lambda^y+1}$\\
\hline
$5$&$\lambda^{\frac{1}{2}(y^2-x^2)}\binom{y}{\frac{y-x}{2}}$&$(-1)^{y-x}\qbinom{2y}{y-x}{\lambda}$
\end{tabular}
\end{center}
\egroup
    
\begin{proposition}\label{circledast1}
    For each $i\in\{1,2,3,4,5\},$ we have $C_i^\lambda\circledast(A_i^\lambda)^{\circleddash}=E_i^\lambda.$
\end{proposition}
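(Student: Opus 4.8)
The plan is to prove the five identities $C_i^\lambda\circledast(A_i^\lambda)^\circleddash=E_i^\lambda$ by rewriting each as an equivalent convolution identity involving only explicit functions, and then verifying that identity directly. Since $(A_i^\lambda)^\circleddash$ is the $\circledast$-inverse of $A_i^\lambda$, the claimed identity $C_i^\lambda\circledast(A_i^\lambda)^\circleddash=E_i^\lambda$ is equivalent (by right-multiplying with $A_i^\lambda$, using associativity of $\circledast$, which is legitimate since all these functions are invertible in the incidence algebra by \Cref{circledastinv}) to
\begin{equation*}
    E_i^\lambda\circledast A_i^\lambda=C_i^\lambda.
\end{equation*}
So the first step is to reduce to proving, for each $i$, the single convolution identity $\sum_{z=x}^y E_i^\lambda(x,z)A_i^\lambda(z,y)=C_i^\lambda(x,y)$, which now involves only the explicitly tabulated functions and no formal inverse. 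This is a cleaner statement to attack.

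Next I would try to recognize each of these five identities as a specialization of one of the $q$-binomial summation lemmas already proved in the excerpt, namely \Cref{qidentityforR}, \Cref{qidentityforR2}, and \Cref{qidentityforL}, possibly combined with \Cref{qbinomialformula}. For instance, in the case $i=1$ one has $A_1^\lambda(x,y)=(-\lambda;\lambda)_{y-x}\qbinom{y}{x}{\lambda^2}$, $C_1^\lambda(x,y)=\qbinom{y}{\lfloor(y-x)/2\rfloor}{\lambda^2}$, and $E_1^\lambda(x,z)$ is an explicit ratio of $q$-binomials; the resulting sum should, after the substitution $z-x=$ (new index) and some bookkeeping with floors/parities, match the shape of \Cref{qidentityforL} (which is itself the alternating-space lattice-counting identity). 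The cases $i=2,3$ carry the $\qbinom{2y}{y-x}{-\lambda}$ and $\qbinom{2y+1}{y-x}{-\lambda}$ factors, suggesting they specialize the Hermitian identities \Cref{qidentityforR} / \Cref{qidentityforR2} at $\lambda\rightsquigarrow-\lambda$; and $i=4,5$ look like the "halved-parity" analogues of $i=1$, to be obtained either directly or by the same reduction with the extra $\tfrac{\lambda^x+1}{\lambda^y+1}$ weight handled exactly as in \Cref{qidentityforR2} versus \Cref{qidentityforR}. Throughout, the floor functions $\lfloor(y-x)/2\rfloor$ mean one must split into the cases $y\equiv x$ and $y\not\equiv x\pmod 2$, and I would do both cases in parallel.

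Concretely the steps are: (1) invert the identity to $E_i^\lambda\circledast A_i^\lambda=C_i^\lambda$; (2) for each $i$, write out $\sum_{z}E_i^\lambda(x,z)A_i^\lambda(z,y)$, clear the common $q$-binomial/Pochhammer prefactors, and reindex so the summand depends only on $z-x$ (or $z$) and the parameters $a=\frac{y-x}{2}$-type quantities; (3) identify the reduced sum with the left-hand side of the relevant lemma among \Cref{qidentityforR}, \Cref{qidentityforR2}, \Cref{qidentityforL} (for $i=2,3$ after $\lambda\to-\lambda$), or failing a clean match, prove it directly by the same method used to prove those lemmas — specialize $\lambda$ to a prime power and interpret both sides as counting subspaces of a fixed type in a Hermitian or alternating or ordinary vector space over $\F_q$, which pins down a polynomial identity in $\lambda$; (4) check the boundary/parity conventions (the footnote convention that the function vanishes unless $y\equiv x\pmod 2$ in the relevant rows, and the $\qbinom{n}{m}{\lambda}=0$ conventions) so that the diagonal entries match, giving $C_i^\lambda(x,x)=E_i^\lambda(x,x)A_i^\lambda(x,x)$ as required for these to be genuine incidence-algebra elements.

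The main obstacle I anticipate is step (3) in the cases that do not reduce verbatim to one of the stated lemmas: the $E_i^\lambda$ weights are rational (ratios like $\frac{\lambda^{x+y+1}-\lambda^{y-x}}{\lambda^{x+y+1}-1}$), so the intermediate sums are not manifestly polynomial, and one must either massage them into a telescoping form — much as \Cref{qidentityforR2} was derived from \Cref{qidentityforR} via a three-term $q$-Pascal recursion — or argue that after clearing a single denominator the identity becomes polynomial and hence can be checked at $\lambda=q$ a prime power by a subspace-counting argument. Getting the denominators and the parity/floor casework to line up cleanly across all five rows simultaneously is where the real work lies; the algebraic core of each individual identity is one of the three lemmas already in hand.
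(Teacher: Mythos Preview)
Your plan is sound and would work, but the paper's proof is organized differently and is considerably shorter. Two tactical differences are worth noting.

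First, rather than verifying $E_i^\lambda\circledast A_i^\lambda=C_i^\lambda$ directly, the paper computes $(A_i^\lambda)^\circleddash$ explicitly via \Cref{qbinomialformula} (which gives the inverse of the $\lambda$-binomial matrix) and then checks $C_i^\lambda\circledast(A_i^\lambda)^\circleddash=E_i^\lambda$. This is the same identity of course, but the point is that $(A_i^\lambda)^\circleddash$ has a clean closed form, and in the resulting convolution the rational weights in $E_i^\lambda$ sit on the \emph{outside} rather than inside the sum, so one never has to clear a $z$-dependent denominator. Your concern about the rational factors is exactly the complication that this choice of direction avoids.

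Second, the paper does not treat the five cases independently. It reduces $i\in\{1,4,5\}$ to one another by (a) twisting by $h(x)=\lambda^x+1$, which converts case $4$ into case $5$, and (b) convolving with the step function $S(x,y)=1_{y-x\in\{0,1\}}$, which converts case $5$ into case $1$; then only $i=5$ is verified directly, and there the sum is precisely \Cref{qidentityforL}. For $i\in\{2,3\}$ it unifies the two cases by a single function on $\tfrac{1}{2}\Z_{\ge0}$ that specializes to case $2$ on integers and case $3$ on half-integers; the resulting identity is exactly \Cref{qidentityforR2}. So in the end only two core computations are needed, one landing on \Cref{qidentityforL} and one on \Cref{qidentityforR2}; \Cref{qidentityforR} is not used here.

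Your approach would also terminate, and your identification of the relevant lemmas is essentially right, but without the inter-case reductions you would be doing five separate verifications, some of which (notably $i=1,4$ with their rational weights) would require the extra telescoping step you anticipated.
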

\begin{proof}
    Let $S(x,y)\defeq\begin{cases}1&\text{if }y-x\in\{0,1\},\\0&\text{otherwise.}\end{cases}$ Then we have $C_1^\lambda=S\circledast C_5^\lambda.$ We can also check that $E_1^\lambda=S\circledast E_5^\lambda,$ as
    \begin{equation*}
        (-1)^{y-x}\qbinom{2y}{y-x}{\lambda}+(-1)^{y-x-1}\qbinom{2y}{y-x-1}{\lambda}=(-1)^{y-x}\qbinom{2y}{y-x}{\lambda}\frac{\lambda^{x+y+1}-\lambda^{y-x}}{\lambda^{x+y+1}-1}.
    \end{equation*}
    Since $A_1^\lambda=A_5^\lambda,$ this means that
    \begin{equation*}
        C_1^\lambda\circledast(A_1^\lambda)^{\circleddash}=E_1^\lambda\iff C_5^\lambda\circledast(A_5^\lambda)^{\circleddash}=E_5^\lambda.
    \end{equation*}
    Note also that if $h(x)=\lambda^x+1,$ then $A_5^\lambda=h\cdot A_4^\lambda,$ $C_5^\lambda=h\cdot C_4^\lambda,$ and $E_5^\lambda=h\cdot E_4^\lambda.$ Hence
    \begin{equation*}
        C_4^\lambda\circledast(A_4^\lambda)^{\circleddash}=E_4^\lambda\iff C_5^\lambda\circledast(A_5^\lambda)^{\circleddash}=E_5^\lambda.
    \end{equation*}

    For $i=5,$ multiplying \Cref{qbinomialformula} by $h(x)=(-\lambda;\lambda)_x$ yields
    \begin{equation*}
        (A_5^\lambda)^\circleddash(x,y)=(-1)^{y-x}\lambda^{\binom{y-x}{2}}(-\lambda;\lambda)_{y-x}\qbinom{y}{x}{\lambda^2},
    \end{equation*}
    and thus we want to prove, setting $z=x+2c,$ that
    \begin{equation*}
        \sum_{c\ge0}\lambda^{\binom{y-x-2c}{2}}(-\lambda;\lambda)_{y-x-2c}\qbinom{y}{c,x+c,y-x-2c}{\lambda^2}\isequal\qbinom{2y}{y-x}{\lambda}.
    \end{equation*}
    This is \Cref{qidentityforL} for $(x,y-x,\lambda^{-1}).$ By the above, this also implies the identity for $i\in\{1,4\}.$

    We are left to consider $i\in\{2,3\}.$ Consider the following functions $A,C,E\in I(\frac{1}{2}\Z_{\ge0},\Z(\lambda)).$ We denote $A(x,y)=C(x,y)=E(x,y)=0$ if $y-x\not\in\Z,$ and otherwise we let
    \begin{equation*}
        A^\lambda(x,y)=(\lambda;\lambda^2)_{y-x}\qbinom{2y}{2x}{\lambda},\quad C^\lambda(x,y)=\qbinom{2y}{y-x}{\lambda},\quad E^\lambda(x,y)=\qbinom{2y}{y-x}{\lambda^2}\frac{\lambda^{x+y}+\lambda^{y-x}}{\lambda^{2y}+1}.
    \end{equation*}
    Note that for $F\in\{A,C,E\},$ we have
    \begin{equation*}
        F^\lambda(x,y)=\begin{cases}
            F_2^{-\lambda}(x,y)&\text{if }x,y\in\Z,\\F_3^{-\lambda}(x-1/2,y-1/2)&\text{if }x,y\in\frac{1}{2}+\Z.
        \end{cases}
    \end{equation*}
    Thus it remains to see that $C^\lambda\circledast(A^\lambda)^\circleddash\isequal E^\lambda.$ One can check that \Cref{qbinomialformula} for $\lambda^2$ implies that
    \begin{equation*}
        (A^\lambda)^{\circleddash}(x,y)=(-1)^{y-x}\lambda^{2\binom{y-x}{2}}(\lambda;\lambda^2)_{y-x}\qbinom{2y}{2x}{\lambda}\quad\text{for }y-x\in\Z,
    \end{equation*}
    and thus we want to prove, setting $z=y-c,$ that for all $x,y\in\frac{1}{2}+\Z$ with $y-x\in\Z,$ we have
    \begin{equation*}
        \sum_{c\ge0}(-1)^c\lambda^{c^2-c}(\lambda;\lambda^2)_c\qbinom{2y}{2c,y-x-c,x+y-c}{\lambda}\isequal\qbinom{2y}{y-x}{\lambda^2}\frac{\lambda^{x+y}+\lambda^{y-x}}{\lambda^{2y}+1}.
    \end{equation*}
    This is \Cref{qidentityforR2} for $(x+y,y-x,\lambda^{-1}).$
\end{proof}
\begin{proposition}\label{circledast2}
    For each $i\in\{1,2,3,4,5\},$ we have $E_i^\lambda\circledast B_i^\lambda=D_i^\lambda.$
\end{proposition}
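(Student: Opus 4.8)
The plan is to imitate the proof of \Cref{circledast1}, which reduces the five cases to a small number of base $q$-identities. First I would look for relations among the $B_i^\lambda$ and $D_i^\lambda$ parallel to the relations $C_1^\lambda=S\circledast C_5^\lambda$, $A_5^\lambda=h\cdot A_4^\lambda$ and the half-integer reindexing identifying $F_2^{-\lambda}$ with $F_3^{-\lambda}$ (for $F$ a $B$-type or $D$-type function) that were used there. Concretely: an $S$-type left convolution and an $h$-twist (with $h$ a small variant of $\lambda^x+1$) expressing the pairs $(B_1^\lambda,D_1^\lambda)$ and $(B_4^\lambda,D_4^\lambda)$ through $(B_5^\lambda,D_5^\lambda)$ would reduce $i\in\{1,4,5\}$ to the single identity $E_5^\lambda\circledast B_5^\lambda=D_5^\lambda$, and the half-integer reindexing over $\tfrac12\Z_{\ge0}$ should merge $i=2$ and $i=3$ into one identity there. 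This leaves essentially two base identities to prove.

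For each base identity I would expand the defining sum of the $W_\beta^\lambda$ occurring in $B_i^\lambda(z,y)$, interchange the two summations in $(E_i^\lambda\circledast B_i^\lambda)(x,y)=\sum_z E_i^\lambda(x,z)B_i^\lambda(z,y)$, and prove the resulting explicit $q$-identity. The cleanest route is probably induction on $y$: the recursion \eqref{Wrec2} rewrites $W_\beta^\lambda(z,y+1)$ through $W_\beta^\lambda(z,y)$ and $W_\beta^\lambda(z-1,y)$ without moving $\beta$ (unlike \eqref{Wrec1}, which would leave the family $B_i^\lambda$), so after the index shift $z\mapsto z+1$ the sum reorganizes as $\sum_z\bigl[c_1(x,z)E_i^\lambda(x,z)+c_2(x,z)E_i^\lambda(x,z+1)\bigr]B_i^\lambda(z,y)$ for explicit $q$-power coefficients $c_1,c_2$; granting the inductive hypothesis, the claim then reduces to showing this bracket equals a $z$-independent scalar times $E_i^\lambda(x,z)$, with that scalar matching the ratio $D_i^\lambda(x,y+1)/D_i^\lambda(x,y)$ produced by Pascal's rule for the ordinary binomial in $D_i^\lambda$. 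A viable alternative is to prove the base identities by the finite-field counting method used for \Cref{qidentityforR}, \Cref{qidentityforR2} and \Cref{qidentityforL}---that $E_2^\lambda(x,y)$ equals $(-1)^{y-x}$ times the Hermitian quantity $\qbinom{2y}{y-x}{\lambda^2}\tfrac{\lambda^{x+y}+\lambda^{y-x}}{\lambda^{2y}+1}$ of \Cref{qidentityforR2} strongly suggests such an interpretation (with the $W$-weight recording a Bruhat-type stratification) is available.

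I expect two points to be delicate. The first is identifying the correct intermediate relations among the $B_i^\lambda,D_i^\lambda$: the $\lambda$-exponent bookkeeping in their definitions is intricate, and unlike for $C_i^\lambda,E_i^\lambda$ I would not expect literally the same $S$ and $h$ to work, so some experimentation is needed for the reductions to close up. The second, and the genuine content of the proposition, is the $q$-identity hidden in the inductive step---showing that the combination of $E_i^\lambda(x,z)$ and $E_i^\lambda(x,z+1)$ produced by \eqref{Wrec2} collapses, uniformly in $z$, to a scalar multiple of $E_i^\lambda(x,z)$, so that a sum of products of $q$-binomials against the $W$-weights becomes an \emph{ordinary} binomial times a monomial. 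This is a strong cancellation and is the step I expect to require the most care.
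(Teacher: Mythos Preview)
Your overall plan—reduce by $S$-convolutions and $h$-twists, then run an induction driven by \eqref{Wrec2} and Pascal's rule—is exactly the paper's strategy. The reduction details differ a bit: here the half-integer reindexing is not used; instead the paper sends $i=2\to i=4$ and $i=3\to i=5$ via the substitution $\lambda\mapsto\lambda^2$ together with $h$-twists, and $i=1\to i=4$ by another $h$-twist, leaving the two base cases $i\in\{4,5\}$ (handled in parallel as $\beta\in\{0,1/2\}$). These are minor discrepancies.

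The genuine gap is in your inductive step. The assertion that after applying \eqref{Wrec2} and shifting $z\mapsto z+1$ the resulting bracket $c_1(z)E_i^\lambda(x,z)+c_2(z)E_i^\lambda(x,z+1)$ is a \emph{$z$-independent} scalar times $E_i^\lambda(x,z)$ is false: for $i=5$ and $x=0$ one computes that the bracket vanishes at $z=0$ but equals $2\lambda^{y+1}$ at $z=1$. Relatedly, the ratio $D_i^\lambda(x,y+1)/D_i^\lambda(x,y)$ you invoke is ill-defined, since $D_5^\lambda(x,y)=0$ unless $y\equiv x\pmod 2$. What makes the induction work is using Pascal in the form $D(x,y)=D(x-1,y-1)+D(x+1,y-1)$, which brings in \emph{two} neighbouring $x$-values. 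Applying the induction hypothesis at $(x\pm1,y-1)$ and comparing, coefficient by coefficient in $z$, with the expansion of $\sum_z F(x,z)W_\beta(z,y)$ via \eqref{Wrec2} yields not a two-term collapse but the four-term recursion
\[
F(x-1,z)+F(x+1,z)=(\lambda^{2z+2\beta}+\lambda^{-2z-2\beta})F(x,z)+F(x,z+1)
\]
for an $h$-twist $F$ of $E_i^\lambda$; this one verifies directly from the $q$-binomial formulas. To make the base case and the step near the boundary go through, the paper also extends $F$ to negative $x$ by the symmetry $F(-x,z)=F(x,z)$ and inducts on $y^2-x^2$ rather than on $y$ alone.
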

\begin{proof}
    Let $S(x,y)\defeq\begin{cases}1&\text{if }y-x\in\{0,1\},\\0&\text{otherwise.}\end{cases}$

    For $h_1(x)=\lambda^{x^2}$ and $h_2(x)=\lambda^x,$ we have $D_3^{\lambda}=h_2\cdot (h_1\cdot S\circledast D_5^{\lambda^2}),$ as well as $B_3^\lambda=h_2\cdot B_5^{\lambda^2}.$ We can also check that $E_3^{\lambda}=h_2\cdot (h_1\cdot S\circledast E_5^{\lambda^2}),$ as
    \begin{equation*}
        (-1)^{y-x}\qbinom{2y}{y-x}{\lambda^2}+\lambda^{2x+1}(-1)^{y-x-1}\qbinom{2y}{y-x-1}{\lambda^2}=(-1)^{y-x}\qbinom{2y+1}{y-x}{\lambda^2}\frac{\lambda^{2x+1}-1}{\lambda^{2y+1}-1}.
    \end{equation*}
    This means that
    \begin{equation*}
        E_3^\lambda\circledast B_3^\lambda=D_3^\lambda\iff E_5^\lambda\circledast B_5^\lambda=D_5^\lambda.
    \end{equation*}
    We also have $F_2=h_2\cdot F_4^{\lambda^2}$ for all $F\in\{B,D,E\}.$ This means that
    \begin{equation*}
        E_2^\lambda\circledast B_2^\lambda=D_2^\lambda\iff E_4^\lambda\circledast B_4^\lambda=D_4^\lambda.
    \end{equation*}
    Moreover, if $h_3(x)=\lambda^{(x^2-x)/2},$ we have $B_1^\lambda=h_2\cdot B_4^\lambda,$ as well as $D_1^\lambda=h_2\cdot(h_3\cdot S\circledast D_4^\lambda).$ We can also check that $E_1^\lambda=h_2\cdot(h_3\cdot S\circledast E_4^\lambda),$ as
    \begin{equation*}
        (-1)^{y-x}\qbinom{2y}{y-x}{\lambda}\frac{\lambda^x+1}{\lambda^y+1}+\lambda^x(-1)^{y-x-1}\qbinom{2y}{y-x-1}{\lambda}\frac{\lambda^{x+1}+1}{\lambda^y+1}=(-1)^{y-x}\qbinom{2y}{y-x}{\lambda}\frac{\lambda^{2x+1}-1}{\lambda^{x+y+1}-1}.
    \end{equation*}
    This means that
    \begin{equation*}
        E_1^\lambda\circledast B_1^\lambda=D_1^\lambda\iff E_4^\lambda\circledast B_4^\lambda=D_4^\lambda.
    \end{equation*}

    Now we prove the identity for $i\in\{4,5\}.$ For $\beta\in\{0,1/2\},$ we consider $F_\beta\in I(\Z_{\ge0},\Z(\lambda))$ to be 
    \begin{equation*}
    F_0(x,y)\defeq (-1)^{y-x}\lambda^{x^2-y^2}\qbinom{2y}{y-x}{\lambda^2}\frac{\lambda^{y+x}+\lambda^{y-x}}{\lambda^{2y}+1}\quad\text{and}\quad F_{1/2}(x,y)\defeq(-1)^{y-x}\lambda^{x^2-y^2}\qbinom{2y}{y-x}{\lambda^2}.
    \end{equation*}
    Denote also $D(x,y)\defeq\binom{y}{\frac{y-x}{2}}.$ Multiplying by $h(x)=\lambda^{x-x^2}$ resp.\ $h(x)=\lambda^{-x^2},$ the identities for $i=4$ resp.\ $i=5$ are equivalent to
    \begin{equation*}
        F_\beta\circledast W_\beta^{\lambda^2}\isequal D\quad\text{for }\beta\in\{0,1/2\}.
    \end{equation*}
    We extend $F_\beta,W_\beta^{\lambda^2},D$ to elements of $I(\Z,\Z(\lambda))$ by denoting them to be $0$ if $y<0,$ and by
    \begin{equation*}
        F_\beta(x,y)=F_\beta(-x,y),\quad D(x,y)=D(-x,y),\quad W_\beta^{\lambda^2}(x,y)=0\quad\text{if }x<0.
    \end{equation*}
    We will prove that $F_\beta\circledast W_\beta^{\lambda^2}\isequal D$ as elements of $I(\Z,\Z(\lambda)).$ Note that $D(x,y)$ satisfies the recurrence
    \begin{equation*}
        D(x,y)=D(x-1,y-1)+D(x+1,y-1)\quad\text{for }y\ge1,\ x\in\Z.
    \end{equation*}
    We prove that $(F_\beta\circledast W_\beta^{\lambda^2})(x,y)\isequal D(x,y)$ by induction on $y^2-x^2.$ The base case is when $\lvert x\rvert\ge y,$ where both sides are $0$ unless $\lvert x\rvert=y,$ in which case they are both $1.$ By induction hypothesis, if $\lvert x\rvert<y,$ we have
    \begin{equation*}
    \begin{split}
        D(x,y)&=D(x-1,y-1)+D(x+1,y-1)\\&=\sum_{z=x-1}^{y-1}F_{\beta}(x-1,z)W_\beta^{\lambda^2}(z,y-1)+\sum_{z=x+1}^{y-1}F_{\beta}(x+1,z)W_\beta^{\lambda^2}(z,y-1)
    \end{split}
    \end{equation*}
    and we want to prove
    \begin{equation*}
    \begin{split}
        D(x,y)&\isequal\sum_{z=x}^yF_\beta(x,z)W_\beta^{\lambda^2}(z,y)\\
        &\stackrel{\eqref{Wrec2}}{=}\sum_{z=x}^yF_{\beta}(x,z)\left((\lambda^{2z+2\beta}+\lambda^{-2z-2\beta})W_\beta^{\lambda^2}(z,y-1)+W_\beta^{\lambda^2}(z-1,y-1)\right)\\
        &=\sum_{z=x}^{y-1}F_{\beta}(x,z)(\lambda^{2z+2\beta}+\lambda^{-2z-2\beta})W_\beta^{\lambda^2}(z,y-1)+\sum_{z=x-1}^{y-1}F_{\beta}(x,z+1)W_\beta^{\lambda^2}(z,y-1).
    \end{split}
    \end{equation*}
    Hence, it remains to check that
    \begin{equation*}
        F_\beta(x-1,y)+F_\beta(x+1,y)\isequal(\lambda^{2y+2\beta}+\lambda^{-2y-2\beta})F_\beta(x,y)+F_\beta(x,y+1)\quad\text{for all }y\ge \max(0,x-1).
    \end{equation*}
    For $\beta=1/2,$ this is equivalent to (changing $\lambda^2$ to $\lambda$)
    \begin{equation*}
        \qbinom{2(y+1)}{(y+1)-x}{\lambda}\isequal(\lambda^{2y+1}+1)\qbinom{2y}{y-x}{\lambda}+\lambda^{y-x+1}\qbinom{2y}{y-x+1}{\lambda}+\lambda^{x+y+1}\qbinom{2y}{y-x-1}{\lambda}
    \end{equation*}
    which is easily verified. For $\beta=0,$ this is equivalent to (changing $\lambda^2$ to $\lambda$)
    \begin{equation*}
    \begin{split}
        \qbinom{2(y+1)}{(y+1)-x}{\lambda}\frac{\lambda^x+1}{\lambda^{y+1}+1}\isequal\ &(\lambda^{2y}+1)\qbinom{2y}{y-x}{\lambda}\frac{\lambda^x+1}{\lambda^y+1}+\lambda^{y-x+1}\qbinom{2y}{y-(x-1)}{\lambda}\frac{\lambda^{x-1}+1}{\lambda^y+1}\\
        &+\lambda^{x+y}\qbinom{2y}{y-(x+1)}{\lambda}\frac{\lambda^{x+1}+1}{\lambda^y+1}
    \end{split}
    \end{equation*}
    which is also easily verified.
\end{proof}

\subsection{Combinatorics over finite fields}
Let $V$ be a finite dimensional $\F_q$-vector space of dimension $2r$ for some $r\ge1.$ We consider nondegenerate pairings $\langle\cdot,\cdot\rangle\colon V\times V\to \F_q$ which are
\begin{case}{\Ugen}
$\F_q/\F_{q_0}$-Hermitian,
\end{case}
\begin{case}{\Ogen}
symmetric,
\end{case}
\begin{case}{\SPgen}
alternating.
\end{case}
We recall some of the notation and results from \cite[Section 2]{CoratoZanarella}. It is well known that the isomorphism classes of such $(V,\langle\cdot,\cdot\rangle)$ are in bijection with $\{(0,1)\}\sqcup(\Z_{>0}\times\mathrm{Sign})$ via
\index{typ@$\mathrm{typ}$}\index{typ@$\mathrm{typ}$! |seealso{\cite[\citetypdef]{CoratoZanarella}}}
\begin{equation*}
    \mathrm{typ}(V)\defeq\left(\frac{1}{\cfactor}\dim_{\F_q}V,\ \mathrm{sign}(\det(V,\langle\cdot,\cdot\rangle))\right).
\end{equation*}

\begin{definition}[{\cite[\citetypFinite]{CoratoZanarella}}]
    For $(V,\langle\cdot,\cdot\rangle)$ with $\mathrm{typ}(V)=(r,\chi)\in\Z_{>0}\times\mathrm{Sign},$ we consider:
\begin{enumerate}
    \item Given $0\le b\le r$ we denote
    \index{S(b,r,chi)@$S\TypeC{b}{}{r}{\chi}$}\index{S(b,r,chi)@$S\TypeC{b}{}{r}{\chi}$! |seealso{\cite[\citetypFinite]{CoratoZanarella}}}
    \begin{equation*}
        S\TypeC{b}{}{r}{\chi}\defeq\#\{W\subseteq V\colon \dim W=b,\ W\subseteq W^\perp\}
    \end{equation*}
    to be the number of $b$-dimensional isotropic subspaces of $V.$
    \item Given $0\le a\le r$ and $\eta\in\mathrm{Sign},$ we denote
    \index{R(a,eta,r,chi)@$R\TypeC{a}{\eta}{r}{\chi}$}\index{R(a,psi,r,chi)@$R\TypeC{a}{\eta}{r}{\chi}$! |seealso{\cite[\citetypFinite]{CoratoZanarella}}}
    \begin{equation*}
        R\TypeC{a}{\eta}{r}{\chi}\defeq\#\{W\subseteq V\colon W\text{ is nondegenerate, }\mathrm{typ}(W)=(a,\eta)\}.
    \end{equation*}
\end{enumerate}
\end{definition}
\begin{proposition}[{\cite[\citeRprop, \citeSprop]{CoratoZanarella}}]
 Denote
\index{e(r,chi)@$e(r,\chi)$}
\begin{equation*}
    e(r,\chi)\defeq\begin{cases}q^{r/2}+\epsilon^{r/2}\chi&\text{if }r\text{ is even,}\\1&\text{if }r\text{ is odd.}\end{cases}
\end{equation*}
Then we have
\begin{equation*}
    R\TypeC{a}{\eta}{r}{\chi}=\begin{cases}\displaystyle(-q_0)^{a(r-a)}\qbinom{r}{a}{-q_0}&\text{in the case \Ugen,}\\\displaystyle\frac{q^{\lfloor a(r-a)/2\rfloor}}{2}\frac{\lfloor r/2\rfloor_{q^2}}{\lfloor a/2\rfloor_{q^2}\lfloor(r-a)/2\rfloor_{q^2}}\frac{e(a,\eta)e(r-a,\chi\eta)}{e(r,\chi)}&\text{in the case \Ogen,}\\\displaystyle q^{2a(r-a)}\qbinom{r}{a}{q^2}&\text{in the case \SPgen.}\end{cases}
\end{equation*}
and
\begin{equation*}
    S\TypeC{b}{}{r}{\chi}=\begin{cases}\displaystyle(-q_0;q_0^2)_b\qbinom{r}{2b}{-q_0}&\text{in the case \Ugen,}\\\displaystyle(-q;q)_b\qbinom{\lfloor r/2\rfloor}{b}{q^2}\frac{e(r-2b,\chi\epsilon^b)}{e(r,\chi)}&\text{in the case \Ogen,}\\\displaystyle(-q;q)_b\qbinom{r}{b}{q^2}&\text{in the case \SPgen.}\end{cases}
\end{equation*}
\end{proposition}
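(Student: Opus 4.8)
The plan is to derive both formulas by counting in two ways, using Witt's extension theorem together with the known orders of the finite classical groups; this can be done uniformly in the three cases. The inputs I would use are: (i) the classification recalled above, which says a nondegenerate subspace $W\subseteq V$ is determined up to isometry by $\mathrm{typ}(W)$; (ii) Witt's extension theorem, which in each case implies that $\mathrm{Aut}(V)$ (the relevant unitary/orthogonal/symplectic group) acts transitively both on the set of isometric embeddings of a fixed nondegenerate space into $V$ and on the set of totally isotropic subspaces of a fixed dimension; (iii) the classical order formulas for $\mathrm{U}_n(\F_{q_0})$, for $\mathrm{O}_n^{\pm}(\F_q)$ in both the split and non-split shapes, and for $\mathrm{Sp}_{2n}(\F_q)$. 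In the case \Ogen\ I would also use that the discriminant is multiplicative along orthogonal direct sums and that a hyperbolic plane $\mathbb{H}$ has $\mathrm{sign}(\det\mathbb{H})=\epsilon$; these are exactly what force the $\epsilon$-twists appearing in the stated answers.

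For $R\TypeC{a}{\eta}{r}{\chi}$, fix a model $W_0$ of type $(a,\eta)$. In any $V$ of type $(r,\chi)$ the orthogonal complement $W_0^{\perp}$ has type $(r-a,\chi\eta)$ (the sign component being trivial in the cases \Ugen, \SPgen), and counting isometric embeddings $W_0\hookrightarrow V$ — once by grouping them according to their image, once as a transitive $\mathrm{Aut}(V)$-set whose point stabilizer is $\mathrm{Aut}(W_0^{\perp})$ — yields
\[
    R\TypeC{a}{\eta}{r}{\chi}=\frac{\#\mathrm{Aut}(V)}{\#\mathrm{Aut}(W_0)\cdot\#\mathrm{Aut}(W_0^{\perp})}.
\]
It then remains to substitute the classical group orders and simplify. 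In the case \SPgen\ the ratio of symplectic orders collapses directly to $q^{2a(r-a)}\qbinom{r}{a}{q^2}$. In the case \Ugen\ one writes $\#\mathrm{U}_n(\F_{q_0})=q_0^{n(n-1)/2}\prod_{i=1}^{n}\bigl(q_0^{i}-(-1)^{i}\bigr)$ and recognizes the Ennola-type substitution turning the ratio into $(-q_0)^{a(r-a)}\qbinom{r}{a}{-q_0}$. In the case \Ogen\ one separates off the discriminant-dependent part of $\#\mathrm{O}_n^{\chi}(\F_q)$ — this dependence is precisely what the factor $e(\cdot,\cdot)$ of \cite[Section 2]{CoratoZanarella} records — so that the $\chi$-independent part assembles into the stated $q^{\lfloor a(r-a)/2\rfloor}$-weighted $q^2$-binomial and the discriminant part assembles into $e(a,\eta)e(r-a,\chi\eta)/e(r,\chi)$.

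For $S\TypeC{b}{}{r}{\chi}$ one argues identically with $W_0$ replaced by a totally isotropic $b$-dimensional subspace $W$: Witt's theorem makes these a single $\mathrm{Aut}(V)$-orbit with stabilizer the maximal parabolic $P_b$, whose Levi quotient is $\mathrm{GL}_b\times\mathrm{Aut}(W^{\perp}/W)$ with $W^{\perp}/W$ nondegenerate of rank $r-2b$ (and, in the case \Ogen, of discriminant $\chi\epsilon^{b}$, one factor of $\epsilon$ per hyperbolic plane split off, which is what produces the $e(r-2b,\chi\epsilon^{b})$ in the formula). Hence $S\TypeC{b}{}{r}{\chi}=\#\mathrm{Aut}(V)/\#P_b$, and the same algebraic simplification gives the closed forms; equivalently, in the case \SPgen\ one may instead run a peeling-off-a-line recursion, convenient there since every line is isotropic.

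I expect the only genuine obstacle to be the bookkeeping in the case \Ogen: handling the split/non-split dichotomy and the parity of the dimension, pinning down the precise definition and the elementary properties of $e(\cdot,\cdot)$ (in particular its value at $0$ and its behaviour in odd dimension), and checking that the floor functions come out right. The cases \Ugen\ and \SPgen\ are a mechanical simplification of a ratio of group orders. A useful consistency check throughout is compatibility with the subspace-counting identities \Cref{qidentityforR,qidentityforL}, whose proofs already implicitly invoke these formulas.
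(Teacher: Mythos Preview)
Your approach is correct and is exactly the standard argument: orbit--stabilizer via Witt's theorem reduces both counts to ratios of orders of finite classical groups, and the stated closed forms follow from the well-known order formulas. The only point to be careful about, as you note, is the bookkeeping in the case \Ogen, where the function $e(\cdot,\cdot)$ absorbs the discriminant dependence.

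There is nothing to compare against here: the paper does not supply a proof of this proposition but simply imports it from \cite[\citeRprop, \citeSprop]{CoratoZanarella}. Your write-up would in fact be a self-contained substitute for that citation.
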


For the rest of this subsection, we assume that $V$ has a Lagrangian subspace. That is, we assume that $\mathrm{typ}(V)=(\frac{2}{\cfactor}\cdot r,\epsilon^r).$

\begin{definition}\label{Ldefinition}\index{L(a,b,psi,r)@$L\Typec{a}{}{b}{\chi}{r}{}$}
For $0\le a,b\le r,$ we denote $L\Typec{a}{}{b}{\chi}{r}{}$ to be, for a fixed $0\sub{a} L\cap L^\perp\subseteq L\subseteq V$ with $\mathrm{typ}(L/L\cap L^\perp)=(b,\chi),$ the number of Lagrangians in $V$ which are disjoint from $L.$
\end{definition}

\begin{proposition}\label{Lquantitysubsub}
We have
\begin{equation*}
    L\Typec{r}{}{0}{1}{r}{}=\Dfactor(r).
\end{equation*}
\end{proposition}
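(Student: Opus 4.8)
The plan is to identify the Lagrangians of $V$ that are disjoint from a fixed Lagrangian with a space of bilinear or sesquilinear forms, and then count those forms directly.

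First I would unwind the hypotheses: taking $(a,b,\chi)=(r,0,1)$ forces $L=L\cap L^{\perp}$, so the fixed subspace $L$ is totally isotropic of dimension equal to half of $\dim_{\F_q}V$; that is, $L$ is a Lagrangian. Since $V$ admits a Lagrangian and $p$ is odd, $L$ admits a complementary Lagrangian $L'$: starting from any vector-space complement $W$ of $L$ in $V$, the graph $\{w+\psi(w):w\in W\}$ of a suitable $\psi\colon W\to L$ is isotropic, and one solves for $\psi$ using that $2$ is invertible (alternatively this follows from Witt's extension theorem, which also shows the count below is independent of the chosen $L$). Fix such an $L'$, so $V=L\oplus L'$ and $\langle\cdot,\cdot\rangle$ restricts to a perfect pairing $L\times L'\to\F_q$; this identifies $L$ with the $\F_q$-linear dual of $L'$ in the cases \Ogen, \SPgen, and with its conjugate-linear dual in the case \Ugen.

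Next I would parametrize: any subspace $M\subseteq V$ with $M\cap L=0$ and $\dim M=\dim L'$ maps isomorphically onto $L'$ under the projection $V=L\oplus L'\twoheadrightarrow L'$, so $M$ is the graph $M_{\phi}\defeq\{\phi(v)+v:v\in L'\}$ of a unique $\F_q$-linear map $\phi\colon L'\to L$, and conversely every $M_\phi$ is disjoint from $L$. Since a Lagrangian disjoint from $L$ automatically has dimension $\dim L=\dim L'$, the Lagrangians of $V$ disjoint from $L$ are exactly the graphs $M_\phi$ that are totally isotropic. Expanding $\langle\phi(v)+v,\phi(w)+w\rangle$ and using that $L$ and $L'$ are isotropic, $M_\phi$ is isotropic if and only if $\langle\phi(v),w\rangle+\langle v,\phi(w)\rangle=0$ for all $v,w\in L'$. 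Writing $B(v,w)\defeq\langle\phi(v),w\rangle$ and invoking the symmetry type of $\langle\cdot,\cdot\rangle$, this says precisely that $B$ is an alternating bilinear form in the case \Ogen, a symmetric bilinear form in the case \SPgen, and a skew-Hermitian sesquilinear form in the case \Ugen; moreover $\phi\mapsto B$ is a bijection from $\mathrm{Hom}_{\F_q}(L',L)$ onto the space of all such forms on $L'$, by the duality of the previous step.

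Finally I would count the forms on an $r$-dimensional space: there are $q^{\binom{r}{2}}$ alternating forms, $q^{\binom{r+1}{2}}$ symmetric forms (using $p$ odd), and $q_0^{\,r^2}$ skew-Hermitian forms (the $r$ diagonal Gram entries lie in the one-dimensional $\F_{q_0}$-subspace $\{\lambda\in\F_q:\overline{\lambda}=-\lambda\}$ of $\F_q=\F_{q_0^2}$, and the remaining $\binom{r}{2}$ entries above the diagonal are arbitrary in $\F_q$, giving $q_0^{\,r}\cdot q^{\binom{r}{2}}=q_0^{\,r^2}$). In all three cases this equals $q^{\binom{r}{2}+\alpha r}=\Dfactor(r)$, proving the proposition. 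The argument is short and uniform; the only point that needs care is in the case \Ugen, where one must track which slot of $\langle\cdot,\cdot\rangle$ is conjugate-linear so that $B$ really comes out skew-Hermitian (and note that Hermitian and skew-Hermitian forms on $\F_{q_0^2}^{r}$ are equinumerous, so the choice of convention is immaterial).
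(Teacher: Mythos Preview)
Your proof is correct, and it takes a genuinely different route from the paper's. The paper argues recursively: it stratifies the set of all Lagrangians of $V$ by the dimension of their intersection with the fixed Lagrangian $L$, obtaining
\[
\sum_{i=0}^{r}\qbinom{r}{i}{q}\,L\Typec{i}{}{0}{1}{i}{}=\text{(total number of Lagrangians)},
\]
and then verifies via the $q$-binomial theorem that $L\Typec{i}{}{0}{1}{i}{}=\Dfactor(i)$ solves this. Your argument is direct: you set up a bijection between Lagrangians disjoint from $L$ and a linear space of (skew-)forms on a complementary Lagrangian $L'$, and simply read off its cardinality. This is shorter and more transparent for the statement at hand; the paper's method, by contrast, ties the count back to the total Lagrangian count and fits the recursive pattern used later in \Cref{Lquantity}.

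Two small remarks. First, your parenthetical ``using $p$ odd'' next to the count of symmetric bilinear forms in case \SPgen is unnecessary: the number of symmetric $r\times r$ matrices over $\F_q$ is $q^{\binom{r+1}{2}}$ in any characteristic, and the paper does not assume $p$ odd in \SPgen. Second, invoking that $2$ is invertible to build the complementary Lagrangian $L'$ is needed only in case \Ogen (where it is assumed); in cases \SPgen and \Ugen a complement exists in all characteristics (e.g.\ by taking $C$ strictly upper-triangular in \SPgen, or by choosing $\lambda\in\F_{q}$ with $\lambda+\overline{\lambda}=1$ in \Ugen). Your fallback to Witt's theorem already handles this uniformly.
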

\begin{proof}
We are counting the number of Lagrangians which are disjoint to a fixed Lagrangian $L.$ The total number of Lagrangians is (see \cite[\citeSprop]{CoratoZanarella})
\begin{equation*}
    \begin{cases}(-q_0;q_0^2)_r & \text{in the case \Ugen,}\\
    (-1;q)_r & \text{in the case \Ogen,}\\
    (-q;q)_r & \text{in the case \SPgen.}\end{cases}
\end{equation*}
So considering the possible intersections with $L,$ we have
\begin{equation*}
    \sum_{i=0}^r\qbinom{r}{i}{q}L\Typec{r-i}{}{0}{1}{r-i}{}=\begin{cases}(-q_0;q_0^2)_r & \text{in the case \Ugen,}\\
    (-1;q)_r & \text{in the case \Ogen,}\\
    (-q;q)_r & \text{in the case \SPgen.}\end{cases}
\end{equation*}
which we can rewrite as
\begin{equation*}
    \sum_{i=0}^r\qbinom{r}{i}{q}L\Typec{i}{}{0}{1}{i}{}=\begin{cases}(-q_0;q_0^2)_r & \text{in the case \Ugen,}\\
    (-1;q)_r & \text{in the case \Ogen,}\\
    (-q;q)_r & \text{in the case \SPgen.}\end{cases}
\end{equation*}
These relations for all $r$ fully determine the $L\Typec{i}{}{0}{1}{i}{}.$ By the $q$-binomial theorem, we have
\begin{equation*}
    \sum_{i=0}^r\qbinom{r}{i}{q}q^{\binom{i}{2}+\alpha i}=(-q^\alpha;q)_r,
\end{equation*}
and now the desired formula for $L\Typec{i}{}{0}{1}{i}{}$ follows.
\end{proof}
\begin{proposition}\label{Lquantitysub}
    We have
    \begin{equation*}
        L\Typec{a}{}{b}{\chi}{r}{}=L\Typec{0}{}{b}{\chi}{r-a}{}\frac{\Dfactor(r)}{\Dfactor(r-a)}.
    \end{equation*}
\end{proposition}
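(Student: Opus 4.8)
Fix a subspace $L$ realizing the invariants $(a,b,\chi)$, so that its radical $R\defeq L\cap L^\perp$ has dimension $a$ and $L/R$ is nondegenerate of type $(b,\chi)$; note that $R$ is isotropic and $L\subseteq R^\perp$. The form on $R^\perp$ has $R$ in its radical, so it descends to a nondegenerate form on $\bar V\defeq R^\perp/R$; extending $R$ to a Lagrangian of $V$ shows $\bar V$ has a Lagrangian, hence $\mathrm{typ}(\bar V)=(\frac{2}{\cfactor}(r-a),\epsilon^{r-a})$. Write $\pi\colon R^\perp\to\bar V$ for the projection and $\bar L\defeq\pi(L)=L/R$, a nondegenerate subspace of $\bar V$ of type $(b,\chi)$. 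The strategy is to express $L\Typec{a}{}{b}{\chi}{r}{}$ as the corresponding count $L\Typec{0}{}{b}{\chi}{r-a}{}$ for the pair $(\bar V,\bar L)$ times the common size of the fibers of a ``restriction to $R^\perp$'' map on Lagrangians, and then to identify that fiber size with $\Dfactor(r)/\Dfactor(r-a)$.

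First I would construct the restriction map. If $M$ is a Lagrangian of $V$ with $M\cap R=0$ --- which holds whenever $M\cap L=0$, since $R\subseteq L$ --- then $M\cap R^\perp$ is isotropic, meets $R$ trivially, and has dimension at least $r-a$; as $\pi$ is injective on it, $\Phi(M)\defeq\pi(M\cap R^\perp)$ is an isotropic subspace of $\bar V$ of dimension at least $r-a=\tfrac12\dim\bar V$, hence a Lagrangian, and in particular $\dim(M\cap R^\perp)=r-a$. Moreover $M\cap L=0$ is equivalent to $\Phi(M)\cap\bar L=0$: since $\pi$ is injective on $M\cap R^\perp\supseteq M\cap L$, one checks that it carries $M\cap L$ isomorphically onto $\Phi(M)\cap\bar L$, surjectivity using $\ker\pi=R\subseteq L$.

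Next I would compute the fibers of $\Phi$. Fix a Lagrangian $\bar M$ of $\bar V$; its preimage $\pi^{-1}(\bar M)\subseteq R^\perp$ is an $r$-dimensional isotropic subspace (in fact a Lagrangian of $V$). For a Lagrangian $M$ with $\Phi(M)=\bar M$, the subspace $N\defeq M\cap R^\perp$ must be a vector-space complement of $R$ in $\pi^{-1}(\bar M)$, and every such complement arises; there are $q^{a(r-a)}$ of them. Given $N$, the Lagrangians $M$ of $V$ with $M\cap R^\perp=N$ are exactly the Lagrangians of the nondegenerate space $W\defeq N^\perp/N$, of type $(\frac{2}{\cfactor}a,\epsilon^a)$, that are disjoint from the image of $R^\perp\cap N^\perp$. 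Since $R+N$ is isotropic of dimension $r$, it is a Lagrangian of $V$, so $R^\perp\cap N^\perp=(R+N)^\perp=R+N$ and its image $(R+N)/N$ is a Lagrangian of $W$; by \Cref{Lquantitysubsub} there are exactly $\Dfactor(a)$ Lagrangians of $W$ disjoint from it. Hence each fiber of $\Phi$ has size $q^{a(r-a)}\Dfactor(a)$, independent of $\bar M$.

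Combining these two steps,
\begin{equation*}
    L\Typec{a}{}{b}{\chi}{r}{}=\#\bigl\{\bar M\ \text{Lagrangian of}\ \bar V:\bar M\cap\bar L=0\bigr\}\cdot q^{a(r-a)}\Dfactor(a)=L\Typec{0}{}{b}{\chi}{r-a}{}\cdot q^{a(r-a)}\Dfactor(a),
\end{equation*}
and it remains to note that $q^{a(r-a)}\Dfactor(a)=\Dfactor(r)/\Dfactor(r-a)$, which is immediate from $\Dfactor(r)=q^{\binom r2+\alpha r}$ together with $\binom r2-\binom{r-a}2=a(r-a)+\binom a2$. The main obstacle is the fiber computation: recognizing that, after choosing the complement $N$, the remaining choices of $M$ are precisely those counted by \Cref{Lquantitysubsub} via the identity $R^\perp\cap N^\perp=R+N$; the rest is bookkeeping with dimensions and the fact (ultimately Witt's extension theorem for these finite classical spaces) that the various intermediate counts are independent of all auxiliary choices.
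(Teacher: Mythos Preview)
Your proof is correct and follows essentially the same approach as the paper's. Both arguments pass to the reduction $R^\perp/R$ (the paper does so implicitly via the intermediate subspace $L_{I,1}=L_{I,0}+R\subseteq L+L^\perp$), choose a complement $N$ of $R$ in the resulting Lagrangian (accounting for $q^{a(r-a)}$), and then count Lagrangians of $N^\perp/N$ disjoint from the image of $R+N$, invoking \Cref{Lquantitysubsub} for the factor $\Dfactor(a)$; your $\Phi(M)$, $N$, and $(R+N)/N$ are exactly the paper's $L_{I,1}/R$, $L_{I,0}$, and $L_{I,1}/L_{I,0}$.
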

\begin{proof}
Fix $L$ a subspace satisfying $0\sub{a} L\cap L^\perp\subseteq L\subseteq V$ with $\mathrm{typ}(L/L\cap L^\perp)=(b,\chi).$

Suppose $I$ is a Lagrangian which is disjoint to $L.$ We consider: $L_{I,0}\defeq I\cap (L+L^\perp)$ and $L_{I,1}\defeq L_{I,0}+(L\cap L^\perp).$ Note that
\begin{equation*}
L_{I,1}=(I+(L\cap L^\perp))\cap(L+L^\perp)=(I\cap(L+L^\perp))^\perp\cap(L\cap L^\perp)^\perp=L_{I,1}^\perp,
\end{equation*}
that is, $L_{I,1}$ is a Lagrangian. Note also that since $I\cap L=0,$ we have $L_{I,0}\cap (L\cap L^\perp)=0,$ $L_{I,1}\cap L=L\cap L^\perp.$ Pictorially:
\begin{equation*}
\begin{tikzcd}
&V\arrow[ldd,phantom,"\join",very near start]&\\I\arrow[ur,dash,"r"]&L+L^\perp\arrow[u,dash,"a"]&\\L_{I,0}\arrow[uur,phantom,"\meet",very near start]\arrow[ur,dash,"r"]\arrow[r,dash,"a"]\arrow[u,dash,"a"]&L_{I,1}\arrow[dl,phantom,"\join",very near start]\arrow[u,dash,"r-a"]&\phantom{a}\\0\arrow[u,dash,"r-a"]\arrow[r,dash,"a"]\arrow[ur,phantom,"\meet",very near start]&L\cap L^\perp\arrow[ur,phantom,"\meet",very near start]\arrow[u,dash,"r-a"]\arrow[r,dash,"{(b,\chi)}"]&L
\end{tikzcd}
\end{equation*}

Conversely, fix a Lagrangian $L_1$ as above: satisfying $L_1\subseteq L+L^\perp$ and $L_1\cap L=L\cap L^\perp.$ The number of such $L_1$ is $L\Typec{0}{}{b}{\chi}{r-a}{}.$ Further fix $L_0$ a complement of $L\cap L^\perp$ in $L_1.$ There are $q^{a(r-a)}$ such $L_0.$ Finally, the number of $I$ such that $L_{I,0}=L_0$ and $L_{I,1}=L_1$ is simply the number of Lagrangians in $L_0^\perp/L_0$ which are disjoint to $L_1/L_0.$ Thus by \Cref{Lquantitysubsub}, we have
\begin{equation*}
L\Typec{a}{}{b}{\chi}{r}{}=L\Typec{0}{}{b}{\chi}{r-a}{}\cdot q^{a(r-a)}\cdot \Dfactor(a)
\end{equation*}
and now the claim follows from the fact that $\Dfactor(a)\Dfactor(r-a)q^{a(r-a)}=\Dfactor(r).$
\end{proof}

\begin{proposition}\label{Lquantity}
We have
\begin{equation*}
    L\Typec{0}{}{b}{\chi}{r}{}=\begin{cases}(-q_0)^{\binom{b}{2}}\frac{(2r-b)_{-q_0}}{(r-b)_{q_0^2}} & \text{in the case \Ugen,}\\
    2q^{\lfloor b/2\rfloor\cdot\lfloor(b-1)/2\rfloor}\frac{(r-\lceil b/2\rceil)_{q^2}}{(r-b)_q\cdot e(2r-b,\chi\epsilon^r)} & \text{in the case \Ogen,}\\
    q^{b^2}\frac{(r-b)_{q^2}}{(r-2b)_q} & \text{in the case \SPgen.}\end{cases}
\end{equation*}
\end{proposition}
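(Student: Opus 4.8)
The plan is to realise each Lagrangian disjoint from $L$ as the ``graph'' of an anti-isometry over an isotropic subspace, which reduces the count to the isotropic-subspace counts recalled in \cite[\citeSprop]{CoratoZanarella} and to the orders of the relevant finite classical groups. Since $L\Typec{0}{}{b}{\chi}{r}{}$ depends only on the isometry types involved, I fix a nondegenerate $U\subseteq V$ of type $(b,\chi)$, giving an orthogonal decomposition $V=U\perp U^\perp$; what must be counted is the set of Lagrangians $I$ of $V$ with $I\cap U=0$. Let $\pi\colon V\to U$ be the orthogonal projection. Using $I^\perp=I$ one checks that $I\cap U=0$ forces $\dim_{\F_q}(I\cap U^\perp)=r-\cfactor b$ and that $\pi|_I\colon I\to U$ is onto, so $I$ is recovered from the pair consisting of $J\defeq I\cap U^\perp$ --- an isotropic $\F_q$-subspace of $U^\perp$ of dimension $r-\cfactor b$ --- together with the isomorphism $\phi\colon U\rightiso J^{\perp_{U^\perp}}/J$ whose graph in $V/J$ is $I/J$ (here one uses the splitting $V/J=U\oplus U^\perp/J$).

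The isotropy of $I$ translates precisely into $\phi$ being an anti-isometry, i.e.\ $\langle\phi(x),\phi(x')\rangle=-\langle x,x'\rangle$ for all $x,x'\in U$. Now $J^{\perp_{U^\perp}}/J$ is nondegenerate of $\F_q$-dimension $\cfactor b$, and writing $U^\perp$ as an orthogonal direct sum of $\dim_{\F_q}J$ hyperbolic planes and a copy of $J^{\perp_{U^\perp}}/J$ --- each hyperbolic plane having sign $\epsilon$ in the case \Ogen --- shows that the type of $J^{\perp_{U^\perp}}/J$ equals $(b,\chi\epsilon^b)$ \emph{independently of the choice of} $J$, which is exactly the type of $U$ with its form negated. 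Hence anti-isometries $U\to J^{\perp_{U^\perp}}/J$ always exist (in the case \Ugen one uses that $-1$ lies in the image of the norm $\F_q^\times\to\F_{q_0}^\times$), and there are exactly $\lvert\mathrm{Isom}(U)\rvert$ of them for each $J$. Summing over $J$ yields
\begin{equation*}
    L\Typec{0}{}{b}{\chi}{r}{}=S\TypeC{r-\cfactor b}{}{2r/\cfactor-b}{\epsilon^r\chi}\cdot h_b^\chi,
\end{equation*}
where $S\TypeC{r-\cfactor b}{}{2r/\cfactor-b}{\epsilon^r\chi}$ is the number of isotropic $\F_q$-subspaces of $U^\perp$ of dimension $r-\cfactor b$, and $h_b^\chi$ denotes the order of $\mathrm{U}_b(\F_q/\F_{q_0})$, of $\mathrm{O}_b^\chi(\F_q)$, or of $\mathrm{Sp}_{2b}(\F_q)$, according as we are in the case \Ugen, \Ogen, or \SPgen.

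It then remains to substitute the explicit value of $S\TypeC{r-\cfactor b}{}{2r/\cfactor-b}{\epsilon^r\chi}$ from \cite[\citeSprop]{CoratoZanarella} and the standard order formulas for the finite unitary, orthogonal and symplectic groups (all expressible via the falling factorials $(n)_\lambda$), and to match the resulting product with the asserted closed forms through elementary $q$-Pochhammer manipulations; for instance, the identities $(-q;q)_s(s)_q=(s)_{q^2}$ and $(-q_0;q_0^2)_s(s)_{q_0^2}=(-1)^s(2s)_{-q_0}$ dispatch the cases \SPgen and \Ugen at once. I expect the case \Ogen to be the main obstacle: one must verify carefully that $J^{\perp_{U^\perp}}/J$ really has the constant type $(b,\chi\epsilon^b)$ for every admissible isotropic $J$ --- so that the count genuinely factors as above --- and then reconcile the product of the $S$-value, the order $\lvert\mathrm{O}_b^\chi(\F_q)\rvert$, and the normalising factors $e(-,-)$ that occur in \cite[\citeSprop]{CoratoZanarella} with the stated expression; the cases \Ugen and \SPgen, in which all discriminants are trivial and $J^{\perp_{U^\perp}}/J\cong U$ outright, are painless specialisations. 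Alternatively --- and more in the spirit of the proof of \Cref{Lquantitysubsub} --- one may instead isolate a triangular recursion expressing a known total Lagrangian count as $\sum_b(\text{a }q\text{-binomial coefficient})\cdot L\Typec{0}{}{b}{\chi}{r}{}$ and solve it using the incidence-algebra identities \Cref{circledast1} and \Cref{circledast2}.
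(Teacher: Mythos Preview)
Your argument is correct and takes a genuinely different route from the paper. The paper fixes $L$ and stratifies all Lagrangians $I$ by $c=\dim(I\cap L)$: for a fixed isotropic $S\subseteq L$ of dimension $c$, the Lagrangians with $I\cap L=S$ correspond to Lagrangians of $S^\perp/S$ disjoint from the (nondegenerate) image of $L$, yielding
\[
\sum_{c\ge0} S\TypeC{c}{}{b}{\chi}\cdot L\Typec{0}{}{b-\tfrac{2}{\cfactor}c}{\chi\epsilon^c}{r-c}{}=(\text{total number of Lagrangians of }V),
\]
and then checks that the stated closed forms solve this recursion --- which is exactly where \Cref{qidentityforR,qidentityforR2,qidentityforL} are invoked. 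Your bijection sidesteps those $q$-series identities entirely, trading them for the standard order formulas of $\mathrm{U}_b$, $\mathrm{O}_b^\chi$, $\mathrm{Sp}_{2b}$; it is more direct and arguably more conceptual, and could in fact be run in reverse to reprove those lemmas. The paper's route, in exchange, treats the three cases more uniformly and avoids the discriminant verification in case \Ogen that you rightly flag as the one delicate point of your approach.
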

\begin{proof}
Fix $L\subseteq V$ nondegenerate with $\mathrm{typ}(L)=(b,\chi).$ Denote $L_c\TypeC{b}{\chi}{r}{}$ to be the number of Lagrangians $I$ such that the intersection $I\cap L$ has dimension $c.$ Note that $L_0\TypeC{b}{\chi}{r}{}=L\Typec{0}{}{b}{\chi}{r}{}.$

We claim that
\begin{equation}\label{Lsubclaim}
    L_c\TypeC{b}{\chi}{r}{}=S\TypeC{c}{}{b}{\chi}\cdot L\Typec{0}{}{b-\frac{2}{\cfactor}\cdot c}{\chi\epsilon^c}{r-c}{}.
\end{equation}
This would allow us to recursively compute $L_0\TypeC{b}{\chi}{r}{},$ as we can write the total number of Lagrangians of $V$ as $\sum_{c\ge0}L_c\TypeC{b}{\chi}{r}{}.$ To see this claim, note that if $I$ is a Lagrangian, then $S_I\defeq I\cap L$ is isotropic. Conversely, given $0\sub{c}S\subseteq L$ isotropic, we have
\begin{equation*}
    (L\cap S^\perp)\cap(L\cap S^\perp)^\perp=(L\cap S^\perp)\cap(L^\perp+S)=L\cap(L^\perp+S)=S+(L\cap L^\perp)=S.
\end{equation*}
That is to say, $L\cap S^\perp$ is nondegenerate in $S^\perp/S.$ Thus the number of Lagrangians $I$ with $S_I=S$ is $L\Typec{0}{}{b-\frac{2}{\cfactor}\cdot c}{\chi\epsilon^c}{r-c}{}.$ Hence \eqref{Lsubclaim} follows.

With that, we have
\begin{equation*}
\sum_{c=0}^{\lfloor \cfactor\cdot b/2\rfloor}S\TypeC{c}{}{b}{\chi}\cdot L\Typec{0}{}{b-\frac{2}{\cfactor}\cdot c}{\chi\epsilon^c}{r-c}{}=\begin{cases}(-q_0;q_0^2)_r & \text{in the case \Ugen,}\\
    (-1;q)_r & \text{in the case \Ogen,}\\
    (-q;q)_r & \text{in the case \SPgen.}\end{cases}
\end{equation*}
As we vary $0\le b\le r,$ and $\chi,$ this recursively determine all the $L\Typec{0}{}{b}{\chi}{r}{}.$
\begin{case}{\Ugen}
We need to prove
\begin{equation*}
    \sum_{c\ge0}(-q_0,q_0^2)_c\qbinom{b}{2c}{-q_0}(-q_0)^{\binom{b-2c}{2}}\frac{(2r-b)_{-q_0}}{(r+c-b)_{q_0^2}}\isequal(-q_0;q_0^2)_r.
\end{equation*}
This is equivalent to
\begin{equation*}
    \sum_{c\ge0}(-q_0)^{\binom{b-2c}{2}}\qbinom{r}{c,r-b+c,b-2c}{q^2}(q_0;-q_0)_{b-2c}\isequal\qbinom{2r}{b}{-q_0}
\end{equation*}
which is exactly \Cref{qidentityforL} for $(a,b,\lambda)=(r-b,b,-q_0^{-1}).$
\end{case}
\begin{case}{\Ogen}
We need to prove
\begin{equation*}
    \sum_{c\ge0}(-q;q)_c\qbinom{\lfloor b/2\rfloor}{c}{q^2}\frac{e(b-2c,\chi\epsilon^c)}{e(b,\chi)}\cdot 2q^{(\lfloor b/2\rfloor-c)\cdot(\lfloor(b-1)/2\rfloor-c)}\frac{(r-\lceil b/2\rceil)_{q^2}}{(r-b+c)_q\cdot e(2r-b,\chi\epsilon^r)}\isequal(-1;q)_r.
\end{equation*}
This is equivalent to
\begin{equation*}
    \sum_{c\ge0}q^{\lfloor b/2\rfloor\cdot\lfloor(b-1)/2\rfloor+c^2+c-bc}\frac{(r)_q\cdot e(b-2c,\chi\epsilon^c)}{(\lfloor b/2\rfloor-c)_{q^2}(c)_q(r-b+c)_q)}\isequal\frac{(r)_{q^2}\cdot e(b,\chi)e(2r-b,\chi\epsilon^r)}{\lfloor b/2\rfloor_{q^2}(r-\rceil b/2\rceil)_{q^2}(q^r+1)}.
\end{equation*}

For $b=2k+1$ and changing variables $c\mapsto k-c,$ this is
\begin{equation*}
    \sum_{c\ge0}q^{c^2}(-1)^c(q;q^2)_c\qbinom{r-1}{2c,k-c,r-k-1-c}{q}\isequal\qbinom{r-1}{k}{q^2}
\end{equation*}
which is exactly \Cref{qidentityforR} for $(a,b,\lambda)=(r-k-1,k,q^{-1}).$

For $b=2k$ and changing variables $c\mapsto k-c,$ this is
\begin{equation*}
    \sum_{c\ge0}q^{c^2}(-1)^c(q;q^2)_c\qbinom{r}{2c,k-c,r-k-c}{q}\left(1+\chi\epsilon^k q^{-c}\right)\isequal\qbinom{r}{k}{q^2}\left(1+\chi\epsilon^k\frac{q^k+q^{r-k}}{q^r+1}\right).
\end{equation*}
The part without $\chi$ is again \Cref{qidentityforR} for $(a,b,\lambda)=(r-k,k,q^{-1}),$ and the part with $\chi$ is \Cref{qidentityforR2} for $(a,b,\lambda)=(r-k,k,q^{-1}).$
\end{case}
\begin{case}{\SPgen}
We need to prove
\begin{equation*}
    \sum_{c\ge0}(-q;q)_c\qbinom{b}{c}{q^2}q^{(b-c)^2}\frac{(r-b)_{q^2}}{(r+c-2b)_q}\isequal(-q;q)_r.
\end{equation*}
This is equivalent to
\begin{equation*}
    \sum_{c\ge0}q^{(b-c)^2}\qbinom{r}{c,r-2b+c,2(b-c)}{q}(q;q^2)_{b-c}(-1)^{b-c}\isequal\qbinom{r}{b}{q^2}.
\end{equation*}
This is
\begin{equation*}
    \sum_{c\ge0}q^{c^2}(-1)^{c}\qbinom{r}{2c,b-c,r-b-c}{q}(q;q^2)_{c}\isequal\qbinom{r}{b}{q^2}
\end{equation*}
which is exactly \Cref{qidentityforR} for $(a,b,\lambda)=(r-b,b,q^{-1}).$\qedhere
\end{case}
\end{proof}
\section{Preliminaries}
We consider a finite dimensional $F$-vector space $V.$ Let $\Lat(V)$\index{Lat(V)@$\Lat(V)$} denote the set of $\O_F$-lattices of $V.$ We will consider forms $\langle\cdot,\cdot\rangle\colon V\times V\to F$ which are
\begin{case}{\Ugen}
Hermitian over $F/F_0,$
\end{case}
\begin{case}{\Ogen}
symmetric,
\end{case}
\begin{case}{\SPgen}
alternating.
\end{case}
We will further assume that $(V,\langle\cdot,\cdot\rangle)$ has a self-dual lattice, and we let $\Lat^\circ(V)\subseteq\Lat(V)$\index{Latcirc(V)@$\Lat^\circ(V)$} denote the subset of such self-dual $\O_F$-lattices. We denote $G(V)=G(V,\langle\cdot,\cdot\rangle)\subseteq\mathrm{GL}(V)$ to be the $p$-adic group of automorphisms that respect $\langle\cdot,\cdot\rangle.$

\subsection{Preliminaries on lattices}\label{LatticeSection}
Let $V$ be a vector space as above, and assume we are given direct sum decomposition $V=V_1\oplus V_2.$ For $i\in\{1,2\},$ we consider the maps $\mathrm{int}_i\colon\Lat(V)\to\Lat(V_i)$\index{int12@$\mathrm{int}_1,\ \mathrm{int}_2$} and $\mathrm{proj}_i\colon\Lat(V)\to\Lat(V_i)$\index{proj12@$\mathrm{proj}_1,\ \mathrm{proj}_2$} where $\mathrm{int}_i(\Lambda)=\Lambda\cap V_i$ and $\mathrm{proj}_i(\Lambda)$ is the projection of $\Lambda$ onto $V_i.$
\begin{proposition}\label{LatBij}
We have a bijection
\begin{equation*}
    \Lat(V)\longleftrightarrow\left\{(L_1^-,L_1^+,L_2^-,L_2^+,\varphi)\colon \begin{array}{l}
        L_1^-,L_1^+\in\Lat(V_1),\ L_1^-\subseteq L_1^+,\\
        L_2^-,L_2^+\in\Lat(V_2),\ L_2^-\subseteq L_2^+,\\
        \varphi\colon L_1^+/L_1^-\rightiso L_2^+/L_2^-
    \end{array}\right\},
\end{equation*}
given by $L\mapsto(\mathrm{int}_1(\Lambda),\mathrm{proj}_1(\Lambda),\mathrm{int}_2(\Lambda),\mathrm{proj}_2(\Lambda),\varphi)$ where $\varphi(x)=y$ if and only if $(\tilde{x},\tilde{y})\in\Lambda$ for (any) lifts $\tilde{x},\tilde{y}\in\Lambda.$
\end{proposition}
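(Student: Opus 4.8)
The plan is to write down the inverse map explicitly and verify that the two composites are identities; the whole argument is a diagram chase in the category of $\O_F$-modules. Given a tuple $(L_1^-,L_1^+,L_2^-,L_2^+,\varphi)$ on the right-hand side, I would set
\begin{equation*}
    \Lambda \defeq \{(x,y)\in L_1^+\oplus L_2^+\colon \varphi(x+L_1^-)=y+L_2^-\}.
\end{equation*}
This is an $\O_F$-submodule of $V$ containing $L_1^-\oplus L_2^-$ (since $\varphi$ sends the zero class to the zero class) and contained in $L_1^+\oplus L_2^+$; being sandwiched between two lattices, it is itself a lattice. So this furnishes a map from the right-hand side to $\Lat(V),$ and it remains to check that it is a two-sided inverse of $\Lambda\mapsto(\mathrm{int}_1(\Lambda),\mathrm{proj}_1(\Lambda),\mathrm{int}_2(\Lambda),\mathrm{proj}_2(\Lambda),\varphi).$

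First I would confirm that, for any $\Lambda\in\Lat(V),$ the prescription for $\varphi$ does define an isomorphism $\mathrm{proj}_1(\Lambda)/\mathrm{int}_1(\Lambda)\rightiso\mathrm{proj}_2(\Lambda)/\mathrm{int}_2(\Lambda)$: it is well defined because $(x,y),(x,y')\in\Lambda$ forces $(0,y-y')\in\Lambda,$ i.e.\ $y-y'\in\Lambda\cap V_2$; it is surjective because any $y\in\mathrm{proj}_2(\Lambda)$ lifts to some $(x,y)\in\Lambda$ with $x\in\mathrm{proj}_1(\Lambda)$; and it is injective because $\varphi(x+\mathrm{int}_1(\Lambda))=0$ means $(x,y)\in\Lambda$ for some $y\in\mathrm{int}_2(\Lambda),$ whence $(0,y)\in\Lambda,$ so $(x,0)\in\Lambda$ and $x\in\mathrm{int}_1(\Lambda).$ This shows the forward map indeed lands in the displayed set.

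Next, for the composite that starts and ends at a lattice $\Lambda,$ I would prove $\Lambda=\{(x,y)\in\mathrm{proj}_1(\Lambda)\oplus\mathrm{proj}_2(\Lambda)\colon\varphi(x+\mathrm{int}_1(\Lambda))=y+\mathrm{int}_2(\Lambda)\}.$ The inclusion $\subseteq$ is the very definition of $\varphi$; conversely, given such an $(x,y),$ choose $(x,y_0)\in\Lambda$ with $\varphi(x+\mathrm{int}_1(\Lambda))=y_0+\mathrm{int}_2(\Lambda),$ so $y-y_0\in\mathrm{int}_2(\Lambda)$ gives $(0,y-y_0)\in\Lambda$ and hence $(x,y)\in\Lambda.$ For the composite that starts and ends at a tuple, let $\Lambda$ denote the reconstructed lattice; then $\Lambda\cap V_1=\{x\in L_1^+\colon\varphi(x+L_1^-)=0\}=L_1^-$ by injectivity of $\varphi,$ while $\mathrm{proj}_1(\Lambda)=L_1^+$ because any $x\in L_1^+$ pairs with a lift $y\in L_2^+$ of $\varphi(x+L_1^-)$ to give $(x,y)\in\Lambda$; the $V_2$-components are handled symmetrically using surjectivity of $\varphi,$ and the recovered gluing map sends $x+L_1^-$ to $y+L_2^-$ whenever $(x,y)\in\Lambda,$ which by construction is exactly $\varphi(x+L_1^-).$

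I do not expect a genuine obstacle here: this is a Goursat-type correspondence for lattices and every step is a direct verification. The only points that require any care are checking that the reconstructed $\Lambda$ is an honest lattice, dispatched by the sandwiching observation above, and checking that the map $\varphi$ attached to a lattice is a bijection rather than merely a homomorphism.
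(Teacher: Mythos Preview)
Your proof is correct and follows essentially the same approach as the paper: both construct the inverse map as the ``graph'' of $\varphi$ inside $L_1^+\oplus L_2^+$, observe it is a lattice via the sandwich $L_1^-\oplus L_2^-\subseteq\Lambda\subseteq L_1^+\oplus L_2^+$, and check mutual inversion. You have simply spelled out more of the routine verifications (well-definedness, injectivity, surjectivity of $\varphi$, and both composites) that the paper leaves implicit.
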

\begin{proof}
Given $L\in\Lat(V),$ denote $L_i^-\defeq\mathrm{int}_i(L)$ and $L_i^+\defeq\mathrm{proj}_i(L).$ Then we have $L_1^-\oplus L_2^-\subseteq L\subseteq L_1^+\oplus L_2^+$ and $\O_F$-module isomorphisms
\begin{equation*}
\begin{tikzcd}
    &L/(L_1^-\oplus L_2^-)\arrow{dl}{\sim}[swap]{\mathrm{proj}_1}\arrow{dr}{\mathrm{proj}_2}[swap]{\sim}&\\
    L_1^+/L_1^-&&L_2^+/L_2^-
\end{tikzcd}
\end{equation*}
Then $\varphi\colon L_1^+/L_1^-\rightiso L_2^+/L_2^-$ denotes their composition.

Conversely, given $(L_1^-,L_1^+,L_2^-,L_2^+,\varphi),$ the inverse of the above map is simply $L\defeq\{(x,y)\in V\colon \varphi(x)=y\}.$ This is an $\O_F$-module since $\varphi$ is an $\O_F$-module homomorphism, and thus $\Lambda\in\Lat(V)$ since $L_1^-\oplus L_2^-\subseteq L\subseteq L_1^+\oplus L_2^+.$ This is indeed the inverse since $\varphi$ is an isomorphism.
\end{proof}

Now equip $V$ with a form $\langle\cdot,\cdot\rangle$ as above, and we assume that the decomposition $V=V_1\oplus V_2$ is an orthogonal decomposition. We denote $H_1,H_2$ for the group of automorphisms of  $V_1,V_2$ respecting the form. We denote $H\defeq H_1\times H_2.$

\begin{proposition}\label{LatcircBij}
We have a bijection
\begin{equation*}
    \Lat^\circ(V)\longleftrightarrow\left\{(L_1,L_2,\varphi)\colon\begin{array}{cl}L_1\in\Lat(V_1),&L_1\subseteq L_1^\vee,\\L_2\in\Lat(V_2),&L_2\subseteq L_2^\vee,\\\varphi\colon L_1^\vee/L_1\rightiso L_2^\vee/L_2,&\langle\varphi(x_1),\varphi(x_2)\rangle\equiv-\langle x_1,x_2\rangle\mod\O_F\end{array}\right\},
\end{equation*}
given by $\Lambda\mapsto(\mathrm{int}_1(\Lambda),\mathrm{int}_2(\Lambda),\varphi)$ where $\varphi(x)=y$ if and only if $(\tilde{x},\tilde{y})\in\Lambda$ for (any) lifts $\tilde{x},\tilde{y}\in\Lambda.$
\end{proposition}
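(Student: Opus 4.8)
The plan is to upgrade the bijection of \Cref{LatBij} by identifying $\Lat^\circ(V)$ with the fixed locus of the duality involution $\Lambda\mapsto\Lambda^\vee$ on $\Lat(V)$. The whole point is that, since $V=V_1\oplus V_2$ is now an \emph{orthogonal} decomposition, we have $\langle(v_1,v_2),(w_1,w_2)\rangle=\langle v_1,w_1\rangle+\langle v_2,w_2\rangle$, which lets one track duality through the five-tuple $(L_1^-,L_1^+,L_2^-,L_2^+,\varphi)$ attached to $\Lambda$ by \Cref{LatBij}.

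For the forward direction I would start with $\Lambda\in\Lat^\circ(V)$, i.e.\ $\Lambda=\Lambda^\vee$, and set $L_i\defeq\mathrm{int}_i(\Lambda)$. Intersecting with $V_i$ and using orthogonality, $L_i=\Lambda\cap V_i=\Lambda^\vee\cap V_i=\{v_i\in V_i:\langle v_i,\mathrm{proj}_i(\Lambda)\rangle\subseteq\O_F\}=\mathrm{proj}_i(\Lambda)^\vee$; dualizing inside $V_i$ gives $\mathrm{proj}_i(\Lambda)=L_i^\vee$, so the five-tuple has the form $(L_1,L_1^\vee,L_2,L_2^\vee,\varphi)$ with $\varphi\colon L_1^\vee/L_1\rightiso L_2^\vee/L_2$. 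To get the compatibility, I pick $x_1,x_1'\in L_1^\vee=\mathrm{proj}_1(\Lambda)$ together with lifts $(x_1,x_2),(x_1',x_2')\in\Lambda$, so that $x_2,x_2'$ represent $\varphi(\bar x_1),\varphi(\bar x_1')$ by the description of $\varphi$ in \Cref{LatBij}; self-duality gives $\langle x_1,x_1'\rangle+\langle x_2,x_2'\rangle\in\O_F$, which read modulo $\O_F$ is precisely $\langle\varphi(x_1),\varphi(x_1')\rangle\equiv-\langle x_1,x_1'\rangle\mod\O_F$ (the pairings on $L_i^\vee/L_i$ being well defined because $\langle L_i,L_i^\vee\rangle\subseteq\O_F$).

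For the converse, given $(L_1,L_2,\varphi)$ as in the statement, I would let $\Lambda$ be the lattice attached by \Cref{LatBij} to $(L_1,L_1^\vee,L_2,L_2^\vee,\varphi)$, so $\Lambda=\{(x_1,x_2):x_i\in L_i^\vee,\ \varphi(\bar x_1)=\bar x_2\}$, and check $\Lambda=\Lambda^\vee$. The inclusion $\Lambda\subseteq\Lambda^\vee$ is immediate from the compatibility: for $(x_1,x_2),(w_1,w_2)\in\Lambda$ one gets $\langle x_1,w_1\rangle+\langle x_2,w_2\rangle\equiv\langle\bar x_1,\bar w_1\rangle+\langle\varphi\bar x_1,\varphi\bar w_1\rangle\equiv0\mod\O_F$. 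For the reverse inclusion, I take $(v_1,v_2)\in\Lambda^\vee$; pairing against $L_1\oplus\{0\}\subseteq\Lambda$ and $\{0\}\oplus L_2\subseteq\Lambda$ forces $v_i\in L_i^\vee$, and pairing against a general $(w_1,w_2)\in\Lambda$ gives $\langle\bar v_2-\varphi(\bar v_1),\varphi(\bar w_1)\rangle\equiv0\mod\O_F$ for all $\bar w_1\in L_1^\vee/L_1$; since $\varphi$ is surjective, $\bar v_2-\varphi(\bar v_1)$ is orthogonal to all of $L_2^\vee/L_2$, hence vanishes because the reduced pairing on $L_2^\vee/L_2$ is nondegenerate. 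Thus $(v_1,v_2)\in\Lambda$, and the two constructions are mutually inverse by \Cref{LatBij}.

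The main obstacle — really the only step that is not pure bookkeeping on top of \Cref{LatBij} and the orthogonality of the decomposition — is this last point: nondegeneracy (in fact perfectness) of the induced pairing $L_2^\vee/L_2\times L_2^\vee/L_2\to F/\O_F$, which I would deduce from nondegeneracy of $\langle\cdot,\cdot\rangle$ on $V_2$ (a consequence of $V$ having a self-dual lattice and $V=V_1\oplus V_2$ being orthogonal) together with $(L_2^\vee)^\vee=L_2$. One can also package everything uniformly by checking that $\Lambda\mapsto\Lambda^\vee$ corresponds under \Cref{LatBij} to $(L_1^-,L_1^+,L_2^-,L_2^+,\varphi)\mapsto((L_1^+)^\vee,(L_1^-)^\vee,(L_2^+)^\vee,(L_2^-)^\vee,\varphi')$ for an explicit transform $\varphi'$ of $\varphi$, and then reading off its fixed points.
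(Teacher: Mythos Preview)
Your proof is correct and the forward direction is essentially identical to the paper's. The converse direction differs in one interesting way: after showing $\Lambda\subseteq\Lambda^\vee$, you prove $\Lambda^\vee\subseteq\Lambda$ directly by taking $(v_1,v_2)\in\Lambda^\vee$ and using nondegeneracy of the reduced pairing on $L_2^\vee/L_2$ to force $\bar v_2=\varphi(\bar v_1)$. The paper instead bypasses this by a length count: from $\Lambda/(L_1\oplus L_2)\simeq L_1^\vee/L_1\simeq L_2^\vee/L_2$ one gets $2\cdot\mathrm{length}(\Lambda/(L_1\oplus L_2))=\mathrm{length}((L_1^\vee\oplus L_2^\vee)/(L_1\oplus L_2))$, whence $\mathrm{length}(\Lambda^\vee/\Lambda)=0$. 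The paper's argument is slightly shorter and avoids invoking perfectness of the $F/\O_F$-valued pairing on $L_2^\vee/L_2$; your argument is more explicit and has the merit of generalizing cleanly to the ``involution on five-tuples'' viewpoint you sketch at the end.
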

\begin{proof}
First note that for any $\Lambda\in\Lat(V)$ we have that $\mathrm{int}_i(\Lambda)^\vee=\mathrm{proj}_i(\Lambda^\vee).$

Now let $\Lambda\in\Lat^\circ(V).$ Then the above implies that $\mathrm{proj}_i(\Lambda)=\mathrm{int}_i(\Lambda)^\vee.$ By \Cref{LatBij}, we have $\varphi\colon L_1^\vee/L_1\rightiso L_2^\vee/L_2,$ and since $\Lambda$ is integral, for any $x_1,x_2\in L_1^\vee$ we must have
\begin{equation*}
    \langle\widetilde{x_1},\widetilde{x_2}\rangle+\langle\widetilde{\varphi(x_1)},\widetilde{\varphi(x_2)}\rangle=\langle(\widetilde{x_1},\widetilde{\varphi(x_1)}),(\widetilde{x_2},\widetilde{\varphi(x_2)})\rangle\in\O_F.
\end{equation*}

Conversely, given $(L_1,L_2,\varphi)$ as above, we consider $\Lambda\in\Lat(V)$ associated to $(L_1,L_1^\vee,L_2,L_2^\vee,\varphi)$ by \Cref{LatBij}. If $(x_1,y_1),(x_2,y_2)\in\Lambda,$ then
\begin{equation*}
    \langle(x_1,y_1),(x_2,y_2)\rangle\equiv\langle(x_1,\varphi(x_1)),(x_2,\varphi(x_2))\rangle\equiv\langle x_1,\varphi(x_1)\rangle+\langle x_2,\varphi(x_2)\rangle\in\O_F
\end{equation*}
and thus $\Lambda$ is integral. Thus $L_1\oplus L_2\subseteq\Lambda\subseteq\Lambda^\vee\subseteq L_1^\vee\oplus L_2^\vee.$ Since $\Lambda/(L_1\oplus L_2)\rightiso L_1^\vee/L_1\iso L_2^\vee/L_2,$ we have
\begin{equation*}
    \mathrm{length}\left(\frac{L_1^\vee\oplus L_2^\vee}{L_1\oplus L_2}\right)=2\cdot\mathrm{length}\left(\frac{\Lambda}{L_1\oplus L_2}\right)=\mathrm{length}\left(\frac{\Lambda}{L_1\oplus L_2}\right)+\mathrm{length}\left(\frac{L_1^\vee\oplus L_2^\vee}{\Lambda^\vee}\right)
\end{equation*}
and thus we conclude $\mathrm{length}(\Lambda^\vee/\Lambda)=0,$ which means $\Lambda\in\Lat^\circ(V).$
\end{proof}

\begin{definition}
Denote
\index{Typ0nat@$\Typ^{0,\natural},\ \Typ^{0,\natural}_r$}\index{Typ0nat@$\Typ^{0,\natural},\ \Typ^{0,\natural}_r$! |seealso{\cite[\citeTypzerodef]{CoratoZanarella}}}\index{Typ0@$\Typ^0,\ \Typ^0_r$}
\begin{equation*}
\begin{split}
    \Typ^{0,\natural}&\defeq\{(e^0,\chi^0)\colon  e^0\colon\Z\to\Z_{\ge0},\ \chi^0\colon\Z\to\mathrm{Sign},\text{ s.t. }e^0(i)=0\implies\chi^0(i)=1\},\\
    \Typ^{0}&\defeq\{(e^0,\chi^0)\colon  e^0\colon\Z_{\ge0}\to\Z_{\ge0},\ \chi^0\colon\Z_{\ge0}\to\mathrm{Sign},\text{ s.t. }e^0(i)=0\implies\chi^0(i)=1\}.
\end{split}
\end{equation*}
so that $\Typ^0\subseteq\Typ^{0,\natural}.$ We also denote $\Typ^{0,\natural}_r$ and $\Typ^{0}_r$ the subsets where $\sum_ie^0(i)=r.$
\end{definition}

Recall from \cite[\citelatticeClassification]{CoratoZanarella} that we have an injection
\index{typnat@$\mathrm{typ}^\natural$}\index{typnat@$\mathrm{typ}^\natural$! |seealso{\cite[\citelatticeClassification]{CoratoZanarella}}}
\begin{equation}\label{typninj}
    \mathrm{typ}^\natural\colon G\backslash\Lat(V)\hookrightarrow\Typ^{0,\natural}
\end{equation}
characterized by the fact that if $\mathrm{typ}^\natural(L)=(e^0,\chi^0),$ then there exists an orthogonal decomposition $L=\bigoplus_{i\in\Z}L^{(i)}$ with $L^{(i),\vee}=\varpi^{-i}L^{(i)}$ and $\mathrm{typ}(L^{(i)}/\varpi L^{(i)}),\varpi^{-i}\langle\cdot,\cdot\rangle)=(e^0(i),\chi^0(i)).$

\begin{proposition}\label{typhelp}
Let $L\in\Lat(V)$ be an integral lattice. Assume we are given $\varphi_0\colon L^\vee/L\rightiso L^\vee/L$ satisfying $\langle\varphi_0(x_1),\varphi_0(x_2)\rangle\equiv\langle x_1,x_2\rangle\mod\O_F.$ Then there exist $g\in G$ such that $gL=L$ and such that $g$ induces $\varphi_0.$
\end{proposition}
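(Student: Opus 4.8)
The plan is to construct $g$ by $\varpi$-adic successive approximation. First I would reduce: it suffices to produce $g\in\mathrm{Aut}_{\O_F}(L)$ with $\langle gx,gy\rangle=\langle x,y\rangle$ for all $x,y\in L$ (equivalently for all $x,y\in V$, since $L$ spans $V$) and whose induced action on $L^\vee/L$ is $\varphi_0$; such a $g$ automatically stabilizes $L^\vee$, hence lies in $G$ and fixes $L$. Next, using the orthogonal decomposition $L=\bigoplus_{i\ge 0}L^{(i)}$ with $L^{(i),\vee}=\varpi^{-i}L^{(i)}$ provided by \eqref{typninj}, write $V=\bigoplus_i V_i$ with $V_i=L^{(i)}\otimes_{\O_F}F$ an orthogonal decomposition, so $L^\vee=\bigoplus_i\varpi^{-i}L^{(i)}$ and $L^\vee/L=\bigoplus_i M_i$ with $M_i\defeq\varpi^{-i}L^{(i)}/L^{(i)}$ a free $\O_F/\varpi^i$-module carrying the nondegenerate form induced by $\varpi^{-i}\langle\cdot,\cdot\rangle$. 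Let $N$ be largest with $M_N\ne 0$, so $\varpi^N L^\vee\subseteq L$ and $L^{(i)}$ is self-dual for $\varpi^{-i}\langle\cdot,\cdot\rangle$ for each $i$.

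The iteration itself is essentially formal. Suppose $g_n\in\mathrm{Aut}_{\O_F}(L)$ induces $\varphi_0$ on $L^\vee/L$ and its defect $\delta_n(x,y)\defeq\langle g_nx,g_ny\rangle-\langle x,y\rangle$ is suitably small on $L\times L$. Because $g_n$ stabilizes both $L$ and $L^\vee$ and $\langle L,L^\vee\rangle\subseteq\O_F$, the form $\delta_n$ is also integral on $L\times L^\vee$, so the self-adjoint operator $\Delta_n$ with $\langle\Delta_n x,y\rangle=\delta_n(x,y)$ sends $L$ into $(L^\vee)^\vee=L$. Since $p>2$ we may symmetrize: one builds, blockwise along $L=\bigoplus_i L^{(i)}$, a self-adjoint $c_n\in\mathrm{Hom}_{\O_F}(L^\vee,L)$ with $c_n(L^\vee)\subseteq L$ (so $c_n$ kills $L^\vee/L$) and $\langle c_nx,y\rangle+\langle x,c_ny\rangle\equiv-\delta_n(x,y)$ to one higher order. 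Then $g_{n+1}\defeq g_n(1+c_n)$ again lies in $\mathrm{Aut}_{\O_F}(L)$, still induces $\varphi_0$, and has strictly smaller defect, as $\langle g_{n+1}x,g_{n+1}y\rangle-\langle x,y\rangle=\delta_n(x,y)+\langle c_nx,y\rangle+\langle x,c_ny\rangle+(\text{terms quadratic in }c_n)$. Taking $g=\lim_n g_n$ — which is invertible since $g_{n+1}\equiv g_n$ modulo a growing power of $\varpi$ — produces an exact isometry of $V$ preserving $L$ and inducing $\varphi_0$.

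The genuine difficulty is not the iteration but (i) producing a good enough \emph{initial} $g_N$ and (ii) the bookkeeping that makes the corrections actually converge: the exponent filtration $M_1,M_2,\dots$ of $L^\vee/L$ and the $\varpi$-adic filtration of the defect interact, so the requirement $c_n(L^\vee)\subseteq L$ constrains the off-diagonal blocks of $c_n$ differently than the divisibility of $\Delta_n$ would like, and one must track exactly how the off-diagonal blocks of the defect contribute. The key inputs here are that each $L^{(i)}$ is self-dual for $\varpi^{-i}\langle\cdot,\cdot\rangle$, so $\mathrm{Stab}_{G(V_i)}(L^{(i)})$ is the group of $\O_{F_0}$-points of a smooth group scheme and therefore surjects onto $O(M_i)$ (its reduction modulo $\varpi^i$); the parity $p>2$, used both to divide by $2$ and to make those reductions behave well; and the orthogonality $\langle V_i,V_j\rangle=0$ for $i\ne j$. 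I expect this level-by-level analysis to be the main obstacle. An alternative that avoids the convergence issue is to verify directly that $O(L^\vee/L)$ is generated by the block isometries of the individual $M_i$ — lifted as above — together with explicit elementary mixing isometries between two homogeneous components, each of which is manifestly induced by an element of $G$ stabilizing $L$.
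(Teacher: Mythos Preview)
Your strategy of successive approximation is the same as the paper's, and you correctly locate the difficulty; what you are missing is the single move that dissolves it. Before iterating, the paper strips off the self-dual block $L^{(0)}$ by induction on $\dim V$: since $L^\vee/L$ does not see $L^{(0)}$, one may assume $\dim L^{(0)}=0$, i.e.\ $L\subseteq\varpi L^\vee$. This reduction forces $\langle L,L\rangle\subseteq\varpi\O_F$, so for any correction $\phi$ landing in $L$ the quadratic term $\langle\phi(x_1),\phi(x_2)\rangle$ vanishes modulo $\varpi$ \emph{automatically}. One is then left with a genuinely linear problem: write the defect $B$ as $\langle\phi(\cdot),\widetilde{\varphi_0}(\cdot)\rangle+\langle\widetilde{\varphi_0}(\cdot),\phi(\cdot)\rangle$ over the residue field, solvable because $L/\varpi L$ and $L^\vee/\varpi L^\vee$ are perfectly paired. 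The off-diagonal block bookkeeping you anticipate never materializes, and the general lifting step $\varphi_n\rightsquigarrow\varphi_{n+1}$ reduces to the $n=0$ step by passing to $(V,\varpi^{-n}\langle\cdot,\cdot\rangle)$ with the lattice $\varpi^nL$. Without this reduction your scheme stalls exactly where you say it does: the constraint $c_n(L^\vee)\subseteq L$ lets you correct the $(0,0)$-block of the defect only at order zero, but the resulting quadratic term is again merely integral, and no gain is made.

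A smaller but genuine issue: your appeal to ``$p>2$ to divide by $2$'' is not available in case \SPgen, where $p=2$ is allowed. The paper never halves. For alternating $B$ one writes $B(x_1,x_2)=C(x_1,x_2)-C(x_2,x_1)$ with $C$ any triangular half; the Hermitian case uses surjectivity of the residue trace (unramifiedness of $F/F_0$) rather than division by $2$; only in case \Ogen is $p>2$ actually invoked.
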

\begin{proof}
Choose a decomposition $L=\bigoplus_{i\ge0}L^{(i)}$ as above. Then $L^\vee/L\iso\bigoplus_{i>0}\varpi^{-i}L^{(i)}/L^{(i)}.$ If $\dim V^{(0)}>0,$ we may proceed by induction on $\dim V$ by considering $L^{(\ge1)}\in\Lat(V^{(\ge1)}).$ So we may further assume $\dim L^{(0)}=0.$ This means that $L\subseteq\varpi L^\vee.$

Now we proceed by Hensel lifting. We will prove the following sub-claim: if $L\in\Lat(V)$ is a lattice with $L\subseteq\varpi L^\vee$ and we are given $n\ge0$ and $\varphi_n\colon L^\vee/\varpi^nL\rightiso L^\vee/\varpi^nL$ satisfying
\begin{equation*}
    \langle\varphi_n(x_1),\varphi_n(x_2)\rangle\equiv\langle x_1,x_2\rangle\mod\varpi^n\O_F,
\end{equation*}
then there exist $\varphi_{n+1}\colon L^\vee/\varpi^{n+1}L\rightiso L^\vee/\varpi^{n+1}L$ extending $\varphi_n$ and satisfying
\begin{equation*}
    \langle\varphi_{n+1}(x_1),\varphi_{n+1}(x_2)\rangle\equiv\langle x_1,x_2\rangle\mod\varpi^{n+1}\O_F.
\end{equation*}

Given this sub-claim, we can then choose a compatible system $(\varphi_n)_{n\ge0}$ with $\varphi_0$ as given. Then the limiting map gives the element $g$ we are looking for.

We first consider the case $n=0.$ Suppose we are given $\varphi_0$ as above. We choose an arbitrary lift $\widetilde{\varphi_0}\colon L^\vee/\varpi L\rightiso L^\vee/\varpi L.$ We will construct $\varphi_{1}$ as $\widetilde{\varphi_0}+\phi$ for some well-chosen map $\phi\colon L^\vee/\varpi L^\vee\to L/\varpi L.$ Note that
\begin{equation*}
    \langle\widetilde{\varphi_0}(x_1)+\phi(x_1),\widetilde{\varphi_0}(x_2)+\phi(x_2)\rangle\equiv\langle\widetilde{\varphi_0}(x_1),\widetilde{\varphi_0}(x_2)\rangle+\langle\widetilde{\varphi_0}(x_1),\phi(x_2)\rangle+\langle\phi(x_1),\widetilde{\varphi_0}(x_2)\rangle\mod\varpi\O_F
\end{equation*}
as we are assuming that $L\subseteq\varpi L^\vee.$ By assumption, we have
\begin{equation*}
    B(x_1,x_2)\defeq \langle\widetilde{\varphi_0}(x_1),\widetilde{\varphi_0}(x_2)\rangle-\langle x_1,x_2\rangle\in\O_F.
\end{equation*}
Now consider the $\O_F/\varpi\O_F$-vector space $W\defeq L/\varpi L.$ Note that if $W^*\defeq L^\vee/\varpi L^\vee,$ the form $\langle\cdot,\cdot\rangle$ induces a perfect pairing $W\otimes W^*\to\O_F/\varpi\O_F.$ Now $\widetilde{\varphi_0}$ induces a map $\widetilde{\varphi_0}\colon W^*\rightiso W^*,$ and we are reduced to find $\phi\colon W^*\to W$ such that for $x_1,x_2\in W^*,$ we have
\begin{equation*}
    B(x_1,x_2)=\langle\phi(x_1),\widetilde{\varphi_0}(x_2)\rangle+\langle\widetilde{\varphi_0}(x_1),\phi(x_2)\rangle.
\end{equation*}
This can always be done as we are assuming that $\widetilde{\varphi_0}$ is an isomorphism, as well as i) in the case \Ugen, that $F/F_0$ is unramified, ii) in the case \Ogen, that $p>2.$

Now we note that the general sub-claim follows from the case $n=0.$ This is because the sub-claim for $n\ge1$ and $(V,\langle\cdot,\cdot\rangle)$ follows from the sub-claim for $n=0$ and $(V,\varpi^{-n}\langle\cdot,\cdot\rangle).$
\end{proof}

Denote $r_i=\frac{2}{\cfactor}\dim_{F}V_i.$ Assume we are given $\Lambda\in\Lat^\circ(V),$ and denote $(e_i^0,\chi_i^0)=\mathrm{typ}^\natural(\mathrm{int}_i(\Lambda))$ for $i\in\{1,2\}.$ Then \Cref{LatcircBij}, means that we have
\begin{equation*}
    e_1^0(i)=e_2^0(i),\quad \chi_1^0(i)=\epsilon^{e_2^0(i)}\chi_2^0(i)\quad \text{for }i\ge1
\end{equation*}
and thus
\begin{equation}\label{eqtyp}
    r_1-e_1^0(0)=r_2-e_2^0(0),\quad \chi_1^0(0)=\epsilon^{r_2-e_2^0(0)}\chi_2^0(0)\cdot\mathrm{sign}(\det(V)).
\end{equation}
In other words, $(e_1^0,\chi_1^0)$ and $(e_2^0,\chi_2^0)$ determine each other.
\begin{proposition}\label{typinj}
    For each $i\in\{1,2\},$ the map $\mathrm{typ}^\natural\circ\mathrm{int}_i\colon\Lat(V)\to\Typ^0$ induces an injection $\mathrm{typ}^\natural\circ\mathrm{int}_i\colon H\backslash\Lat^\circ(V)\hookrightarrow\Typ^0.$
\end{proposition}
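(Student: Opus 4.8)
The plan is to reduce the statement to \Cref{typhelp} using the parametrization of $\Lat^\circ(V)$ in \Cref{LatcircBij}. First one checks well-definedness: since every $\Lambda\in\Lat^\circ(V)$ is integral, so is $\mathrm{int}_i(\Lambda)=\Lambda\cap V_i$, whence $\mathrm{typ}^\natural(\mathrm{int}_i(\Lambda))$ actually lies in $\Typ^0$ (and not merely $\Typ^{0,\natural}$); moreover $\mathrm{int}_i$ intertwines the $H$-action on $\Lat^\circ(V)$ with the $H_i=G(V_i)$-action on $\Lat(V_i)$, under which $\mathrm{typ}^\natural$ is invariant by \eqref{typninj}, so the composite descends to $H\backslash\Lat^\circ(V)$. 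By \eqref{eqtyp} the invariants $\mathrm{typ}^\natural(\mathrm{int}_1(\Lambda))$ and $\mathrm{typ}^\natural(\mathrm{int}_2(\Lambda))$ determine one another, so the two maps have the same fibres and it suffices to prove injectivity for $i=1$.

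For injectivity, I would take $\Lambda,\Lambda'\in\Lat^\circ(V)$ with $\mathrm{typ}^\natural(\mathrm{int}_1(\Lambda))=\mathrm{typ}^\natural(\mathrm{int}_1(\Lambda'))$ (hence also for $\mathrm{int}_2$). Applying the injectivity \eqref{typninj} to $V_1$ and $V_2$ separately, choose $h_i\in H_i$ with $h_i\,\mathrm{int}_i(\Lambda)=\mathrm{int}_i(\Lambda')$; replacing $\Lambda$ by $(h_1,h_2)\Lambda$, which does not change its $H$-orbit, we may assume $L_i:=\mathrm{int}_i(\Lambda)=\mathrm{int}_i(\Lambda')$ for $i=1,2$. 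Then \Cref{LatcircBij} identifies $\Lambda$ and $\Lambda'$ with triples $(L_1,L_2,\varphi)$ and $(L_1,L_2,\varphi')$, where $\varphi,\varphi'\colon L_1^\vee/L_1\rightiso L_2^\vee/L_2$ both satisfy $\langle\varphi(x_1),\varphi(x_2)\rangle\equiv-\langle x_1,x_2\rangle\bmod\O_F$, so the two lattices differ only in their gluing isomorphism.

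The final step is to produce $h\in H$ stabilizing $L_1$ and $L_2$ and sending $\varphi$ to $\varphi'$. A short computation from the description in \Cref{LatcircBij} shows that an $h=(h_1,h_2)$ with $h_iL_i=L_i$ acts on the gluing datum by $\varphi\mapsto\bar h_2\circ\varphi\circ\bar h_1^{-1}$, where $\bar h_i$ is the automorphism of $L_i^\vee/L_i$ induced by $h_i$; so it is enough to take $h_2=\mathrm{id}$ and find $h_1\in H_1$ with $h_1L_1=L_1$ inducing $\psi:=(\varphi')^{-1}\circ\varphi$ on $L_1^\vee/L_1$. Here the sign convention is what makes it work: substituting $(\varphi')^{-1}(\varphi(x_j))$ into the compatibility relation for $\varphi'$ and combining with that for $\varphi$, the two sign flips cancel and $\langle\psi(x_1),\psi(x_2)\rangle\equiv\langle x_1,x_2\rangle\bmod\O_F$. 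Thus $\psi$ is an isometry of $(L_1^\vee/L_1,\langle\cdot,\cdot\rangle\bmod\O_F)$, and \Cref{typhelp}, applied to the integral lattice $L_1\in\Lat(V_1)$, supplies the desired $h_1$. Then $(h_1,\mathrm{id})\Lambda$ corresponds to $(L_1,L_2,\varphi\circ\psi^{-1})=(L_1,L_2,\varphi')$, i.e.\ equals $\Lambda'$, which proves injectivity.

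The main obstacle here is really already dealt with: the substantive content is \Cref{typhelp}, whose proof is the Hensel-lifting argument (and is where the hypotheses "$F/F_0$ unramified" / "$p>2$" are used). Given that input, the only care needed is bookkeeping — correctly identifying the $H$-action on the gluing isomorphism in \Cref{LatcircBij}, and tracking the minus sign in the perpendicularity condition so that $(\varphi')^{-1}\circ\varphi$ lands in the "$+$" case covered by \Cref{typhelp} rather than the "$-$" case.
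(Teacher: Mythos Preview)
Your proof is correct and follows essentially the same strategy as the paper: reduce via \eqref{typninj} to the case $\mathrm{int}_i(\Lambda)=\mathrm{int}_i(\Lambda')$, pass to the triples of \Cref{LatcircBij}, and then invoke \Cref{typhelp} to adjust the gluing isomorphism. The only cosmetic difference is that the paper acts on the $V_2$ side (taking $\varphi_0=\varphi'\circ\varphi^{-1}$ on $L_2^\vee/L_2$ and finding $g_2\in H_2$), whereas you act on the $V_1$ side (taking $\psi=(\varphi')^{-1}\circ\varphi$ on $L_1^\vee/L_1$ and finding $h_1\in H_1$); by the evident symmetry both are valid.
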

\begin{proof}
By the above, it suffices to see that if $\Lambda,\Lambda'\in\Lat(V)$ agree under both $\mathrm{typ}^\natural\circ\mathrm{int}_1$ and $\mathrm{typ}^\natural\circ\mathrm{int}_2,$ then there exist $h\in H$ with $h\Lambda=\Lambda'.$

By \eqref{typninj} for both $V_1,V_2,$ we conclude that if $\Lambda,\Lambda'\in\Lat^\circ(V)$ agree under $\Typ^0,$ then we can find $(g_1,g_2)\in H$ such that $((g_1,g_2)\Lambda)\cap V_i=\Lambda'\cap V_i$ for both $i\in\{1,2\}.$ So we may assume without loss of generality that $\Lambda\cap V_i=\Lambda'\cap V_i$ for both $i\in\{1,2\}.$

So under \Cref{LatcircBij}, $\Lambda,\Lambda'$ correspond to $(L_1,L_2,\varphi),(L_1,L_2,\varphi').$ Now let $\varphi_0\colon L_2^\vee/L_2\rightiso L_2^\vee/L_2$ be $\varphi_0=\varphi'\circ\varphi^{-1}.$ Note that it satisfies $\langle\varphi_0(y_1),\varphi_0(y_2)\rangle\equiv\langle y_1,y_2\rangle\mod\O_F.$ By \Cref{typhelp}, we can find $g_2\in H_2$ such that $g_2L_2=L_2$ and such that $g_2$ induces $\varphi_0.$ Then $(1,g_2)\cdot\Lambda$ corresponds to $(L_1,L_2,\varphi_0\circ\varphi)=(L_1,L_2,\varphi'),$ that is, $(1,g_2)\cdot\Lambda=\Lambda'.$
\end{proof}

\begin{proposition}\label{typbij}
Assume without loss of generality that $r_1\le r_2.$ Then the injection
\begin{equation*}
    \mathrm{typ}^\natural\circ\mathrm{int}_1\colon H\backslash\Lat^\circ(V)\hookrightarrow \mathrm{typ}^\natural(\Lat(V_1))\cap \Typ^0
\end{equation*}
is a bijection if and only if either i) $r_1<r_2$ or ii) $r_1=r_2$ and $\mathrm{sign}(\det(V))=\epsilon^{r_1}.$
\end{proposition}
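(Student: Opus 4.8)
The injectivity is already \Cref{typinj}, so the plan is to pin down exactly when $\mathrm{typ}^\natural\circ\mathrm{int}_1$ surjects onto $\mathrm{typ}^\natural(\Lat(V_1))\cap\Typ^0$. First I would make the image explicit. Given $\tau_1=(e_1^0,\chi_1^0)\in\mathrm{typ}^\natural(\Lat(V_1))\cap\Typ^0$, let $\tau_2=(e_2^0,\chi_2^0)$ be the candidate partner forced by \eqref{eqtyp}: $e_2^0(i)=e_1^0(i)$ and $\chi_2^0(i)=\epsilon^{e_1^0(i)}\chi_1^0(i)$ for $i\ge1$, $e_2^0(0)=e_1^0(0)+(r_2-r_1)\ge0$ (using $r_1\le r_2$), and $\chi_2^0(0)=\epsilon^{r_1-e_1^0(0)}\chi_1^0(0)\cdot\mathrm{sign}(\det V)$; note that $\tau_2$ need not a priori lie in $\Typ^0$. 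I claim $\tau_1$ is in the image if and only if $\tau_2\in\mathrm{typ}^\natural(\Lat(V_2))\cap\Typ^0$. One direction is immediate from \eqref{eqtyp} applied to a preimage. For the converse, pick integral lattices $L_i\in\Lat(V_i)$ with $\mathrm{typ}^\natural(L_i)=\tau_i$; since (for $p$ odd, resp.\ $F/F_0$ unramified) a perfect torsion $\langle\cdot,\cdot\rangle$-module over $\O_F$ is determined up to isometry by its residual types at levels $i\ge1$ --- the same input feeding \Cref{typhelp} --- the defining relations of $\tau_2$ say precisely that $L_1^\vee/L_1$ and $L_2^\vee/L_2$ are anti-isometric, the twist $\chi_2^0(i)=\epsilon^{e_1^0(i)}\chi_1^0(i)$ accounting for negating a form of rank $e_1^0(i)$. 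Choosing such an anti-isometry $\varphi$ and invoking \Cref{LatcircBij} produces $\Lambda\in\Lat^\circ(V)$ with $\mathrm{int}_1(\Lambda)=L_1$. Thus surjectivity is equivalent to: \emph{for every $\tau_1\in\mathrm{typ}^\natural(\Lat(V_1))\cap\Typ^0$, its partner $\tau_2$ lies in $\mathrm{typ}^\natural(\Lat(V_2))\cap\Typ^0$.}

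Next I would turn the realizability part into a single isometry. Write $U(m,\chi)$ for the (unique) $F$-space of the appropriate type carrying a self-dual lattice whose reduction has type $(m,\chi)$, and $\varpi^i\cdot W$ for $W$ with its form multiplied by $\varpi^i$. The characterization in \eqref{typninj} says that a type $(e^0,\chi^0)\in\Typ^0$ is realized by a lattice in a space $W$ if and only if $W\cong\bigoplus_i\varpi^i\cdot U(e^0(i),\chi^0(i))$. Feeding $V_1\cong\bigoplus_i\varpi^i\cdot U(e_1^0(i),\chi_1^0(i))$ into the formula for $\tau_2$, setting $N\defeq\bigoplus_{i\ge1}\varpi^i\cdot U(e_1^0(i),\chi_1^0(i))$ so that $V_1\cong U(e_1^0(0),\chi_1^0(0))\perp N$, and using $\varpi^i\cdot U(m,\epsilon^m\chi)\cong-\varpi^i\cdot U(m,\chi)$, one finds that $\tau_2$ is realized in $V_2$ iff $V_2\cong U(e_2^0(0),\chi_2^0(0))\perp(-N)$. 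Since $V_2\cong V\ominus V_1$ (orthogonal complement), Witt cancellation together with $N\perp(-N)\cong$ (split space of rank $2(r_1-e_1^0(0))$) rewrites this as the single isometry
\begin{equation*}
    V\cong U(e_1^0(0),\chi_1^0(0))\perp U(e_2^0(0),\chi_2^0(0))\perp(\text{split of rank }2(r_1-e_1^0(0))).
\end{equation*}
A short Hilbert-symbol computation, using that all the spaces $U(m,\chi)$ are diagonalizable over units and that $V$ --- having a self-dual lattice --- is itself diagonalizable over units for $p$ odd, shows that both sides have discriminant $\mathrm{sign}(\det V)$ and Hasse invariant $1$; hence this isometry always holds (in the cases \Ugen, \SPgen one has at most a discriminant to match, which is again automatic). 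So the realizability is automatic, and surjectivity is equivalent to: \emph{$\tau_2\in\Typ^0$ for every realizable $\tau_1$.}

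Finally I would dispatch this condition. The only level at which $\tau_2$ can fall outside $\Typ^0$ is $i=0$, and $e_2^0(0)=0$ forces $r_1=r_2$ and $e_1^0(0)=0$, whence $\chi_1^0(0)=1$ and $\chi_2^0(0)=\epsilon^{r_1}\mathrm{sign}(\det V)$. Thus if $r_1<r_2$ then $e_2^0(0)>0$ always and there is nothing to check, while if $r_1=r_2$ the condition "$\tau_2\in\Typ^0$ for all realizable $\tau_1$" holds precisely when $\mathrm{sign}(\det V)=\epsilon^{r_1}$. For necessity when $r_1=r_2$: take any Jordan splitting $V_1\cong U(s,\sigma)\perp\varpi\cdot U(t,\sigma')$ and place the unimodular block at level $2$ (that is, $e_1^0(2)=s$, $e_1^0(1)=t$, $\chi_1^0(2)=\sigma$, $\chi_1^0(1)=\sigma'$, and $e_1^0(0)=0$); since $\varpi^2\cdot U(s,\sigma)\cong U(s,\sigma)$ this is a realizable type with $e_1^0(0)=0$, and its partner has $e_2^0(0)=0$ but $\chi_2^0(0)=\epsilon^{r_1}\mathrm{sign}(\det V)\ne1$ once $\mathrm{sign}(\det V)\ne\epsilon^{r_1}$, so $\tau_1$ is not hit. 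This gives the stated equivalence; and in the cases \Ugen, \SPgen one has $\mathrm{Sign}=\{1\}$, so $\epsilon=1$ and $\mathrm{sign}(\det V)=1$, and condition (ii) reads simply $r_1=r_2$, recovering that the map is always bijective there.

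The main obstacle is the middle step: formulating the realizability criterion cleanly enough to collapse "$\tau_2$ realized in $V_2$" to one isometry, and then checking --- via the behaviour of discriminant and Hasse invariant under scaling by $\varpi$ and under negation --- that this isometry is forced the moment $V$ admits a self-dual lattice. Everything else is bookkeeping around \eqref{eqtyp} and \Cref{LatcircBij}.
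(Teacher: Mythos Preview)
Your proof is correct, but the paper takes a much shorter constructive route. For sufficiency under (ii), the paper simply observes that $r_1=r_2$ together with $\mathrm{sign}(\det V)=\epsilon^{r_1}$ forces an isometry $f\colon V_1\rightiso(-V_2)$; then for any integral $L\in\Lat(V_1)$ the triple $(L,f(L),f)$ of \Cref{LatcircBij} produces $\Lambda\in\Lat^\circ(V)$ with $\mathrm{int}_1(\Lambda)=L$. Case (i) is reduced to (ii) by choosing a nondegenerate subspace $V_2'\subseteq V_2$ with $V_1\cong -V_2'$ and working inside $V_1\oplus V_2'\subseteq V$. Necessity is a one-line appeal to \eqref{eqtyp} at a type with $e^0(0)=0$.

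Your approach replaces this direct construction with a classification-theoretic argument: you characterize the image as those $\tau_1$ whose forced partner $\tau_2$ is both realizable in $V_2$ and lies in $\Typ^0$, then eliminate the realizability condition via Witt cancellation and a Hasse-invariant check. This is longer---your middle step is essentially re-deriving, invariant by invariant, the isometry the paper invokes directly---but it has two payoffs: it yields the exact description of the image in the non-bijective case (which the paper states separately as a Remark), and it makes explicit the existence of a type with $e^0(0)=0$ in $\mathrm{typ}^\natural(\Lat(V_1))\cap\Typ^0$, a point the paper leaves implicit. The paper's argument is the natural one for the bare statement; yours is the natural one if the goal is the full image description.
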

\begin{proof}
If $r_1=r_2$ and the image of $\mathrm{typ}_{V_1}$ contained an element with $e^0(0)=0,$ then \eqref{eqtyp} would imply that $\mathrm{sign}(\det(V))=\epsilon^{r_1}.$

Conversely, if $\mathrm{sign}(\det(V))=\epsilon^{r_1}.$ then by \eqref{eqtyp}, we must have $f\colon V_1\rightiso(-V_2).$ Then for any $L\in\Lat(V_1)$ integral, we have a lattice $\Lambda\in\Lat^\circ(V)$ given by the data $(L,f(L),f)$ as in \Cref{LatcircBij}. It satisfies $\mathrm{typ}_{V_1}(\Lambda)=\mathrm{typ}^\natural(L).$

For the case $r_1<r_2,$ we can find a subspace $V_2'\subseteq V_2$ with $V_1\iso(-V_2'),$ and then an orthogonal decomposition $V_2=(V_2')^\perp\oplus V_2'.$ Then if $V'\defeq V_1\oplus V_2',$ we certainly have $\mathrm{typ}_{V_1}(\Lat^\circ(V))\supseteq\mathrm{typ}_{V_1}(\Lat^\circ(V')),$ and thus the claim follows from the previous case.
\end{proof}
\begin{remark}
In the case $r_1=r_2$ but $V_1\not\iso(-V_2),$ the image is $\mathrm{typ}^\natural(\Lat(V_1))\cap\{(e^0,\chi^0)\in\Typ^0\colon e^0(0)>0\}.$
\end{remark}

\subsection{Hecke algebras}\label{Heckesection}
Consider $(V,\langle\cdot,\cdot\rangle)$ as above, and we fix a $\Lambda^\circ_{fix}\in\Lat^\circ(V).$ This determines a hyperspecial subgroup $K\subseteq G$ given by $K=\mathrm{Stab}(\Lambda^\circ_{fix}).$ Let $N\defeq\dim_FV.$
\begin{definition}\label{Hdef}
    We consider the ($R$-valued) Hecke algebra $\mathcal{H}(G,K)\defeq C_c^\infty(K\backslash G\slash K,R)$ to be the convolution algebra of ($R$-valued) locally constant compactly supported $K$ bi-invariant functions from $G$ to $R.$
\end{definition}

We may choose an $\O_F$-basis $e_1,\ldots,e_N$ of $\Lambda^{\circ}_{fix}$ such that $\langle e_i, e_j\rangle$ is nonzero if and only if $i+j=N+1.$ We let $A\subseteq G$ denote the subgroup of automorphisms which act as scalar on every $e_i$ for $1\le i\le N.$ As $K$ is hyperspecial, we have the Cartan decomposition $G=KAK,$ and thus $\mathcal{H}(G,K)\rightiso R[A/A\cap K]^W$ where $W=N_G(A)/A$ is the Weyl group of $A$ in $G.$

\begin{definition}\label{Tidef}
    For $\underline{t}=(t_1,\ldots,t_N)\in\Z^N$ satisfying $t_i+t_{N+1-i}=0,$ and for $a\in F_0^\times,$ we denote $a^{\underline{t}}\in A$ to be such that $a^{\underline{t}}\cdot e_i=a^{t_i}e_i.$ For $0\le k\le\lfloor N/2\rfloor,$ we denote
    \begin{equation*}
        \underline{t}_k\defeq(\underbrace{1,\ldots,1}_k,\underbrace{0,\ldots,0}_{N-2k},\underbrace{-1,\ldots,-1}_k)
    \end{equation*}\index{1muk@$\mu_k$}
    and we let $T_k\in\mathcal{H}(G,K)$ be $T_k\defeq\mathrm{char}(K\varpi^{\underline{t}_k}K).$
\end{definition}
\begin{remark}
    We note that the $T_i\in\mathcal{H}(G,K)$ do not depend on the choice of basis $e_i$ as above, and it is well-known that $\mathcal{H}(G,K)=R[T_1,\ldots,T_{\lfloor N/2\rfloor}].$
\end{remark}

We now fix some notation for the Satake transform of $\mathcal{H}(G,K).$ We denote by $G^0$ the connected component of $G.$ Note that $G^0\neq G$ only in the case \Ogen, where we have $G^0=G\cap\mathrm{SL}(V).$ We note that if $K^0\defeq K\cap G^0,$ then $\mathcal{H}(G,K)\hookrightarrow\mathcal{H}(G^0,K^0)$ via restriction, and that this is an isomorphism except for the case \Ogen with $N$ even.

If $\dim_FV=N,$ the Langlands dual group of $G^0,$ which we denote ${}^LG,$ is
\begin{center}
\begin{tabular}{c|c|c}
    \Ugen&\Ogen&\SPgen\\
    \hline
    $\mathrm{GL}_N^{\mathrm{ext}}(\C)$&$\begin{cases}\mathrm{SO}(N,\C)&\text{if $N$ is even,}\\\mathrm{Sp}(N-1,\C)&\text{if $N$ is odd}\end{cases}$&$\mathrm{SO}(N+1,\C)$
\end{tabular}
\end{center}
Here we denote $\mathrm{GL}_N^{\mathrm{ext}}(\C)\defeq\mathrm{GL}_N(\C)\rtimes\{1,\sigma\}$ where
\begin{equation*}
    \sigma(g)=J\cdot (g^{\intercal})^{-1}\cdot J^{-1}\quad\text{for}\quad J\defeq\begin{psmallmatrix}&&&&1\\&&&-1&\\&&\iddots&&\\&(-1)^{N-2}&&&\\(-1)^{N-1}&&&&\end{psmallmatrix}.
\end{equation*}

For the cases \Ogen and \SPgen, denote $M=\begin{cases}
    2\lfloor N/2\rfloor&\text{in case \Ogen,}\\N+1&\text{in case \SPgen,}
\end{cases}$ so that ${}^LG$ is a classical group for a space of dimension $M.$ Note that $\lfloor N/2\rfloor=\lfloor M/2\rfloor.$ Let $T\subseteq{}^LG$ be the maximal torus, and we may assume, without loss of generality, that it consists of the diagonal matrices. Let $\mathbb{X}^*(T)$ denote the group of characters of $T,$ and for $1\le i\le M$ we denote $\mu_i\in\mathbb{X}^*(T)$ to be the projection to the $i$-th factor. Then $\mathbb{X}^*(T)$ is a free abelian group generated by $\mu_1,\ldots,\mu_{\lfloor N/2\rfloor},$ and we have $\mu_{M+1-i}=-\mu_i.$ For $\mu\in\mathbb{X}^*(T),$ denote $[\mu]\in R[\mathbb{X}^*(T)]$ the corresponding element. For $1\le i\le \lfloor N/2\rfloor,$ we consider $\boldsymbol{\mu}_i\defeq[\mu_i]+[-\mu_i].$ Then we have that
\begin{equation}\label{SatOSP}
    R[\boldsymbol{\mu}_1,\ldots,\boldsymbol{\mu}_{\lfloor N/2\rfloor}]^{\mathrm{sym}}\hookrightarrow R[\mathbb{X}^*(T)]^{W_T}
\end{equation}
with equality except in case \Ogen with $N$ even.

For the case \Ugen, we follow \cite[Section B.1]{LTXZZ}. For $T\subseteq\mathrm{GL}_N\subseteq\mathrm{GL}_N^{\mathrm{ext}}$ the maximal torus, its group of characters $\mathbb{X}^*(T)$ is a free abelian group generated by the projection to the $i$-th factor $\mu_i\in\mathbb{X}^*(T).$ For $1\le i\le \lfloor N/2\rfloor,$ denote $\boldsymbol{\mu}_i\defeq[\mu_i-\mu_{N+1-i}]+[\mu_{N+1-i}-\mu_i]\in R[\mathbb{X}^*(T)].$ Then as in \cite[Remark B.1.1]{LTXZZ}, we have
\begin{equation}\label{SatU}
    R[\mathbb{X}^*(T)^\sigma]^{W_T}=R[\boldsymbol{\mu}_1,\ldots,\boldsymbol{\mu}_{\lfloor N/2\rfloor}]^{\mathrm{sym}}.
\end{equation}
\begin{definition}\label{Satdef}
    Fix a choice of a square root of $q$ in $R,$ which we take to be $q_0$ in case \Ugen. We consider the Satake transform $\mathrm{Sat}\colon\mathcal{H}(G,K)\hookrightarrow\mathcal{H}(G^0,K^0)\to R[\mathbb{X}^*(T)]^{W_T},$ which induces $\mathrm{Sat}\colon\mathcal{H}(G,K)\to R[\boldsymbol{\mu}_1,\ldots,\boldsymbol{\mu}_{\lfloor N/2\rfloor}]^{\mathrm{sym}}$\index{Sat@$\mathrm{Sat}$} \index{1mui@$\boldsymbol{\mu}_i$} via \eqref{SatOSP} and \eqref{SatU}.
\end{definition}
One can give explicit formulas for the Satake transforms of the Hecke operators $T_k\in\mathcal{H}(G,K),$ as for example in \cite{Macdonald} for the case that $G$ is split. We will not directly use such formulas, and will instead rely on the following.
\begin{theorem}[Lusztig--Kato formulas]\label{LKform}
Denote $n\defeq\lfloor N/2\rfloor.$ For $0\le\delta\le n,$ we have the following identities
\begin{center}
    \begin{tabular}{c|c}
        \SPgen&$\displaystyle\sum_{\delta\le i\le n}\qbinom{i}{\lfloor\frac{i-\delta}{2}\rfloor}{q^2}\mathrm{Sat}(T_{n-i})=q^{\frac{1}{2}\left(n^2+n-(\delta^2+\delta)\right)}\sum_{\delta\le i\le n}\binom{i}{\lfloor\frac{i-\delta}{2}\rfloor}\cdot s_{n-i}(\boldsymbol{\mu})$\\
        \hline
        \begin{tabular}{c}\Ugen\\$N$ even\end{tabular}&$\displaystyle\sum_{\delta\le i\le n}\qbinom{2i}{i-\delta}{-q_0}\mathrm{Sat}(T_{n-i})=q_0^{n^2-\delta^2}\sum_{\substack{\delta\le i\le n\\ i\equiv\delta\mod 2}}\binom{i}{\frac{i-\delta}{2}}\cdot s_{n-i}(\boldsymbol{\mu})$\\
        \hline
        \begin{tabular}{c}\Ugen\\$N$ odd\end{tabular}&$\displaystyle\sum_{\delta\le i\le n}\qbinom{2i+1}{i-\delta}{-q_0}\mathrm{Sat}(T_{n-i})=q_0^{n^2+n-(\delta^2+\delta)}\sum_{\delta\le i\le n}\binom{i}{\lfloor\frac{i-\delta}{2}\rfloor}\cdot s_{n-i}(\boldsymbol{\mu})$\\
        \hline
        \begin{tabular}{c}\Ogen\\$N$ even\end{tabular}&$\displaystyle\sum_{\substack{\delta\le i\le n\\ i\equiv\delta\mod 2}}\qbinom{i}{\frac{i-\delta}{2}}{q^2}\frac{q^\delta+1}{q^i+1}\mathrm{Sat}(T_{n-i})=q^{\frac{1}{2}(n^2-n-(\delta^2-\delta))}\sum_{\substack{\delta\le i\le n\\ i\equiv\delta\mod 2}}\binom{i}{\frac{i-\delta}{2}}\cdot s_{n-i}(\boldsymbol{\mu})$\\
        \hline
        \begin{tabular}{c}\Ogen\\$N$ odd\end{tabular}&$\displaystyle\sum_{\substack{\delta\le i\le n\\ i\equiv\delta\mod 2}}\qbinom{i}{\frac{i-\delta}{2}}{q^2}\mathrm{Sat}(T_{n-i})=q^{\frac{1}{2}\left(n^2-\delta^2\right)}\sum_{\substack{\delta\le i\le n\\ i\equiv\delta\mod 2}}\binom{i}{\frac{i-\delta}{2}}\cdot s_{n-i}(\boldsymbol{\mu})$\\
    \end{tabular}
\end{center}
\end{theorem}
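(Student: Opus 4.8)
The plan is to prove the five identities one case at a time, in each case by importing a ``raw'' description of the Satake transforms $\mathrm{Sat}(T_k)$ from the literature and converting it into the displayed triangular form by means of the incidence-algebra identities \Cref{circledast1,circledast2}. The unifying observation is that, up to explicit diagonal rescalings of the two families $\{\mathrm{Sat}(T_{n-z})\}_z$ and $\{s_{n-z}(\boldsymbol\mu)\}_z$ by powers of $q$ --- equivalently, conjugations by twists $h\cdot(-)$ exactly of the kind that already appear in the proofs of \Cref{circledast1,circledast2} --- each displayed identity is an avatar of the single matrix identity
\begin{equation*}
    C_i^\lambda\circledast(A_i^\lambda)^\circleddash\circledast B_i^\lambda=D_i^\lambda,
\end{equation*}
with $(i,\lambda)=(1,q)$ in the case \SPgen, $(i,\lambda)\in\{(2,q_0),(3,q_0)\}$ in the two cases \Ugen (according to the parity of $N$), and $(i,\lambda)\in\{(4,q),(5,q)\}$ in the two cases \Ogen. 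This matrix identity is immediate from \Cref{circledast1,circledast2} and associativity of $\circledast$ (indeed $C_i^\lambda\circledast(A_i^\lambda)^\circleddash\circledast B_i^\lambda=E_i^\lambda\circledast B_i^\lambda=D_i^\lambda$), so all the mathematical content is in identifying the combination $(A_i^\lambda)^\circleddash\circledast B_i^\lambda$, and the coefficients $C_i^\lambda(\delta,z)$, with the genuine change-of-basis data relating $\{\mathrm{Sat}(T_k)\}$ to the Schur polynomials $\{s_j(\boldsymbol\mu)\}$.

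\emph{Step 1: the raw Satake data.} For the cases \Ugen I would invoke the geometric-Satake/base-change computation of \cite[Appendix B]{LTXZZ} for the unitary groups $\mathrm{U}(N)$, which makes the Satake transform of each minuscule-type operator $T_k$ fully explicit in terms of the extended dual group $\mathrm{GL}_N^{\mathrm{ext}}(\C)$. After matching their conventions with those of \Cref{Satdef} and the choice $\sqrt q=q_0$, this yields $\mathrm{Sat}(T_{n-z})=\sum_m\big((A_i^{q_0})^\circleddash\circledast B_i^{q_0}\big)(z,m)\,s_{n-m}(\boldsymbol\mu)$ up to the advertised rescaling, with $i=2$ when $N$ is even and $i=3$ when $N$ is odd. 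For the cases \Ogen and \SPgen I would start instead from Macdonald's formula for the split classical dual groups ${}^LG=\mathrm{SO}_M(\C),\mathrm{Sp}_M(\C)$, which presents a normalization of $\mathrm{Sat}(T_{n-z})$ as the Hall--Littlewood polynomial $P_{\omega_{n-z}}$ attached to a fundamental coweight; expanding $P_{\omega_{n-z}}$ in the Schur basis of ${}^LG$ requires the Lusztig $t$-analogues / (generalized) Kostka--Foulkes polynomials of ${}^LG$ at pairs of fundamental weights, and I would extract these in closed form from the crystal- and charge-theoretic descriptions of \cite{Lecouvey-Lenart} and \cite{Jang-Kwon}. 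In this bookkeeping the passage from the Hall--Littlewood basis to the basis of Weyl-orbit sums contributes the factor $B_i^q$ and the passage from Weyl-orbit sums to Schur polynomials contributes $(A_i^q)^\circleddash$, with $i\in\{4,5\}$ in the case \Ogen (by the parity of $N$) and $i=1$ in the case \SPgen.

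\emph{Step 2: assembly.} Given the formula of Step 1, I convolve it on the left by the appropriate twist of $C_i^\lambda$. The left-hand side becomes $\sum_{\delta\le z\le n}C_i^\lambda(\delta,z)\,\mathrm{Sat}(T_{n-z})$, which --- reading $C_i^\lambda(\delta,z)$ off the table --- is exactly the left-hand side of the displayed identity, the $\lambda$-binomial coefficient appearing there being $C_i^\lambda(\delta,z)$ at $\lambda=q$ (resp.\ $\lambda=q_0$). The right-hand side becomes $\sum_m\big(C_i^\lambda\circledast(A_i^\lambda)^\circleddash\circledast B_i^\lambda\big)(\delta,m)\,s_{n-m}(\boldsymbol\mu)=\sum_m D_i^\lambda(\delta,m)\,s_{n-m}(\boldsymbol\mu)$ by \Cref{circledast1,circledast2}. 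Writing $D_i^\lambda(\delta,m)$ as its $q$-power times its binomial part $\binom{m}{\lfloor(m-\delta)/2\rfloor}$, pulling the $\delta$-dependent but $m$-independent portion of the $q$-power outside the sum (this produces the prefactors $q^{\frac12(n^2+n-\delta^2-\delta)}$ and its analogues), and absorbing the residual $m$-dependent powers into the rescaling of the $s_{n-m}(\boldsymbol\mu)$ fixed in Step 1, yields the five formulas.

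\emph{Main obstacle.} The assembly in Step 2 is essentially formal once \Cref{circledast1,circledast2} are available; the real work is Step 1. The delicate points are: (a) pinning down every normalization --- the square root of $q$, the half-integral cocharacters occurring for $N$ odd, the $\langle\rho,\cdot\rangle$-shifts in Macdonald's formula, and the $\mathrm{GL}_N^{\mathrm{ext}}$ twist of \cite{LTXZZ} --- so that the accumulated powers of $q$ reassemble into the single clean prefactor in each identity and the diagonal rescalings used in Steps 1--2 are mutually consistent; (b) extracting honest closed forms for the classical-type Kostka--Foulkes polynomials at fundamental weights out of the crystal/charge formulas of \cite{Lecouvey-Lenart,Jang-Kwon}, which a priori are not presented as the rational and $q$-binomial expressions encoded by $(A_i^q)^\circleddash\circledast B_i^q$; and (c) keeping straight which of the functions $B_1$ (case \SPgen), $B_4$ (case \Ogen, $N$ even), $B_5$ (case \Ogen, $N$ odd) governs each situation, and likewise on the unitary side. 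I expect (b) to account for most of the effort.
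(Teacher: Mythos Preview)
Your plan and the paper's proof converge on the same hard input --- explicit Kostka--Foulkes polynomials at fundamental weights --- but the paper's route is more direct, and your use of \Cref{circledast1,circledast2} does not actually shortcut anything.

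The paper does not use \Cref{circledast1,circledast2} in the proof of \Cref{LKform} at all. It applies the general Lusztig--Kato formula of \cite{Kato} to $\rho_k=\bigwedge^k\mathrm{std}$ of ${}^LG$: the character $\chi(\rho_k)$ is computed directly (giving the right-hand side with its ordinary binomials), and the left-hand side is identified by evaluating the Kostka--Foulkes polynomials $K^{{}^LG}_{\omega_a,\omega_b}(t)$ in closed form --- reducing to type $A$ via \cite{Lecouvey-Lenart} when ${}^LG$ is of type $C$, and using the combinatorial formula of \cite{Jang-Kwon} (with an explicit weighted count over the sets $\mathscr{S}(k,M)$) when ${}^LG$ is of type $B$ or $D$. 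The identities \Cref{circledast1,circledast2} enter only later in the paper, where \Cref{LKform} is combined with the module-theoretic relations $T_{\le k}=\sum_iA(k,i)T_i=\sum_iB(k,i)S_i$ to deduce the Satake formula for $S_{k,r}^?$.

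Your Step~1 asks for $\mathrm{Sat}(T_{n-z})=\sum_m((A_i)^\circleddash\circledast B_i)(z,m)\,s_{n-m}(\boldsymbol\mu)$ up to rescaling. But since $C_i$ is invertible in the incidence algebra, this statement is \emph{equivalent} to \Cref{LKform} itself once \Cref{circledast1,circledast2} are known; the convolution by $C_i$ in your Step~2 is then just a reformulation. So you still owe a direct proof of the transition formula, and your own obstacle (b) correctly identifies this as the Kostka--Foulkes computation --- the same one the paper carries out. The additional factorization you posit, with $B_i$ as the Hall--Littlewood--to--orbit-sum transition and $(A_i)^\circleddash$ as orbit-sum--to--Schur, is not substantiated: in the paper the functions $A_i,B_i$ arise from Hecke-module combinatorics (isotropic-subspace counts defining $T_{\le k}$, and the $W_\beta$-weights in $\Delta_{\le k}$), not from any basis change on the dual group, and there is no a priori reason they should admit the representation-theoretic interpretation you suggest.
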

\begin{proof}
    The case \Ugen is proved in \cite[Lemma B.1.2]{LTXZZ}, so we focus on the cases \Ogen and \SPgen.

    We recall the Lusztig--Kato formula \cite[Theorem 1.5]{Kato}. Let $P^{++}$ be the set of dominant weights of ${}^LG$ and $\rho^\vee$ be half the sum of the positive coroots. For $\lambda\in P^{++},$ let $\chi_\lambda$ be the character of the irreducible representation of highest weight $\lambda.$ Then the Lusztig--Kato formula states that
    \begin{equation}\label{LKeq}
        \sum_{\substack{\mu\in P^{++}\\\mu\le \lambda}}q^{\langle\lambda-\mu,\rho^\vee\rangle}K_{\lambda,\mu}(q^{-1})\cdot\mathrm{Sat}(T_\mu)=q^{\langle\lambda,\rho^\vee\rangle}\chi_\lambda
    \end{equation}
    where $T_\mu$ are defined similarly as in \Cref{Tidef}, and $K_{\lambda,\mu}(t)$ are the generalized Kostka--Foulkes polynomials.
    
    Denote $k\defeq n-\delta.$ We will consider the Lusztig--Kato formula for the representation $\rho_k\defeq\bigwedge^k\mathrm{std}$ of ${}^LG.$ Denote by $\chi(\rho_k)$ the restriction of its character to $T.$ Then we have
    \begin{equation*}
        \chi(\rho_k)=\begin{cases}
            \displaystyle\sum_{0\le i\le k}\binom{\lfloor N/2\rfloor-k+i}{\lfloor i/2\rfloor}\cdot s_{k-i}(\boldsymbol{\mu})&\text{in case \SPgen,}\\
            \displaystyle\sum_{\substack{0\le i\le k\\i\equiv0\mod2}}\binom{\lfloor N/2\rfloor-k+i}{i/2}\cdot s_{k-i}(\boldsymbol{\mu})&\text{in case \Ogen}.
        \end{cases}
    \end{equation*}
    This is because $\chi(\rho_k)=s_k([\mu_1],\ldots,[\mu_M])$ and we have the formal identities
    \begin{equation*}
    \begin{split}
        s_k(\alpha_1,\ldots,\alpha_r,1,\alpha_r^{-1},\ldots\alpha_r^{-1})&=\sum_{0\le i\le k}\binom{r-k+i}{\lfloor i/2\rfloor}\cdot s_{k-i}(\alpha_1+\alpha_1^{-1},\ldots,\alpha_r+\alpha_r^{-1}),\\
        s_k(\alpha_1,\ldots,\alpha_r,\alpha_r^{-1},\ldots\alpha_r^{-1})&=\sum_{\substack{0\le i\le k\\i\equiv0\mod2}}\binom{r-k+i}{i/2}\cdot s_{k-i}(\alpha_1+\alpha_1^{-1},\ldots,\alpha_r+\alpha_r^{-1}).
    \end{split}
    \end{equation*}

    Note that by changing $i\mapsto i-\delta,$ we can rewrite this as
    \begin{equation*}
        \chi(\rho_k)=\begin{cases}
            \displaystyle\sum_{\delta\le i\le n}\binom{i}{\lfloor\frac{i-\delta}{2}\rfloor}\cdot s_{n-i}(\boldsymbol{\mu})&\text{in case \SPgen,}\\
            \displaystyle\sum_{\substack{\delta\le i\le n\\i\equiv\delta\mod2}}\binom{i}{\frac{i-\delta}{2}}\cdot s_{n-i}(\boldsymbol{\mu})&\text{in case \Ogen.}
        \end{cases}
    \end{equation*}
    Thus it remains to compute the left hand side of \eqref{LKeq}.

    In the case \Ogen with $N$ odd, ${}^LG$ has type $C_n.$ Note that $\chi(\rho_k)=\sum_{i=0}^{\lfloor k/2\rfloor}\chi_{\omega_{k-2i}}$ where $\omega_i$ are the fundamental weights\footnote{This is because the contraction map $\bigwedge^k\rho_{std}\to\bigwedge^{k-2}\rho_{std}$ given by the symplectic form is surjective with irreducible kernel of highest weight $\omega_k.$}. It remains to see that
    \begin{equation*}
        q^{\frac{1}{2}(i^2-\delta^2)}\sum_{j=0}^{\lfloor (i-\delta)/2\rfloor}K^{C_n}_{\omega_{n-\delta-2j},\omega_{n-i}}(q^{-1})\isequal \qbinom{i}{\frac{i-\delta}{2}}{q^2},
    \end{equation*}
    which is equivalent to
    \begin{equation*}
        \sum_{j=0}^{\lfloor (i-\delta)/2\rfloor}K^{C_n}_{\omega_{n-\delta-2j},\omega_{n-i}}(t)\isequal \qbinom{i}{\frac{i-\delta}{2}}{t^2}.
    \end{equation*}
    By the work of Lecouvey--Lenart \cite[Theorem 7.2, Remark 7.6]{Lecouvey-Lenart}, we have, denoting $\gamma_k=(\underbrace{1,\ldots,1}_k,0,\ldots,0,\underbrace{-1,\ldots,-1}_k),$ that
    \begin{equation*}
        K^{C_n}_{\omega_{n-a},\omega_{n-b}}(t)=K^{C_b}_{\omega_{b-a},0}(t)=\begin{cases}
            K^{A_{b-1}}_{\gamma_{(b-a)/2},0}(t^2)&\text{if $b-a$ is even,}\\
            0&\text{otherwise.}
        \end{cases}
    \end{equation*}
    Now the claimed formula follows from the well-known fact (see  \cite[Ch. III, §6, Ex. 2]{Macdonaldbook}) that
    \begin{equation*}
        K^{A_{i-1}}_{\gamma_m,0}(t)=\qbinom{i}{m}{t}-\qbinom{i}{m-1}{t}.
    \end{equation*}

    For the cases where ${}^LG$ is orthogonal of rank $M,$ $\rho_k$ is an irreducible representation except when $k=n$ and $M=2n,$ in which case it is a sum of two irreducible representations. Similarly as above\footnote{See \cite[\S 2.2 remark (iv)]{Lecouvey}, for example.}, it suffices to consider the case that $\mu=0.$ That is, we need to prove that
    \begin{equation*}
        q^{\frac{1}{2}(2nk-k^2+k)}K^{B_n}_{\omega_k,0}(q^{-1})\isequal\qbinom{n}{\lfloor \frac{k}{2}\rfloor}{q^2}\quad\text{for $0\le k\le n$}
    \end{equation*}
    and
    \begin{equation*}
        q^{\frac{1}{2}(2nk-k^2-k)}K^{D_n}_{\omega_k,0}(q^{-1})\isequal\frac{1}{q^n+1}\qbinom{n}{\frac{k}{2}}{q^2}\cdot\begin{cases}
            q^{n-k}+1&\text{if $k<n,$}\\1&\text{if $k=n$}
        \end{cases}\quad\text{for $0\le k\le n$}.
    \end{equation*}
    These are equivalent to
    \begin{equation*}
        K^{B_n}_{\omega_k,0}(t)\isequal\qbinom{n}{\lfloor k/2\rfloor}{t^2}\cdot\begin{cases}
            t^{k/2}&\text{if $k$ is even,}\\
            t^{n-\lfloor k/2\rfloor}&\text{if $k$ is odd}
        \end{cases}\quad\text{for $0\le k\le n$}
    \end{equation*}
    and
    \begin{equation*}
        K^{D_n}_{\omega_k,0}(t)\isequal\frac{1}{t^n+1}\qbinom{n}{\frac{k}{2}}{t^2}\cdot\begin{cases}
            t^{n-k/2}+t^{k/2}&\text{if $k<n,$}\\
            t^{n/2}&\text{if $k=n$}
        \end{cases}\quad\text{for $0\le k\le n$}.
    \end{equation*}
    We can verify these identities by using the combinatorial formula of Jang--Kwon \cite[Theorem 5.6]{Jang-Kwon} for orthogonal Kostka--Foulkes polynomials with second parameter $0.$ In our simpler case, $\mu$ in the notation of loc.\ cit.\ is a column partition of size $k.$ A semistandard Young tableaux of shape $\mu$ in the letters $\{1,\ldots,M\}$ is given by a subset $S\subseteq\{1,\ldots,M\}$ of size $k.$ For such a subset $S,$ the condition \cite[Equation (4.3)]{Jang-Kwon} is vacuous. Consider the sets
    \begin{equation*}
        \mathscr{S}(k,M)\defeq\left\{S\subseteq\{1,\ldots,M\}\colon \begin{array}{l}S\text{ has size }k\text{ and is of the form }S=(a_1,b_1]\sqcup\cdots\sqcup(a_l,b_l]\\\text{where }l\in\Z_{\ge1}\text{ and }a_i,b_i\text{ are even integers with}\\0\le a_1<b_1<a_2<b_2<\cdots<a_l<b_l\le M\end{array}\right\}
    \end{equation*}
    and
    \begin{equation*}
        \mathscr{S}'(k,M)\defeq\begin{cases}
            \mathscr{S}(k,M)\sqcup\mathscr{S}(M-k,M)&\text{if $k\neq M-k$,}\\
            \mathscr{S}(k,M)&\text{otherwise.}
        \end{cases}
    \end{equation*}
    Define, for $S\in\mathscr{S}(k,M),$ its \emph{weight} to be $w(S)\defeq\frac{1}{2}\sum_{i=1}^l(a_i+b_i).$ Then, using \cite[Lemma 5.3]{Jang-Kwon}, we can check that \cite[Theorem 5.6]{Jang-Kwon} translates to
    \begin{equation*}
        K^{B_n}_{\omega_k,0}(t)=\sum_{S\in\mathscr{S}'(k,2n+1)}t^{w(S)},\quad K^{D_n}_{\omega_k,0}(t)=\frac{1}{t^n+1}\sum_{S\in\mathscr{S}'(k,2n)}t^{w(S)}.
    \end{equation*}
    
    Denote
    \begin{equation*}
        w(k,N)=\sum_{S\in\mathscr{S}(k,N)}t^{w(S)}.
    \end{equation*}
    It remains to check that
    \begin{equation*}
        w(k,N)\isequal\begin{cases}
            t^{k/2}\qbinom{\lfloor N/2\rfloor}{k/2}{t^2}&\text{if $k$ is even,}\\
            0&\text{if $k$ is odd.}
        \end{cases}
    \end{equation*}
    Clearly $w(k,N)=w(k,2\lfloor N/2\rfloor).$ So consider without loss of generality the case $N=2n.$ Thinking about the last segment $(a_l,b_l]$ of $S\in\mathscr{S}(k,2n),$ we obtain the recursion
    \begin{equation*}
        w(k,2n)=w(k,2(n-1))+\sum_{i\ge1}t^{2n-i}w(k-2i,2(n-i-1)).
    \end{equation*}
    We may rewrite this as the following finite recurrence
    \begin{equation*}
        w(k,2n)=w(k,2(n-1))+t\cdot w(k-2,2(n-1))+t(t^{2(n-1)}-1)\cdot w(k-2,2(n-2)),
    \end{equation*}
    from which the desired formula easily follows by induction.
\end{proof}

\section{Explicit model of spherical functions}\label{MainSection}
Let $r\ge1$ be a positive integer, and fix a coefficient ring $R\neq0.$ In the case \Ogen, we assume that $2\in R^\times.$

Let $V^\flat\defeq F^{2\cfactor r},$ that is,
\begin{case}{\Ugen}
$V^\flat=F^{2r},$
\end{case}
\begin{case}{\Ogen}
$V^\flat=F^{2r},$
\end{case}
\begin{case}{\SPgen}
$V^\flat=F^{4r}.$
\end{case}
We equip it with a pairing $\langle\cdot,\cdot\rangle^\flat\colon V^\flat\times V^\flat\to F$\index{Vflat@$V^\flat$} which is:
\begin{case}{\Ugen}
$F/F_0$-Hermitian,
\end{case}
\begin{case}{\Ogen}
symmetric,
\end{case}
\begin{case}{\SPgen}
alternating.
\end{case}
We assume that both $(V^\flat,\langle\cdot,\cdot\rangle^\flat)$ and $(V^\flat,\varpi\langle\cdot,\cdot\rangle^\flat)$ admit self-dual lattices. in the cases \Ugen and \SPgen this is automatic, and in the case \Ogen this nails $(V^\flat,\langle\cdot,\cdot\rangle)$ down to $\mathrm{sign}(\det(V^\flat,\langle\cdot,\cdot\rangle))=\epsilon^r.$ We denote $\Lat^\bullet(V^\flat)\defeq\Lat^\circ(V^\flat,\varpi\langle\cdot,\cdot\rangle^\flat).$\index{Latbullet@$\Lat^\bullet(V^\flat)$} That is, a lattice $\Lambda^\bullet\in\Lat^\bullet(V^\flat)$ satisfies that $\Lambda^{\bullet,\vee}=\varpi\Lambda^\bullet.$

Furthermore, in the cases \Ugen and \Ogen, we let $V^\sharp\defeq F^{2r+1}$\index{Vsharp@$V^\sharp$} and equip it with a pairing $\langle\cdot,\cdot\rangle^\sharp\colon V^\sharp\times V^\sharp\to F$ which is:
\begin{case}{\Ugen}
$F/F_0$-Hermitian,
\end{case}
\begin{case}{\Ogen}
symmetric.
\end{case}
We also assume that $V^\sharp$ admits self-dual lattices, and we denote $\psi\defeq\mathrm{sign}(\mathrm{disc}(V)).$\index{1psi@$\psi$} Note that $\binom{2r}{2}\equiv r\mod 2,$ and so $\psi=\epsilon^r\mathrm{sign}(\det(V)).$

We let $G^\flat$ resp.\ $G^\sharp$ denote the automorphism group of $(V^\flat,\langle\cdot,\cdot\rangle^\flat)$ resp.\ $(V^\sharp,\langle\cdot,\cdot\rangle^\sharp).$ We will consider orthogonal decompositions $V^\flat=V_1^\flat\oplus V_2^\flat$ resp.\ $V^\sharp=V_1^\sharp\oplus V_2^\sharp$ where $V_2^\flat$ resp.\ $V_2^\sharp$ has dimension $\cfactor r.$ For such decompositions, we denote $H_1^\flat,H_2^\flat$ resp.\ $H_1^\sharp,H_2^\sharp$ the corresponding automorphism groups. We consider
\index{Xflatsharp@$X^\flat,\ X^\sharp$}
\begin{equation*}
    X^\flat\defeq\bigsqcup_{V^\flat=V_1^\flat\oplus V_2^\flat}(H_1^\flat\times H_2^\flat)\backslash G^\flat,\quad X^\sharp\defeq\bigsqcup_{V^\sharp=V_1^\sharp\oplus V_2^\sharp}(H_1^\sharp\times H_2^\sharp)\backslash G^\sharp,
\end{equation*}
where the disjoint unions are over the isomorphism classes of $V_1^?,V_2^?.$

Now fix choices $\Lambda^{\circ,\flat}_{fix}\in\Lat^\circ(V^\flat),$ $\Lambda^{\bullet,\flat}_{fix}\in\Lat^\bullet(V^\flat)$ and  $\Lambda^{\sharp}_{fix}\in\Lat^\circ(V^\sharp).$ Their stabilizers determine, respectively, hyperspecial subgroups $K^{\flat},K^{\bullet,\flat}\subseteq G^\flat$ and $K^{\sharp}\subseteq G^\sharp.$ We consider
\index{S(X/K)@$\mathcal{S}(X^?/K^?)$}
\begin{equation*}
    \mathcal{S}(X^\flat/K^{\flat})\defeq\mathcal{S}(X^\flat)^{K^\flat},\quad\mathcal{S}(X^\flat/K^{\bullet,\flat})\defeq\mathcal{S}(X^\flat)^{K^{\bullet,\flat}},\quad\mathcal{S}(X^\sharp/K^\sharp)\defeq\mathcal{S}(X^\sharp)^{K^\sharp}
\end{equation*}
to be the spaces of ($R$-valued) locally constant, compactly supported spherical functions on $X^\flat$ and $X^\sharp.$ As in \Cref{Hdef}, we also consider the Hecke algebras $\mathcal{H}(G^\flat,K^{\flat}),\mathcal{H}(G^\flat,K^{\bullet,\flat}),\mathcal{H}(G^\sharp,K^\sharp)$\index{H(G,K)@$\mathcal{H}(G^?,K^?)$} which act on the above spaces via convolution.

\subsection{Combinatorial description of spherical functions}
Since $G^\flat/K^\flat=\Lat^\circ(V^\flat),$ $G^\flat/K^{\bullet,\flat}=\Lat^\bullet(G^\flat)$ and $G^\sharp/K^\sharp=\Lat^\circ(V^\sharp),$ we have 
\begin{equation*}
    X^\flat/K^\flat=\bigsqcup_{V^\flat=V_1^\flat\oplus V_2^\flat}(H_1^\flat\times H_2^\flat)\backslash \Lat^\circ(V^\flat),\quad X^\flat/K^{\bullet,\flat}=\bigsqcup_{V^\flat=V_1^\flat\oplus V_2^\flat}(H_1^\flat\times H_2^\flat)\backslash \Lat^\bullet(V^\flat),
\end{equation*}
and
\begin{equation*}
    X^\sharp/K^\sharp=\bigsqcup_{V^\sharp=V_1^\sharp\oplus V_2^\sharp}(H_1^\sharp\times H_2^\sharp)\backslash \Lat^\circ(V^\sharp).
\end{equation*}

\begin{proposition}\index{typfs@$\mathrm{typ}^\flat,\ \mathrm{typ}^\sharp$}
    The maps $\mathrm{typ}^\natural\circ\mathrm{int}_2$ induce bijections
    \begin{equation*}
        \mathrm{typ}^{\flat}\colon X^\flat/K^{\flat}\rightiso\Typ^0_r,\quad \mathrm{typ}^{\bullet,\flat}\colon X^\flat/K^{\bullet,\flat}\rightiso\Typ^0_r,\quad\mathrm{typ}^\sharp\colon X^\sharp/K^\sharp\rightiso\Typ^0_r.
    \end{equation*}
    Note that in the second map, we are considering the space $(V^\flat,\varpi\langle\cdot,\cdot\rangle^\flat).$
\end{proposition}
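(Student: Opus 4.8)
The plan is to prove all three bijections together. Since $\Lat^\bullet(V^\flat)=\Lat^\circ(V^\flat,\varpi\langle\cdot,\cdot\rangle^\flat)$ and the scaled space $(V^\flat,\varpi\langle\cdot,\cdot\rangle^\flat)$ again carries self-dual lattices and, in the case \Ogen, still satisfies $\mathrm{sign}(\det)=\epsilon^r$ (as $\varpi^{2\cfactor r}$ is a square), the statement for $\mathrm{typ}^{\bullet,\flat}$ is the statement for $\mathrm{typ}^{\flat}$ applied to that scaled space; so I treat the $\flat$ (that is, $\circ$) and $\sharp$ cases. In both, the source is $\bigsqcup_{(V_1^?,V_2^?)}(H_1^?\times H_2^?)\backslash\Lat^\circ(V^?)$, the union running over isometry classes of orthogonal decompositions of the fixed space with $\dim_F V_2^?=\cfactor r$; by \Cref{typinj} the map $\mathrm{typ}^\natural\circ\mathrm{int}_2$ is injective on each summand, so it suffices to show that as $(V_1^?,V_2^?)$ varies the images of the summands partition $\Typ^0_r$. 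I will use two elementary facts about $p$-adic forms: (a) $V^\flat$ is \emph{split} — it has a self-dual lattice and, in the case \Ogen, Hasse invariant $1$ — so that for any space $W$ of dimension $\cfactor r$ one has $V^\flat\cong(-W)\perp W$ (in the case \Ugen one uses that $F/F_0$ is unramified, so $-1$ is a norm, to see $(-W)\perp W$ is split); (b) $V^\sharp$ is \emph{isotropic}, since its self-dual lattice reduces to an isotropic space over $\F_q$.

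\emph{Image of a single summand.} In the $\sharp$ case $\dim_F V_1^\sharp=r+1>r=\dim_F V_2^\sharp$, so \Cref{typbij} (applied with the two factors interchanged) identifies the image with $\mathrm{typ}^\natural(\Lat(V_2^\sharp))\cap\Typ^0_r$. In the $\flat$ case $\dim_F V_1^\flat=\dim_F V_2^\flat=\cfactor r$, and by (a) together with Witt cancellation every pair occurring in the union has $V_1^\flat\cong -V_2^\flat$; thus \Cref{typbij} again gives image $\mathrm{typ}^\natural(\Lat(V_2^\flat))\cap\Typ^0_r$, the remark following it not being triggered. \emph{Disjointness of the images.} By the characterization \eqref{typninj}, if $\mathrm{typ}^\natural(\mathrm{int}_2(\Lambda))=(e^0,\chi^0)$ then $V_2^?\cong\bigoplus_i V^{(i)}$, the orthogonal sum of the standard spaces attached to the data $(e^0(i),\chi^0(i))$; hence $(e^0,\chi^0)$ determines the isometry class of $V_2^?$, and then — by Witt cancellation from $V_1^?\oplus V_2^?\cong V^?$ — that of $V_1^?$, so distinct summands have disjoint images.

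\emph{Covering.} Let $(e^0,\chi^0)\in\Typ^0_r$ and put $V_2^?\defeq\bigoplus_i V^{(i)}$, the orthogonal sum of the standard spaces attached to $(e^0(i),\chi^0(i))$; it has dimension $\cfactor\sum_i e^0(i)=\cfactor r$ and carries the lattice $L=\bigoplus_i L^{(i)}$ with $\mathrm{typ}^\natural(L)=(e^0,\chi^0)$, so $(e^0,\chi^0)\in\mathrm{typ}^\natural(\Lat(V_2^?))$. By (a), in the $\flat$ case $V^\flat\cong(-V_2^\flat)\perp V_2^\flat$; by (b) and $\dim_F V^\sharp-\dim_F V_2^\sharp\ge 2$, in the $\sharp$ case $V_2^\sharp$ splits off $V^\sharp$ as an orthogonal summand. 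Fixing such a decomposition $V^?=V_1^?\oplus V_2^?$, the resulting pair occurs in the union, and by the image computation above its image is $\mathrm{typ}^\natural(\Lat(V_2^?))\cap\Typ^0_r\ni(e^0,\chi^0)$. This proves surjectivity and completes the proof.

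The main obstacle is the splitness input (a): one needs $V^\flat$ to be split so that the model space $V_2^\flat$ assembled from an arbitrary type in $\Typ^0_r$ genuinely arises as an orthogonal summand of the fixed $V^\flat$ with complement $-V_2^\flat$ — this is exactly where the Hasse-invariant hypothesis (case \Ogen) and the unramifiedness of $F/F_0$ (case \Ugen) enter — and, relatedly, one must check that in the equal-dimension $\flat$ case every pair occurring in the union has $V_1^\flat\cong -V_2^\flat$, so that the whole set $\mathrm{typ}^\natural(\Lat(V_2^\flat))\cap\Typ^0_r$ is realized and not merely its subset with $e^0(0)>0$.
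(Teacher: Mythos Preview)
Your proof is correct and follows essentially the same route as the paper: both reduce to \Cref{typbij} together with the fact that, as $V_2^?$ ranges over all isometry classes of spaces of dimension $\cfactor r$, the sets $\mathrm{typ}^\natural(\Lat(V_2^?))\cap\Typ^0_r$ partition $\Typ^0_r$. The paper dispatches this partition statement by citing \cite[\citeXoverKclassification]{CoratoZanarella}, whereas you reprove it inline (disjointness from the fact that the type determines the isometry class of $V_2^?$; covering by building $V_2^?$ from the data $(e^0,\chi^0)$) and additionally verify explicitly that every such $V_2^?$ actually occurs as an orthogonal summand of the fixed $V^?$ --- a point the paper leaves implicit. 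Your splitness argument (a) for $V^\flat$ and the isotropy argument (b) for $V^\sharp$ are the right ingredients for this last verification; the paper's assumptions on $V^\flat$ (self-dual lattices for both $\langle\cdot,\cdot\rangle^\flat$ and $\varpi\langle\cdot,\cdot\rangle^\flat$) indeed force $V^\flat\cong H^{\cfactor r}$ in all three cases, so your claim that $V_1^\flat\cong -V_2^\flat$ for every summand is justified.
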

\begin{proof}
    This follows from \Cref{typbij}, together with the corresponding statement \cite[\citeXoverKclassification]{CoratoZanarella}.
\end{proof}

For $0\le k\le \cfactor r,$ we have the Hecke operators
\index{Ticircbulletflatsharp@$T_{k,r}^{\flat},\ T_{k,r}^{\bullet,\flat},\ T_{k,r}^\sharp$}
\begin{equation*}
    T_k^{\flat}\in\mathcal{H}(G^\flat,K^\flat),\quad T_k^{\bullet,\flat}\in\mathcal{H}(G^\flat,K^{\bullet,\flat})\quad\text{and}\quad T_k^\sharp\in\mathcal{H}(G^\sharp,K^\sharp)
\end{equation*}
as defined in \Cref{Tidef}. We also consider the intertwining operators
\index{Tcircbullet@$T^{\bullet\circ}_r,\ T^{\circ\bullet}_r$}
\begin{equation*}
    T^{\bullet\circ}\colon\mathcal{S}(X^\flat/K^\flat)\to\mathcal{S}(X^\flat/K^{\bullet,\flat})\quad\text{and}\quad T^{\circ\bullet}\colon\mathcal{S}(X^\flat/K^{\bullet,\flat})\to\mathcal{S}(X^\flat/K^\flat)
\end{equation*}
given by convolution by $\mathrm{char}(K^\flat K^{\bullet,\flat})$ and $\mathrm{char}(K^{\bullet,\flat}K^\flat)$ respectively.

Putting it all together, we have the following ``combinatorial'' description of the structure of the $\mathcal{H}(G^?,K^?)$-modules $\mathcal{S}(X^?/K^?).$
\begin{corollary}
    The above maps induce identifications
    \begin{equation*}
        \mathrm{typ}^{\flat}\colon\mathcal{S}(X^\flat/K^{\flat})\rightiso R[\Typ_r^0],\quad\mathrm{typ}^{\bullet,\flat}\colon\mathcal{S}(X^\flat/K^{\bullet,\flat})\rightiso R[\Typ_r^0],\quad \mathrm{typ}^{\sharp}\colon\mathcal{S}(X^\sharp/L^\sharp)\rightiso R[\Typ_r^0].
    \end{equation*}
    The (adjoint of the) action of the Hecke algebras gets translated to the following \emph{counting problems}. For $?\in\{\flat,\sharp\},$ given $\delta_1,\delta_2\in\Typ_r^0,$ the coefficient of $\delta_2$ in $T_k^{?,*}(\delta_1)$ is given by: for $\Lambda_1\in\Lat^\circ(V^?)$ with $\mathrm{typ}^?(\Lambda_1)=\delta_1,$ the quantity
    \begin{equation*}
        \#\{\Lambda_2\in\Lat^\circ(V^?)\colon \mathrm{typ}^?(\Lambda_2)=\delta_2,\ \mathrm{relpos}(\Lambda_1,\Lambda_2)=(\underbrace{1,\ldots,1}_k,\underbrace{0,\ldots,0}_{N-2k},\underbrace{-1,\ldots,-1}_k)\}.
    \end{equation*}

    Furthermore, the (adjoint of the) action of the intertwining operators gets translated to the following \emph{counting problems}. Given $\delta^\circ,\delta^\bullet\in\Typ_r^0,$ the coefficient of $\delta^\circ$ in $T^{\circ\bullet,*}(\delta^\bullet)$ is given by: for $\Lambda^\bullet\in\Lat^\bullet(V^\flat)$ with $\mathrm{typ}^{\bullet,\flat}(\Lambda^\bullet)=\delta^\bullet,$ the quantity
    \begin{equation*}
        \#\{\Lambda^\circ\in\Lat^\circ(V^\flat)\colon \mathrm{typ}^{\flat}(\Lambda^\circ)=\delta^\circ,\ \varpi\Lambda^\bullet\subseteq\Lambda^\circ\subseteq\Lambda^\bullet\}.
    \end{equation*}
\end{corollary}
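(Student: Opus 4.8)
The first assertion I would dispatch immediately from the preceding Proposition: since $K^?$ is open, $G^?/K^?=\Lat^?(V^?)$ is discrete, hence so is $X^?/K^?=H\backslash\Lat^?(V^?)$, and therefore $\mathcal{S}(X^?/K^?)=\mathcal{S}(X^?)^{K^?}$ is just the $R$-module of finitely supported functions on the set $X^?/K^?$, i.e. the free module $R[X^?/K^?]$, which the Proposition identifies with $R[\Typ_r^0]$ via $\mathrm{typ}^?$. Under this identification the basis vector attached to $\delta\in\Typ_r^0$ is the indicator function $\mathbf 1_\delta$ of the $H$-orbit $(\mathrm{typ}^?)^{-1}(\delta)\subseteq\Lat^?(V^?)$.

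For the Hecke action I would invoke the standard dictionary between $\mathcal{H}(G^?,K^?)$ and lattice correspondences. Under $G^?/K^?=\Lat^?(V^?)$, $(K^?,K^?)$-double cosets correspond to $G^?$-orbits on $\Lat^?(V^?)^2$, i.e. to relative positions; by the Cartan decomposition $G^?=K^?AK^?$ used to define $T_k^?$ in \Cref{Tidef}, the coset $K^?\varpi^{\underline{t}_k}K^?$ is the one whose orbit is $T_k^{\mathrm{cor}}\defeq\{(\Lambda_1,\Lambda_2)\colon\mathrm{relpos}(\Lambda_1,\Lambda_2)=\underline{t}_k\}$ (note this subset is symmetric, since $\underline{t}_k$ is unchanged under $\underline{t}\mapsto-\underline{t}$ followed by reordering, so no convention-dependent transpose intervenes here). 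Decomposing $K^?\varpi^{\underline{t}_k}K^?$ into finitely many left $K^?$-cosets shows that convolution by $T_k^?$ acts on functions on $\Lat^?(V^?)$ by the correspondence sum $(T_k^?f)(\Lambda_1)=\sum_{(\Lambda_1,\Lambda_2)\in T_k^{\mathrm{cor}}}f(\Lambda_2)$; this is $H$-equivariant, so preserves $H$-invariant functions and descends to $\mathcal{S}(X^?/K^?)$. Evaluating at $\mathbf 1_{\delta_2}$ yields $(T_k^?\mathbf 1_{\delta_2})(\Lambda_1)=\#\{\Lambda_2\colon\mathrm{typ}^?(\Lambda_2)=\delta_2,\ \mathrm{relpos}(\Lambda_1,\Lambda_2)=\underline{t}_k\}$, which depends only on $\delta_1\defeq\mathrm{typ}^?(\Lambda_1)$ because the fibres of $\mathrm{typ}^?$ are exactly the $H$-orbits (the Proposition). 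Thus the matrix of $T_k^?$ in the $\Typ_r^0$-basis has $(\delta_1,\delta_2)$-entry equal to this count, and its adjoint $T_k^{?,*}$, being the transpose, satisfies $T_k^{?,*}(\delta_1)=\sum_{\delta_2}\#\{\cdots\}\,\delta_2$ — exactly the stated formula.

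For the intertwining operators I would argue the same way once $\mathrm{char}(K^{\bullet,\flat}K^\flat)$ is identified with a lattice correspondence. First, one may choose the base lattices with $\varpi\Lambda^{\bullet,\flat}_{fix}\subseteq\Lambda^{\circ,\flat}_{fix}\subseteq\Lambda^{\bullet,\flat}_{fix}$: fixing $\Lambda^{\bullet,\flat}_{fix}\in\Lat^\bullet(V^\flat)$, a self-dual lattice sandwiched in this way amounts to a Lagrangian in the nondegenerate residual space $\Lambda^{\bullet,\flat}_{fix}/\varpi\Lambda^{\bullet,\flat}_{fix}$, which exists by the standing hypothesis on $(V^\flat,\langle\cdot,\cdot\rangle^\flat)$ (in the case \Ogen this is exactly the condition $\mathrm{sign}(\det V^\flat)=\epsilon^r$). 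Then, for any $\Lambda^\bullet\in\Lat^\bullet(V^\flat)$ and any self-dual $\Lambda^\circ$, a short duality argument shows the conditions $\Lambda^\circ\subseteq\Lambda^\bullet$, $\varpi\Lambda^\bullet\subseteq\Lambda^\circ$, and $\varpi\Lambda^\bullet\subseteq\Lambda^\circ\subseteq\Lambda^\bullet$ are all equivalent, and $G^\flat$ acts transitively on the set of such pairs — this one reduces, using $G^\flat$-transitivity on $\Lat^\bullet(V^\flat)$ and surjectivity of $K^{\bullet,\flat}$ onto the finite classical group of $\Lambda^{\bullet,\flat}_{fix}/\varpi\Lambda^{\bullet,\flat}_{fix}$, to transitivity of that finite group on its Lagrangians. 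Hence $K^{\bullet,\flat}K^\flat$ is the double coset whose associated $G^\flat$-equivariant correspondence is $\{(\Lambda^\circ,\Lambda^\bullet)\colon\varpi\Lambda^\bullet\subseteq\Lambda^\circ\subseteq\Lambda^\bullet\}$, and convolution by its indicator is the corresponding correspondence sum; passing to $\Typ_r^0$-coordinates and transposing as above gives the stated description of $T^{\circ\bullet,*}$, and $T^{\bullet\circ}$ is treated identically with $\circ$ and $\bullet$ exchanged.

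As for where the difficulty lies: there really is no serious obstacle here — the corollary is a formal consequence of the preceding Proposition plus the standard ``Hecke operator $=$ $G$-equivariant correspondence'' dictionary. The only steps that need an actual argument rather than bookkeeping are the identification of the double cosets $K^?\varpi^{\underline{t}_k}K^?$ and $K^{\bullet,\flat}K^\flat$ with the lattice correspondences above (resting on the Cartan decomposition) and, for the intertwiners, the transitivity statement, which is the one spot where a small lemma about Lagrangians over the residue field is invoked.
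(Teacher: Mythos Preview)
Your proposal is correct; the paper itself offers no proof for this corollary, treating it as an immediate formal consequence of the preceding Proposition together with the standard correspondence between Hecke operators and $G$-equivariant lattice correspondences. Your write-up simply supplies the routine details (including the small Lagrangian-transitivity argument for the intertwiners) that the paper leaves implicit.
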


We will analyze these combinatorial problems for each isomorphism class of $V_2$ at a time. Given such $V_2,$ we may without loss of generality assume that we have $V^\flat=V_1^\flat\oplus V_2$ and $V^\sharp=V_1^\sharp\oplus V_2$ where
\begin{equation*}
    V_1^\sharp=Fu\oplus V_1^\flat\quad\text{for}\quad \langle u,u\rangle\in\O_F^\times,\ \mathrm{sign}(\langle u,u\rangle)=\psi.
\end{equation*}
Moreover, for any given $\delta\in\Typ_r^0,$ we may choose $\Lambda^\sharp\in\Lat^\circ(V^\sharp)$ satisfying $\mathrm{typ}^\sharp(\Lambda^\sharp)=\delta$ to be of the form $\Lambda^\sharp=\O_Fu\oplus\Lambda^\flat$ for $\Lambda^\flat\in\Lat^\circ(V^\flat).$
\subsection{Straightening relations}
We begin by recalling some notation and results from \cite[Section 4.2]{CoratoZanarella}.
\begin{definition}[{\cite[\citeTypdef]{CoratoZanarella}}]
    For each $r\ge0,$ we consider $\Typ_r\defeq(\Z\times\mathrm{Sign})^r,$ and we denote $\Typ\defeq\bigsqcup_{r\ge0}\Typ_r.$\index{Typ@$\Typ,\ \Typ_r$}\index{Typ@$\Typ,\ \Typ_r$! |seealso{\cite[\citeTypdef]{CoratoZanarella}}} For $a,b\ge0,$ we consider the concatenation product $\star\colon\Typ_a\times\Typ_b\to\Typ_{a+b}.$\index{0*@$\star$} This makes $R[\Typ]$ into a graded noncommutative $R$-ring with graded pieces $\mathrm{Gr}_r(R[\Typ])=R[\Typ_r].$
\end{definition}
For $e\in\Z^r,$ $\chi\in\mathrm{Sign}^r,$ we will denote the element associated to $(e,\chi)\in\Typ_r$ by
\index{0(e_i,chi_i)@$\Typee{e_i}{\chi_i}{e_{i+1}}{\chi_{i+1}}$}\index{1delta(e,chi)@$\delta(e,\chi)$}
\begin{equation*}
    \delta(e,\chi)=\Typee{e_i}{\chi_i}{e_{i+1}}{\chi_{i+1}}\in R[\Typ].
\end{equation*}
Moreover, for a character $s\colon\mathrm{Sign}^r\to\{\pm1\},$ we denote
\index{1deltas(e)@$\delta_s(e)$}
\begin{equation*}
    \delta_s(e)\defeq\sum_{\chi\in\mathrm{Sign}^r}s(\chi)\Typee{e_i}{\chi_i}{e_{i+1}}{\chi_{i+1}}\in R[\Typ].
\end{equation*}
If $(s_1,\ldots,s_r)\in(\widehat{\mathrm{Sign}})^r=\mathrm{Sign}^r$ are such that $s\colon\mathrm{Sign}^r\to\{\pm1\}$ is given by $s(\chi)=\prod_{i}s_i(\chi_i),$ then we also denote
\index{0(e_i,s_i)Sigma@$\Typee{e_i}{s_i}{e_{i+1}}{s_{i+1}}^\Sigma$}
\begin{equation*}
    \Typee{e_i}{s_i}{e_{i+1}}{s_{i+1}}^\Sigma=\delta_s(e).
\end{equation*}

We recall that \cite[\citeReldef]{CoratoZanarella} defines a two-sided ideal $\Rel^\natural\subseteq R[\Typ]$\index{Relnat@$\Rel^\natural,\ \Rel^\natural_r$}\index{Relnat@$\Rel^\natural,\ \Rel^\natural_r$! |seealso{\cite[\citeReldef]{CoratoZanarella}}} in such a way that if $\Rel^\natural_r\defeq\mathrm{Gr}_r(\Rel^\natural),$ then there is a natural map (see \cite[\citeBergmanProp]{CoratoZanarella})
\index{strnat@$\mathrm{str}^\natural$}\index{strnat@$\mathrm{str}^\natural$! |seealso{\cite[\citeBergmanProp]{CoratoZanarella}}}
\begin{equation*}
    \mathrm{str}^\natural\colon R[\Typ_r]/\Rel^\natural_r\rightiso R[\Typ^{0,\natural}_r].
\end{equation*}
The ``smallest'' elements in $\Rel^\natural$ are summarized in \Cref{reltable}, which also includes straightening relations for the cases \genf and \gens, which we now define.
\begin{definition}\index{Relflat@$\Rel^\flat,\ \Rel_r^\flat$}
    We consider the homogeneous left ideal $\Rel^\flat\subseteq R[\Typ]$ given by $\Rel^\flat=\Rel^\natural+\Rel_{1}^\flat$ where $\Rel_{1}^\flat$ is generated by the following degree $1$ elements. We also denote $\Rel^\flat_r\defeq\mathrm{Gr}_r(\Rel^\flat).$
    \begin{case}{\Uf}
    For $m>1$ an integer,
    \begin{equation*}
        \Rel^\flat(m)\defeq(-m)-(-m+2)-q_0^{m-1}((m)-(m-2))
    \end{equation*}
    as well as
    \begin{equation*}
        \Rel^\flat(1)\defeq(-1)-(1).
    \end{equation*}
    \end{case}
    \begin{case}{\Of}
    For $m\ge1$ an integer and $s\in\widehat{\mathrm{Sign}},$
    \begin{equation*}
        \Rel^\flat\TypeCC{m}{s}\defeq\TypeCC{-m}{s}^\Sigma-\TypeCC{m}{s}^\Sigma.
    \end{equation*}
    \end{case}
    \begin{case}{\SPf}
    For $m>1$ an integer,
    \begin{equation*}
        \Rel^\flat(m)\defeq(-m)-(q+1)(-m+1)+q(-m+2)+q^{3(m-1)}\left((m-2)-(q+1)(m-1)+q(m)\right)
    \end{equation*}
    and
    \begin{equation*}
        \Rel^\flat(1)\defeq(-1)-(q+1)(0)+q(1).
    \end{equation*}
    \end{case}
\end{definition}

\begin{definition}\index{Relsharp@$\Rel^\sharp,\ \Rel_r^\sharp$}
    We consider the homogeneous left ideal $\Rel^\sharp\subseteq R[\Typ]$ given by $\Rel^\sharp=\Rel^\natural+\Rel_{1}^\sharp$ where $\Rel_{1}^\sharp$ is generated by the following degree $1$ elements. We also denote $\Rel^\sharp_r\defeq\mathrm{Gr}_r(\Rel^\sharp).$
    \begin{case}{\Us}
    For $m>1$ an integer,
    \begin{equation*}
        \Rel^\sharp(m)\defeq (-m)-(-m+2)-(-q)^{m-1}((m)-(m-2))
    \end{equation*}
    as well as
    \begin{equation*}
        \Rel^\sharp(1)\defeq(-1)-(1).
    \end{equation*}
    \end{case}
    \begin{case}{\Os}
    For $m\ge1$ odd and $s\in\widehat{\mathrm{Sign}}$:
    \begin{equation*}
        \Rel^\sharp\TypeCC{m}{s}\defeq\TypeCC{-m}{s}^\Sigma-\TypeCC{-m+2}{s}^\Sigma-q^{(m-1)/2}\left(\TypeCC{m}{s}^\Sigma-\TypeCC{m-2}{s}^\Sigma\right)
    \end{equation*}
    For $m\ge2$ even and $s\in\widehat{\mathrm{Sign}}$:
    \begin{equation*}
    \begin{split}
        \Rel^\sharp\TypeCC{m}{s}\defeq\ &\TypeCC{-m}{s}^\Sigma-\TypeCC{-m+2}{s}^\Sigma-\psi\TypeCC{-m+2}{-s}^\Sigma+\psi\TypeCC{-m+4}{-s}^\Sigma\\
        &+q^{(m-2)/2}\left(\psi\TypeCC{m}{-s}^\Sigma-\psi\TypeCC{m-2}{-s}^\Sigma-\TypeCC{m-2}{s}^\Sigma+\TypeCC{m-4}{s}^\Sigma\right).
    \end{split}
    \end{equation*}
    \end{case}
\end{definition}
\begin{remark}
    in the case \Os, since we are assuming $2\in R^\times,$ we have $\TypeCC{-1}{\chi}\equiv\TypeCC{1}{\chi}\mod\Rel_1^\sharp,$ as well as
    \begin{equation*}
        \TypeCC{-2}{\chi}\equiv(1+\psi\chi)\TypeCC{0}{\chi}-\psi\chi\TypeCC{2}{\chi}\mod\Rel_1^\sharp.
    \end{equation*}
\end{remark}

\begin{definition}[{\cite[\citetdef]{CoratoZanarella}}]
    For $\varepsilon\in\Z^r,$ we consider the translation endomorphisms $t(\varepsilon)\colon R[\Typ_r]\to R[\Typ_r]$\index{t(epsilon)@$t(\varepsilon)$}\index{t(epsilon)@$t(\varepsilon)$! |seealso{\cite[\citetdef]{CoratoZanarella}}} given by $\delta(e,\chi)\mapsto\delta(e+\varepsilon,\chi).$
\end{definition}
\begin{proposition}\label{preservesfs}
    The following operators preserve $\Rel_1^?$:
    \begin{equation*}
        \begin{array}{c|c|c|c|c}
            \SPf&\Uf&\Us&\Of&\Os \\
        \hline
            t(-1)+q^3\cdot t(1) & t(-1)+q_0\cdot t(1)&t(-1)-q\cdot t(1)& t(-1)+t(1)&t(-2)+q\cdot t(2)
        \end{array}
    \end{equation*}
\end{proposition}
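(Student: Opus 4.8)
The plan is to reduce the statement to a finite, entirely mechanical computation with the explicitly listed generators. Each of the five operators in the table is an $R$-linear, degree-preserving endomorphism of $R[\Typ]$, hence restricts to the degree-one piece $R[\Typ_1]$ in which $\Rel_1^?$ lives. Since $\Rel_1^?$ contains the listed degree-one elements $\Rel^\flat(m)$, $\Rel^\flat\TypeCC{m}{\chi}$, $\Rel^\sharp(m)$, $\Rel^\sharp\TypeCC{m}{\chi}$, it suffices to show that each operator maps each of these generators into the $R$-span of the generators; this makes no reference to the $\mathrm{Gr}_1$ of $\Rel^\natural$.

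The mechanism is the same in every case: the relevant generator is built from one or two \emph{atoms} on which the translations act by shifting the index, and plugging this in produces a clean three-term recursion. In the cases \Uf and \Us, write $P_m\defeq (-m)-(-m+2)$ and $Q_m\defeq (m)-(m-2)$, so that $\Rel^\flat(m)=P_m-q_0^{m-1}Q_m$ and $\Rel^\sharp(m)=P_m-(-q)^{m-1}Q_m$; since $t(-1)P_m=P_{m+1}$, $t(1)P_m=P_{m-1}$, $t(-1)Q_m=Q_{m-1}$, $t(1)Q_m=Q_{m+1}$, one obtains for $m\ge 2$
\begin{align*}
(t(-1)+q_0\,t(1))\bigl(\Rel^\flat(m)\bigr)&=\Rel^\flat(m+1)+q_0\,\Rel^\flat(m-1),\\
(t(-1)-q\,t(1))\bigl(\Rel^\sharp(m)\bigr)&=\Rel^\sharp(m+1)-q\,\Rel^\sharp(m-1).
\end{align*}
The case \SPf is identical, now with the three-term atoms $(-m)-(q+1)(-m+1)+q(-m+2)$ and $(m-2)-(q+1)(m-1)+q(m)$ and the operator $t(-1)+q^3t(1)$, giving $(t(-1)+q^3t(1))(\Rel^\flat(m))=\Rel^\flat(m+1)+q^3\Rel^\flat(m-1)$. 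The case \Of is the trivial instance: $(t(-1)+t(1))(\Rel^\flat\TypeCC{m}{\chi})=\Rel^\flat\TypeCC{m+1}{\chi}+\Rel^\flat\TypeCC{m-1}{\chi}$. For \Os one splits the generic even-$m$ generator as $\Rel^\sharp\TypeCC{m}{\chi}=R^-_m+q^{(m-2)/2}R^+_m$ into its two four-term blocks ($R^-_m$ the terms with first coordinate among $-m,-m+2,-m+4$, and $R^+_m$ the rest), checks $t(-2)R^-_m=R^-_{m+2}$, $t(2)R^-_m=R^-_{m-2}$, $t(-2)R^+_m=R^+_{m-2}$, $t(2)R^+_m=R^+_{m+2}$, and concludes $(t(-2)+q\,t(2))(\Rel^\sharp\TypeCC{m}{\chi})=\Rel^\sharp\TypeCC{m+2}{\chi}+q\,\Rel^\sharp\TypeCC{m-2}{\chi}$ for $m$ large; the odd-$m$ generators are handled the same way.

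It then remains to treat the small values of $m$, where the generic generator degenerates: $\Rel^\flat(1)$, $\Rel^\sharp(1)$, $\Rel^\sharp\TypeCC{1}{\chi}$ have two terms instead of four, and $\Rel^\sharp\TypeCC{2}{\chi}$ collapses to four terms. These are one-line direct substitutions — for instance $(t(-1)+q_0t(1))(\Rel^\flat(1))=\Rel^\flat(2)$, $(t(-1)+q_0t(1))(\Rel^\flat(2))=\Rel^\flat(3)+2q_0\,\Rel^\flat(1)$, and $(t(-2)+q\,t(2))(\Rel^\sharp\TypeCC{2}{\chi})=2\,\Rel^\sharp\TypeCC{4}{\chi}$; the stray factors of $2$ cause no difficulty since $\Rel_1^?$ is an $R$-submodule. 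If one prefers, the degeneracies $\TypeCC{-1}{\chi}\equiv\TypeCC{1}{\chi}$ and $\TypeCC{-2}{\chi}\equiv(1+\psi\chi)\TypeCC{0}{\chi}-\psi\chi\TypeCC{2}{\chi}$ modulo $\Rel_1^\sharp$ from the preceding Remark can be used to streamline the \Os bookkeeping.

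I expect the only laborious part to be \Os, on account of the size of the generators, the split into $m$ odd and $m$ even, and the $\psi$-twist that has to be tracked through the index shifts; but there is no conceptual obstacle, and once the shift behaviour of the component atoms is recorded each case collapses to a short algebraic identity.
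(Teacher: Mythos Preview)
Your proposal is correct and follows essentially the same approach as the paper: both verify directly that the operator applied to each listed generator of $\Rel_1^?$ lands in the $R$-span of generators, via the same three-term recursions (and the paper even records the identical small-$m$ identities $(t(-1)+q_0t(1))\Rel^\flat(2)=\Rel^\flat(3)+2q_0\Rel^\flat(1)$, $(t(-2)+qt(2))\Rel^\sharp\TypeCC{2}{\chi}=2\Rel^\sharp\TypeCC{4}{\chi}$, etc.). One cosmetic slip: your generic formula $(t(-1)+q_0t(1))(\Rel^\flat(m))=\Rel^\flat(m+1)+q_0\Rel^\flat(m-1)$ is stated ``for $m\ge 2$'' but only holds for $m>2$ (since $Q_1=-P_1$ introduces the factor $2$ at $m=2$) --- but you then write the correct $m=2$ formula explicitly, so no harm done.
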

\begin{proof}
\begin{case}{\SPf}
We have
\begin{equation*}
\begin{split}
    (t(-1)+q^3\cdot t(1))&\Rel^\flat(1)=\Rel^\flat(2),\\
    (t(-1)+q^3\cdot t(1))&\Rel^\flat(2)=\Rel^\flat(3)+2q^3\Rel^\flat(1),\\
    (t(-1)+q^3\cdot t(1))&\Rel^\flat(m)=\Rel^\flat(m+1)+q^3\Rel^\flat(m-1)\quad\text{for }m>2.
\end{split}
\end{equation*}
\end{case}
\begin{case}{\Uf}
We have
\begin{equation*}
\begin{split}
    (t(-1)+q_0\cdot t(1))&\Rel^\flat(1)=\Rel^\flat(2)\\
    (t(-1)+q_0\cdot t(1))&\Rel^\flat(2)=\Rel^\flat(3)+2q_0\Rel^\flat(1)\\
    (t(-1)+q_0\cdot t(1))&\Rel^\flat(m)=\Rel^\flat(m+1)+q_0\Rel^\flat(m-1)\quad\text{for }m>2.
\end{split}
\end{equation*}
\end{case}
\begin{case}{\Us}
This follows formally from the case \Uf by changing $q_0\mapsto -q.$
\end{case}
\begin{case}{\Of}
We have
\begin{equation*}
\begin{split}
    (t(-1)+t(1))&\Rel^\flat\TypeCC{1}{s}=\Rel^\flat\TypeCC{2}{s},\\
    (t(-1)+t(1))&\Rel^\flat\TypeCC{m}{s}=\Rel^\flat\TypeCC{m-1}{s}+\Rel^\flat\TypeCC{m+1}{s}\quad\text{for }m>1.
\end{split}
\end{equation*}
\end{case}
\begin{case}{\Os}
We have
\begin{equation*}
\begin{split}
    (t(-2)+q\cdot t(2))&\Rel^\sharp\TypeCC{1}{s}=2\Rel^\sharp\TypeCC{3}{s},\\
    (t(-2)+q\cdot t(2))&\Rel^\sharp\TypeCC{2}{s}=2\Rel^\sharp\TypeCC{4}{s},\\
    (t(-2)+1\cdot t(2))&\Rel^\sharp\TypeCC{m}{s}=\Rel^\sharp\TypeCC{m+2}{s}+q\Rel^\sharp\TypeCC{m-2}{s}\quad\text{for }m>2.\qedhere
\end{split}
\end{equation*}
\end{case}
\end{proof}

\begin{proposition}\index{strfs@$\mathrm{str}^\flat,\ \mathrm{str}^\sharp$}
    We have canonical isomorphisms
    \begin{equation*}
        \mathrm{str}^\flat\colon R[\Typ_r]/\Rel^\flat_r\rightiso R[\Typ^0_r]\quad\text{and}\quad\mathrm{str}^\sharp\colon R[\Typ_r]/\Rel^\sharp_r\rightiso R[\Typ^0_r]
    \end{equation*}
    characterized uniquely by the fact that if $\delta(e,\chi)\in\Typ_r$ is with $e_1\ge\cdots\ge e_r\ge0,$ then $\mathrm{str}^\flat(\delta(e,\chi))=\mathrm{str}^\sharp(\delta(e,\chi))=(e^0,\chi^0),$ where $e^0(i)=\lambda_i(e)$ and $\chi(i)=\prod_{e_j=i}\chi_j.$
\end{proposition}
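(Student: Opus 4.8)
The plan is to deduce this from the already-established isomorphism $\mathrm{str}^\natural\colon R[\Typ_r]/\Rel^\natural_r\rightiso R[\Typ^{0,\natural}_r]$ of \cite[\citeBergmanProp]{CoratoZanarella}. Since $\Rel^?=\Rel^\natural+\Rel_1^?$ for $?\in\{\flat,\sharp\}$, with $\Rel_1^?$ the \emph{left} ideal generated by the displayed degree-$1$ elements and $\Rel^\natural$ two-sided, its degree-$r$ part is $\Rel^?_r=\Rel^\natural_r+R[\Typ_{r-1}]\star\Rel_1^?$. Transporting through the ring isomorphism $\mathrm{str}^\natural$, I would first reduce to showing
\begin{equation*}
    R[\Typ^{0,\natural}_r]/\bigl(R[\Typ^{0,\natural}_{r-1}]\cdot\overline{\Rel_1^?}\bigr)\ \iso\ R[\Typ^0_r],
\end{equation*}
where the product is taken in the graded ring transported from $R[\Typ]/\Rel^\natural$, and $\overline{\Rel_1^?}\subseteq R[\Typ^{0,\natural}_1]$ is the image of $\Rel_1^?$ — which, since $\mathrm{str}^\natural$ is the obvious bijection $\Typ_1=\Z\times\mathrm{Sign}\rightiso\Typ^{0,\natural}_1$ in degree $1$, is spanned by the very same expressions $\Rel^?(m)$, $\Rel^?\TypeCC{m}{\chi}$ read inside $R[\Typ^{0,\natural}_1]$. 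Concretely, I would show that the classes of $\{(e^0,\chi^0):(e^0,\chi^0)\in\Typ^0_r\subseteq\Typ^{0,\natural}_r\}$ form an $R$-basis of the left-hand side; uniqueness of $\mathrm{str}^?$ is then automatic, as these elements are precisely the $\mathrm{str}^\natural$-images of the dominant monomials $\delta(e,\chi)$ with $e_1\ge\cdots\ge e_r\ge0$.

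For the spanning statement I would induct on the ``negativity'' $\nu(e^0)\defeq\sum_{k<0}(-k)\,e^0(k)$. If $\nu(e^0)>0$, let $-m$ ($m\ge1$) be the most negative part; writing $(e^0,\chi^0)$ as the normal form of a dominant monomial whose last letter is $(-m)$ and applying $\Rel^?(m)$ (resp.\ $\Rel^?\TypeCC{m}{\chi}$), I would rewrite that letter in terms of the others occurring in the relation. Each of those is either of strictly smaller absolute value — the $(-m+2)$ term, and in the case \Os also the $(-m+4)$ and $(-m+2,-\chi)$ terms — or of value $\ge0$; so every term produced has strictly smaller $\nu$, and the induction closes, with the degenerate cases $m=1$ (and $m=2$ in the case \Os) covered by the relations recorded just after the definitions. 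This exhibits the left-hand side above as a quotient of the free module $R[\Typ^0_r]$.

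For linear independence I would promote this to a terminating confluent rewriting system and invoke Bergman's diamond lemma exactly as in \cite[\citeBergmanProp]{CoratoZanarella}: to the rules that resolve $\Rel^\natural$ into dominant normal forms, adjoin the rules rewriting any monomial whose last letter is $(-m)$ via $\Rel^?(m)$. Termination holds since the new rules strictly drop $\nu$ while the old ones preserve it; by Newman's lemma it then suffices to resolve the overlap ambiguities. Those internal to the old rules are handled by \cite{CoratoZanarella}, and those between new rules act on disjoint slots; the substantive case is an overlap of a new rule at the last slot with an $\Rel^\natural$-straightening of the last two slots, i.e.\ one must check that rewriting the final letter $(-m)$ before, versus after, straightening it past its left neighbour gives the same element. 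Because straightening moves the value of a letter by $\pm1$ with controlled $q$-powers, this compatibility amounts to the invariance of $\Rel_1^?$ under the corresponding translation operators, which is \Cref{preservesfs} (the remaining matching of $q$-powers coming from the recursions and identities of \Cref{finFieldSection}).

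The hard part will be this overlap-resolution step, and I expect the case \Os to be the most delicate: there the degree-$1$ relations for even $m$ mix the characters $\chi$ and $-\chi$ and carry more terms, so confirming that the new rewrites interact correctly with $\Rel^\natural$ in every position — beyond the translation-invariance already packaged in \Cref{preservesfs}, the precise bookkeeping of $q$-powers and signs — is where the genuine computation lies.
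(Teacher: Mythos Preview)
Your approach via Bergman's diamond lemma is exactly the paper's, and you correctly isolate the single substantive overlap: the degree-$1$ rule $r^?\TypeCC{b}{s_2}$ on the last letter against the $\Rel^\natural$ rule $r\TypeC{a}{s_1}{b}{s_2}$ on the last two. (Your initial detour through $\mathrm{str}^\natural$ buys nothing --- you end up back in $R[\Typ_r]$ with both rule sets --- but it does no harm.)

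The gap is your claim that resolving this overlap ``amounts to the invariance of $\Rel_1^?$ under the corresponding translation operators'' plus bookkeeping from \Cref{finFieldSection}. \Cref{preservesfs} is a \emph{one}-variable statement about $\Rel_1^?$, whereas the overlap condition $P^?\TypeC{a}{s_1}{b}{s_2}$ is a \emph{two}-variable identity in $R[\Typ_2]$ entangling $\Rel_1^?$ with $\Rel^\natural$; knowing that $t(-1)+c\cdot t(1)$ preserves $\Rel_1^?$ does not by itself tell you that the two ways of reducing a length-$2$ word agree. The paper's proof introduces a genuinely new ingredient: explicit two-variable operators $\varphi,\varphi'$ (each a four-term combination of $t(\pm\beta,\pm\beta)$ with case-dependent $q$-weights) that preserve $\Rel_2^?$ --- this requires both \Cref{preservesfs} \emph{and} \cite[\citepreserveshalf]{CoratoZanarella} --- and that carry the condition $P^?\TypeC{a}{s_1}{b}{s_2}$ to $P^?\TypeC{a-\beta}{s_1}{b}{s_2}$ and $P^?\TypeC{a-\beta}{s_1}{b-\beta}{s_2}$ respectively. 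This reduces the infinite family of overlaps to a finite list of base cases (one in \Ugen and \SPgen, six in \Ogen), which are then verified by computer. Neither the construction of $\varphi,\varphi'$ nor the residual verification is routine bookkeeping, and \Cref{finFieldSection} plays no role in this proof.
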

\begin{proof}
As in the proof of \cite[\citeBergmanProp]{CoratoZanarella}, we apply Bergman's diamond lemma \cite[Theorem 1.2]{Bergman}. We closely follow such proof. The only additional ambiguities that need to be checked are the ones that involve $\Rel_1^\flat$ and $\Rel_1^\sharp.$

We first recall some notation from \cite[\citeBergmanProp]{CoratoZanarella}. In particular, we will be using the notation $\Type{e_i}{s_i}^\Sigma$ as in \cite[Section 4.2]{CoratoZanarella}. Recall that we apply Bergman's diamond lemma with respect to the generating set $\TypeCC{a}{s}^\Sigma$ for $a\in\Z$ and $s\in\widehat{\mathrm{Sign}},$ with the partial order $\TypeCC{a}{s_1}\le\TypeCC{b}{s_2}$ if either $a<b$ or $(a,s_1)=(b,s_2).$ This is extended lexicographically to the monomials. We let
\begin{equation*}
    J_2\defeq\left\{\TypeC{a}{s_1}{b}{s_2}\colon \TypeCC{a}{s_1}\not\ge\TypeCC{b}{s_2}\right\}
\end{equation*}
be the index set of the generators $r\TypeC{a}{s_1}{b}{s_2}$ of $\Rel^\natural$ considered in \cite[\citeBergmanProp]{CoratoZanarella}.

Similarly, we let
\begin{equation*}
    J_1\defeq\left\{\TypeCC{a}{s}\colon a<0\right\}
\end{equation*}
be the index set of the generators $r^\flat\TypeCC{a}{s}$ of $\Rel_1^\flat$ and $r^\sharp\TypeCC{a}{s}$ of $\Rel_1^\sharp.$ These are simply $r^?\TypeCC{a}{s}=\Rel^?\TypeCC{a}{s},$ except in the cases $a\in\{-1,-2\}$ for \Os, where we have $r^?\TypeCC{a}{s}=\frac{1}{2}\Rel^?\TypeCC{a}{s}.$

As in \cite[\citeBergmanProp]{CoratoZanarella} for the $r\TypeC{a}{s_1}{b}{s_2}$, these are so that $r^?\TypeCC{a}{s}-\TypeCC{a}{s}^\Sigma$ only contain terms which are larger than $\TypeCC{a}{s}.$ We also extend the notation to $r^?\TypeCC{a}{s}=0$ if $a\ge0.$

Let
\begin{equation*}
\begin{split}
    I_2^?\TypeC{a}{s_1}{b}{s_2}\defeq R\left[r\TypeC{a'}{s_1'}{b'}{s_2'},\quad \TypeCC{a'}{s_1'}^\Sigma\star r^?\TypeCC{b'}{s_2'}\colon\quad\quad\TypeC{a'}{s_1'}{b'}{s_2'}>\TypeC{a}{s_1}{b}{s_2}\right]
\end{split}
\end{equation*}
and let $J_{1,2}\defeq J_2\cap\left\{\TypeC{a}{s_1}{b}{s_2}\colon \TypeCC{b}{s_2}\in J_1\right\}.$ For $\TypeC{a}{s_1}{b}{s_2}\in J_{1,2},$ we define
\begin{equation*}
    D^?\TypeC{a}{s_1}{b}{s_2}\defeq  r\TypeC{a}{s_1}{b}{s_2}-\TypeCC{a}{s_1}^\Sigma\star r^?\TypeCC{b}{s_2}.
\end{equation*}
Then we need to prove that for all $\TypeC{a}{s_1}{b}{s_2}\in J_{1,2}$ and $?\in\{\flat,\sharp\},$ we have
\begin{equation*}
    P^?\TypeC{a}{s_1}{b}{s_2}\colon\quad\left(D^?\TypeC{a}{s_1}{b}{s_2}\in I_2^?\TypeC{a}{s_1}{b}{s_2}\right).
\end{equation*}

Let $\beta=1$ in the cases \SPgen, \Ugen and $\beta=2$ in the case \Ogen. We consider the following operator $\varphi.$
\begin{equation*}
\begin{array}{c|c}
\SPf&t(-1,0)+q^2t(0,-1)+q^5t(0,1)+q^7t(1,0)\\
\hline
\Uf&t(-1,0)-q_0t(0,-1)-q_0^2t(0,1)+q_0^3t(1,0)\\
\hline
\Us&t(-1,0)-q_0t(0,-1)+q_0^3t(0,1)-q_0^4t(1,0)\\
\hline
\Of&t(-2,0)+qt(0,-2)+qt(0,2)+q^2t(2,0)\\
\hline
\Os&t(-2,0)+qt(0,-2)+q^2t(0,2)+q^3t(2,0)
\end{array}
\end{equation*}
This is built-in so that it preserves $\Rel_2^?,$ by \cite[\citepreserveshalf]{CoratoZanarella} and \Cref{preservesfs}.

Furthermore, we have
\begin{equation*}
    \varphi\left(r\TypeC{a}{s_1}{b}{s_2}\right)\equiv r\TypeC{a-\beta}{s_1}{b}{s_2}\mod I_2^?\TypeC{a-\beta}{s_1}{b}{s_2}\quad\text{for }\TypeC{a}{s_1}{b}{s_2}\in J_2,
\end{equation*}
as well as
\begin{equation*}
    \varphi\left(\TypeCC{a}{s_1}^\Sigma\star r^?\TypeCC{b}{s_2}\right)\equiv \TypeCC{a-\beta}{s_1}^\Sigma\star r^?\TypeCC{b}{s_2}\mod I_2^?\TypeC{a-\beta}{s_1}{b}{s_2}\quad\text{for }\TypeCC{b}{s_2}\in J_1.
\end{equation*}
In particular, $\varphi\left(I_2^?\TypeC{a}{s_1}{b}{s_2}\right)\subseteq I_2^?\TypeC{a-\beta}{s_1}{b}{s_2}.$ Then
\begin{equation*}
    \varphi\left(D^?\TypeC{a}{s_1}{b}{s_2}\right)\equiv D^?\TypeC{a-\beta}{s_1}{b}{s_2}\mod I_2^?\TypeC{a-\beta}{s_1}{b}{s_2}\quad\text{for }\TypeC{a}{s_1}{b}{s_2}\in J_{1,2}.
\end{equation*}
This proves
\begin{equation}\label{strred1}
    P^?\TypeC{a+\beta}{s_1}{b}{s_2}\implies P^?\TypeC{a}{s_1}{b}{s_2}\quad\text{for }\TypeC{a+\beta}{s_1}{b}{s_2}\in J_{1,2}.
\end{equation}

Now consider the following operator $\varphi'.$
\begin{equation*}
\begin{array}{c|c}
\SPf&t(-1,-1)+q^3t(-1,1)+q^7t(1,-1)+q^{10}t(1,1)\\
\hline
\Uf&t(-1,-1)+q_0t(-1,1)+q_0^3t(1,-1)+q_0^4t(1,1)\\
\hline
\Us&t(-1,-1)-qt(-1,1)-q^2t(1,-1)+q^3t(1,1)\\
\hline
\Of&t(-2,-2)+t(-2,2)+q^2t(2,-2)+q^2t(2,2)\\
\hline
\Os&t(-2,-2)+qt(-2,2)+q^3t(2,-2)+q^4t(2,2)
\end{array}
\end{equation*}
Similarly as $\varphi,$ we have
\begin{equation*}
    \varphi'\left(r\TypeC{a}{s_1}{b}{s_2}\right)\equiv r\TypeC{a-\beta}{s_1}{b-\beta}{s_2}\mod I_2^?\TypeC{a-\beta}{s_1}{b-\beta}{s_2}\quad\text{for }\TypeC{a}{s_1}{b}{s_2}\in J_2,
\end{equation*}
as well as
\begin{equation*}
    \varphi'\left(\TypeCC{a}{s_1}^\Sigma\star r^?\TypeCC{b}{s_2}\right)\equiv \TypeCC{a-\beta}{s_1}^\Sigma\star r^?\TypeCC{b-\beta}{s_2}\mod I_2^?\TypeC{a-\beta}{s_1}{b-\beta}{s_2}\quad\text{for }\TypeCC{b}{s_2}\in J_1.
\end{equation*}
This implies
\begin{equation*}
    \varphi'\left(D^?\TypeC{a}{s_1}{b}{s_2}\right)\equiv D^?\TypeC{a-\beta}{s_1}{b-\beta}{s_2}\mod I_2^?\TypeC{a-\beta}{s_1}{b-\beta}{s_2}\quad\text{for }\TypeC{a}{s_1}{b}{s_2}\in J_{1,2}.
\end{equation*}
This proves
\begin{equation}\label{strred2}
    P^?\TypeC{a+\beta}{s_1}{b+\beta}{s_2}\implies P^?\TypeC{a}{s_1}{b}{s_2}\quad\text{for }\TypeC{a+\beta}{s_1}{b+\beta}{s_2}\in J'.
\end{equation}

Now \eqref{strred1} and \eqref{strred2} reduces the claim to consider $P^?(-2,-1)$ in the cases \Ugen and \SPgen, and
\begin{equation*}
\begin{split}
    &P^?\TypeC{-1}{-s}{-1}{s},\quad P^?\TypeC{-2}{-s}{-2}{s},\quad P^?\TypeC{-2}{s_1}{-1}{s_2},\\
    &P^?\TypeC{-3}{s}{-1}{s},\quad P^?\TypeC{-3}{s_1}{-2}{s_2},\quad P^?\TypeC{-4}{s}{-2}{s}.
\end{split}
\end{equation*}
in the cases \Ogen. This is a finite computation which can be easily verified on a computer.\footnote{See \href{https://github.com/murilocorato/Straightening-relations}{https://github.com/murilocorato/Straightening-relations} for an implementation of this verification on Sage.}
\end{proof}

\subsection{Generic computation}
As mentioned in the introduction, computing the generic behavior of $T_i^{?,*}$ directly seems to be not feasible. Instead, we will consider the following linear combinations.
\begin{definition}\index{Tless@$T^\flat_{\le k,r},\ T^\sharp_{\le k,r}$}
We consider the following operators $T_{\le k,r}^\flat\in\mathcal{H}(G^\flat,K^\flat)$ and $T_{\le k,r}^\sharp\in\mathcal{H}(G^\sharp,K^\sharp)$ for $0\le k\le \cfactor r$
\begin{equation*}
    T_{\le k,r}^\flat=\sum_{i=0}^kS\TypeC{k-i}{}{2r-\frac{2}{\cfactor}i}{\epsilon^{r-i}}T_{i,r}^\flat,\quad T_{\le k,r}^\sharp=\sum_{i=0}^kS\TypeC{k-i}{}{2r+1-2i}{\epsilon^{r-i}\psi}T_{i,r}^\sharp.
\end{equation*}
\end{definition}
The goal of this subsection is to compute $T_{\le k,r}^{?,*}(\delta)$ for $\delta$ \emph{generic}. The answer to this question will be given in by the following operators.
\begin{definition}\index{t2epsilon@$t^2(\varepsilon)$}
For $\varepsilon\in \Z^{\cfactor r},$ we consider the following \emph{translation} $ t^2(\varepsilon)\colon R[\Typ_r]\to R[\Typ_r].$
\begin{equation*}
    t^2(\varepsilon)\Type{e_i}{\chi_i}\defeq\begin{cases}\Type{e_i+2\varepsilon_i}{\chi_i}&\text{in the cases \Ugen, \Ogen,}\\
    \type{e_i+\varepsilon_{2i-1}+\varepsilon_{2i}}&\text{in the case \SPgen}.\end{cases}
\end{equation*}
\end{definition}
\begin{definition}\index{1Deltafsless@$\Delta^\flat_{\le k,r},\ \Delta^\sharp_{\le k,r}$}
We define $\Delta^?_{\le k,r}\colon R[\Typ_r]\to R[\Typ_r]$ for $?\in\{\flat,\sharp\}$ by
\begin{equation*}
    \Delta^\flat_{\le k,r}\defeq\sum_{\varepsilon\in\{-1,0,1\}^{\cfactor r}}q^{\widetilde{\inv}(\varepsilon)}\frac{\Dfactor(\lambda_1(\varepsilon))\cdot \Dfactor(\cfactor r-\lambda_{-1}(\varepsilon))}{\Dfactor(\cfactor r-k)}W_0(\cfactor r-k,\lambda_0(\varepsilon))\cdot t^2(\varepsilon)
\end{equation*}
and
\begin{equation*}
    \Delta^\sharp_{\le k,r}\defeq\sum_{\varepsilon\in\{-1,0,1\}^{\cfactor r}}q^{\widetilde{\inv}(\varepsilon)}\frac{\Dfactor(\lambda_1(\varepsilon))\cdot\Dfactor(\cfactor r-\lambda_{-1}(\varepsilon))}{\Dfactor(\cfactor r-k)}q^{\frac{k+\Sigma(\varepsilon)}{2}}W_{1/2}(\cfactor r-k,\lambda_0(\varepsilon))\cdot t^2(\varepsilon).
\end{equation*}
\end{definition}

\begin{proposition}\label{preserve}
    For $?\in\{\flat,\sharp\},$ the operators $\Delta_{\le k,r}^?$ preserve $\Rel_r^?.$
\end{proposition}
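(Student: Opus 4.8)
\emph{Plan.} Write $\mathfrak r^?_1\subseteq R[\Typ_1]$ for the $R$-span of the degree-one generators listed in the definition of $\Rel^?_1$ (so $\mathfrak r^?_1=\mathrm{Gr}_1(\Rel^?_1)$). Since $\Rel^\natural$ is a two-sided ideal while $\Rel^?_1$ is the left ideal generated by $\mathfrak r^?_1$, its degree-$r$ part is $R[\Typ_{r-1}]\star\mathfrak r^?_1$, so
\[
\Rel^?_r=\Rel^\natural_r+R[\Typ_{r-1}]\star\mathfrak r^?_1 .
\]
Thus it suffices to prove \textbf{(a)} that $\Delta^?_{\le k,r}$ preserves $\Rel^\natural_r$, and \textbf{(b)} that $\Delta^?_{\le k,r}(\delta\star g)\in\Rel^?_r$ for all $\delta\in\Typ_{r-1}$ and $g\in\mathfrak r^?_1$.

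\emph{Part (a).} Here $\Rel^\natural_r=\sum_{a+b=r-2}R[\Typ_a]\star\mathfrak r^\natural_2\star R[\Typ_b]$, where $\mathfrak r^\natural_2$ is spanned by the degree-two generators of $\Rel^\natural$ (the ``smallest elements'' in \Cref{reltable}). Given $\delta_1\star h\star\delta_2$ with $h\in\mathfrak r^\natural_2$, one splits $\varepsilon$ into the block of $2\cfactor$ coordinates hitting $h$ and the rest; reorganizing the defining sum (using the recursions \eqref{Wrec1}--\eqref{Wrec2} for $W_\beta$ and the composition rule for $\Dfactor$ recalled below) exhibits $\Delta^?_{\le k,r}(\delta_1\star h\star\delta_2)$ as a sum of terms $(\cdots)\star\Theta(h)\star(\cdots)$ with $\Theta$ running through powers of the degree-two operators introduced in the proof of the straightening isomorphisms above, which preserve $\Rel^\natural_2$ by \cite[\citepreserveshalf]{CoratoZanarella}. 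This is precisely the argument of \cite[\citeDeltapreserves]{CoratoZanarella} applied to the present operators.

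\emph{Part (b).} Fix $g\in\mathfrak r^?_1$ and $\delta\in\Typ_{r-1}$, and split each $\varepsilon\in\{-1,0,1\}^{\cfactor r}$ as $\varepsilon=\varepsilon'\star\varepsilon''$ with $\varepsilon'\in\{-1,0,1\}^{\cfactor(r-1)}$ and $\varepsilon''\in\{-1,0,1\}^{\cfactor}$, so that $t^2(\varepsilon)(\delta\star g)=t^2(\varepsilon')(\delta)\star t^2_1(\varepsilon'')(g)$ where $t^2_1(\varepsilon'')$ is the induced translation on $R[\Typ_1]$ (a shift by $2\varepsilon''$ in the cases \Ugen, \Ogen, and by $\varepsilon''_1+\varepsilon''_2$ in the case \SPgen). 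Collecting by $\varepsilon'$ we get
\[
\Delta^?_{\le k,r}(\delta\star g)=\sum_{\varepsilon'}t^2(\varepsilon')(\delta)\star\Psi_{\varepsilon'}(g),\qquad\Psi_{\varepsilon'}\defeq\sum_{\varepsilon''}C(\varepsilon',\varepsilon'')\,t^2_1(\varepsilon''),
\]
where $C(\varepsilon',\varepsilon'')$ is the coefficient of $t^2(\varepsilon'\star\varepsilon'')$ in $\Delta^?_{\le k,r}$. Using $\lambda_m(\varepsilon)=\lambda_m(\varepsilon')+\lambda_m(\varepsilon'')$, $\widetilde\inv(\varepsilon)=\widetilde\inv(\varepsilon')+\widetilde\inv(\varepsilon'')+\sum_{i,j}\max(0,\varepsilon'_i-\varepsilon''_j)$, $\Sigma(\varepsilon)=\Sigma(\varepsilon')+\Sigma(\varepsilon'')$, the relation $\Dfactor(a)\Dfactor(b)q^{ab}=\Dfactor(a+b)$, and the recursion \eqref{Wrec1} (applied $\cfactor\le2$ times to peel the $\varepsilon''$-coordinates out of $W_\beta(\cfactor r-k,\lambda_0(\varepsilon'\star\varepsilon''))$), one computes $\Psi_{\varepsilon'}$ explicitly and verifies it lies in the commutative subalgebra $R[\varphi_1^?]\subseteq\mathrm{End}_R(R[\Typ_1])$, where $\varphi_1^?$ is the operator of \Cref{preservesfs} (namely $t(-1)+q_0t(1)$, $t(-1)-qt(1)$, $t(-1)+q^3t(1)$, $t(-1)+t(1)$, $t(-2)+qt(2)$ in the cases \Uf, \Us, \SPf, \Of, \Os). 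Since $\mathrm{id}$ and $\varphi_1^?$ both preserve $\mathfrak r^?_1$ by \Cref{preservesfs}, so does every element of $R[\varphi_1^?]$; hence $\Psi_{\varepsilon'}(g)\in\mathfrak r^?_1$ and each summand lies in $R[\Typ_{r-1}]\star\mathfrak r^?_1\subseteq\Rel^?_r$, proving (b).

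\emph{The hard part.} Everything reduces to the coefficient bookkeeping in (b): after the cross terms $q^{\sum_{i,j}\max(0,\varepsilon'_i-\varepsilon''_j)}$ and the $W$-recursion have been carried out, one must see that $\Psi_{\varepsilon'}$ has exactly the shape cut out by $R[\varphi_1^?]$---concretely, that the coefficients of the opposite shifts occurring in $\Psi_{\varepsilon'}$ stand in the ratios forced by the powers of $q$ in $\varphi_1^?$ (for \SPf also for the $\pm2$ shifts coming from $(\varphi_1^?)^2$). In the cases \gens this is complicated further by the factor $q^{(k+\Sigma(\varepsilon))/2}$, which makes the relevant recursion run with $W_{1/2}$, i.e.\ half-integral $\beta$; and in the case \Os one additionally uses the congruence $\TypeCC{-2}{\chi}\equiv(1+\psi\chi)\TypeCC{0}{\chi}-\psi\chi\TypeCC{2}{\chi}\bmod\Rel^\sharp_1$ to normalize the translates that appear. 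Each of these is a finite explicit verification.
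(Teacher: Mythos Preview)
Your approach is essentially the paper's: both split $\Rel^?_r$ into its $\Rel^\natural_r$ part (deferred to \cite{CoratoZanarella}) and the left ideal generated by $\mathfrak r^?_1$, then reduce the latter to \Cref{preservesfs}. Part (a) is hand-waved in both.

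For part (b), the paper's argument is more direct than yours. Rather than computing $\Psi_{\varepsilon'}$ in full via the $W$-recursions and locating it inside $R[\varphi_1^?]$, the paper simply observes that $W_\beta(\cfactor r-k,\lambda_0(\varepsilon))$ depends on $\varepsilon$ only through $\lambda_0(\varepsilon)$, which is \emph{the same} for $(\varepsilon',1)$ and $(\varepsilon',-1)$. Hence the ratio $C(\varepsilon',1)/C(\varepsilon',-1)$ is a fixed power of $q$, computed directly from the $q^{\widetilde\inv}$ and $\Dfactor$ factors alone; no recursion is needed. That single ratio already forces the operator on the last slot to be a scalar multiple of the operator listed in the paper's table plus a $t(0)$-term, and \Cref{preservesfs} handles both. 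Your invocation of \eqref{Wrec1}--\eqref{Wrec2} and of the \Os congruence $\TypeCC{-2}{\chi}\equiv(1+\psi\chi)\TypeCC{0}{\chi}-\psi\chi\TypeCC{2}{\chi}$ are thus unnecessary for this proposition. Your stronger claim that $\Psi_{\varepsilon'}\in R[\varphi_1^?]$ is true (and in the \SPf case, where $\cfactor=2$, your grouping by the last $\cfactor$ coordinates is arguably the cleaner way to see this), but more than what is required.
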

\begin{proof}
That they preserve $\Rel_r^\natural$ follows easily from \cite[\citepreserveshalf]{CoratoZanarella}.

To see that they preserve $\Rel_1^?,$ we need to compare the coefficients of $(\varepsilon,-1)$ and $(\varepsilon,1)$ in the expression for $\Delta_{\le k,r}^?.$ Let $\beta=\alpha+\begin{cases}
    1/2 & \text{if }?=\sharp \\
    0 & \text{if }?=\flat.
\end{cases}$ Then the power of $q$ in the coefficient of $t^2(\varepsilon)$ in $\Delta_{\le k,r}^?$
\begin{equation*}
    w(\varepsilon)\defeq\widetilde{\inv}(\varepsilon)+\frac{\lambda_1(\varepsilon)^2+(\lambda_1(\varepsilon)+\lambda_0(\varepsilon))^2-(\cfactor r-k)^2}{2}+\beta(k+\lambda_1(\varepsilon)-\lambda_{-1}(\varepsilon)).
\end{equation*}
Now we have
\begin{equation*}
    w(\varepsilon,1)-w(\varepsilon,-1)=-2\lambda_1(\varepsilon)-\lambda_0(\varepsilon)+\frac{2\lambda_1(\varepsilon)+1+2\lambda_1(\varepsilon)+2\lambda_0(\varepsilon)+1}{2}+2\beta=2\beta+1.
\end{equation*}
and thus it remains to see that the following operators preserve $\Rel_1^?.$
\begin{equation*}
\begin{array}{c|c|c|c|c}
    \SPf&\Uf&\Us&\Of&\Os \\
\hline
    t(-1)+q^3\cdot t(1)&t(-2)+q\cdot t(2)&t(-2)+q^2\cdot t(2)& t(-2)+t(2)&t(-2)+q\cdot t(2)
\end{array}
\end{equation*}
This follows from \Cref{preservesfs}.
\end{proof}

Note that the definitions of $T_{\le k,r}^?$ are precisely so that the coefficient of $\delta_1$ in $T_{\le k,r}^{?,*}(\delta_2)$ can be given as follows: given $\Lambda_2^?\in\Lat^{\circ}(V^?)$ with $\mathrm{typ}^?(\Lambda_2^?)=\delta_2,$ such coefficient is the quantity
\begin{equation*}
    \sum_{\substack{L^?\in\Lat(V^?)\\\varpi L^{?,\vee}\subseteq L^?\sub{k}\Lambda_2^?}}\#\{\Lambda_1^?\in\Lat^\circ(V^?)\colon\mathrm{typ}^?(\Lambda_1^?)=\delta_1,\ L^?\subseteq\Lambda_1^?\}.
\end{equation*}
In other words, we may write
\begin{equation}\label{TlessSum}
    T_{\le k,r}^{?,*}(\mathrm{typ}^?(\Lambda_2^?))=\sum_{\substack{L^?\in\Lat(V^?)\\\varpi L^{?,\vee}\subseteq L^?\sub{k}\Lambda_2^?}}\sum_{\substack{\Lambda_1^?\in\Lat^\circ(V^?)\\ L^?\subseteq\Lambda_1^?}}\mathrm{typ}^?(\Lambda_1^?).
\end{equation}

We will eventually reduce all these computations to certain computations in $V_2.$ This will allow us to invoke the computations of \cite[Section 3]{CoratoZanarella}. For this reduction, we first reduce, in the cases \gens, the computations to $V^\flat.$
\begin{proposition}\label{TlessSums}
Assume we are given $\Lambda_2^\sharp=\O_Fu\oplus\Lambda_2^\flat$ and $k\ge1.$ Then we have
\begin{equation*}
\begin{split}
    T_{\le k,r}^{\sharp,*}(\mathrm{typ}^\sharp(\Lambda_2^\sharp))=\ &T_{\le k,r}^{\flat,*}(\mathrm{typ}^\flat(\Lambda_2^\flat))+q^{r-k+\alpha}(q^{r-k+1}-1)\cdot T_{\le k-1,r}^{\flat,*}(\mathrm{typ}^\flat(\Lambda_2^{\flat}))\\
    &+q^\alpha(q^\alpha+1)\sum_{\substack{L^\flat\in\Lat(V^\flat)\\\varpi L^{\flat,\vee}\subseteq L^\flat\sub{k-1}\Lambda_2^\flat}}\sum_{\substack{\Lambda^\flat\in\Lat^\circ(V^\flat)\\ L^\flat\subseteq\Lambda^\flat}}\sum_{\substack{\Lambda^\flat_-\in\Lat(V^\flat)\\ \mathrm{typ}(\Lambda^\flat/\Lambda^\flat_-)=(1,\epsilon\psi)\\\varpi L^{\flat,\vee}\subseteq\Lambda^\flat_-}}\mathrm{typ}^\natural(\mathrm{int}_2(\Lambda_-^\flat)).
\end{split}
\end{equation*}
\end{proposition}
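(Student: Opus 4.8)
The plan is to expand $T_{\le k,r}^{\sharp,*}(\mathrm{typ}^\sharp(\Lambda_2^\sharp))$ using \eqref{TlessSum} for $V^\sharp$, with $\Lambda_2^\sharp=\O_Fu\oplus\Lambda_2^\flat$, and to organize the resulting double sum --- over $L^\sharp\in\Lat(V^\sharp)$ with $\varpi L^{\sharp,\vee}\subseteq L^\sharp\sub{k}\Lambda_2^\sharp$ and over $\Lambda_1^\sharp\in\Lat^\circ(V^\sharp)$ with $L^\sharp\subseteq\Lambda_1^\sharp$ --- according to the positions of $L^\sharp$ and $\Lambda_1^\sharp$ relative to the anisotropic line $Fu$. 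First I would record that, since $\Lambda_2^\sharp$ is self-dual, any such $L^\sharp$ satisfies $\Lambda_2^\sharp\subseteq L^{\sharp,\vee}$, hence $\varpi\Lambda_2^\sharp\subseteq L^\sharp\subseteq\Lambda_2^\sharp$. Feeding this into \Cref{LatBij} for $V^\sharp=Fu\oplus V^\flat$ and using $\langle u,u\rangle\in\O_F^\times$, there are exactly three possibilities: (A) $\O_Fu\subseteq L^\sharp$, so $L^\sharp=\O_Fu\oplus L^\flat$; (B) $L^\sharp\cap Fu=\varpi\O_Fu$ while the image of $L^\sharp$ under projection to $Fu$ is $\O_Fu$, so $L^\sharp=\O_F(u+w)\oplus L^\flat_-$ with $L^\flat_-\sub{1}M^\flat$, where $M^\flat\defeq L^\flat_-+\O_Fw\subseteq\Lambda_2^\flat$ is the projection of $L^\sharp$ to $V^\flat$; or (C) $L^\sharp=\varpi\O_Fu\oplus L^\flat_-$. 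In case (C) one computes $\varpi L^{\sharp,\vee}=\O_Fu\oplus\varpi L^{\flat,\vee}_-\not\subseteq L^\sharp$, so (C) never arises. In case (A), any self-dual $\Lambda_1^\sharp\supseteq L^\sharp$ contains $\O_Fu$, hence equals $\O_Fu\oplus\Lambda_1^\flat$ with $\Lambda_1^\flat$ self-dual and $\Lambda_1^\flat\supseteq L^\flat$; as $V_2\subseteq V^\flat$ this gives $\mathrm{typ}^\sharp(\Lambda_1^\sharp)=\mathrm{typ}^\flat(\Lambda_1^\flat)$, and the constraint $\varpi L^{\sharp,\vee}\subseteq L^\sharp\sub{k}\Lambda_2^\sharp$ is equivalent to $\varpi L^{\flat,\vee}\subseteq L^\flat\sub{k}\Lambda_2^\flat$. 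Comparing with \eqref{TlessSum} for $V^\flat$, case (A) contributes exactly the first term $T_{\le k,r}^{\flat,*}(\mathrm{typ}^\flat(\Lambda_2^\flat))$.

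For case (B) the plan is to split the sum over self-dual $\Lambda_1^\sharp\supseteq L^\sharp$ by whether or not $u\in\Lambda_1^\sharp$. If $u\in\Lambda_1^\sharp$, then as above $\Lambda_1^\sharp=\O_Fu\oplus\Lambda_1^\flat$ with $\Lambda_1^\flat$ self-dual, and since $w=(u+w)-u\in\Lambda_1^\sharp$ we get $\Lambda_1^\flat\supseteq M^\flat$ and $\mathrm{typ}^\sharp(\Lambda_1^\sharp)=\mathrm{typ}^\flat(\Lambda_1^\flat)$. Reducing mod $\varpi$, the image $\bar M^\flat$ of $M^\flat$ in $\Lambda_2^\flat/\varpi\Lambda_2^\flat$ is coisotropic of codimension $k-1$ --- equivalently $\varpi M^{\flat,\vee}\subseteq M^\flat\sub{k-1}\Lambda_2^\flat$ --- and every such $M^\flat$ occurs. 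By \Cref{LatBij}, the $L^\sharp$ of type (B) projecting to a fixed $M^\flat$ correspond to the nonzero linear forms on $\bar M^\flat$ vanishing on $(\bar M^\flat)^\perp$, and the extra condition $\varpi L^{\sharp,\vee}\subseteq L^\sharp$ forces the vector of $U\defeq\bar M^\flat/(\bar M^\flat)^\perp$ representing such a form to have norm $-\langle u,u\rangle^{-1}$, a value of sign $\epsilon\psi$. As $(\bar M^\flat)^\perp$ is totally isotropic of dimension $k-1$, the space $U$ has rank $2(r-k+1)$; it is the split quadratic space of that rank (using, for the case \Os, the hypothesis $\mathrm{sign}(\det V^\flat)=\epsilon^r$ which makes $\Lambda_2^\flat/\varpi\Lambda_2^\flat$ split), resp.\ the Hermitian space of that rank, and in either case the number of vectors of the prescribed norm is $q^{r-k+\alpha}(q^{r-k+1}-1)$, independently of $M^\flat$. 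Summing over $L^\sharp$ of type (B) and then over $M^\flat$, the $u\in\Lambda_1^\sharp$ portion of case (B) contributes $q^{r-k+\alpha}(q^{r-k+1}-1)\,T_{\le k-1,r}^{\flat,*}(\mathrm{typ}^\flat(\Lambda_2^\flat))$, the second term.

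If $L^\sharp$ is of type (B) and $u\notin\Lambda_1^\sharp$, then applying \Cref{LatcircBij} to $\Lambda_1^\sharp\in\Lat^\circ(V^\sharp)$ for $V^\sharp=Fu\oplus V^\flat$ shows $\Lambda_1^\sharp\cap Fu=\varpi\O_Fu$, so $\Lambda^\flat_-\defeq\Lambda_1^\sharp\cap V^\flat$ has $\mathrm{length}(\Lambda^{\flat,\vee}_-/\Lambda^\flat_-)=2$ with cyclic quotient; the compatibility $\langle\varphi(x),\varphi(y)\rangle\equiv-\langle x,y\rangle$ of that bijection, combined with $\mathrm{sign}(\langle u,u\rangle)=\psi$, forces the unique self-dual lattice $\Lambda^\flat$ with $\Lambda^\flat_-\subseteq\Lambda^\flat\subseteq\Lambda^{\flat,\vee}_-$ to satisfy $\mathrm{typ}(\Lambda^\flat/\Lambda^\flat_-)=(1,\epsilon\psi)$. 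One then checks that the lattice $L^\flat$ built from $L^\sharp$ (so that $L^\flat\subseteq\Lambda^\flat$, $\varpi L^{\flat,\vee}\subseteq L^\flat\sub{k-1}\Lambda_2^\flat$, and $\varpi L^{\flat,\vee}\subseteq\Lambda^\flat_-$) together with $\Lambda^\flat$ and $\Lambda^\flat_-$ gives a triple appearing in the triple sum, with $\mathrm{typ}^\sharp(\Lambda_1^\sharp)=\mathrm{typ}^\natural(\mathrm{int}_2(\Lambda^\flat_-))$ because $\Lambda_1^\sharp\cap V_2=\Lambda^\flat_-\cap V_2$. Running the correspondence backwards, each such triple $(L^\flat,\Lambda^\flat,\Lambda^\flat_-)$ arises from exactly $q^\alpha(q^\alpha+1)$ pairs $(L^\sharp,\Lambda_1^\sharp)$ of this kind --- the factor counting the admissible gluing isomorphisms of \Cref{LatBij} and \Cref{LatcircBij}, subject to the relevant norm conditions. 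This produces the third term, and summing the contributions of (A), of (B) with $u\in\Lambda_1^\sharp$, and of (B) with $u\notin\Lambda_1^\sharp$ gives the stated identity.

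The main obstacle is this last case: correctly identifying $\Lambda_1^\sharp$ with the triple $(L^\flat,\Lambda^\flat,\Lambda^\flat_-)$ --- in particular verifying the constraint $\varpi L^{\flat,\vee}\subseteq\Lambda^\flat_-$ and that the recorded quantity is $\mathrm{typ}^\natural(\mathrm{int}_2(\Lambda^\flat_-))$ --- pinning down the discriminant $\epsilon\psi$ of $\Lambda^\flat/\Lambda^\flat_-$ from the compatibility constraint together with $\mathrm{sign}(\langle u,u\rangle)=\psi$, and evaluating the multiplicity $q^\alpha(q^\alpha+1)$. The second term also genuinely uses that the residue-field space $U$ is split, so that the count of vectors of a given nonzero norm collapses to $q^{r-k+\alpha}(q^{r-k+1}-1)$. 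Everything else --- the repeated passage between lattices in $V^\sharp$ and pairs of lattices in $V^\flat$ via \Cref{LatBij}, \Cref{LatcircBij}, and arguments of the type of \Cref{typhelp} --- is routine but notationally heavy.
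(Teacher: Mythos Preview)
Your approach is essentially the same as the paper's. Both start from \eqref{TlessSum} for $V^\sharp$, split the sum over $L^\sharp$ according to whether $\O_Fu\subseteq L^\sharp$ (your case (A), the paper's case (a)) or $L^\flat_-\defeq\mathrm{int}_\flat(L^\sharp)\subsetneq\mathrm{proj}_\flat(L^\sharp)\eqdef L^\flat_+$ (your case (B), the paper's case (b)), and then split case (B) over whether $\Lambda_1^\sharp$ contains $u$ (your ``$u\in\Lambda_1^\sharp$'', the paper's (b.1)) or not (the paper's (b.2)). One cosmetic difference: for the second term, you count vectors of a prescribed nonzero norm in the residue space $U=L^\flat_+/\varpi(L^\flat_+)^\vee$ directly, whereas the paper counts lines of type $(1,\epsilon\psi)$ via $R\TypeC{1}{\epsilon\psi}{2(r-k+1)}{\epsilon^{r-k+1}}$ and then multiplies by the number of isometries of a rank-one form; these are tautologically the same count. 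For the third term, the paper makes explicit that, once $(L^\flat_+,\Lambda^\flat,\Lambda^\flat_-)$ are fixed, the pair $(L^\sharp,\Lambda_1^\sharp)$ is determined by the single datum $\varphi\colon\Lambda^\flat_+/\Lambda^\flat_-\rightiso\varpi^{-1}\O_Fu/\varpi\O_Fu$ (from which $L^\flat_-$ and $\varphi_L$ are recovered via the commutative square), and that the number of such $\varphi$ is $q_0(q_0+1)$ in case \Ugen and $2$ in case \Ogen, i.e.\ $q^\alpha(q^\alpha+1)$; your sketch of this step (``the factor counting the admissible gluing isomorphisms'') is correct but could be made more precise along these lines.
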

\begin{proof}
Note that $\mathrm{typ}^\sharp(\Lambda_2^\sharp)=\mathrm{typ}^\flat(\Lambda_2^\flat).$ We have two cases according to whether $L^\sharp$ is of the form $\O_Fu\oplus L^\flat$ or not. Denote $L^\flat_-\defeq\mathrm{int}_\flat(L^\sharp)$ and $L^\flat_+\defeq\mathrm{proj}_\flat(L^\sharp).$

\begin{enumerate}[label=(\alph*),leftmargin=*]
    \item If $L^\sharp=\O_Fu\oplus L^\flat$: then we must also have $\Lambda_1^\sharp=\O_Fu\oplus \Lambda_1^\flat.$ In total, this gives a contribution of $T_{\le k,r}^\flat(\mathrm{typ}^\flat(\Lambda_2^\flat))$ to \eqref{TlessSum}.
    \item If $L_-^\flat\neq L_+^\flat$: note that we necessarily have $\mathrm{int}_{Fu}(L)=\varpi\O_Fu$ and $\mathrm{proj}_{Fu}(L)=\O_Fu.$ As in \Cref{LatBij}, we get also the data of $\varphi_L\colon L^\flat_+/L^\flat_-\rightiso \O_Fu/\varpi\O_Fu.$ In such terms, having $\varpi L^{\sharp,\vee}\subseteq L^\sharp$ is the same as having
    \begin{equation*}
        \begin{tikzcd}
            &L^\flat_-\arrow[r,dash,"1"]&L^\flat_+\arrow[dll,phantom,"\join",very near start]\arrow[r,dash,"k-1"]&\Lambda_2^\flat\\
            \varpi(L^\flat_+)^\vee\arrow[r,dash,swap,"(1{,}\epsilon\psi)"]\arrow[ur,dash]\arrow[urr,phantom,"\meet",very near start]&\varpi(L^\flat_-)^\vee\arrow[ur,dash]&&
        \end{tikzcd}
    \end{equation*}
    as well as having that $\varphi_L$ preserves $\langle\cdot,\cdot\rangle\mod\varpi\O_F.$
    \begin{enumerate}[label=(b.\arabic*),leftmargin=*]
        \item For the cases that $\Lambda_1^\sharp=\O_Fu\oplus\Lambda_1^\flat,$ the contribution to \eqref{TlessSum} is thus
        \begin{equation*}
        \begin{split}
            &T_{\le k-1,r}^{\flat,*}(\mathrm{typ}^\flat(\Lambda_2^{\flat}))\cdot R\TypeC{1}{\epsilon\psi}{2(r-k+1)}{\epsilon^{r-k+1}}\cdot\begin{cases}q_0+1&\text{in the case \Ugen}\\2&\text{in the case \Ogen}\end{cases}\\
            =\ &q^{r-k+\alpha}(q^{r-k+1}-1)\cdot T_{\le k-1,r}^{\flat,*}(\mathrm{typ}^\flat(\Lambda_2^{\flat}))
        \end{split}
        \end{equation*}
        \item For the remaining cases, denote $\Lambda^\flat_-\defeq\mathrm{int}_\flat(\Lambda_1^\flat)$ and $\Lambda^\flat_+\defeq\mathrm{proj}_\flat(\Lambda_1^\flat)$ and assume $\Lambda^\flat_-\neq \Lambda^\flat_+.$ Note that $\mathrm{typ}^\sharp(\Lambda_1^\sharp)=\mathrm{typ}^\natural(\mathrm{int}_2(\Lambda_-^\flat)).$
        
        Note that we necessarily must have $\mathrm{int}_{Fu}(\Lambda^\flat_1)=\varpi\O_Fu$ and $\mathrm{proj}_{Fu}(\Lambda^\flat_1)=\varpi^{-1}\O_Fu.$ Let $\varphi\colon \Lambda^\flat_+/\Lambda^\flat_-\rightiso\varpi^{-1}\O_Fu/\varpi\O_Fu$ be as in \Cref{LatcircBij} and recall this must respect $\langle\cdot,\cdot\rangle\mod\O_F.$ Now the condition that $L^\sharp\subseteq\Lambda_1^\sharp$ translate to having $L^\flat_\pm\subseteq\Lambda^\flat_\pm$ as well as the following commutative diagram
        \begin{equation*}
            \begin{tikzcd}
                L^\flat_+/L^\flat_-\arrow{d}\arrow{r}{\sim}[swap]{\varphi_L}&\O_Fu/\varpi\O_Fu\arrow[d,hook]\\
                \Lambda^\flat_+/\Lambda^\flat_-\arrow{r}{\sim}[swap]{\varphi}&\varpi^{-1}\O_Fu/\varpi\O_Fu
            \end{tikzcd}
        \end{equation*}
        Now denote $\Lambda^\flat$ to be the unique self-dual lattice between $\Lambda^\flat_-$ and $\Lambda^\flat_+$ (corresponding to $\O_Fu/\varpi\O_Fu$ under $\varphi$). Then we have
        \begin{equation*}
            \begin{tikzcd}
                &L^\flat_-\arrow[r,dash,"1"]\arrow[dr,dash]\arrow[drr,phantom,"\meet",very near start]&L^\flat_+\arrow[dr,dash,"k-1"]\\
                &&\Lambda^\flat_-\arrow[r,dash,swap,"(1{,}\epsilon\psi)"]&\Lambda^\flat\arrow[llu,phantom,"\join",very near start]
            \end{tikzcd}
        \end{equation*}
        All in all, given the data of $L^\flat_+,\Lambda^\flat,\Lambda_-^\flat$ with $\Lambda^\flat\in\Lat^\circ(V^\flat),$ $\varpi(\Lambda^\flat_-)^\vee\not\subseteq\Lambda^\flat_-$ and
        \begin{equation*}
            \begin{tikzcd}
                &&\Lambda_2^\flat\\
                \varpi(L^\flat_+)^\vee\arrow[r,dash]\arrow[dr,dash]&L^\flat_+\arrow[ur,dash,"k-1"]\arrow[dr,dash,"k-1"]&\\
                &\Lambda^\flat_-\arrow[r,dash,swap,"(1{,}\epsilon\psi)"]&\Lambda^\flat
            \end{tikzcd}
        \end{equation*}
        then the contribution to \eqref{TlessSum} is $\mathrm{typ}^\natural(\mathrm{int}_2(\Lambda^\flat_-))$ times the number of choices of $\varphi,$ which is $\begin{cases}
             q_0(q_0+1) & \text{in the case \Ugen}, \\
             2 & \text{in the case \Ogen},
        \end{cases}$ which we may write as $q^\alpha(q^\alpha+1).$\qedhere
    \end{enumerate}
\end{enumerate}
\end{proof}

The next step is to reduce all the computations in $V^\flat$ to computations in $V_2.$
\begin{lemma}\label{lemmaselfdual}
    Assume $L\in\Lat(V^\flat)$ satisfy i) $L\sub{2a}L^\vee,$ ii) $\mathrm{int}_2(L)\sub{a}\mathrm{int}_2(L^\vee),$ iii) $\mathrm{int}_2(L^\vee)\subseteq\mathrm{int}_2(L^\vee)^\vee.$ Then $I\defeq L+\mathrm{int}_2(L^\vee)$ satisfies $I^\vee=I.$
\end{lemma}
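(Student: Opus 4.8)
Write $M\defeq\mathrm{int}_2(L^\vee)=L^\vee\cap V_2^\flat$, so that $I=L+M$; I abbreviate $V_2=V_2^\flat$, and I note that $\mathrm{int}_2(L),\mathrm{int}_2(L^\vee),M$ are lattices in $V_2$, whose duals I compute inside $V_2$ (in contrast with $L^\vee$ and $I^\vee$, computed inside $V^\flat$). The strategy is to produce the chain $L\subseteq I\subseteq I^\vee\subseteq L^\vee$ and then to check that $\mathrm{length}(I/L)=\mathrm{length}(I^\vee/L)=a$, which forces $I^\vee=I$. I will use without further comment the standard facts $(A+B)^\vee=A^\vee\cap B^\vee$ and $\mathrm{length}(A^\vee/B^\vee)=\mathrm{length}(B/A)$ for lattices $A\subseteq B$ in a nondegenerate space, together with the identity $\mathrm{int}_i(\Lambda)^\vee=\mathrm{proj}_i(\Lambda^\vee)$ recorded in the proof of \Cref{LatcircBij}.

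\textbf{Identifying $I^\vee$.} First I would observe that $I^\vee=\{x\in L^\vee\colon\langle x,M\rangle\subseteq\O_F\}$, and that, since the decomposition $V^\flat=V_1^\flat\oplus V_2$ is orthogonal and $M\subseteq V_2$, the condition $\langle x,M\rangle\subseteq\O_F$ depends only on $\mathrm{proj}_2(x)$ and says precisely that $\mathrm{proj}_2(x)$ lies in the dual $M^\vee$ of $M$ computed inside $V_2$. Thus $I^\vee$ is the preimage of $M^\vee$ under $\mathrm{proj}_2\colon L^\vee\twoheadrightarrow\mathrm{proj}_2(L^\vee)$. Applying $\mathrm{int}_2(\Lambda)^\vee=\mathrm{proj}_2(\Lambda^\vee)$ with $\Lambda=L^\vee$ gives $M^\vee=\mathrm{int}_2(L^\vee)^\vee=\mathrm{proj}_2(L)$, so $I^\vee$ is the preimage of $\mathrm{proj}_2(L)$; in particular $L\subseteq I^\vee\subseteq L^\vee$ and $\mathrm{length}(L^\vee/I^\vee)=\mathrm{length}\bigl(\mathrm{proj}_2(L^\vee)/\mathrm{proj}_2(L)\bigr)$. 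Using $\mathrm{proj}_2(L^\vee)=\mathrm{int}_2(L)^\vee$ and the length--duality formula, this equals $\mathrm{length}\bigl(\mathrm{int}_2(L^\vee)/\mathrm{int}_2(L)\bigr)=a$ by hypothesis (ii).

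\textbf{Integrality and the count.} Next I would check $I\subseteq I^\vee$, i.e.\ $\langle I,I\rangle\subseteq\O_F$: the pairings $\langle L,L\rangle$ and $\langle L,M\rangle$ are integral because $L\subseteq L^\vee$ and $M\subseteq L^\vee$, while $\langle M,M\rangle$ is integral because hypothesis (iii) says exactly that $M$ is contained in its own dual inside $V_2$. Finally, from $L\cap M=L\cap V_2=\mathrm{int}_2(L)$ one gets $I/L\cong M/\mathrm{int}_2(L)$, so $\mathrm{length}(I/L)=\mathrm{length}\bigl(\mathrm{int}_2(L^\vee)/\mathrm{int}_2(L)\bigr)=a$ by (ii); and by hypothesis (i), $\mathrm{length}(I^\vee/L)=\mathrm{length}(L^\vee/L)-\mathrm{length}(L^\vee/I^\vee)=2a-a=a$. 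Since $L\subseteq I\subseteq I^\vee$ and the quotients $I/L$, $I^\vee/L$ have equal length, $I=I^\vee$.

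\textbf{Where the work is.} This argument is essentially bookkeeping with lattice duality; the only step that needs care is the identification in the second paragraph of $\mathrm{length}(L^\vee/I^\vee)$ with a quotient of projections to $V_2$, and the consistent application of $\mathrm{int}_i(\Lambda)^\vee=\mathrm{proj}_i(\Lambda^\vee)$ in both directions. It is worth recording how the hypotheses enter: (i) supplies $\mathrm{length}(L^\vee/L)=2a$, (ii) controls both $\mathrm{length}(I/L)$ and $\mathrm{length}(L^\vee/I^\vee)$, and (iii) is precisely the integrality of $M$ (hence of $I$).
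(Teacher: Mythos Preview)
Your proof is correct and follows essentially the same approach as the paper: establish $L\subseteq I\subseteq I^\vee\subseteq L^\vee$, compute $\mathrm{length}(I/L)=a$ from (ii), and conclude by a length count using (i). The only difference is that you identify $I^\vee$ explicitly as a preimage under $\mathrm{proj}_2$ and compute $\mathrm{length}(L^\vee/I^\vee)=a$ directly, whereas the paper skips this and implicitly uses the duality $\mathrm{length}(L^\vee/I^\vee)=\mathrm{length}(I/L)$; either way the same length comparison finishes the argument.
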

\begin{proof}
    Since $I/L\iso \mathrm{int}_2(L^\vee)/\mathrm{int}_2(L),$ we have $\mathrm{length}(I/L)=a$ by (ii). As $\mathrm{length}(L^\vee/L)=2a$ by (i), it remains to check that $I\subseteq I^\vee.$ As $L\subseteq L^\vee,$ this boils down to (iii).
\end{proof}

\begin{proposition}\label{gencomp1}
Let $\Lambda\in\Lat^\circ(V^\flat)$ and denote $L_0\defeq\mathrm{int}_2(\Lambda).$ We assume that it satisfies $L_0\subseteq \varpi L_0^{\vee}.$ Fix $L_-,L_+\in\Lat(V_2)$ such that $\varpi L_0\subseteq L_-\subseteq L_+\subseteq L_0.$ Then the number of $L\in\Lat(V^\flat)$ satisfying
i) $\varpi L^\vee\subseteq L\subseteq\Lambda,$ ii) $\mathrm{int}_2(L)=L_+,$ iii) $\mathrm{int}_2(\varpi L^\vee)=L_-$ is
\begin{equation*}
    \frac{\Dfactor(\dim(\mathrm{int}_2(\Lambda)/L_-))}{\Dfactor(\dim(L_+/L_-))}.
\end{equation*}
Moreover, in such cases we must have that $\dim(L_+/L_-)+\dim(\Lambda/L)=\cfactor r.$
\end{proposition}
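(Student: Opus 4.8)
The plan is to reduce the count to a linear-algebra problem over the residue field $\F_q$ and solve it with the results of \Cref{finFieldSection}.

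First I would observe that any $L$ as in the statement satisfies $\varpi\Lambda\subseteq L$: condition (i) gives $\Lambda=\Lambda^\vee\subseteq L^\vee$, hence $\varpi\Lambda\subseteq\varpi L^\vee\subseteq L$. Thus $L$ is recovered from the $\F_q$-subspace $\overline L\defeq L/\varpi\Lambda$ of $\overline\Lambda\defeq\Lambda/\varpi\Lambda$, which carries the nondegenerate (Hermitian/symmetric/alternating) form obtained by reducing $\langle\cdot,\cdot\rangle$ modulo $\varpi$, using self-duality of $\Lambda$. Set $\overline V_2\defeq(L_0+\varpi\Lambda)/\varpi\Lambda$; since $\varpi\Lambda\cap V_2=\varpi L_0$ one has $\overline V_2\iso L_0/\varpi L_0$, and moreover $\overline V_2^{\,\perp}$ is the image of $\mathrm{int}_1(\Lambda)$ while $\mathrm{proj}_2(\Lambda)=L_0^\vee$, which fixes the type of $\overline V_2$ purely in terms of $L_0$. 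A short computation (using $\varpi\Lambda\subseteq\varpi L^\vee$ and $\varpi L_0\subseteq L_\pm\subseteq L_0$) shows that $M\mapsto\overline M$ identifies $\varpi L^\vee$ with $\overline L^{\,\perp}$ and identifies $\mathrm{int}_2(L),\mathrm{int}_2(\varpi L^\vee)$ with $\overline L\cap\overline V_2$ and $\overline L^{\,\perp}\cap\overline V_2$ respectively, and that these intersections determine $\mathrm{int}_2(L),\mathrm{int}_2(\varpi L^\vee)$ since both lie between $\varpi L_0$ and $L_0$. Hence, writing $\overline L_\pm$ for the images of $L_\pm$, conditions (i)--(iii) become: $\overline L$ is coisotropic, $\overline L\cap\overline V_2=\overline L_+$, and $\overline L^{\,\perp}\cap\overline V_2=\overline L_-$. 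The ``Moreover'' identity then drops out of computing $\dim_{\F_q}\overline L$: coisotropy of $\overline L$ together with the two intersection conditions and the duality $\overline L^{\,\perp}\cap\overline V_2=(\overline L+\overline V_2^{\,\perp})^\perp\cap\overline V_2$ pin down $\dim\overline L=\cfactor r+\dim(L_+/L_-)$, so that $\dim(\Lambda/L)=2\cfactor r-\dim\overline L=\cfactor r-\dim(L_+/L_-)$.

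It then remains to count coisotropic $\overline L\subseteq\overline\Lambda$ with $\overline L\cap\overline V_2=\overline L_+$ and $\overline L^{\,\perp}\cap\overline V_2=\overline L_-$. Here I would follow the strategy in the proof of \Cref{Lquantitysub}: stratify by the isotropic subspace $W\defeq\overline L^{\,\perp}$ (which satisfies $W\cap\overline V_2=\overline L_-$ and $W^\perp\cap\overline V_2=\overline L_+$), separate off the choice of complements of $\overline L_-$ inside $\overline L_+$ and of the relevant spaces inside $\overline V_2$ (this accounts for the factor $q^{\dim(L_0/L_+)\cdot\dim(L_+/L_-)}$), and, after passing to the nondegenerate quotient $\overline L/\overline L^{\,\perp}$, recognize the remaining freedom as the choice of a Lagrangian transverse to a fixed subspace in a split space of type $\dim(L_0/L_+)$, of which there are $\Dfactor(\dim(L_0/L_+))$ by \Cref{Lquantitysubsub,Lquantitysub}. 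The elementary identity $\Dfactor(a)/\Dfactor(b)=q^{(a-b)b}\,\Dfactor(a-b)$, applied with $a=\dim(L_0/L_-)$ and $b=\dim(L_+/L_-)$, then assembles these contributions into the asserted value.

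I expect the main obstacle to be precisely this finite-field count: controlling how the (possibly degenerate) restriction of the form to $\overline V_2$ interacts with the coisotropic subspace $\overline L$, and checking that all the ``type'' data of $\overline V_2$ and of the flag $\overline L_-\subseteq\overline L_+\subseteq\overline V_2$ beyond the two colengths $\dim(L_0/L_-)$ and $\dim(L_+/L_-)$ cancels in the final answer. The structural input that makes this cancellation happen is that $L_0=\mathrm{int}_2(\Lambda)$ comes from a \emph{self-dual} lattice, so $\mathrm{proj}_2(\Lambda)=L_0^\vee$ and $\overline V_2$ together with $\overline V_2^{\,\perp}$ exhaust $\overline\Lambda$ modulo the radical of $\overline V_2$; this is exactly what couples the condition on $\overline L\cap\overline V_2$ to the condition on $\overline L^{\,\perp}\cap\overline V_2$ and yields the clean ratio of $\Dfactor$'s.
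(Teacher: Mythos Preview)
Your approach is correct and parallels the paper's closely: both pass to the residue field (you explicitly via $\overline\Lambda=\Lambda/\varpi\Lambda$, the paper by working with $N=\varpi L^\vee$ and the auxiliary lattice $I=\varpi\Lambda+L_0$) and then invoke the Lagrangian counts of \Cref{finFieldSection}. The simplifying observation you are missing is that $\overline V_2$ is itself a \emph{Lagrangian} of $\overline\Lambda$; in the paper's language this is the identity $\varpi I^\vee=I$, obtained from \Cref{lemmaselfdual}. Once $\overline V_2=\overline V_2^{\,\perp}$, your two intersection conditions dualize to $W\cap\overline V_2=\overline L_-$ and $W+\overline V_2=\overline L_+^{\,\perp}$, so the ``Moreover'' clause drops out of modularity (no separate control of $\overline L\cap\overline V_2^{\,\perp}$ is needed), and the count is clean: first choose the Lagrangian image of $W$ in $\overline L_+^{\,\perp}/\overline L_+$ transverse to the fixed Lagrangian $\overline V_2/\overline L_+$ (there are $\Dfactor(a)$ such by \Cref{Lquantitysubsub}), then the $q^{ab}$ lifts to $W$. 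Your ``main obstacle'' about a possibly degenerate form on $\overline V_2$ therefore dissolves; the paper's argument is essentially yours with this one observation inserted.
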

\begin{proof}
    Denote $I\defeq\varpi\Lambda+\mathrm{int}_2(\Lambda).$ By \Cref{lemmaselfdual}, applied to $(V^\flat,\varpi\langle\cdot,\cdot\rangle$) and $a=\gamma\cdot r,$ we have $\varpi I^\vee=I.$ 

    Given $L$ satisfying i), we have that ii) is equivalent to $L\cap I=\varpi\Lambda+L_+,$ which is equivalent to $\varpi L^\vee+I=\varpi(\varpi \Lambda+L_+).$ Similarly, given i) we have that iii) is equivalent to $\varpi L^\vee\cap I=\varpi\Lambda+L_-.$

    Now denote $N=\varpi L^\vee.$ Then we are counting the number of $N$ satisfying $N\subseteq\varpi N^\vee$ as well as
    \begin{equation*}
        \begin{tikzcd}
            &N\arrow[drr,dash]&&\\
            \varpi\Lambda+L_-\arrow[ur,dash,"a"]\arrow[dr,dash,swap,"b"]\arrow[drr,dash]\arrow[rrr,phantom,"\meet",very near start]&&&\varpi(\varpi\Lambda+L_+)^\vee\arrow[lll,phantom,"\join",very near start]\\
            &\varpi\Lambda+L_+\arrow[r,dash,swap,"a"]&I\arrow[ur,dash,"a"]&
        \end{tikzcd}
    \end{equation*}
    The second claim follows as $\dim(\Lambda/L)=\dim(N/\varpi\Lambda)=\dim(I/\varpi\Lambda)-b=\cfactor r-\dim(L_+/L_-).$
    
    Now note that $(\varpi\Lambda+L_-)+N$ is a Lagrangian in $\varpi(\varpi\Lambda+L_+)^\vee/(\varpi\Lambda+L_+)$ which is disjoint from $I.$ By \Cref{Lquantitysubsub}, there are $\Dfactor(a)$ possible such $(\varpi\Lambda+L_-)+N.$ Given that, there are $q^{ab}$ possible $N.$ Thus the final result is
    \begin{equation*}
        q^{ab}\Dfactor(a)=\frac{\Dfactor(a+b)}{\Dfactor(b)}.\qedhere
    \end{equation*}
\end{proof}

\begin{proposition}\label{gencomp2}
Assume $L\in\Lat(V^\flat)$ is such that $\varpi L^\vee\subseteq L\sub{2k} L^\vee.$ Denote $L_-\defeq\mathrm{int}_2(\varpi L^\vee)$ and $L_+\defeq\mathrm{int}_2(L).$ Assume further that $L_-\subseteq\varpi^2 L_-^\vee$ and that $\dim(\varpi^{-1}L_-/L_+)=k.$ Then given $L_+\subseteq L_0\subseteq \varpi^{-1}L_-,$ the number of $\Lambda\in\Lat^\circ(V^\flat)$ satisfying i) $L\subseteq \Lambda,$ ii) $\mathrm{int}_2(\Lambda)=L_0$ is $\Dfactor(\dim(\varpi^{-1}L_-/L_0)).$
\end{proposition}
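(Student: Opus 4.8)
The plan is to pass to the residue field and reduce the count to that of Lagrangians disjoint from a fixed one, where \Cref{Lquantitysubsub} applies.

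First I would note that any $\Lambda\in\Lat^\circ(V^\flat)$ with $L\subseteq\Lambda$ automatically satisfies $\Lambda=\Lambda^\vee\subseteq L^\vee$, hence $\varpi L^\vee\subseteq L\subseteq\Lambda\subseteq L^\vee$ using $\varpi L^\vee\subseteq L$. Set $\overline{V}\defeq L^\vee/L$; since $\varpi L^\vee\subseteq L$ this is killed by $\varpi$, of length $2k$, and $\varpi\langle\cdot,\cdot\rangle^\flat$ descends to a nondegenerate form on it of the type considered in \Cref{finFieldSection}. Under the standard dictionary, $\Lambda\mapsto\overline{\Lambda}\defeq\Lambda/L$ is a bijection from $\{\Lambda\in\Lat^\circ(V^\flat)\colon L\subseteq\Lambda\}$ onto the set of Lagrangians of $\overline{V}$.

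Next I would identify $\overline{W}\defeq$ the image of $\mathrm{int}_2(L^\vee)$ in $\overline{V}$. From $L_-=\mathrm{int}_2(\varpi L^\vee)=\varpi\,\mathrm{int}_2(L^\vee)$ we get $\mathrm{int}_2(L^\vee)=\varpi^{-1}L_-$, and the kernel of the map $\varpi^{-1}L_-\to\overline{V}$ is $\varpi^{-1}L_-\cap L=L_+$, whence $\dim\overline{W}=\dim(\varpi^{-1}L_-/L_+)=k$ by hypothesis. The assumption $L_-\subseteq\varpi^2 L_-^\vee$ is exactly $\langle\varpi^{-1}L_-,\varpi^{-1}L_-\rangle^\flat\subseteq\O_F$, i.e.\ $\overline{W}$ is isotropic; of dimension $k$, it is therefore a Lagrangian of $\overline{V}$ (which in particular certifies that $\overline{V}$ satisfies the hypothesis of \Cref{Lquantitysubsub}). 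Then, for $\Lambda$ as above, $\mathrm{int}_2(\Lambda)=\Lambda\cap V_2=\Lambda\cap\varpi^{-1}L_-$ is precisely the preimage in $\varpi^{-1}L_-$ of $\overline{\Lambda}\cap\overline{W}$; since $L_+$ lies in this preimage and $L_+\subseteq L_0\subseteq\varpi^{-1}L_-$, the condition $\mathrm{int}_2(\Lambda)=L_0$ is equivalent to $\overline{\Lambda}\cap\overline{W}=\overline{L_0}$, where $\overline{L_0}$ denotes the image of $L_0$, an isotropic subspace of $\overline{W}$ of dimension $c\defeq\dim(L_0/L_+)$.

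Finally I would count the Lagrangians $\overline{\Lambda}$ of $\overline{V}$ with $\overline{\Lambda}\cap\overline{W}=\overline{L_0}$. Both $\overline{\Lambda}$ and $\overline{W}$ lie in $\overline{L_0}^\perp$ and descend to Lagrangians of the nondegenerate space $\overline{L_0}^\perp/\overline{L_0}$, which is of length $2(k-c)$ and admits the Lagrangian $\overline{W}/\overline{L_0}$; the condition $\overline{\Lambda}\cap\overline{W}=\overline{L_0}$ becomes $(\overline{\Lambda}/\overline{L_0})\cap(\overline{W}/\overline{L_0})=0$, and conversely every Lagrangian of $\overline{L_0}^\perp/\overline{L_0}$ disjoint from $\overline{W}/\overline{L_0}$ pulls back to such a $\overline{\Lambda}$. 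Hence the count equals the number of Lagrangians of $\overline{L_0}^\perp/\overline{L_0}$ disjoint from a fixed one, which by \Cref{Lquantitysubsub} is $\Dfactor(k-c)$; and $k-c=\dim(\varpi^{-1}L_-/L_+)-\dim(L_0/L_+)=\dim(\varpi^{-1}L_-/L_0)$ gives the claimed value. The main obstacle is the bookkeeping of this middle step — pinning down the induced form on $\overline{V}$, deducing from $L_-\subseteq\varpi^2 L_-^\vee$ that $\overline{W}$ is a Lagrangian, and translating $\mathrm{int}_2(\Lambda)=L_0$ into the intersection condition; the final reduction is then routine, dual to the proof of \Cref{gencomp1}.
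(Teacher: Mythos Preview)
Your proof is correct and follows essentially the same route as the paper: both reduce to counting Lagrangians disjoint from a fixed one and invoke \Cref{Lquantitysubsub}. The only cosmetic difference is that the paper packages the fact that $\overline{W}$ is Lagrangian as the statement $I=I^\vee$ (via \Cref{lemmaselfdual}) and works directly in $(L+L_0)^\vee/(L+L_0)$, whereas you pass first through $L^\vee/L$ and then take the subquotient $\overline{L_0}^\perp/\overline{L_0}$; these are the same space, so the arguments coincide.
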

\begin{proof}
Let $I\defeq L+\mathrm{int}_2(L^\vee).$ By \Cref{lemmaselfdual}, we have $I=I^\vee.$

Now we are simply counting $\Lambda\in\Lat^\circ(V^\flat)$ such that $\Lambda\cap I=L+L_0.$ As $L+L_0\subseteq I,$ we are counting Lagrangians in $(L+L_0)^\vee/(L+L_0)$ not intersecting a given Lagrangian. Hence the answer is simply $\Dfactor(\dim(I/(L+L_0)))=\Dfactor(\dim(\varpi^{-1}L_-/L_0)).$
\end{proof}

The following proposition will only be used in the cases \gens.
\begin{proposition}\label{gencomp3}
    Assume we have $\Lambda\in\Lat^\circ(V^\flat)$ and $L\in\Lat(V^\flat)$ with $\varpi L^\vee\subseteq L\subseteq\Lambda.$ Denote $L_0=\mathrm{int}_2(\Lambda)$ and $L_-=\mathrm{int}_2(\varpi L^\vee),$ $L_+=\mathrm{int}_2(L).$ Assume furthermore that $L_+\subseteq\varpi L_+^\vee.$ Then given $L_1\in\Lat(V_2)$ with $L_-\subseteq L_1\sub{1}L_0$ but $L_+\not\subseteq L_1,$ the number of $\Lambda_1\in\Lat(V^\flat)$ satisfying i) $\varpi L^\vee\subseteq\Lambda_1\subseteq\Lambda,$ ii) $\mathrm{typ}(\Lambda/\Lambda_1)=(1,\epsilon\psi),$ iii) $\mathrm{int}_2(\Lambda_1)=L_1$ is
    \begin{equation*}
        q^{\cfactor r-\dim(L_0/L_+)-1+\alpha}(q-1)/(q^\alpha+1).
    \end{equation*}
\end{proposition}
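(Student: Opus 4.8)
The plan is to reduce the count modulo $\varpi\Lambda$ and identify it with a count of anisotropic lines in a quadratic/Hermitian space over the residue field. First I would note that $\varpi\Lambda\subseteq\varpi L^\vee\subseteq\Lambda_1\subseteq\Lambda$, so that (since $\Lambda/\Lambda_1$ has length $1$) the lattice $\Lambda_1$ is the preimage in $\Lambda$ of a hyperplane of $\bar\Lambda\defeq\Lambda/\varpi\Lambda$; as $\Lambda^\vee=\Lambda$, the space $\bar\Lambda$ is nondegenerate for the reduction $\bar{\langle\cdot,\cdot\rangle}$ of $\langle\cdot,\cdot\rangle$ (Hermitian in case \Ugen, symmetric in case \Ogen). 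Writing $W\defeq(\Lambda_1/\varpi\Lambda)^\perp$, this gives a bijection between the relevant $\Lambda_1$ and lines $W\subseteq\bar\Lambda$, under which $\mathrm{typ}(\Lambda/\Lambda_1)=(1,\epsilon\psi)$ becomes ``$W$ is anisotropic with $\mathrm{typ}(W)=(1,\epsilon\psi)$''. Denoting by $\bar M$ the image in $\bar\Lambda$ of a lattice $M$ with $\varpi\Lambda\subseteq M\subseteq\Lambda$, one checks using $\Lambda^\vee=\Lambda$ that $\overline{\varpi L^\vee}=\bar L^\perp$, $\overline{\mathrm{int}_2(\Lambda)}=\overline{L_0}$, and $\overline{\mathrm{int}_2(\Lambda_1)}=W^\perp\cap\overline{L_0}$; since $\bar L_1$ has codimension $1$ in $\overline{L_0}$ and $L_-\subseteq\mathrm{int}_2(\Lambda_1)$ holds automatically, conditions i) and iii) become $W\subseteq\bar L\cap\bar L_1^\perp$ and $W\not\subseteq\overline{L_0}^\perp$. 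So the quantity to compute is the number of anisotropic lines $W$ of type $(1,\epsilon\psi)$ inside $U\defeq\bar L\cap\bar L_1^\perp$ but not inside the hyperplane $U_0\defeq\bar L\cap\overline{L_0}^\perp$.

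Next I would analyze the form on $U$. By the perp-calculus in $\bar\Lambda$ one has $U^\perp=\bar L^\perp+\bar L_1=\overline{\varpi L^\vee}+\bar L_1$, so the radical of the form restricted to $U$ is $R\defeq U\cap U^\perp$. This is where the hypotheses come in: $\varpi L^\vee\subseteq L$ forces $\langle\varpi L^\vee,\varpi L^\vee\rangle\subseteq\O_F\cdot\varpi$ (so $\overline{\varpi L^\vee}=\bar L\cap\bar L^\perp$ is totally isotropic), and $L_+\subseteq\varpi L_+^\vee$ forces $\langle L_+,L_+\rangle\subseteq\O_F\cdot\varpi$ (so $\overline{L_+}=\bar L\cap\overline{L_0}$ is totally isotropic). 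Using these together with the identity $\dim(\Lambda/L)+\dim(L_+/L_-)=\cfactor r$ — which follows from $\Lambda^\vee=\Lambda$ and the bijections of \Cref{LatBij} and \Cref{LatcircBij} — a dimension bookkeeping should yield $\dim R=\cfactor r-\dim(L_0/L_+)-1$, that $R\subseteq U_0$, that $U/R$ is a $2$-dimensional split (hyperbolic) space, and that $U_0/R$ is an isotropic line of $U/R$.

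Granting this, one finishes as follows: since $R$ is totally isotropic, an anisotropic line $W\subseteq U$ has nonzero image $\bar W$ in $U/R$, it is anisotropic of type $(1,\epsilon\psi)$ iff $\bar W$ is, it satisfies $W\not\subseteq U_0$ iff $\bar W\neq U_0/R$, and it lies over $\bar W$ together with exactly $q^{\dim R}$ lines of $U$. As $U_0/R$ is isotropic it is not among the anisotropic lines of the hyperbolic plane $U/R$, so nothing is excluded, and the total equals $q^{\dim R}$ times the number of anisotropic lines of type $(1,\epsilon\psi)$ in a split plane; this standard finite-field count is $q_0^2-q_0$ in case \Ugen and $\tfrac{q-1}{2}$ in case \Ogen, i.e.\ $q^\alpha(q-1)/(q^\alpha+1)$ in both. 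Multiplying gives $q^{\cfactor r-\dim(L_0/L_+)-1+\alpha}(q-1)/(q^\alpha+1)$, as desired. The main obstacle is the middle paragraph: organizing the mod-$\varpi$ quadratic-space bookkeeping precisely enough to see that the hypothesis $L_+\subseteq\varpi L_+^\vee$ makes $U/R$ a hyperbolic plane with $U_0/R$ an isotropic line; once that structural fact is in place, everything else is formal or a routine count.
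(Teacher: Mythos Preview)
Your approach is correct and essentially identical to the paper's. The paper works with the auxiliary lattices $N=\varpi L^\vee+L_1$, $M=\varpi L^\vee+L_0$, $I=\varpi L^\vee+L_+$ directly rather than in $\bar\Lambda$, but your $U,U_0,R$ are exactly $\overline{\varpi N^\vee},\overline{\varpi M^\vee},\overline{N\cap\varpi N^\vee}$, and the structural input of your ``middle paragraph'' comes down to \Cref{lemmaselfdual} giving $I=\varpi I^\vee$ (i.e., $\bar I$ Lagrangian, whence $U_0$ isotropic) together with the chain $N\sub{1}M\sub{1}N+\varpi N^\vee$ (this is where the hypothesis $L_+\not\subseteq L_1$ enters) giving $\dim(U/R)=2$.
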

\begin{proof}
Let $I\defeq \varpi L^\vee+L_+.$ By \Cref{lemmaselfdual}, we have $I=\varpi I^\vee.$

Denote $N\defeq\varpi L^\vee+L_1,$ and $M\defeq\varpi L^\vee+L_0.$ Then $N\sub{1}M$ and $I\subseteq M,$ but $I\not\subseteq N.$ Note that $N\subseteq N+\varpi M^\vee\subseteq M,$ and since $N\sub{1}M$ we must have either $\varpi M^\vee\subseteq N$ or $N+\varpi M^\vee=M.$ In the first case, we would have
\begin{equation*}
\begin{split}
L_+\cap \varpi L_0^\vee&=\mathrm{int}_2(L)\cap \varpi L_0^\vee=(\mathrm{proj}_2(L^\vee)+\varpi^{-1}L_0)^\vee\\&=\varpi\mathrm{proj}_2(M)^\vee=\mathrm{int}_2(\varpi M^\vee)\subseteq \mathrm{int}_2(N)=L_1.
\end{split}
\end{equation*}
But by assumption $L_+\subseteq L_0\subseteq\varpi L_0^\vee,$ and thus the above would contradict our assumption that $L_+\not\subseteq L_1.$ Thus, $N+\varpi M^\vee=M.$ In particular, $N\cap \varpi M^\vee\sub{1}\varpi M^\vee,$ and dualy $M\sub{1}M+\varpi N^\vee.$ Thus we obtain
\begin{equation*}
    N+\varpi N^\vee=N+(\varpi M^\vee+\varpi N^\vee)=(N+\varpi M^\vee)+\varpi N^\vee=M+\varpi N^\vee
\end{equation*}
and $N\sub{1}M\sub{1}N+\varpi N^\vee.$

We are counting $N\subseteq\Lambda_1\subseteq\Lambda$ with $M\cap\Lambda_1=N$ and $\mathrm{typ}(\Lambda/\Lambda_1)=(1,\epsilon\psi).$ Dually, this means counting a line of norm $\epsilon\psi$ in $\varpi N^\vee/\varpi L^\vee$ which is not in $\varpi M^\vee.$ Note that such a line cannot be in $M^\vee,$ as $M^\vee$ is isotropic. Recall that we have just seen that
\begin{equation*}
    \varpi L^\vee\subseteq N\cap\varpi N^\vee\sub{2}\varpi N^\vee.
\end{equation*}
Thus the number of such lines is
\begin{equation*}
    q^{\dim(N\cap\varpi N^\vee/\varpi L^\vee)}R\TypeC{1}{\epsilon\psi}{2}{\epsilon}=q^{\dim(N\cap\varpi N^\vee/\varpi L^\vee)}=q^\alpha(q-1)/(q^\alpha+1)
\end{equation*}
and note that
\begin{equation*}
    \dim(N\cap\varpi N^\vee/\varpi L^\vee)=\dim(\varpi M^\vee/\varpi L^\vee)-1=\cfactor r-\dim(L_0/L_+)-1.\qedhere
\end{equation*}
\end{proof}

\begin{proposition}\label{genMainProp}
    Assume we are given $\delta=(e^0,\chi^0)\in\Typ^0_r$ which is supported on $\Z_{\ge2}.$ Fix $L\in\Lat(V_2)$ such that $\mathrm{typ}^\natural(L)=\delta.$ Denote
    \begin{equation*}
        A_L^{(a)}\defeq\left\{(L_-,L_+,L_0)\in\Lat(V_2)^3\colon\begin{tikzcd}
            \varpi L\arrow[dash,dr]&&&L\\[-20pt]
            &L_-\arrow[r,dash,"\cfactor r-a"]&L_+\arrow[ur,dash]\arrow[dr,dash]&\\[-20pt]
            \varpi L_0\arrow[ur,dash]&&&L_0
        \end{tikzcd}\right\}.
    \end{equation*}
    Then
    \begin{equation*}
    \begin{split}
        T_{\le k,r}^{\flat,*}(\delta)=\ &\sum_{(L_-,L_+,L_0)\in A_L^{(k)}}\frac{\Dfactor(\dim(L/L_-))}{\Dfactor(\cfactor r-k)}\Dfactor(k-\dim(L_0/L_+))\cdot\mathrm{typ}^\natural(L_0),\\
        T_{\le k,r}^{\sharp,*}(\delta)=\ &T_{\le k,r}^{\flat,*}(\delta)+q^{r-k+\alpha}(q^{r-k+1}-1)T_{\le k-1,r}^{\flat,*}(\delta)\\&+(q-1)\sum_{(L_-,L_+,L_0)\in A_L^{(k-1)}}\frac{\Dfactor(\dim(L/L_-))}{\Dfactor(\cfactor r-k)}\Dfactor(k-\dim(L_0/L_+))\sum_{\substack{L_-\subseteq L_1\sub{1}L_0\\L_+\not\subseteq L_1}}\mathrm{typ}^\natural(L_0).
    \end{split}
    \end{equation*}
\end{proposition}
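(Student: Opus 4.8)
The plan is to reduce both formulas to the three counting lemmas \Cref{gencomp1}, \Cref{gencomp2} and (for the $\sharp$ case) \Cref{gencomp3}, by stratifying the sums computing $T_{\le k,r}^{?,*}$ according to the images of the relevant lattices under $\mathrm{int}_2$. Throughout, the hypothesis ``$\delta$ supported on $\Z_{\ge2}$'' will be used in the clean form $\langle L,L\rangle\subseteq\varpi^2\O_F$, equivalently $L\subseteq\varpi^2 L^\vee$; this is what feeds the integrality/divisibility hypotheses of the three lemmas.

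For the $\flat$ case, I would choose $\Lambda_2^\flat\in\Lat^\circ(V^\flat)$ with $\mathrm{typ}^\flat(\Lambda_2^\flat)=\delta$ and, in addition, $\mathrm{int}_2(\Lambda_2^\flat)=L$ (always possible, and it does not change $T_{\le k,r}^{\flat,*}(\delta)$). Starting from \eqref{TlessSum}, I break the double sum over $(L^\flat,\Lambda_1^\flat)$ according to $L_-\defeq\mathrm{int}_2(\varpi L^{\flat,\vee})$, $L_+\defeq\mathrm{int}_2(L^\flat)$ and $L_0\defeq\mathrm{int}_2(\Lambda_1^\flat)$. Applying $\mathrm{int}_2$ to the chains $\varpi\Lambda_2^\flat\subseteq\varpi L^{\flat,\vee}\subseteq L^\flat\subseteq\Lambda_2^\flat$ and $\varpi\Lambda_1^\flat\subseteq\varpi L^{\flat,\vee}\subseteq L^\flat\subseteq\Lambda_1^\flat$ yields exactly the inclusions defining $A_L^{(k)}$, the index $\dim(L_+/L_-)=\cfactor r-k$ coming from the ``moreover'' clause of \Cref{gencomp1} together with $\dim(\Lambda_2^\flat/L^\flat)=k$. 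For a fixed triple in $A_L^{(k)}$: \Cref{gencomp1} (with $\Lambda=\Lambda_2^\flat$) gives $\Dfactor(\dim(L/L_-))/\Dfactor(\cfactor r-k)$ lattices $L^\flat$ with these projections, and for each of them \Cref{gencomp2} gives $\Dfactor(\dim(\varpi^{-1}L_-/L_0))=\Dfactor(k-\dim(L_0/L_+))$ lattices $\Lambda_1^\flat$ with $L^\flat\subseteq\Lambda_1^\flat$ and $\mathrm{int}_2(\Lambda_1^\flat)=L_0$ (a count depending only on the triple). Here the hypotheses of \Cref{gencomp2} are checked directly: $L^\flat\sub{2k}L^{\flat,\vee}$ from $\varpi L^{\flat,\vee}\subseteq L^\flat\subseteq\Lambda_2^\flat$ and self-duality of $\Lambda_2^\flat$; $L_-\subseteq\varpi^2 L_-^\vee$ from $\langle L_-,L_-\rangle\subseteq\langle L,L\rangle\subseteq\varpi^2\O_F$; and $\dim(\varpi^{-1}L_-/L_+)=k$ from $\dim(L_+/L_-)=\cfactor r-k$. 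Summing the product of the two counts times $\mathrm{typ}^\natural(L_0)$ over $A_L^{(k)}$ gives the first identity.

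For the $\sharp$ case (with $k\ge1$) I would invoke \Cref{TlessSums}, taking $\Lambda_2^\sharp=\O_F u\oplus\Lambda_2^\flat$ for the $\Lambda_2^\flat$ above. Its first two terms are rewritten verbatim using the $\flat$ identity just proved, applied with $k$ and with $k-1$. Its third term I again stratify by $(L_-,L_+,L_0)=(\mathrm{int}_2(\varpi L^{\flat,\vee}),\mathrm{int}_2(L^\flat),\mathrm{int}_2(\Lambda^\flat))\in A_L^{(k-1)}$ and by $L_1\defeq\mathrm{int}_2(\Lambda_-^\flat)$, ranging over lattices with $L_-\subseteq L_1\sub{1}L_0$ and $L_+\not\subseteq L_1$. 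The relevant counts are then $\Dfactor(\dim(L/L_-))/\Dfactor(\cfactor r-k+1)$ lattices $L^\flat$ by \Cref{gencomp1}; $\Dfactor(k-1-\dim(L_0/L_+))$ lattices $\Lambda^\flat$ over each $L^\flat$ by \Cref{gencomp2} applied with $k-1$; and $q^{\cfactor r-\dim(L_0/L_+)-1+\alpha}(q-1)/(q^\alpha+1)$ lattices $\Lambda_-^\flat$ over each $L_1$ by \Cref{gencomp3}, whose hypothesis $L_+\subseteq\varpi L_+^\vee$ again follows from $\langle L,L\rangle\subseteq\varpi^2\O_F$. Multiplying these three counts by the prefactor $q^\alpha(q^\alpha+1)$ of \Cref{TlessSums} and using the elementary identity $\Dfactor(j+1)=q^{j+\alpha}\Dfactor(j)$ to replace $\Dfactor(\cfactor r-k+1)$ and $\Dfactor(k-1-\dim(L_0/L_+))$ by $\Dfactor(\cfactor r-k)$ and $\Dfactor(k-\dim(L_0/L_+))$, one checks that every power of $q$ cancels, leaving exactly the asserted expression.

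Since \Cref{gencomp1}, \Cref{gencomp2}, \Cref{gencomp3} and \Cref{TlessSums} are already in hand, the argument is essentially organizational; I expect the one genuinely delicate point to be the $q$-power bookkeeping in the $\sharp$ case, namely verifying that the renormalization from $\Dfactor(\cfactor r-k+1)$ to $\Dfactor(\cfactor r-k)$, together with the matching shift of the second $\Dfactor$-argument, precisely absorbs the factor $q^{\cfactor r-\dim(L_0/L_+)-1+\alpha}(q-1)/(q^\alpha+1)$ from \Cref{gencomp3} and the prefactor $q^\alpha(q^\alpha+1)$ from \Cref{TlessSums}.
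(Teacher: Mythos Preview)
Your outline is essentially the paper's proof: stratify \eqref{TlessSum} and the third sum of \Cref{TlessSums} by $(L_-,L_+,L_0)$ (and $L_1$), then plug in \Cref{gencomp1}, \Cref{gencomp2}, \Cref{gencomp3}; the $q$-power bookkeeping you flag is exactly the identity
\[
q^{r-\dim(L_0/L_+)-1+2\alpha}\,\frac{\Dfactor(k-\dim(L_0/L_+))}{\Dfactor(\cfactor r-k)}=\frac{\Dfactor((k+1)-\dim(L_0/L_+))}{\Dfactor(\cfactor r-(k+1))}
\]
used in the paper's last line.

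There is one substantive step you have not justified. In the $\sharp$ case you assert that $L_1=\mathrm{int}_2(\Lambda_-^\flat)$ ``ranges over lattices with $L_-\subseteq L_1\sub{1}L_0$ and $L_+\not\subseteq L_1$'', but a priori the sum in \Cref{TlessSums} allows $\mathrm{int}_2(\Lambda_-^\flat)=L_0$ (i.e.\ the index-one sublattice $\Lambda_-^\flat\subset\Lambda^\flat$ need not drop on $V_2$), and \Cref{gencomp3} only applies once you know $L_1\sub{1}L_0$ and $L_+\not\subseteq L_1$. The paper handles this with a short argument via \Cref{lemmaselfdual}: if $L_+\subseteq L_1$ then $I\defeq\varpi L^{\flat,\vee}+L_+\subseteq\Lambda_-^\flat$, but $I=\varpi I^\vee$ forces $\varpi\Lambda_-^{\flat,\vee}\subseteq\Lambda_-^\flat$, contradicting $\mathrm{typ}(\Lambda^\flat/\Lambda_-^\flat)=(1,\epsilon\psi)$; and since $L_+\subseteq L_0$ but $L_+\not\subseteq L_1$, one has $L_1\neq L_0$, hence $L_1\sub{1}L_0$. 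You should add this verification before invoking \Cref{gencomp3}.
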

\begin{proof}
    First of all, given $\Lambda_2^\sharp=\O_Fu\oplus\Lambda_2^\flat$ with $\mathrm{typ}^\flat(\Lambda_2^\flat)=\delta,$ we have $\mathrm{int}_2(\Lambda_2^\flat)\subseteq\varpi^2\mathrm{int}_2(\Lambda_2^\flat)^\vee.$ Denote
    \begin{equation*}
    \begin{split}
        S^\flat&\defeq\sum_{\substack{L^\flat\in\Lat(V^\flat)\\\varpi L^{\flat,\vee}\subseteq L^\flat\sub{k}\Lambda_2^\flat}}\sum_{\substack{\Lambda_1^\flat\in\Lat^\circ(V^\flat)\\ L^\flat\subseteq\Lambda_1^\flat}}\mathrm{typ}^\flat(\Lambda_1^\flat)\\
        S^\sharp&\defeq q^\alpha(q^\alpha+1)\sum_{\substack{L^\flat\in\Lat(V^\flat)\\\varpi L^{\flat,\vee}\subseteq L^\flat\sub{k}\Lambda_2^\flat}}\sum_{\substack{\Lambda^\flat\in\Lat^\circ(V^\flat)\\ L^\flat\subseteq\Lambda^\flat}}\sum_{\substack{\Lambda^\flat_-\in\Lat(V^\flat)\\ \mathrm{typ}(\Lambda^\flat/\Lambda^\flat_-)=(1,\epsilon\psi)\\\varpi L^{\flat,\vee}\subseteq\Lambda^\flat_-}}\mathrm{typ}^\natural(\mathrm{int}_2(\Lambda_-^\flat)).
    \end{split}
    \end{equation*}
    Of course, by \eqref{TlessSum} and \Cref{TlessSums}, we have $T_{\le k}^{\flat,*}(\delta)=S^\flat$ and $T_{\le k+1}^{\sharp,*}(\delta)=T_{\le k+1}^{\flat,*}(\delta)+q^{r-k-1+\alpha}(q^{r-k}-1)T_{\le k}^{\flat,*}(\delta)+S^\sharp.$

    First, for $S^\sharp,$ if we are given $L^\flat,$ $\Lambda^\flat$ and  $\Lambda^\flat_-,$ we note that $\mathrm{int}_2(L^\flat)\not\subseteq\mathrm{int}_2(\Lambda^\flat_-)\sub{1}\mathrm{int}_2(\Lambda^\flat).$ Indeed, i) if $\mathrm{int}_2(L)\subseteq\mathrm{int}_2(\Lambda^\flat_-),$ then $I\defeq\varpi L^{\flat,\vee}+\mathrm{int}_2(L^\flat)\subseteq\Lambda^\flat_-,$ but by \Cref{lemmaselfdual} we have $I=\varpi I^\vee,$ and this contradicts that $\varpi\Lambda_-^{\flat,\vee}\not\subseteq\Lambda_-^\flat,$ ii) as $\mathrm{int}_2(L^\flat)\not\subseteq\mathrm{int}_2(\Lambda^\flat_-),$ we certainly cannot have $\mathrm{int}_2(\Lambda_-^\flat)=\mathrm{int}_2(\Lambda^\flat),$ and thus since $\Lambda^\flat_--\sub{1}\Lambda^\flat,$ we conclude $\mathrm{int}_2(\Lambda_-^\flat)\sub{1}\mathrm{int}_2(\Lambda^\flat).$ Hence by \Cref{gencomp3} we conclude
    \begin{equation*}
        S^\sharp=(q-1)\sum_{\substack{L^\flat\in\Lat(V^\flat)\\\varpi L^{\flat,\vee}\subseteq L^\flat\sub{k}\Lambda_2^\flat}}\sum_{\substack{\Lambda^\flat\in\Lat^\circ(V^\flat)\\ L^\flat\subseteq\Lambda^\flat}}q^{r-\dim(\mathrm{int}_2(\Lambda^\flat)/\mathrm{int}_2(L^\flat))-1+2\alpha}\sum_{\substack{L_1\in\Lat(V_2)\\\mathrm{int}_2(\varpi L^{\flat,\vee})\subseteq L_1\sub{1}\mathrm{int}_2(\Lambda^\flat)\\\mathrm{int}_2(L)\not\subseteq L_1}}\mathrm{typ}^\natural(L_1).
    \end{equation*}

    Note denote $c^\flat(L_-,L_+,L_0)=1$ and
    \begin{equation*}
        c^\sharp(L_-,L_+,L_0)\defeq (q-1)q^{r-\dim(L_0/L_+)-1+2\alpha}\sum_{\substack{L_1\in\Lat(V_2)\\L_-\subseteq L_1\sub{1}L_0\\L_+\not\subseteq L_1}}\mathrm{typ}^\natural(L_1),
    \end{equation*}
    so that we may uniformly write
    \begin{equation*}
        S^?=\sum_{\substack{L^\flat\in\Lat(V^\flat)\\\varpi L^{\flat,\vee}\subseteq L^\flat\sub{k}\Lambda_2^\flat}}\sum_{\substack{\Lambda^\flat\in\Lat^\circ(V^\flat)\\ L^\flat\subseteq\Lambda^\flat}}c^?(\mathrm{int}_2(\varpi L^{\flat,\vee}),\mathrm{int}_2(L^\flat),\mathrm{int}_2(\Lambda^\flat)).
    \end{equation*}
    Now given $L^\flat$ as above, denote $L_-\defeq\mathrm{int}_2(\varpi L^{\flat,\vee})$ and $L_+\defeq\mathrm{int}_2(L^\flat).$ Note that by \Cref{gencomp1} we have $L_-\sub{\cfactor r-k}L_+.$ Thus by \Cref{gencomp2} we have
    \begin{equation*}
        S^?=\sum_{\substack{L^\flat\in\Lat(V^\flat)\\\varpi L^{\flat,\vee}\subseteq L^\flat\sub{k}\Lambda_2^\flat}}\sum_{\substack{L_0\in\Lat(V_2)\\L_+\subseteq L_0\subseteq\varpi^{-1}L_-}}\Dfactor(\dim(\varpi^{-1}L_-/L_0))\cdot c^?(L_-,L_+,L_0).
    \end{equation*}
    Finally, by \Cref{gencomp1}, we conclude that $S^?$ is
    \begin{equation*}
        \sum_{\substack{L_-,L_+\in\Lat(V_2)\\\varpi\mathrm{int}_2(\Lambda_2^\flat)\subseteq L_-\sub{\cfactor r-k}L_+\subseteq\mathrm{int}_2(\Lambda_2^\flat)}}\sum_{\substack{L_0\in\Lat(V_2)\\L_+\subseteq L_0\subseteq\varpi^{-1}L_-}}\frac{\Dfactor(k-\dim(L_0/L_+))\Dfactor(\dim(L_0/L_-))}{\Dfactor(\cfactor r-k)}\cdot c^?(L_-,L_+,L_0).
    \end{equation*}
    Now the claim follows immediately for case \genf. For case \gens, note that
    \begin{equation*}
        q^{r-\dim(L_0/L_+)-1+2\alpha}\frac{\Dfactor(k-\dim(L_0/L_+))}{\Dfactor(\cfactor r-k)}=\frac{\Dfactor((k+1)-\dim(L_0/L_+))}{\Dfactor(\cfactor r-(k+1))}.\qedhere
    \end{equation*}
\end{proof}

Finally, the last step is to do the above computation in $V_2.$ We will repeatedly use the following generic case of \cite[\citemainLemma]{CoratoZanarella}.
\begin{lemma}\label{genMainLemma}
    Let $L\in\Lat(V_2)$ and $V_2=\bigoplus_{i\in\Z}V_2^{(i)}$ be an orthogonal decomposition such that if $L^{(i)}\defeq L\cap V^{(i)},$ then $L=\bigoplus_{i\in\Z}L^{(i)}$ and $L^{(i),\vee}=\varpi^{-i}L^{(i)}.$ Assume that $\dim V_2^{(i)}\in\{0,\cfactor\}$ for all $i,$ and assume also that for every $i,$ at least one of $V_2^{(i)}$ or $V_2^{(i+1)}$ is $0.$ We consider the flag $L^{(\ge i)}/\varpi L^{(\ge i)}$ of $L/\varpi L.$ In the case \SPgen, we further (arbitrarily) choose a line on each nonzero $L^{(i)}/\varpi L^{(i)}.$ This gives us a full flag
    \begin{equation*}
        0=F_0\subset F_1\subsetneq\cdots\subsetneq  F_{\gamma r}=L/\varpi L.
    \end{equation*}
    Given $L'\in\Lat(V_2)$ with $\varpi L\subseteq L'\subseteq L,$ we denote $\varepsilon\in\{0,1\}^{\cfactor r}$ to be the incidence of $L'$ in the above flag, that is, $\varepsilon_i\defeq\dim((L'\cap F_i)/(L'\cap F_{i-1})).$ Then $\mathrm{typ}^\natural(L')= t^2(\varepsilon)(\mathrm{typ}^\natural(L)).$
\end{lemma}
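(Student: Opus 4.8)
The plan is to deduce this from \cite[\citemainLemma]{CoratoZanarella}, of which it is the ``generic'' case. Recall the shape of that result: one is given a lattice equipped with an orthogonal decomposition into modular pieces and a flag of its reduction mod $\varpi$ refining the decomposition; one writes $\mathrm{typ}^\natural$ of that lattice as $\delta(e,\chi)$ with $e$ weakly decreasing; and then, for a sublattice $L'$ between $\varpi$ times the lattice and the lattice, with incidence vector $\varepsilon$ against the flag, one translates $\delta(e,\chi)$ by the datum recorded in $\varepsilon$ and applies the straightening map $\mathrm{str}^\natural\colon R[\Typ_r]\twoheadrightarrow R[\Typ^{0,\natural}_r]$ to obtain $\mathrm{typ}^\natural(L')$. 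So I would first unwind loc.\ cit.\ in the present setup and check that the translation attached to an incidence vector is exactly $t^2(\varepsilon)$: in the cases \Ugen, \Ogen the factor $2$ is the modularity shift $(\varpi M)^\vee=\varpi^{-2}(\varpi M)$ for a unimodular $M$, while in the case \SPgen a rank-$\cfactor$ block additionally admits the intermediate shift realized by the $\varpi$-modular lattice $\langle f_1,\varpi f_2\rangle$, which is why the slots $\varepsilon_{2i-1},\varepsilon_{2i}$ (and the choice of line) enter only through their sum.

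The key point is that our standing hypotheses make the straightening invisible. Because each $V_2^{(i)}$ has $F$-dimension $0$ or $\cfactor$ and no two consecutive $V_2^{(i)},V_2^{(i+1)}$ are simultaneously nonzero, the sequence $e$ is strictly decreasing with all gaps $\ge 2$; and since the incidence $\varepsilon$ of any admissible $L'$ moves each entry of $e$ by an amount in $\{0,1,2\}$, one has $e'_j-e'_{j+1}\ge (e_j-e_{j+1})-2\ge 0$, so $t^2(\varepsilon)(\delta(e,\chi))$ is again of the form $\delta(e',\chi)$ with $e'$ weakly decreasing. Hence no straightening relation is triggered and the formula of \cite[\citemainLemma]{CoratoZanarella} collapses to $\mathrm{typ}^\natural(L')=t^2(\varepsilon)(\mathrm{typ}^\natural(L))$. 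I expect the only real friction here to be bookkeeping — the orientation of the flag, the normalization relating $t$ and $t^2$, and the $\mathrm{Sp}$ folding to a $\Typ_r$-datum; the geometric content, namely that a sublattice of a modular lattice is again modular with modularity index shifted by its colength, is elementary.

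If a self-contained argument is preferred, one can proceed directly in two steps. First reduce to the case that $L'$ is graded, i.e.\ $L'=\bigoplus_i(L'\cap V_2^{(i)})$; this uses that $\mathrm{typ}^\natural$ is $G(V_2)$-invariant together with the fact that the mod-$\varpi$ reduction of $\mathrm{Stab}_{G(V_2)}(L)$ (a parabolic-type subgroup, onto which $\mathrm{Stab}_{G(V_2)}(L)$ surjects by Hensel lifting in the style of \Cref{typhelp}) acts transitively on sublattices between $\varpi L$ and $L$ with a prescribed incidence vector, by Bruhat-cell transitivity. Then for graded $L'$ one computes $\mathrm{typ}^\natural(L')$ one block at a time: $L'\cap V_2^{(i)}$ is a modular lattice in $V_2^{(i)}$ of colength $a_i\defeq\mathrm{length}\bigl(L^{(i)}/(L'\cap V_2^{(i)})\bigr)$, so its modularity index is $i+2a_i$ in the cases \Ugen, \Ogen and $i+a_i$ in the case \SPgen, and reassembling over $i$ while matching the $a_i$ with the coordinates of $\varepsilon$ gives the claim. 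The one nontrivial input in this route is the transitivity statement, which is precisely what \cite[\citemainLemma]{CoratoZanarella} packages for us.
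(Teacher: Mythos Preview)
Your proposal is correct and matches the paper's approach exactly: the paper does not give a separate proof of this lemma but simply presents it as ``the following generic case of \cite[\citemainLemma]{CoratoZanarella}'', and your first paragraph unpacks precisely why the genericity hypotheses (each block of rank $\cfactor$, gaps $\ge 2$) force the straightened output of that theorem to coincide with the raw translate $t^2(\varepsilon)(\mathrm{typ}^\natural(L))$. Your cautionary remark about bookkeeping (flag orientation, the $t$ versus $t^2$ normalization, and the symplectic folding) is well placed --- these are exactly the conventions one must track when matching the incidence vector here to the data $D=(n,m,l,\psi_1,\psi_2)$ of the cited theorem --- but none of this affects the substance of the argument.
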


\begin{theorem}\label{generic}
    Let $\delta=\Type{e_i}{\chi_i}\in\Typ^0_r$ be \emph{generic}, in the sense that $e_{i+1}\ge e_i+4$ for all $1\le i< r$ and $e_r\ge4.$ Then we have
    \begin{equation*}
        T_{\le k,r}^{\flat,*}(\delta)=\mathrm{str}^\flat(\Delta_{\le k,r}^\flat(\delta))\quad\text{and}\quad T_{\le k,r}^{\sharp,*}(\delta)=\mathrm{str}^\sharp(\Delta_{\le k,r}^\sharp(\delta)).
    \end{equation*}
\end{theorem}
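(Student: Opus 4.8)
The plan is to chase through the reduction already carried out in Proposition~\ref{genMainProp}, which rewrites $T^{\flat,*}_{\le k,r}(\delta)$ --- and, layered on top of it, $T^{\sharp,*}_{\le k,r}(\delta)$ --- purely in terms of sums over triples $(L_-,L_+,L_0)$ of lattices in the small space $V_2$, weighted by ratios of the function $\Dfactor$ and by $\mathrm{typ}^\natural(L_0)$. Fix $L\in\Lat(V_2)$ with $\mathrm{typ}^\natural(L)=\delta$. The point of the genericity hypothesis --- all parts of $\delta$ large and pairwise far apart --- is that $L$ then admits an orthogonal decomposition $V_2=\bigoplus_i V_2^{(i)}$, $L=\bigoplus_i L^{(i)}$ with each $V_2^{(i)}$ of dimension $0$ or $\cfactor$, no two consecutive ones nonzero, and $L^{(i),\vee}=\varpi^{-i}L^{(i)}$, i.e.\ exactly the setup of Lemma~\ref{genMainLemma}. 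First I would verify that, because the occupied levels are so far apart and the triples in $A_L^{(k)}$ are trapped between $\varpi L$ and $\varpi^{-1}L$, every such triple decomposes orthogonally along the blocks, so that the sum in Proposition~\ref{genMainProp} factors as a product over the $V_2^{(i)}$.

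In each block $V_2^{(i)}$ the local lattice $L_0^{(i)}$ is one of the three nested lattices $\varpi L^{(i)}\subseteq L^{(i)}\subseteq\varpi^{-1}L^{(i)}$ (in the case \SPgen, where $\cfactor=2$, there is also an intermediate layer, matched to the choice of line in Lemma~\ref{genMainLemma}), which I record by a sign $\varepsilon^{(i)}\in\{-1,0,1\}^{\cfactor}$; write $\varepsilon\in\{-1,0,1\}^{\cfactor r}$ for the concatenation. A routine upgrade of Lemma~\ref{genMainLemma} to the three directions up/stay/down --- the ``down'' case coming from applying the lemma with $\varpi^{-1}L^{(i)}$ in place of $L^{(i)}$ --- identifies $\mathrm{typ}^\natural(L_0)$ with $\mathrm{str}^\flat(t^2(\varepsilon)(\delta))$, the map $\mathrm{str}^\flat$ acting here just by collecting since genericity keeps $t^2(\varepsilon)(\delta)$ weakly ordered with nonnegative entries. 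The coefficient of $t^2(\varepsilon)(\delta)$ is then obtained by summing the explicit $\Dfactor$-weight of Proposition~\ref{genMainProp} over all triples $(L_-,L_+,L_0)\in A_L^{(k)}$ of shape $\varepsilon$; by the block decomposition this sum factors over the $V_2^{(i)}$, the finite-field counts of Section~\ref{finFieldSection} having already been folded in, and telescoping the per-block $\Dfactor$'s via $\Dfactor(a)\Dfactor(b)q^{ab}=\Dfactor(a+b)$ collapses it to $q^{\widetilde{\inv}(\varepsilon)}\,\Dfactor(\lambda_1(\varepsilon))\,\Dfactor(\cfactor r-\lambda_{-1}(\varepsilon))/\Dfactor(\cfactor r-k)$ times a generating function over the residual freedom in the triples, which one identifies with $W_0(\cfactor r-k,\lambda_0(\varepsilon))$. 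This proves the $\flat$ identity.

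For the $\sharp$ identity I would substitute the $\flat$ identity just proved into the second formula of Proposition~\ref{genMainProp}, $T^{\sharp,*}_{\le k,r}(\delta)=T^{\flat,*}_{\le k,r}(\delta)+q^{r-k+\alpha}(q^{r-k+1}-1)\,T^{\flat,*}_{\le k-1,r}(\delta)+(q-1)\textstyle\sum_{A_L^{(k-1)}}(\cdots)$, using that $\mathrm{str}^\flat$ and $\mathrm{str}^\sharp$ agree on every weakly ordered type with nonnegative parts, which by genericity is the only kind that occurs here. The remaining third term is handled by exactly the same block-by-block localisation; the extra inner sum over the lattices $L_1$ with $L_+\not\subseteq L_1$ only multiplies each block-contribution by an explicit $q$-factor, and one checks that the total equals $\mathrm{str}^\sharp(\Delta^\sharp_{\le k,r}(\delta))-\mathrm{str}^\sharp(\Delta^\flat_{\le k,r}(\delta))-q^{r-k+\alpha}(q^{r-k+1}-1)\,\mathrm{str}^\sharp(\Delta^\flat_{\le k-1,r}(\delta))$. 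Since $\Delta^\sharp_{\le k,r}$ and $\Delta^\flat_{\le k,r}$ differ only in replacing $W_0$ by $q^{(k+\Sigma(\varepsilon))/2}W_{1/2}$, this last step reduces to a combinatorial identity among the $W_\beta$'s, which follows from the recursions \eqref{Wrec1} and \eqref{Wrec2}.

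The step I expect to be the main obstacle is the coefficient bookkeeping in the second paragraph: correctly localising the constraints defining $A_L^{(k)}$ to the individual blocks, keeping track of every power of $q$ arising from a choice of complementary subspace, and recognising the residual combinatorial sum as $W_0(\cfactor r-k,\lambda_0(\varepsilon))$ --- in particular pinning down which two classes of stay-blocks produce the $E_0$ and $E_1$ parts of the defining tripartition of $W_\beta$. A secondary, more bureaucratic point is to justify cleanly that genericity forces all the relevant lattice data to be block-diagonal and keeps every type $t^2(\varepsilon)(\delta)$ inside $\Typ^0_r$, so that $\mathrm{str}^\flat$ and $\mathrm{str}^\sharp$ act transparently and the identities can be read off term by term.
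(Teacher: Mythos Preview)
Your overall strategy --- start from Proposition~\ref{genMainProp}, parametrize the triples in $A_L^{(k)}$ by a shape $\varepsilon\in\{-1,0,1\}^{\cfactor r}$, compute $\mathrm{typ}^\natural(L_0)$ as $t^2(\varepsilon)(\delta)$, and match the resulting coefficient against the definition of $\Delta_{\le k,r}^?$ --- is the paper's. But your first step is wrong: the triples $(L_-,L_+,L_0)$ do \emph{not} decompose orthogonally along the blocks $V_2^{(i)}$. A lattice between $\varpi L$ and $L$ corresponds to an arbitrary subspace of the $\F_q$-vector space $L/\varpi L$, and genericity imposes no constraint on which subspaces occur; already for $r=2$, $\cfactor=1$, only two of the $q+1$ lines in $L/\varpi L$ respect the block splitting. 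So the sum over $A_L^{(k)}$ does not factor as a product over the $V_2^{(i)}$, and there is no ``local lattice $L_0^{(i)}$'' to speak of.

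What replaces this is \emph{incidence data} relative to the full flag of Lemma~\ref{genMainLemma}: one records $\varepsilon_-,\varepsilon_+\in\{0,1\}^{\cfactor r}$ for $L_-,L_+$ against $L$, and $\varepsilon_0$ for $L_0$ against $\varpi^{-1}L_-$. Repeated use of Lemma~\ref{genMainLemma} --- this is where genericity is actually used --- shows that $\mathrm{typ}^\natural(L_0)$ depends only on these incidences, not on the lattice. For a fixed tuple of incidence data the number of triples is a single power of $q$ whose exponent is an inversion statistic \emph{across} the blocks (standard Schubert-cell counting), and collecting these produces both the $q^{\widetilde{\inv}(\varepsilon)}$ and the five-part partition $I_-\sqcup E_0\sqcup E\sqcup E_1\sqcup I_+$ from which $W_0(\cfactor r-k,\lambda_0(\varepsilon))$ emerges. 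For the $\sharp$ case your reduction is correct in outline, but the residual identity among the $W_\beta$'s is not a one-liner from \eqref{Wrec1}--\eqref{Wrec2}: the paper isolates an auxiliary sum $Y_k(I_0,I_+)$, first proves by a swap argument that it depends only on $|I_0|$ and $|I_+|$, and then finishes by induction using \eqref{Wrec2}.
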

\begin{proof}
Let $L\in\Lat(V_2)$ be any lattice with $\mathrm{typ}^\natural(L)=\delta.$ For this entire proof, we fix a decomposition of $V_2$ as in \Cref{genMainLemma} with respect to $L,$ and, in the case \SPgen, we arbitrarily extend the $V_2^{(\ge i)}$ to a full flag of $V_2.$ We will consider other lattices $L_-,L_+,L_0,L_1$ throughout the proof, and we will repeatedly apply \Cref{genMainLemma} for them as well, with the same choice of flag. This will be possible due to our assumption on the genericity of $\delta.$

The starting point is \Cref{genMainProp}. We first parametrize $A_L^{(k)}.$ Let $\varepsilon_-,\varepsilon_+\in\{0,1\}^{\cfactor r}$ be the incidence datas of $L_-,L_+$ with respect to $L.$ Let $\varepsilon_0$ be the incidence data of $L_0$ with respect to $\varpi^{-1}L_-.$ in the case \gens, denote also $\varepsilon_1$ to be the incidence data of $L_1$ with respect to $\varpi^{-1}L_-.$ Pictorially:
\begin{equation*}
\begin{tikzcd}
    &&L&\\[-10pt]
    L_-\arrow[r,dash]\arrow[dr,dash]\arrow[urr,dash,bend left=25,"\varepsilon_-"]&L_+\arrow[ur,dash,"\varepsilon_+"]\arrow[dr,dash]&&\\[-10pt]
    &L_1\arrow[r,dash]\arrow[rr,dash,bend right=30,swap,"\varepsilon_1"]&L_0\arrow[r,dash,"\varepsilon_0"]&\varpi^{-1}L_-
\end{tikzcd}
\end{equation*}
Then by multiple applications of \Cref{genMainLemma}, we have
\begin{equation*}
\begin{split}
    \mathrm{typ}^\natural(L_0)&= t^2(\varepsilon_-+\varepsilon_0-1)\mathrm{typ}^\natural(L)\\
    \mathrm{typ}^\natural(L_1)&= t^2(\varepsilon_-+\varepsilon_1-1)\mathrm{typ}^\natural(L)
\end{split}
\end{equation*}
Denoting $\varepsilon_{dif}\defeq \varepsilon_-+\varepsilon_0-1,$ we partition $\{1,\ldots,\cfactor r\}=I_-\sqcup E_0\sqcup E\sqcup E_1\sqcup I_+$ so that the $\varepsilon_?$ are given by
\begin{equation*}
    \begin{array}{c|c|c|c|c|c}
        &I_-&E_1&E&E_0&I_+\\
        \hline
        \varepsilon_{dif}&-1&0&0&0&1\\
        \hline
        \varepsilon_0&0&1&0&0&1\\
        \hline
        \varepsilon_-&0&0&1&1&1\\
        \hline
        \varepsilon_+&0&0&0&1&1
    \end{array}
\end{equation*}
We also denote $I_0\defeq E_1\sqcup E\sqcup E_0.$ Note that $\lvert E\rvert=\dim(L_+/L_-)=\cfactor r-k.$ In the case \gens, we denote $\beta\in I_-\sqcup E_0\sqcup E$ to be the (unique) index of $\{1,\ldots,\gamma r\}$ for which $(\varepsilon_1)_\beta\neq(\varepsilon_0)_\beta.$ We will also abuse notation to denote $\varepsilon_{dif}+\beta\defeq \varepsilon_-+\varepsilon_1-1.$

Now given the partition $\{1,\ldots,\cfactor r\}=I_-\sqcup E_0\sqcup E\sqcup E_1\sqcup I_+$ with $\lvert E\rvert=\cfactor r-k,$ the number of $(L_-,L_+,L_0)\in A_L^{(k)}$ with such data is
\begin{equation*}
    q^{\inv(\varepsilon_-)}\cdot q^{\inv(\varepsilon_+\rvert_{E_0\sqcup E\sqcup I_+})}\cdot q^{\inv(\varepsilon_0\rvert_{I_-\sqcup E_0\sqcup E_1\sqcup I_+})}=q^{\inv(E\sqcup E_0\sqcup I_+,I_-\sqcup E_1)+\inv(E_0\sqcup I_+,E)+\inv(E_1\sqcup I_+,I_-\sqcup E_0)}.
\end{equation*}
Collecting terms, this is
\begin{equation*}
    q^{\widetilde{\inv}(\varepsilon_{dif})}\cdot q^{\lvert E_0\rvert\cdot\lvert E_1\rvert+\inv(E_0,E)+\inv(E,E_1)}=q^{\widetilde{\inv}(\varepsilon_{dif})+\lvert E_0\sqcup E\rvert\cdot\lvert E_1\rvert+\inv(E_0,E)-\inv(E_1,E)}
\end{equation*}
Note also that
\begin{equation*}
\begin{split}
    &\binom{\lvert E_0\sqcup E\sqcup I_+\rvert}{2}+\alpha\cdot\lvert E_0\sqcup E\sqcup I_+\rvert+\binom{\lvert E_1\sqcup I_+\rvert}{2}+\alpha\cdot\lvert E_1\sqcup I_+\rvert+\lvert E_0\sqcup E\rvert\cdot\lvert E_1\rvert\\
    =\ &\binom{\lvert I_+\rvert}{2}+\alpha\cdot\lvert I_+\rvert+\binom{\lvert E_0\sqcup E\sqcup E_1\sqcup I_+\rvert}{2}+\alpha\cdot\lvert E_0\sqcup E\sqcup E_1\sqcup I_+\rvert,
\end{split}
\end{equation*}
that is, that
\begin{equation*}
    q^{\lvert E_0\sqcup E\rvert\cdot\lvert E_1\rvert}=\frac{\Dfactor(\lvert I_+\rvert) \cdot\Dfactor(\lvert E_0\sqcup E\sqcup E_1\sqcup I_+\rvert)}{\Dfactor(\lvert E_0\sqcup E\sqcup I_+\rvert)\cdot\Dfactor(\lvert E_1\sqcup I_+\rvert)}=\frac{\Dfactor(\lvert I_+\rvert)\cdot\Dfactor(\lvert I_+\sqcup I_0\rvert)}{\Dfactor(\dim(L/L_-))\cdot\Dfactor(k.-\dim(L_0/L_+))}
\end{equation*}
Now the case \genf  follows from \Cref{genMainProp}.

For the case \gens, we note that given $(L_-,L_+,L_0)\in A_L^{(k)}$ and $\varepsilon_1$ as above, the number of $L_1\in\Lat(V_2)$ satisfying $L_-\subseteq L_1\subseteq L_0$ and $L_+\not\subseteq L_1$ is
\begin{equation*}
    q^{\inv((\varepsilon_1-\varepsilon_0)\rvert_{I_-\sqcup E\sqcup E_0})}-\begin{cases}q^{\inv((\varepsilon_1-\varepsilon_0)\rvert_{I_-\sqcup E_0})}&\text{if }\varepsilon_+\ge\varepsilon_{dif}+\beta,\\0&\text{otherwise.}
    \end{cases}
\end{equation*}
This is
\begin{equation*}
    q^{\inv(\beta,I_-\sqcup E_0)}\cdot\begin{cases}q^{\inv(\beta,E)}-1&\text{if }\beta\in I_-\sqcup E_0,\\q^{\inv(\beta,E)}&\text{if }\beta\in E.\end{cases}
\end{equation*}
Together with \Cref{genMainProp} and the above case \genf, we get that $T_{\le k,r}^{\sharp,*}(\delta)$ is
\begin{equation*}
\begin{split}
    &\sum_{\{1,\ldots,\cfactor r\}=I_-\sqcup I_0\sqcup I_+}q^{\widetilde{\inv}(\varepsilon_{dif})}\frac{\Dfactor(\lvert I_+\rvert)\cdot\Dfactor(\lvert I_+\sqcup I_0\rvert)}{\Dfactor(\cfactor r-k)}\\
    &\qquad\cdot\left(\begin{array}{cl}&(W_0(\cfactor r-k,\lvert I_0\rvert)+(q^{r-k+1}-1)W_0(\cfactor r-k+1,\lvert I_0\rvert))\cdot t^2(\varepsilon_{dif})(\delta)\\
    +&(q-1)\sum_{\substack{I_0=E_0\sqcup E\sqcup E_1\\\lvert E\rvert=\cfactor r-k+1\\\beta\in I_-\sqcup E_0\sqcup E}}q^{\inv(\beta,I_-\sqcup E_0)}(q^{\inv(\beta,E)}-1_{\beta\in I_-\sqcup E_0})\cdot t^2(\varepsilon_{dif}+\beta)(\delta)\end{array}\right).
\end{split}
\end{equation*}
Changing the notation so that $\{1,\ldots,\cfactor r\}=I_-\sqcup I_0\sqcup I_+$ corresponds to $\varepsilon_{dif}+\beta$ in the last term, and changing $k\mapsto\cfactor r-k$ for notational simplicity, we have
\begin{equation*}
\begin{split}
    T_{\le \cfactor r-k,r}^{\sharp,*}(\delta)=\sum_{\{1,\ldots,\cfactor r\}=I_-\sqcup I_0\sqcup I_+}&q^{\widetilde{\inv}(\varepsilon)}\frac{\Dfactor(\lvert I_+\rvert)\cdot\Dfactor(\lvert I_+\sqcup I_0\rvert)}{\Dfactor(k)}\\
    &\cdot\left(\begin{array}{cl}&(W_0(k,\lvert I_0\rvert)+(q^{k+1}-1)W_0(k+1,\lvert I_0\rvert))\\
    +&(q-1)Y_k(I_0,I_+)\end{array}\right)\cdot t^2(\varepsilon)(\delta)
\end{split}
\end{equation*}
where $Y_k(I_0,I_+)$ is
\begin{equation}\label{Ykdef}
\begin{split}
    &\sum_{\beta\in I_+}\sum_{\substack{I_0=E_0\sqcup E\sqcup E_1\\ \lvert E\rvert=k}}\exp_q(\inv(E_0,E)-\inv(E_1,E)+\inv(I_+\sqcup E_0,\beta))\\
    &\qquad+\sum_{\beta\in I_+}\sum_{\substack{I_0=E_0\sqcup E\sqcup E_1\\ \lvert E\rvert=k+1}}\exp_q(\inv(E_0,E)-\inv(E_1,E)+\inv(I_+\sqcup E_1,\beta))(q^{\inv(\beta,E)}-1)\\
    &\qquad+\sum_{\beta\in I_0}\sum_{\substack{I_0=\beta\sqcup E_0\sqcup E\sqcup E_1\\ \lvert E\rvert=k+1}}\exp_q(\inv(E_0,E)-\inv(E_1,E)+\inv(I_+\sqcup E_1,\beta))(1-q^{-\inv(\beta,E)}).
\end{split}
\end{equation}
Now the theorem follows from the following lemma.
\end{proof}

\begin{lemma}
    For $I_0\sqcup I_+$ a totally ordered set, define $Y_k(I_0,I_+)$ as in \eqref{Ykdef}. Then we have
    \begin{equation*}
        (q-1)Y_k(I_0,I_+)=q^{\lvert I_+\rvert+\frac{1}{2}(\lvert I_0\rvert-k)}W_{1/2}(k,\lvert I_0\rvert)-W_0(k,\lvert I_0\rvert)-(q^{k+1}-1)W_0(k+1,\lvert I_0\rvert).
    \end{equation*}
\end{lemma}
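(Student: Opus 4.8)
The plan is to prove the identity by induction on $\lvert I_0\rvert+\lvert I_+\rvert$; the induction will in particular show \emph{a posteriori} that $Y_k(I_0,I_+)$ depends only on $n\defeq\lvert I_0\rvert$ and $m\defeq\lvert I_+\rvert$, not on how $I_0$ and $I_+$ interleave. The base case $I_0=I_+=\emptyset$ is immediate: every sum in \eqref{Ykdef} is empty, so $Y_k(\emptyset,\emptyset)=0$, while the right-hand side $q^{-k/2}W_{1/2}(k,0)-W_0(k,0)-(q^{k+1}-1)W_0(k+1,0)$ vanishes since $W_\beta(a,0)=0$ for $a>0$ and $W_{1/2}(0,0)=W_0(0,0)=1$. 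For the inductive step, let $x$ be the minimal element of the totally ordered set $I_0\sqcup I_+$, and distinguish two cases.

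If $x\in I_+$, I would separate in each of the three sums of \eqref{Ykdef} the contribution of $\beta=x$ from that of $\beta\neq x$. Since $x$ is minimal, $\inv(I_+\sqcup E_0,x)=\inv(I_+\sqcup E_1,x)=0$ and $\inv(x,E)=\lvert E\rvert$, so the $\beta=x$ part of the first sum is exactly $W_0(k,n)$ and that of the second sum is $(q^{k+1}-1)W_0(k+1,n)$ (the third sum has $\beta\in I_0$, so no $\beta=x$ term). For $\beta\neq x$ the element $x$ lies in none of $E_0,E,E_1$ and is below $\beta$, so it contributes a uniform factor $q$ to every remaining term and otherwise reproduces $Y_k(I_0,I_+\setminus\{x\})$. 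This yields
\begin{equation*}
    Y_k(I_0,I_+)=q\,Y_k(I_0,I_+\setminus\{x\})+W_0(k,n)+(q^{k+1}-1)W_0(k+1,n),
\end{equation*}
and substituting the induction hypothesis (with $m-1$ in place of $m$) the $W_0(k,n)$ and $W_0(k+1,n)$ terms telescope, leaving exactly the claimed formula.

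If $x\in I_0$, I would instead split each decomposition in \eqref{Ykdef} according to whether $x\in E_0$, $x\in E$ or $x\in E_1$, and in the third sum additionally isolate the term $\beta=x$. Using minimality of $x$, one checks that placing $x$ in $E_0$ scales every term by a uniform factor $q^{k+1}$ (combining the change in $\inv(E_0,E)$ with $\lvert E\rvert$ and, in the first sum only, with the change in $\inv(I_+\sqcup E_0,\beta)$), that placing $x$ in $E_1$ scales it by a uniform factor $q^{-k}$, and that placing $x$ in $E$ lowers $\lvert E\rvert$ by one without touching the other statistics, thereby turning the $Y_k$-sums into the $Y_{k-1}$-sums; the $\beta=x$ term of the third sum contributes $(1-q^{-k-1})W_0(k+1,n-1)$. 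Collecting these gives
\begin{equation*}
    Y_k(I_0,I_+)=(q^{k+1}+q^{-k})\,Y_k(I_0\setminus\{x\},I_+)+Y_{k-1}(I_0\setminus\{x\},I_+)+(1-q^{-k-1})W_0(k+1,n-1).
\end{equation*}
I would then insert the induction hypothesis for $Y_k$ and $Y_{k-1}$ on $I_0\setminus\{x\}$, and on the target expression use \eqref{Wrec2} to rewrite $W_{1/2}(k,n)$, $W_0(k,n)$ and $W_0(k+1,n)$ in terms of their $(n-1)$-versions; matching the coefficients of $W_{1/2}(k,n-1)$, $W_{1/2}(k-1,n-1)$, $W_0(k+1,n-1)$, $W_0(k,n-1)$ and $W_0(k-1,n-1)$ reduces the claim to five elementary polynomial identities in $q$.

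The only real obstacle is the case $x\in I_0$: one must keep track, simultaneously across the three sums of \eqref{Ykdef} and the three placements of $x$, of exactly how each of $\inv(E_0,E)$, $\inv(E_1,E)$, $\inv(I_+\sqcup E_\bullet,\beta)$ and $\inv(\beta,E)$ changes upon deleting $x$, and then verify that the resulting polynomial identity agrees with the output of \eqref{Wrec2}. This bookkeeping is purely mechanical but error-prone; everything else follows the same pattern as the $W$-recursion manipulations already carried out in \Cref{finFieldSection}.
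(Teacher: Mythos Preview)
Your argument is correct. The recursion in the case $x\in I_+$ is exactly as you describe, and in the case $x\in I_0$ the uniform scaling factors $q^{k+1}$, $q^{-k}$ across the three sums do hold: in the first sum the contribution to the exponent when $x\in E_0$ is $\lvert E\rvert+1=k+1$ (from $\inv(E_0,E)$ and $\inv(I_+\sqcup E_0,\beta)$), while in the second and third sums it is $\lvert E\rvert=k+1$ (from $\inv(E_0,E)$ alone, since there $E_0$ does not appear in the $\beta$-term); the $E_1$ case is symmetric. Substituting the induction hypothesis and applying \eqref{Wrec2} to each of $W_{1/2}(k,n)$, $W_0(k,n)$, $W_0(k+1,n)$ then closes the induction after a short polynomial check, exactly as you outline.

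The paper organises the proof differently. Rather than a single induction on $\lvert I_0\rvert+\lvert I_+\rvert$, it first establishes \emph{separately} that $Y_k(I_0,I_+)$ depends only on the cardinalities, by showing invariance under swapping two adjacent elements $i\in I_0$, $j\in I_+$ (this is done by rewriting each of the three sums, for fixed $\beta$, as a sum of products of $W$'s indexed by the split of $I_0$ at $\beta$, and checking that the swap leaves the resulting expression unchanged). Only then does it evaluate $Y_k$ at the specific ordering where all of $I_+$ precedes all of $I_0$, reducing to a quantity $\tilde Y(m,n)$ over $I_0$ alone, and finishes by an induction on $n$ via the first element of $I_0$, which gives a recursion structurally identical to your $x\in I_0$ case. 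Your approach fuses these two steps into one induction, which is cleaner and avoids the somewhat involved swap computation; the price is that the $x\in I_0$ bookkeeping must be carried out for the full $Y_k$ (all three sums and the $I_+$-dependence) rather than for the simpler $\tilde Y$. Both routes ultimately hinge on the same instance of \eqref{Wrec2}.
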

\begin{proof}
We first verify that $Y_k(I_0,I_+)$ only depends on $\lvert I_0\rvert$ and $\lvert I_+\rvert.$ In order to see this, we rewrite the terms in \eqref{Ykdef} as follows: for a given $\beta\in I_+\sqcup I_0,$ we write $I_?^<,I_?^>$ to denote the parts before and after $\beta.$ Then we can rewrite the first term for a given $\beta\in I_+$ as
\begin{equation*}
\begin{split}
    &q^{\lvert I_+^<\rvert}\sum_{a+b=k}\sum_{\substack{I_0^>=E_0^>\sqcup E^>\sqcup E_1^>\\ \lvert E^>\rvert=b}}\sum_{\substack{I_0^<=E_0^<\sqcup E^<\sqcup E_1^<\\ \lvert E^<\rvert=a}}\exp_q\left(\begin{array}{@{}c@{}}\inv(E_0^<,E^<)-\inv(E_1^<,E^<)\\+\inv(E_0^>,E^>)-\inv(E_1^>,E^>)\\+\lvert E_0^<\rvert\cdot\lvert E^>\rvert-\lvert E_1^<\rvert\cdot\lvert E^>\rvert+\lvert E_0^<\rvert\end{array}\right)\\
    =\ &q^{\lvert I_+^<\rvert}\sum_{a+b=k}W_0(b,\lvert I_0^>\rvert)\sum_{\substack{I_0^<=E_0^<\sqcup E^<\sqcup E_1^<\\ \lvert E^<\rvert=a}}\exp_q\left(\begin{array}{@{}c@{}}\inv(E_0^<,E^<)-\inv(E_1^<,E^<)\\+(b+\frac{1}{2})(\lvert E_0\rvert-\lvert E_1\rvert)+\frac{1}{2}(\lvert E_0\rvert+\lvert E_1\rvert)\end{array}\right)\\
    =\ &\sum_{a+b=k}q^{\lvert I_+^<\rvert+\frac{\lvert I_0^<\rvert-a}{2}}W_0(b,\lvert I_0^>\rvert)W_{b+1/2}(a,\lvert I_0^<\rvert).
\end{split}
\end{equation*}
Similarly, the second resp.\  third terms for a given $\beta\in I_+$ resp.\ $\beta\in I_0$ are
\begin{equation*}
    \sum_{a+b=k+1}q^{\lvert I_+^<\rvert+\frac{\lvert I_0^<\rvert-a}{2}}W_0(b,\lvert I_0^>\rvert)W_{b-1/2}(a,\lvert I_0^<\rvert)(q^b-1)
\end{equation*}
resp.\ 
\begin{equation*}
    \sum_{a+b=k+1}q^{\lvert I_+^<\rvert+\frac{\lvert I_0^<\rvert-a}{2}}W_0(b,\lvert I_0^>\rvert)W_{b-1/2}(a,\lvert I_0^<\rvert)(1-q^{-b}).
\end{equation*}
Now assume that we have two neighboring indices $i,j\in I_0\sqcup I_+$ where $i\in I_0$ and $j\in I_+.$ Let $Y_1\defeq Y_k(I_0,I_+)$ and $Y_2\defeq Y_k((I_0\sqcup\{j\})\setminus\{i\},(I_+\sqcup\{i\})\setminus\{j\}).$ We want to prove that $Y_1=Y_2.$ Without loss of generality, assume $i<j,$ and denote $a_0=\lvert I_0^{<i}\rvert,$ $b_0=\lvert I_0^{>j}\rvert$ and $a_+=\lvert I_+^{<i}\rvert.$ To compute the difference $Y_1-Y_2,$ we only need to consider the terms with $\beta\in\{i,j\}.$ Thus the difference is
\begin{equation*}
\begin{split}
    &\sum_{a+b=k}q^{a_++\frac{a_0-a}{2}}\left(q^{\frac{1}{2}}W_0(b,b_0)W_{b+1/2}(a,a_0+1)-W_0(b,b_0+1)W_{b+1/2}(a,a_0)\right)\\
    &\qquad+(q^b-1)\sum_{a+b=k+1}q^{a_++\frac{a_0-a}{2}}\left(q^{\frac{1}{2}}W_0(b,b_0)W_{b-1/2}(a,a_0+1)-W_0(b,b_0+1)W_{b-1/2}(a,a_0)\right)\\
    &\qquad+(1-q)\sum_{a+b=k+1}q^{a_++\frac{a_0-a}{2}}W_0(b,b_0)W_{b-1/2}(a,a_0)(1-q^{-b}).
\end{split}
\end{equation*}
Now we rewrite this by denoting $C(a,b)\defeq q^{a_++\frac{a_0-a}{2}}W_0(b,b_0)W_{k-a+1/2}(a,a_0)$ and using \eqref{Wrec1} for the terms $W_{k-a+1/2}(a,a_0+1)$ as well as \eqref{Wrec2} for the terms $W_0(b,b_0+1).$ We have
\begin{equation*}
\begin{split}
    Y_1-Y_2=\ &\sum_{a+b=k}(q^{b+1}+q^{-b})C(a,b)+C(a-1,b)-(q^b+q^{-b})C(a,b)-C(a,b-1)\\
    &+\sum_{a+b=k+1}(q^b-1)\left((q^b+q^{1-b})C(a,b)+C(a-1,b)-(q^b+q^{-b})C(a,b)-C(a,b-1)\right)\\
    &-(q-1)\sum_{a+b=k+1}(1-q^{-b})C(a,b)\\
    =\ &\sum_{a+b=k+1}C(a,b)(q^b-1)(q^b+q^{1-b}-q^b-q^{-b}-(q-1)q^{-b})\\
    &+\sum_{a+b=k}C(a,b)\left(q^{b+1}+q^{-b}-q^b-q^{-b}+q^b-1-(q^{b+1}-1)\right)\\\
    &+\sum_{a+b=k-1}C(a,b)(1-1)\\
    =\ &0.
\end{split}
\end{equation*}

Thus $Y_k(I_0,I_+)$ is a function of $\lvert I_0\rvert$ and $\lvert I_+\rvert.$ This means that we can evaluate it by assuming that all of $I_+$ come before all of $I_0.$ This give us
\begin{equation*}
\begin{split}
    Y_k(I_-,I_0)=\ &\frac{q^{\lvert I_+\rvert}-1}{q-1}W_0(k,\lvert I_0\rvert)\\
    &+\frac{q^{\lvert I_+\rvert}-1}{q-1}(q^{k+1}-1)W_0(k+1,\lvert I_0\rvert)\\
    &+q^{\lvert I_+\rvert}\sum_{\substack{I_0=\beta\sqcup E_0\sqcup E\sqcup E_1\\ \lvert E\rvert=k+1}}\exp_q(\inv(E_0,E)-\inv(E_1,E)+\inv(E_1,\beta))(1-q^{-\inv(\beta,E)}).
\end{split}
\end{equation*}
Denoting
\begin{equation*}
    \tilde{Y}(m,n)\defeq\sum_{\substack{\{1,\ldots,n\}=\beta\sqcup E_0\sqcup E\sqcup E_1\\\lvert E\rvert=m}}\exp_q(\inv(E_0,E)-\inv(E_1,E)+\inv(E_1,\beta))(1-q^{-\inv(\beta,E)}),
\end{equation*}
we are left to check that
\begin{equation*}
    (q-1)\tilde{Y}(m,n)\isequal q^{(n+1-m)/2}W_{1/2}(m-1,n)-W_0(m-1,n)-(q^{m}-1)W_0(m,n).
\end{equation*}
We do this by induction on $n+m,$ the base case being $m=0,$ which is trivial since both sides are $0.$ Thinking about the first element of $\{1,\ldots,n\}$ in the above expression for $\tilde{Y}(n,m)$ give us the recursion
\begin{equation*}
    \tilde{Y}(m+1,n+1)=\tilde{Y}(m,n)+(q^{m+1}+q^{-m})\tilde{Y}(m+1,n)+(1-q^{-m-1})W_0(m+1,n).
\end{equation*}
By the induction hypothesis we get that $(q-1)\tilde{Y}(m+1,n+1)$ is
\begin{equation*}
\begin{split}
    &q^{(n+1-m)/2}W_{1/2}(m-1,n)-W_0(m-1,n)-(q^m-1)W_0(m,n)\\
    &\qquad+(q^{m+1}+q^{-m})(q^{(n-m)/2}W_{1/2}(m,n)-W_0(m,n)-(q^{m+1}-1)W_0(m+1,n))\\
    &\qquad+(q-1)(1-q^{-m-1})W_0(m+1,n)\\\
    =\ &q^{(n+1-m)/2}\left(W_{1/2}(m-1,n)+(q^{m+1/2}+q^{-m-1/2})W_{1/2}(m,n)\right)\\
    &\qquad-\left(W_0(m-1,n)+(q^m+q^{-m})W_0(m,n)\right)\\
    &\qquad-(q^{m+1}-1)\left(W_0(m,n)+(q^{m+1}+q^{-m-1})W_0(m+1,n)\right)
\end{split}
\end{equation*}
and thus the induction is complete by using \eqref{Wrec2}.
\end{proof}
\subsection{Straightening relations and \texorpdfstring{$T^{\circ\bullet}_r$}{T(circ,bullet)r}}
\begin{definition}\index{t1(epsilon)@$t^1(\varepsilon)$}
For $\varepsilon\in\{-1,1\}^{\cfactor r},$ we denote $t^{1}(\varepsilon)\colon R[\Typ_r]\to R[\Typ_r]$ by
\begin{equation*}
    t^{1}(\varepsilon)\Type{e_i}{\chi_i}\defeq\begin{cases}\Type{e_i+\varepsilon_i}{\chi_i}&\text{in the cases \Ugen, \Ogen,}\\
    \type{e_i+\frac{\varepsilon_{2i-1}+\varepsilon_{2i}}{2}}&\text{in the case \SPgen}.\end{cases}
\end{equation*}
\end{definition}
\begin{definition}\index{1Deltacircbullet@$\Delta^\intert_r$}
We define $\Delta^{\intert}_r\colon R[\Typ_r]\to R[\Typ_r]$ by
\begin{equation*}
    \Delta^\intert_r\defeq\sum_{\varepsilon\in\{-1,1\}^{\cfactor r}}q^{\sum_i(\cfactor r-i+\alpha)\frac{1+\varepsilon_i}{2}} t^1(\varepsilon).
\end{equation*}
\end{definition}
\begin{remark}\label{DeltaAlternative}
Note that if $\varepsilon\in\{0,1\}^{\cfactor r},$ then
\begin{equation*}
    \sum_{i=1}^{\cfactor r}(\cfactor r-i)\cdot \varepsilon_i=\binom{\lambda_1(\varepsilon)}{2}+\inv(\varepsilon),
\end{equation*}
so we may write
\begin{equation*}
    \Delta^\intert_r=\sum_{\varepsilon\in\{0,1\}^{\cfactor r}}q^{\binom{\lambda_1(\varepsilon)}{2}+\alpha\lambda_1(\varepsilon)}q^{\inv(\varepsilon)}t^2(\varepsilon)t(-1,\ldots,-1).
\end{equation*}
In other words,
\begin{equation*}
    \Delta^\intert_r=t(-1,\ldots,-1)\cdot\sum_{k=0}^{\cfactor r}\Dfactor(k)\Delta^\natural_{k,r}
\end{equation*}
where $\Delta^\natural_{k,r}\colon R[\Typ_r]\to R[\Typ_r]$\index{1Deltanatural@$\Delta^\natural_{k,r}$}\index{1Deltanatural@$\Delta^\natural_{k,r}$! |seealso{\cite[\citeDeltadefinition]{CoratoZanarella}}} are the operators appearing in \cite[\citeDeltadefinition]{CoratoZanarella}.
\end{remark}

\begin{proposition}\label{intertpreserves}
    $\Delta_r^\intert$ preserves $\Rel^\flat_r.$
\end{proposition}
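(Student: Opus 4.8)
The plan is to reduce the statement to the degree‑one computations already recorded in \Cref{preservesfs}. Write $\Rel^\flat_r=\Rel^\natural_r+\mathrm{Gr}_r(\Rel_1^\flat)$. Since $\Rel_1^\flat$ is the left ideal generated by degree‑one elements, $\mathrm{Gr}_r(\Rel_1^\flat)=R[\Typ_{r-1}]\star\mathrm{Gr}_1(\Rel_1^\flat)$, so this graded piece is spanned by the elements $\delta\star g$ with $\delta\in\Typ_{r-1}$ and $g$ one of the degree‑one generators of $\Rel_1^\flat$. Hence it suffices to prove (a) $\Delta^\intert_r$ preserves $\Rel^\natural_r$, and (b) $\Delta^\intert_r(\delta\star g)\in\Rel^\flat_r$ for every such $\delta$ and $g$; together these give that $\Delta^\intert_r$ preserves the sum $\Rel^\flat_r$.

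For (a) I would use the second formula of \Cref{DeltaAlternative},
\begin{equation*}
    \Delta^\intert_r=t(-1,\ldots,-1)\circ\sum_{k=0}^{\cfactor r}\Dfactor(k)\,\Delta^\natural_{k,r}.
\end{equation*}
Each $\Delta^\natural_{k,r}$ preserves $\Rel^\natural_r$ by \cite[\citeDeltapreserves]{CoratoZanarella}, and the constant translation $t(-1,\ldots,-1)$ preserves $\Rel^\natural_r$ as well: under the isomorphism $\mathrm{str}^\natural$ of \cite[\citeBergmanProp]{CoratoZanarella} it intertwines with the index shift $(e^0,\chi^0)\mapsto(e^0(\cdot+1),\chi^0(\cdot+1))$ on $R[\Typ^{0,\natural}_r]$, hence descends to $R[\Typ_r]/\Rel^\natural_r$. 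Thus the composite preserves $\Rel^\natural_r$.

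The crux is (b), and the key point is that $\Delta^\intert_r$ \emph{factors}. The weight $q^{\sum_i(\cfactor r-i+\alpha)(1+\varepsilon_i)/2}$ is multiplicative in the $\varepsilon_i$, and $t^1(\varepsilon)$ acts on each type of $\Typ_r$ independently in the cases \Ugen, \Ogen (resp.\ on each pair $\{2i-1,2i\}$ of indices independently in the case \SPgen). Expanding the sum over $\varepsilon\in\{-1,1\}^{\cfactor r}$ as a product therefore gives $\Delta^\intert_r=\Phi\circ\Theta$, where $\Phi$ only modifies the first $r-1$ types and $\Theta$ only modifies the last type. Computing the last factor explicitly yields
\begin{equation*}
    \Theta=\begin{cases}t(-1)+q_0\,t(1)&\text{in case \Uf,}\\ t(-1)+t(1)&\text{in case \Of,}\\ t(-1)+q^3\,t(1)+q(q+1)\cdot\mathrm{id}&\text{in case \SPf.}\end{cases}
\end{equation*}
In each case $\Theta$ maps $\mathrm{Gr}_1(\Rel_1^\flat)$ into itself: for \Uf and \Of this is precisely \Cref{preservesfs}, and for \SPf it follows from \Cref{preservesfs} together with the obvious fact that $\mathrm{id}$ preserves $\mathrm{Gr}_1(\Rel_1^\flat)$. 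Since $\Phi$ does not touch the last type, for $\delta\in\Typ_{r-1}$ and $g\in\mathrm{Gr}_1(\Rel_1^\flat)$ we obtain
\begin{equation*}
    \Delta^\intert_r(\delta\star g)=(\Phi\delta)\star(\Theta g)\in R[\Typ_{r-1}]\star\mathrm{Gr}_1(\Rel_1^\flat)=\mathrm{Gr}_r(\Rel_1^\flat)\subseteq\Rel^\flat_r,
\end{equation*}
which is (b).

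The only genuinely new ingredient here is \Cref{preservesfs}; the rest is bookkeeping about the graded structure of the left ideal $\Rel^\flat$. The step that needs the most care is the explicit determination of the last factor $\Theta$: in the cases \Uf and \Of it is visibly the operator appearing in \Cref{preservesfs}, but in the case \SPf one must peel off the scalar term $q(q+1)\cdot\mathrm{id}$ arising from the two $\varepsilon$‑variables attached to the final type $e_r$, so that what remains is the operator $t(-1)+q^3\,t(1)$ treated there.
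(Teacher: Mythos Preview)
Your proof is correct and follows essentially the same approach as the paper. Part (a) is identical, and for part (b) the paper makes the same factorization, writing out the last factor $\Theta$ in each case (including the \SPf computation $t^1(-1,-1)+q\cdot t^1(-1,1)+q^2\cdot t^1(1,-1)+q^3\cdot t^1(1,1)=(t(-1)+q^3\cdot t(1))+q(q+1)\cdot t(0)$) and appealing to \Cref{preservesfs}; you simply spell out the bookkeeping about the graded left ideal more explicitly.
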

\begin{proof}
    The above remark together with \cite[\citeDeltapreserves]{CoratoZanarella} (and the fact that $t(-1,\ldots,-1)$ preserves $\Rel^\natural_r$) implies that $\Delta^\intert_r$ preserves $\Rel^\natural_r.$
    
    For $\Rel_1^\flat,$ this follows from \Cref{preservesfs}, as we are looking at the operators $t(-1)+q_0\cdot t(1)$ in the case \Uf, $t(-1)+t(1)$ in the case \Of
    \begin{equation*}
        t^1(-1,-1)+q\cdot t^1(-1,1)+q^2\cdot t^1(1,-1)+q^3\cdot t^1(1,1)=(t(-1)+q^3\cdot t(1))+q(q+1)\cdot t(0)
    \end{equation*}
    in the case \SPf.
\end{proof}

\begin{proposition}\label{intertwiningProp}
    Fix $\Lambda^\bullet\in\Lat^\bullet(V^\flat)$ and denote $L^\bullet\defeq\mathrm{int}_2(\Lambda^\bullet).$  Assume we are given $\varpi L^\bullet\subseteq L^\circ\subseteq L^\bullet$ with $L^\circ\subseteq L^{\circ,\vee}.$ 
    
    Then the number of $\Lambda^\circ\in\Lat^\circ(V^\flat)$ with $\varpi\Lambda^\bullet\subseteq\Lambda^\circ\subseteq\Lambda^\bullet$ such that $\mathrm{int}_2(\Lambda^\circ)=L^\circ$ is
    \begin{equation*}
        L\typec{\dim\left(\frac{L^\bullet\cap L^{\bullet,\vee}}{L^\circ\cap L^{\bullet,\vee}}\right)}{\mathrm{typ}\left(\frac{L^\bullet\cap L^{\circ,\vee}}{L^\circ+L^\bullet\cap L^{\bullet,\vee}}\right)}{\dim\left(\frac{L^\bullet}{L^\circ}\right)}
    \end{equation*}
\end{proposition}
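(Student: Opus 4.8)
The plan is to run an argument analogous to those of \Cref{gencomp1}, \Cref{gencomp2} and \Cref{gencomp3}: first decompose the counting problem in $V^\flat$ via the orthogonal splitting $V^\flat = V_1^\flat \oplus V_2$, reduce to counting Lagrangians in a suitable subquotient, and then apply the lattice-counting formulas (Propositions \ref{Lquantitysubsub}--\ref{Lquantity}) together with the data $(L_1, L_2, \varphi)$ from \Cref{LatcircBij}. Set $L^\bullet_0 \defeq L^\bullet \cap L^{\bullet,\vee}$ and observe that by \Cref{typhelp}-type considerations this is the ``radical'' part of $L^\bullet$ inside $V_2$ with respect to the $\varpi$-modified form. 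The key intermediate object is $I \defeq \varpi(\Lambda^{\bullet}) + L^\bullet$, or more precisely an auxiliary lattice built to be (anti)self-dual, obtained from \Cref{lemmaselfdual} applied to $(V^\flat, \varpi\langle\cdot,\cdot\rangle^\flat)$; this linearizes the constraint $\varpi\Lambda^\bullet \subseteq \Lambda^\circ \subseteq \Lambda^\bullet$ into a constraint of the form $\Lambda^\circ \cap I = (\text{fixed lattice})$.

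First I would use \Cref{LatBij} and \Cref{LatcircBij} to translate the problem: a lattice $\Lambda^\circ$ with $\varpi\Lambda^\bullet \subseteq \Lambda^\circ \subseteq \Lambda^\bullet$ and $\mathrm{int}_2(\Lambda^\circ) = L^\circ$ corresponds, under the bijection, to a choice of $L_1^\circ \defeq \mathrm{int}_1(\Lambda^\circ) \in \Lat(V_1^\flat)$ (constrained between $\varpi\mathrm{int}_1(\Lambda^\bullet)$ and $\mathrm{int}_1(\Lambda^\bullet)$, with the right dual condition) together with an isomorphism $\varphi^\circ\colon L_1^{\circ,\vee}/L_1^\circ \rightiso L^{\circ,\vee}/L^\circ$ compatible with the forms. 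Because $L^\circ$ is fixed, the count factors as (number of admissible $L_1^\circ$) times (number of admissible $\varphi^\circ$ compatible with a given $L_1^\circ$), and the second factor is controlled by \Cref{typhelp}. The first factor is where the Lagrangian count enters: admissible $L_1^\circ$ that glue to a self-dual $\Lambda^\circ$ sitting inside $\Lambda^\bullet$ are exactly Lagrangians, inside an appropriate symplectic/Hermitian/orthogonal subquotient of $\varpi^{-1}(L^\bullet\cap L^{\circ,\vee})\big/(L^\circ + L^\bullet_0)$, that are disjoint from the canonical Lagrangian coming from $\Lambda^\bullet$ itself and meet the ``defect'' subspace in a prescribed way. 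Identifying the three parameters of the $L\typec{a}{(b,\chi)}{r}{}$ symbol: $a = \dim(L^\bullet_0 / (L^\circ \cap L^{\bullet,\vee}))$ records the failure of $L^\circ$ to contain the radical; $(b,\chi) = \mathrm{typ}\big((L^\bullet \cap L^{\circ,\vee})/(L^\circ + L^\bullet_0)\big)$ records the nondegenerate middle piece; and $r = \dim(L^\bullet/L^\circ)$ is the total corank; then \Cref{Lquantitysub} reduces everything to the base case \Cref{Lquantity}. I would package the bookkeeping into a single commutative lattice diagram (in the style of the diagram in the proof of \Cref{Lquantitysub}) exhibiting all of $\varpi L^\bullet, L^\circ, L^\bullet_0, L^\bullet\cap L^{\circ,\vee}, L^\bullet, \varpi^{-1}L^\bullet_0$ and the relevant quotients, so the dimension counts can be read off directly.

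The main obstacle I anticipate is the \emph{sign/discriminant bookkeeping} in the orthogonal case \Ogen: tracking how $\mathrm{typ}$ (the second coordinate $\chi$, involving $\epsilon^{(\cdot)}$ and $\psi$) behaves under passage to $\varpi^{-1}(L^\bullet \cap L^{\circ,\vee})/(L^\circ + L^\bullet_0)$, and matching it precisely with the $\chi$ appearing inside the $L\typec{}{}{}{}$ symbol via \eqref{eqtyp} and \Cref{LatcircBij}. One must be careful that the ``$-1$ twist'' in the compatibility $\langle\varphi(x_1),\varphi(x_2)\rangle \equiv -\langle x_1,x_2\rangle$ from \Cref{LatcircBij} and the $\varpi$-scaling of the form interact correctly; the factor $\epsilon^{r_2 - e_2^0(0)}\cdot\mathrm{sign}(\det V)$ from \eqref{eqtyp} is exactly what should reconcile the two sides, but verifying this requires care. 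A secondary technical point is confirming that the auxiliary self-dual lattice $I$ produced by \Cref{lemmaselfdual} genuinely has the stated self-duality under the hypotheses at hand (one needs to check the three hypotheses (i)--(iii) of that lemma hold here, with $a$ the appropriate corank), after which the remaining steps are, as in Propositions \ref{gencomp1}--\ref{gencomp3}, essentially formal manipulations with the counting formulas for Lagrangians disjoint from a fixed one.
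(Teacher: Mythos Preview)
Your proposal takes a genuinely different route from the paper. You plan to split $\Lambda^\circ$ via \Cref{LatcircBij} into $(L_1^\circ,L^\circ,\varphi^\circ)$ and count the $V_1^\flat$-piece and the gluing map separately; the paper never invokes the splitting at all and stays entirely inside $V^\flat$.

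The paper's argument is this. Set $M\defeq\varpi\Lambda^\bullet+L^\circ$; this is integral since $L^\circ\subseteq\Lambda^\bullet$ and $L^\circ\subseteq L^{\circ,\vee}$. The conditions $\varpi\Lambda^\bullet\subseteq\Lambda^\circ\subseteq\Lambda^\bullet$ and $\mathrm{int}_2(\Lambda^\circ)=L^\circ$ are then equivalent to $M\subseteq\Lambda^\circ\subseteq M^\vee$ together with $\Lambda^\circ\cap(\varpi\Lambda^\bullet+L^\bullet)=M$. So one is counting Lagrangians in $M^\vee/M$ disjoint from the image of $N\defeq(\varpi\Lambda^\bullet+L^\bullet)\cap M^\vee$, and by the very \Cref{Ldefinition} of the symbol $L$ the answer is
\[
L\bigl(\dim((N\cap N^\vee)/M),\ \mathrm{typ}(N/(N\cap N^\vee)),\ \tfrac{1}{2}\dim(M^\vee/M)\bigr).
\]
The remainder is a direct computation of $N$, $N^\vee$, $N\cap N^\vee$ using only that $\mathrm{int}_2$ and $\mathrm{proj}_2$ are dual to one another, identifying the three parameters with the quotients in the statement. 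No use of \Cref{lemmaselfdual} or \Cref{LatcircBij} is needed.

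Your approach can be made to work but is heavier. You would have to track how $\varphi^\circ$ is constrained by $\varphi^\bullet$ (the inclusion $\Lambda^\circ\subseteq\Lambda^\bullet$ is a compatibility between them, not just between the $L_i$'s), and \Cref{typhelp} lifts automorphisms of $L^\vee/L$ rather than counting them, so the ``second factor'' is not immediate. More importantly, the sign/discriminant bookkeeping you flag in the \Ogen case is a real complication of your route that the paper's argument sidesteps completely: by working with the single induced form on $M^\vee/M$ rather than comparing forms on $V_1^\flat$ and $V_2$, the paper never sees the $-1$ twist of \Cref{LatcircBij} or the $\epsilon$-factors of \eqref{eqtyp}. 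The payoff of the direct approach is a one-line reduction to \Cref{Ldefinition} followed by straightforward lattice algebra; your factorisation would recover the same answer but at the cost of exactly the delicate steps you already identified as obstacles.
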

\begin{proof}
    First note that $\varpi\Lambda^\bullet+L^\circ$ is integral, as $L^\circ\subseteq\Lambda^\bullet$ and $L^\circ\subseteq L^{\circ,\vee}.$ So we are counting the number of $\Lambda^\circ\in\Lat^\circ(V^\flat)$ such that $(\varpi\Lambda^\bullet+L^\circ)\subseteq\Lambda^\circ\subseteq(\varpi\Lambda^\bullet+L^\circ)^\vee$ and $\Lambda^\circ\cap(\varpi\Lambda^\bullet+L^\bullet)=\varpi\Lambda^\bullet+L^\circ.$

   Let $N\defeq(\varpi\Lambda^\bullet+L^\bullet)\cap(\varpi\Lambda^\bullet+L^\circ)^\vee.$ By \Cref{Ldefinition}, the answer we seek is 
   \begin{equation*}
       L\left(\dim\left(\frac{N\cap N^\vee}{\varpi\Lambda^\bullet+L^\bullet}\right),\mathrm{typ}\left(\frac{N}{N\cap N^\vee}\right),\frac{1}{2}\dim\left(\frac{(\varpi\Lambda^\bullet+L^\circ)^\vee}{\varpi\Lambda^\bullet+L^\circ}\right)\right).
   \end{equation*}
   We have
   \begin{equation*}
   \begin{split}
       \frac{1}{2}\dim\left(\frac{(\varpi\Lambda^\bullet+L^\circ)^\vee}{\varpi\Lambda^\bullet+L^\circ}\right)&=\frac{1}{2}\dim\left(\frac{\Lambda^\bullet}{\varpi\Lambda^\bullet}\right)-\dim\left(\frac{\varpi\Lambda^\bullet+L^\circ}{\varpi\Lambda^\bullet}\right)\\&=\dim\left(\frac{L^\bullet}{\varpi L^\bullet}\right)-\dim\left(\frac{L^\circ}{\varpi L^\bullet}\right)\\&=\dim\left(\frac{L^\bullet}{L^\circ}\right).
    \end{split}
   \end{equation*}
    Also,
    \begin{equation*}
        N=(\varpi\Lambda^\bullet+L^\bullet)\cap(\Lambda^\bullet\cap\mathrm{proj}_2^{-1}(L^{\circ,\vee})=(\varpi\Lambda^\bullet+L^\bullet)\cap\mathrm{proj}_2^{-1}(L^{\circ,\vee})=\varpi\Lambda^\bullet+(L^\bullet\cap L^{\circ,\vee})
    \end{equation*}
    and
    \begin{equation*}
        N^\vee=(\varpi\Lambda^\bullet+L^\bullet)^\vee+(\varpi\Lambda^\bullet+L^\circ)=(\Lambda^\bullet\cap\mathrm{proj}_2^{-1}(L^{\bullet,\vee}))+(\varpi\Lambda^\bullet+L^\circ).
    \end{equation*}
    Thus
    \begin{equation*}
        N\cap N^\vee=\varpi\Lambda^\bullet+L^\circ+(L^\bullet\cap L^{\circ,\vee}\cap L^{\bullet,\vee})=\varpi\Lambda^\bullet+L^\circ+(L^\bullet\cap L^{\bullet,\vee}).
    \end{equation*}
    So we may compute that
    \begin{equation*}
        \frac{N\cap N^\vee}{\varpi\Lambda^\bullet+L^\circ}\iso \frac{L^\bullet\cap L^{\bullet,\vee}}{(L^\bullet\cap L^{\bullet,\vee})\cap(\varpi\Lambda^\bullet+L^\circ)}=\frac{L^\bullet\cap L^{\bullet,\vee}}{L^\bullet\cap L^{\bullet,\vee}\cap L^\circ}=\frac{L^\bullet\cap L^{\bullet,\vee}}{L^\circ\cap L^{\bullet,\vee}}
    \end{equation*}
    and
    \begin{equation*}
    \frac{N}{N\cap N^\vee}\iso\frac{L^\bullet\cap L^{\circ,\vee}}{(L^\circ+(L^\bullet\cap L^{\bullet,\vee}))\cap (L^\bullet\cap L^{\circ,\vee})}=\frac{L^\bullet\cap L^{\circ,\vee}}{L^\circ+L^\bullet\cap L^{\bullet,\vee}}.\qedhere
    \end{equation*}
\end{proof}

\begin{theorem}\label{intert}
The induced map $\Delta^\intert_r\colon\Z[\Typ_r^0]\to\Z[\Typ_r^0]$ agrees with $T^{\circ\bullet,*}_r.$
\end{theorem}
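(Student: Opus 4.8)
The plan is to compute $T^{\circ\bullet,*}_r$ directly from the counting interpretation recorded above and to match it with (the induced operator of) $\Delta^\intert_r$, by reducing everything to a computation in $V_2$ and then appealing to the results of \cite{CoratoZanarella} for the cases \Un, \On, \SPn.

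First I would fix $\delta^\bullet\in\Typ^0_r$ together with a representative $\Lambda^\bullet\in\Lat^\bullet(V^\flat)$ with $\mathrm{typ}^{\bullet,\flat}(\Lambda^\bullet)=\delta^\bullet$, and set $L^\bullet\defeq\mathrm{int}_2(\Lambda^\bullet)$. By the combinatorial description of the intertwining operator, $T^{\circ\bullet,*}_r(\delta^\bullet)=\sum_{\Lambda^\circ}\mathrm{typ}^\flat(\Lambda^\circ)$, the sum running over $\Lambda^\circ\in\Lat^\circ(V^\flat)$ with $\varpi\Lambda^\bullet\subseteq\Lambda^\circ\subseteq\Lambda^\bullet$. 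Exactly as in the reduction carried out in \Cref{genMainProp}, I would organize this sum by $L^\circ\defeq\mathrm{int}_2(\Lambda^\circ)$: these are precisely the lattices $L^\circ\in\Lat(V_2)$ with $\varpi L^\bullet\subseteq L^\circ\subseteq L^\bullet$ that are integral for $\langle\cdot,\cdot\rangle^\flat$ restricted to $V_2$; one has $\mathrm{typ}^\flat(\Lambda^\circ)=\mathrm{typ}^\natural(L^\circ)\in\Typ^0_r$ (integrality of $L^\circ$ forces its type to be supported on $\Z_{\ge0}$); and for a fixed such $L^\circ$ the number of $\Lambda^\circ$ lying over it is given by \Cref{intertwiningProp}. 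This turns the statement into an identity purely about $\Lat(V_2)$.

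Next I would simplify the weight coming from \Cref{intertwiningProp} by means of the closed formulas for the $L$-quantities collected in \Cref{finFieldSection}; in particular \Cref{Lquantitysub} extracts a factor $\Dfactor(\dim(L^\bullet/L^\circ))$. Reorganizing the remaining sum by $k\defeq\dim(L^\bullet/L^\circ)$ and by the position of $L^\circ$ relative to the modular part $L^\bullet\cap L^{\bullet,\vee}$ and to $L^{\bullet,\vee}$, the quantity $T^{\circ\bullet,*}_r(\delta^\bullet)$ takes the shape $\sum_{k=0}^{\cfactor r}\Dfactor(k)\cdot(\ast_k)$, where each $(\ast_k)$ is the counting problem for $\Lat(V_2)$ that computes sublattices of a fixed elementary-divisor type --- that is, precisely the problem solved by the operator $T_{k,r}^{\natural,*}$ of \cite{CoratoZanarella} --- once one has accounted for the rescaling of $\langle\cdot,\cdot\rangle^\flat$ by $\varpi$ that relates $\Lat^\circ(V^\flat)$ to $\Lat^\bullet(V^\flat)$ and that, on the level of types, is the translation $t(-1,\ldots,-1)$. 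At this point I would invoke \cite[\citeinducedThm]{CoratoZanarella}, which identifies $\Delta^\natural_{k,r}$ with $T_{k,r}^{\natural,*}$ on $\Typ^{0,\natural}_r$, together with \Cref{DeltaAlternative}, which writes $\Delta^\intert_r=t(-1,\ldots,-1)\cdot\sum_{k=0}^{\cfactor r}\Dfactor(k)\Delta^\natural_{k,r}$; since $\Delta^\intert_r$ preserves $\Rel^\flat_r$ by \Cref{intertpreserves}, the induced operator on $R[\Typ^0_r]$ is well defined, and comparing the two expressions gives the theorem.

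The main obstacle will be the middle step: reconciling the explicit powers of $q$ produced by the $L$-formulas --- which take genuinely different shapes in the cases \Ugen, \Ogen, \SPgen --- against the weights $\Dfactor(k)$ and $q^{\inv(\varepsilon)}$ appearing in $\Delta^\intert_r$, and, above all, handling the ``boundary'' contributions where $L^\circ$ meets the self-dual part $L^\bullet\cap L^{\bullet,\vee}$: it is there that the degree-one straightening relations $\Rel_1^\flat$ are needed, through a bookkeeping that runs parallel to the one performed for the cases \genn in \cite{CoratoZanarella}. A minor additional point is the arbitrary choice of a line in each graded piece in the case \SPgen, which must be checked not to change the count.
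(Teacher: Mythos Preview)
Your outline follows essentially the same route as the paper: reduce to $V_2$ via \Cref{intertwiningProp}, extract a $\Dfactor$-factor via \Cref{Lquantitysub}, and then invoke \cite[\citeinducedThm]{CoratoZanarella} together with \Cref{DeltaAlternative}. Two points deserve sharpening.

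First, your claimed identification ``$(\ast_k)=T_{k,r}^{\natural,*}$'' is only valid when the type of $L^\bullet$ is supported away from $0$. The weight from \Cref{intertwiningProp} is an $L$-quantity that depends not only on $k=\dim(L^\bullet/L^\circ)$ but also on $\dim\bigl((L^\bullet\cap L^{\bullet,\vee})/(L^\circ\cap L^{\bullet,\vee})\bigr)$ and on $\mathrm{typ}\bigl((L^\bullet\cap L^{\circ,\vee})/(L^\circ+L^\bullet\cap L^{\bullet,\vee})\bigr)$; these collapse to $\Dfactor(k)$ only when $L^\bullet\subseteq L^{\bullet,\vee}$, i.e.\ when $e^0(0)=0$ in the $\bullet$-type. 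The paper makes this cleaner by first factoring through the $\star$-product (as in \cite[\citeinducedThm]{CoratoZanarella}), which reduces everything to the constant types $e=(k,\ldots,k)$; for $k>0$ your argument then goes through verbatim.

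Second, the boundary case $e=(0,\ldots,0)$ is more than bookkeeping, and is not parallel to anything in \cite{CoratoZanarella}. Here $L^\bullet=L^{\bullet,\vee}$ forces the $L$-quantity to involve the genuinely nondegenerate count $L\Typec{0}{}{l}{\chi}{r}{}$ of \Cref{Lquantity}, and matching this against $\Delta^\intert_r$ requires a separate induction on $r$: one writes $\Delta^\intert_{r+1}$ recursively in terms of $\Delta^\intert_r$, applies the induction hypothesis, pushes the leading factor through explicit degree-$2$ relations in $\Rel^\natural$, and then verifies a closed-form identity among the $Q_0$-weights case by case in \Ugen, \Ogen, \SPgen. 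This is where the new content of the theorem lies.
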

\begin{proof}
Assume we are given $\Lambda^\bullet\in\Lat^\bullet(V^\flat),$ and denote $L^\bullet\defeq\mathrm{int}_2(\Lambda^\bullet).$

Choose a decomposition $V_2=\bigoplus_{i\in\Z}V_2^{(i)}$ such that $L^{\bullet}=\bigoplus L^{\bullet,(i)}$ with $(L^{\bullet,(i)})^\vee=\varpi^{-i}L^{\bullet,(i)},$ then $V_2^{(i)}=0$ for $i<-1.$ Hence
\begin{equation*}
    L^\bullet\cap L^{\bullet,\vee}=\varpi L^{\bullet,(-1)}\oplus L^{\bullet,(\ge0)}.
\end{equation*}

Now assume we are  given $\varpi L^\bullet\subseteq L^\circ\subseteq L^\bullet$ with $L^\circ\subseteq L^{\circ,\vee}.$ Denote $N_i\defeq\mathrm{proj}_{V_2^{(i)}}(L^\circ\cap V_2^{(\ge i)}).$ Then we have
\begin{equation*}
\begin{split}
    L^\circ\cap L^{\bullet,\vee}&=\varpi L^{\bullet,(-1)}\oplus(L^{\bullet,(\ge0)}\cap L^\circ)=\varpi L^{\bullet,(-1)}\oplus(L^\circ\cap V_2^{(\ge0)}),\\
    L^\bullet\cap L^{\circ,\vee}&=(L^{\bullet,(-1)}\cap L^{\circ,\vee})\oplus L^{\bullet,(\ge0)}=(N_{-1})^\vee\oplus L^{\bullet,(\ge0)},\\
    L^\circ+L^\bullet\cap L^{\bullet,\vee}&=N_{-1}\oplus L^{\bullet,(\ge0)}.
\end{split}
\end{equation*}

Now assume $L^\circ$ corresponds to the data $D=(n,m,l,\psi_1,\psi_2)$ as in \cite[\citemainLemma]{CoratoZanarella}. This means that if $\mathrm{typ}^\bullet(\Lambda^\bullet)=(e^0,\chi^0),$ then
\begin{equation*}
    e^0(i)=n(i-1)+\frac{2}{\cfactor}\cdot m(i-1)+l(i-1)\quad\text{and}\quad\chi^0(i)=\psi_1(i-1)\cdot\epsilon^{m(i-1)}\cdot\psi_2(i-1),
\end{equation*}
and if $\Lambda^\circ\in\Lat^\circ(V^\flat)$ is such that $\mathrm{int}_2(\Lambda^\circ)=L^\circ,$ then $(f^0(D),\psi^0(D))\defeq\mathrm{typ}^\circ(\Lambda^\circ)$ is given by
\begin{equation*}
    f^0(D)(i)=n(i)+\frac{2}{\cfactor}\cdot m(i-1)+l(i-2)\quad\text{and}\quad\psi^0(i)=\psi_1(i)\cdot\epsilon^{m(i-1)}\cdot\psi_2(i-2).
\end{equation*}
Note that $n(-1)=0,$ and thus $\psi_1(-1)=1.$ Denote $a_i(D)=m(i)+\cfactor\cdot l(i)=\dim(L^{\bullet,(i)}/N_i).$ By \Cref{intertwiningProp} and \Cref{Lquantitysub}, the number of $\Lambda^\circ\in\Lat(V^\flat)$ with $\varpi\Lambda^\bullet\subseteq\Lambda^\circ$ and $\mathrm{int}_2(\Lambda^\circ)=L^\circ$ is
\begin{equation*}
    L\Typec{\sum_{i\ge0}a_i(D)}{}{l(-1)}{\psi_2(-1)}{\Sigma (a(D))}{}=L\Typec{0}{}{l(-1)}{\chi(0)\cdot\epsilon^{m(-1)}}{a_{-1}(D)}{}\frac{\Dfactor(\Sigma(a(D)))}{\Dfactor(a_{-1}(D))}
\end{equation*}
Note that
\begin{equation*}
\begin{split}
    &\frac{\Sigma(a(D))^2}{2}+\sum_{i>j}(m(i)+\cfactor\cdot l(i))\cdot(m(j)+\cfactor\cdot n(j))\\
    &\qquad=\frac{\Sigma(a(D))^2}{2}+\sum_{i>j}a_i(D)\cdot(\cfactor\cdot e^0(j+1)-a_j(D))\\
    &\qquad=\sum_i\frac{a_i(D)^2}{2}+\cfactor\cdot\sum_ia_i(D)\cdot \lambda_{\le i}(e^0).
\end{split}
\end{equation*}

At last, together with \cite[\citemainLemmaCount]{CoratoZanarella}, this means that $T^{\circ\bullet,*}_r(\delta(e^0,\chi^0))$ is
\begin{equation*}
    \sum_{D=(n,m,l,\psi_1,\psi_2)}\left(\prod_{i\ge0}Q_i(D)\right)\cdot(f^0(D),\psi^0(D))
\end{equation*}
where
\begin{equation*}
    Q_{0}(D)\defeq S\TypeC{m(-1)}{}{e^0(0)}{\chi^0(0)}
    \cdot L\Typec{0}{}{l(-1)}{\chi(0)\cdot\epsilon^{m(-1)}}{m(-1)+\cfactor \cdot l(-1)}{}.
\end{equation*}
and
\begin{equation*}
    Q_{i+1}(D)\defeq Q\Typec{n(i)}{\psi_1(i)}{m(i)}{}{l(i)}{\psi_2(i)}q^{\cfactor\cdot a_i(D)\cdot\lambda_{\le i}(e^0)}\cdot \Dfactor(a_i(D))\quad\text{for }i\ge0,
\end{equation*}
for $Q\Typec{n(i)}{\psi_1(i)}{m(i)}{}{l(i)}{\psi_2(i)}$ as defined in \cite[\citemainLemmaFinite]{CoratoZanarella}.

As in the proof of \cite[\citeinducedThm]{CoratoZanarella}, we have
\begin{equation*}
(f^0(D),\psi^0(D))\equiv\cdots\star\delta^{(i+1)}(D)\star\delta^{(i)}(D)\star\cdots\star\delta^{(-1)}(D)\mod\Rel^\natural
\end{equation*}
where
\begin{equation*}
    \delta^{(i)}(D)\defeq\Typec{(i+2)^{l(i)}}{\psi_2(i)}{(i+1)^{\frac{2}{\cfactor}\cdot m(i)}}{\epsilon^{m(i)}}{(i)^{n(i)}}{\psi_1(i)}.
\end{equation*}
This reduces to prove the theorem for the case $e=(k,\ldots,k)$ for each $k\ge0.$ When $k>0$ and in light of \Cref{DeltaAlternative}, this follows from \cite[\citeinducedThm]{CoratoZanarella}.

So it remains to analyze the case $e=(0,\ldots,0).$ If we consider the expression for $\Delta^\intert_{r+1}$ and separate the sum over $\varepsilon$ according to $\varepsilon_{1},$ we have that $\Delta^\intert_{r+1}$ is
\begin{equation*}
\begin{array}{c|c}
    \Ugen&(t(-1)+q_0^{2r+1}t(1))\star \Delta^\intert_{r}\\
    \hline
    \Ogen&(t(-1)+q^{r}t(1))\star \Delta^\intert_{r}\\
    \hline
    \SPgen&(t(-1)+q^{2r+1}(q+1)t(0)+q^{4r+3}t(1))\star \Delta^\intert_{r}
\end{array}
\end{equation*}
Now using the following straightening relations modulo $\Rel^\natural_2,$
\begin{equation*}
\begin{array}{c|c}
    \Ugen&(-1,(1)^l)\equiv(-q_0)^l\cdot ((1)^l,-1)+(1-(-q_0)^l)\cdot((1)^{l-1},(0)^2)\\
    \hline
    \Ogen&\begin{array}{l}
    \TypeC{-1}{\chi_1}{(1)^l}{\chi_2}\equiv(1-(q\epsilon)^{\lfloor l/2\rfloor}\chi_2(-\epsilon\chi_1)^l)\TypeC{(1)^{l-1}}{\epsilon\chi_1\chi_2}{(0)^{2}}{\epsilon}\\
    \quad\quad+(q\epsilon)^{\lfloor l/2\rfloor}\chi_2(-\epsilon\chi_1)^l
    \left(\frac{1+q\epsilon}{2}\TypeC{(1)^l}{\chi_2}{-1}{\chi_1}+\frac{1-q\epsilon}{2}
    \TypeC{(1)^l}{(-1)^l\chi_2}{-1}{(-1)^l\chi_1}\right)
    \end{array}\\
    \hline
    \SPgen&(-1,(1)^l)\equiv q^{2l}\cdot((1)^l,-1)+(1-q^{2l})\cdot((1)^{l-1},(0)^2)
\end{array}
\end{equation*}
the induction hypothesis implies that $\Delta^\intert_{r+1}\left(\TypeCC{(0)^r}{\chi}\star\TypeCC{0}{\eta}\right)$ is congruent, modulo $\Rel^\flat_{r+1},$ to
\begin{equation*}
\begin{array}{c|c}
    \Ugen&\displaystyle\sum_{2m+l=r}Q_0(m,l)\left(\begin{array}{@{}r@{}c@{}l@{}}&(1-(-q_0)^l)&\typeC{(1)^{l-1}}{(0)^{2(m+1)}}\\+&(-q_0)^l&\typeC{(1)^{l+1}}{(0)^{2m}}\\+&q_0^{2r+1}&\typeC{(1)^{l+1}}{(0)^{2m}}\end{array}\right)\\
    \hline
    \Ogen&\displaystyle\sum_{2m+l=r}Q_0\TypeC{m}{}{l}{\chi\epsilon^m}\left(\begin{array}{@{}r@{}c@{}l@{}}
    &(1-(q\epsilon)^{\lfloor l/2\rfloor}\chi\epsilon^m(-\epsilon\eta)^l)&\TypeC{(1)^{l-1}}{\chi\epsilon^{m+1}\eta}{(0)^{2(m+1)}}{\epsilon^{m+1}}\\
    +&(q\epsilon)^{\lfloor l/2\rfloor}\chi\epsilon^m(-\epsilon\eta)^l\frac{1+q\epsilon}{2}&\TypeC{(1)^{l+1}}{\chi\epsilon^m\eta}{(0)^{2m}}{\epsilon^m}\\
    +&(q\epsilon)^{\lfloor l/2\rfloor}\chi\epsilon^m(-\epsilon\eta)^l\frac{1-q\epsilon}{2}&\TypeC{(1)^{l+1}}{\chi\epsilon^m\eta}{(0)^{2m}}{\epsilon^m}\\
    +&q^{r}&\TypeC{(1)^{l+1}}{\chi\epsilon^m\eta}{(0)^{2m}}{\epsilon^m}\end{array}\right)\\
    \hline
    \SPgen&\displaystyle\sum_{m+l=r}Q_0(m,l)\left(\begin{array}{@{}r@{}c@{}l@{}}&(1-q^{2l})&\typeC{(1)^{l-1}}{(0)^{m+2}}\\-&q^{2l+1}&\typeC{(1)^{l+1}}{(0)^{m}}\\
    +&q^{2l}(q+1)&\typeC{(1)^{l}}{(0)^{m+1}}\\+&q^{2r+1}(q+1)&\typeC{(1)^l}{(0)^{m+1}}\\+&q^{4r+3}&\typeC{(1)^{l+1}}{(0)^{m}}\end{array}\right)
\end{array}
\end{equation*}
Collecting terms, it remains to check that $Q_0\TypeC{m}{}{l}{\chi\eta\epsilon^m}$ agrees with
\begin{equation*}
\begin{array}{c|c}
    \Ugen&\begin{array}{@{}r@{}c@{\,}l@{}}&(1-(-q_0)^{l+1})&Q_0(m-1,l+1)\\+&(-q_0)^{l-1}&Q_0(m,l-1)\\+&q_0^{4m+2l-1}&Q_0(m,l-1)\end{array}\\
    \hline
    \Ogen&\begin{array}{@{}r@{}c@{\,}l@{}}&(1-(q\epsilon)^{\lfloor (l+1)/2\rfloor}\chi\epsilon^{m-1}(-\epsilon\eta)^{l+1})&Q_0\TypeC{m-1}{}{l+1}{\chi\epsilon^{m-1}}\\
    +&((q\epsilon)^{\lfloor (l-1)/2\rfloor}\chi\epsilon^m(-\epsilon\eta)^{l-1}+q^{2m+l-1})&Q_0\TypeC{m}{}{l-1}{\chi\epsilon^m}\end{array}\\
    \hline
    \SPgen&\begin{array}{@{}r@{}c@{\,}l@{}}&(1-q^{2l+2})&Q_0(m-2,l+1)\\+&(q^{4m+4l-1}-q^{2l-1})&Q_0(m,l-1)\\+&(q^{2m+2l-1}+q^{2l})(q+1)&Q_0(m-1,l)\end{array}
\end{array}
\end{equation*}
Dividing by $Q_0\TypeC{m}{}{l}{\chi\eta\epsilon^m},$
we are looking at
\begin{equation*}
\begin{array}{c|c}
    \Ugen&\displaystyle\frac{1}{((-q_0)^{l+2m}-1)^2}\cdot\left(\begin{array}{@{}r@{}l@{}}&(-q_0)^{l}(q_0^{2m}-1)^2\\+&(1-(-q_0)^{l+4m})(1-(-q_0)^l)\end{array}\right)\\
    \hline
    \Ogen&\begin{array}{l}\displaystyle\frac{1}{(q^{m+\lfloor l/2\rfloor}-\epsilon^{m+\lfloor l/2\rfloor}\chi\eta(-\eta)^l)^2}\\
    \qquad\cdot\displaystyle\left(\begin{array}{@{}r@{}l@{}}&\epsilon^{m+\lfloor l/2\rfloor}\chi\eta(-\eta)^lq^{\lfloor l/2\rfloor}(q^m-1)^2\\
    +&((q\epsilon)^{\lfloor (l-1)/2\rfloor}\chi\epsilon^m(-\epsilon\eta)^{l-1}+q^{2m+l-1})q^{-\lfloor(l-1)/2\rfloor}(q^{\lfloor l/2\rfloor}-\epsilon^{\lfloor l/2\rfloor}\chi\eta\epsilon^m(-\eta)^l)\end{array}\right)\end{array}\\
    \hline
    \SPgen&\displaystyle\frac{1}{(q^{2m+2l}-1)^2}\cdot\left(\begin{array}{@{}r@{}l@{}}-&q^{2l+1}(q^m-1)^2(q^{m-1}-1)^2\\+&(q^{4m+2l}-1)(q^{2l}-1)\\+&q^{2l}(q+1)(q^{2m-1}+1)(q^m-1)^2\end{array}\right)
\end{array}
\end{equation*}
which simplify to $1$ after a routine computation.
\end{proof}

\subsection{Straightening relations and \texorpdfstring{$T_{\le1,r}^\sharp$}{T(<=1,r)sharp}}
\begin{proposition}\label{deltadif}
We have
\begin{equation*}
    \Delta_{\le1,r}^\sharp-\Delta_{\le1,r}^\flat=q^{r-1+\alpha}(q^r-1)+(q-1)q^{r-1+2\alpha}\Delta_{1,r}^\natural
\end{equation*}
where
\begin{equation*}
    \Delta_{1,r}^{\natural}\defeq\sum_{\substack{\varepsilon\in\{0,1\}^{\cfactor r}\\\lambda_1(\varepsilon)=1}}q^{\inv(\varepsilon)}t^2(\varepsilon).
\end{equation*}
\end{proposition}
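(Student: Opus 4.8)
The plan is to unfold the definitions of $\Delta^\sharp_{\le1,r}$ and $\Delta^\flat_{\le1,r}$ and to compare, for each $\varepsilon$, the coefficient of $t^2(\varepsilon)$. Since we are in one of the cases \gens we have $\cfactor=1$, so the index set is $\{-1,0,1\}^r$, and the two operators carry the common coefficient factor $q^{\widetilde{\inv}(\varepsilon)}\Dfactor(\lambda_1(\varepsilon))\Dfactor(r-\lambda_{-1}(\varepsilon))/\Dfactor(r-1)$; only the $W$-factors differ. Hence the coefficient of $t^2(\varepsilon)$ in $\Delta^\sharp_{\le1,r}-\Delta^\flat_{\le1,r}$ equals
\begin{equation*}
    q^{\widetilde{\inv}(\varepsilon)}\,\frac{\Dfactor(\lambda_1(\varepsilon))\,\Dfactor(r-\lambda_{-1}(\varepsilon))}{\Dfactor(r-1)}\left(q^{\frac{1+\Sigma(\varepsilon)}{2}}W_{1/2}(r-1,\lambda_0(\varepsilon))-W_0(r-1,\lambda_0(\varepsilon))\right),
\end{equation*}
and the task reduces to summing this over $\varepsilon$.

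The first point is that $W_\beta(r-1,n)$ vanishes unless $n\ge r-1$; thus the summand is nonzero only when $\lambda_1(\varepsilon)+\lambda_{-1}(\varepsilon)\le1$, i.e.\ for three families: (i) $\varepsilon=(0,\dots,0)$, where $\lambda_0(\varepsilon)=r$; (ii) $\varepsilon$ with a single entry $1$ and the rest $0$, where $\lambda_0(\varepsilon)=r-1$; (iii) $\varepsilon$ with a single entry $-1$ and the rest $0$, where $\lambda_0(\varepsilon)=r-1$. I will then record the two needed evaluations, $W_\beta(r-1,r-1)=1$ (only the term $E=\{1,\dots,r-1\}$ survives) and
\begin{equation*}
    W_\beta(r-1,r)=\sum_{i=0}^{r-1}\bigl(q^{i+\beta}+q^{-i-\beta}\bigr),
\end{equation*}
the latter obtained by specializing \eqref{Wrec2} to $a=n$ and telescoping (equivalently, by summing over the position of the unique index not placed in $E$, according to whether it falls in $E_0$ or $E_1$).

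Substituting: in case (i) the parenthesis equals $q^{1/2}W_{1/2}(r-1,r)-W_0(r-1,r)=\sum_{i=1}^rq^i-\sum_{i=0}^{r-1}q^i=q^r-1$, and the coefficient factor is $\Dfactor(r)/\Dfactor(r-1)=q^{r-1+\alpha}$, producing the scalar operator $q^{r-1+\alpha}(q^r-1)$. In case (ii) we have $\Sigma(\varepsilon)=1$, so the parenthesis equals $q-1$, the coefficient factor is $\Dfactor(1)\Dfactor(r)/\Dfactor(r-1)=q^{r-1+2\alpha}$, and—using that $\widetilde{\inv}$ agrees with $\inv$ on $\{0,1\}$-valued sequences—summing over the $r$ possible positions reproduces exactly $q^{r-1+2\alpha}(q-1)\Delta_{1,r}^\natural$. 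In case (iii) we have $\Sigma(\varepsilon)=-1$, so the parenthesis is $q^0-1=0$ and these terms drop out. Adding the three contributions gives the asserted identity. The argument is essentially bookkeeping; the only steps requiring care are the closed form for $W_\beta(r-1,r)$, the exponent arithmetic for the $\Dfactor$-ratios, and the $\widetilde{\inv}=\inv$ identification that makes case (ii) land on $\Delta_{1,r}^\natural$ on the nose, so I do not foresee a genuine obstacle.
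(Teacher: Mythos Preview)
Your proof is correct and is precisely the direct computation the paper alludes to; the paper's own proof consists of the single sentence ``This is a direct computation for each of the cases \Us and \Os,'' and you have carried it out uniformly in the two cases.
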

\begin{proof}
    This is a direct computation for each of the cases \Us and \Os.
\end{proof}

The following is \Cref{TlessSums} specialized to $k=1.$
\begin{corollary}\label{tdif}
    If $\Lambda^\sharp=\O_Fu\oplus\Lambda^\flat$ for $\Lambda^\flat\in\Lat^\circ(V^\flat),$ denote
    \begin{equation*}
        T_{\le1,r}^{\mathrm{err}}(\mathrm{typ}^\sharp(\Lambda^\sharp))\defeq\sum_{\substack{\Lambda^\flat_-\in\Lat(V^\flat)\\\Lambda^\flat_-\sub{(1,\epsilon\psi)}\Lambda^\flat}}\mathrm{typ}^\natural
        (\mathrm{int}_2(\Lambda^\flat_-)).
    \end{equation*}
    Then
    \begin{equation*}
        T_{\le 1,r}^{\sharp,*}(\mathrm{typ}^\sharp(\Lambda^\sharp))=T_{\le 1,r}^{\flat,*}(\mathrm{typ}^\flat(\Lambda^\flat))+q^{r-1+\alpha}(q^r-1)\mathrm{typ}^\sharp(\Lambda^\sharp)+q^\alpha(q^\alpha+1)T_{\le1,r}^{\mathrm{err}}(\mathrm{typ}^\flat(\Lambda^\flat)).
    \end{equation*}
\end{corollary}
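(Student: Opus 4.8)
The statement is precisely \Cref{TlessSums} evaluated at $k=1$, so the plan is to specialize that proposition and read off the three terms on its right-hand side. Taking $\Lambda_2^\sharp=\Lambda^\sharp=\O_Fu\oplus\Lambda^\flat$ and $k=1$, the first term is literally $T_{\le 1,r}^{\flat,*}(\mathrm{typ}^\flat(\Lambda^\flat))$, with nothing to do.

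For the second term, specialization produces $q^{r-1+\alpha}(q^r-1)\cdot T_{\le 0,r}^{\flat,*}(\mathrm{typ}^\flat(\Lambda^\flat))$. I would then note that $T_{\le 0,r}^\flat=S\TypeC{0}{}{2r}{\epsilon^r}\,T_{0,r}^\flat=T_{0,r}^\flat$, since there is a unique $0$-dimensional isotropic subspace, and that $T_{0,r}^\flat=\mathrm{char}(K^\flat)$ is the unit of $\mathcal{H}(G^\flat,K^\flat)$; equivalently, by the lattice-counting description of the adjoint action, the operator $T_{0,r}^{\flat,*}$ sends each basis element $\delta$ to itself (relative position $(0,\ldots,0)$ forces $\Lambda_1=\Lambda_2$). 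Hence $T_{\le 0,r}^{\flat,*}$ is the identity of $R[\Typ_r^0]$, and since $\mathrm{int}_2$ does not see the summand $Fu\subseteq V_1^\sharp$ we have $\mathrm{typ}^\flat(\Lambda^\flat)=\mathrm{typ}^\sharp(\Lambda^\sharp)$, so this term equals $q^{r-1+\alpha}(q^r-1)\,\mathrm{typ}^\sharp(\Lambda^\sharp)$.

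For the third term, specialization gives $q^\alpha(q^\alpha+1)$ times a triple sum over $L^\flat\in\Lat(V^\flat)$, a self-dual lattice containing it, and $\Lambda^\flat_-$. I would observe that the conditions $\varpi L^{\flat,\vee}\subseteq L^\flat\sub{0}\Lambda^\flat$ force $L^\flat=\Lambda^\flat$; that a self-dual lattice containing the self-dual $\Lambda^\flat$ must equal $\Lambda^\flat$, since self-dual lattices are maximal among integral ones; and that the remaining constraint $\varpi L^{\flat,\vee}=\varpi\Lambda^\flat\subseteq\Lambda^\flat_-$ then holds automatically, because $\Lambda^\flat/\Lambda^\flat_-$ has type $(1,\epsilon\psi)$, hence is a residue-field vector space and is annihilated by $\varpi$. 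The triple sum therefore collapses to $\sum_{\Lambda^\flat_-\sub{(1,\epsilon\psi)}\Lambda^\flat}\mathrm{typ}^\natural(\mathrm{int}_2(\Lambda^\flat_-))$, which by definition is $T_{\le1,r}^{\mathrm{err}}(\mathrm{typ}^\flat(\Lambda^\flat))$ (reading $\mathrm{typ}^\flat$ on lattices of $V_2$ as $\mathrm{typ}^\natural$, and using $\mathrm{typ}^\sharp(\Lambda^\sharp)=\mathrm{typ}^\flat(\Lambda^\flat)$ once more). Combining the three contributions gives the claimed identity. There is essentially no obstacle here: the only points that warrant a sentence apiece are the two ``collapse'' observations and the automatic inclusion $\varpi\Lambda^\flat\subseteq\Lambda^\flat_-$.
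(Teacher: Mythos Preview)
Your proposal is correct and matches the paper's approach exactly: the paper simply states that the corollary is \Cref{TlessSums} specialized to $k=1$ and gives no further argument. Your write-up spells out the three collapses (that $T_{\le 0,r}^{\flat,*}$ is the identity, that $L^\flat=\Lambda^\flat$ forces the middle sum to a single term, and that $\varpi\Lambda^\flat\subseteq\Lambda^\flat_-$ is automatic), which is more than the paper bothers to record but is precisely what the specialization amounts to.
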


The following is a more precise version of \Cref{gencomp3} in the simplest case $L=\Lambda.$
\begin{proposition}\label{comp3}
    Assume we have $\Lambda\in\Lat^\circ(V^\flat).$ Denote $L_0=\mathrm{int}_2(\Lambda),$ and $(e^0,\chi^0)=\mathrm{typ}^\natural(L_0).$ Then the number of $\Lambda_1\in\Lat(V^\flat)$ satisfying i) $\Lambda_1\sub{(1,\epsilon\psi)}\Lambda,$ ii) $\mathrm{int}_2(\Lambda)=L_0$ is
    \begin{equation*}
        q^{r-e^0(0)}R\TypeC{1}{\psi}{e^0(0)}{\chi^0(0)}.
    \end{equation*}
    
    Furthermore, given $L_1\in\Lat(V_2)$ with $L_1\sub{1}L_0,$ we denote $(f^0,\psi^0)=\mathrm{typ}^\natural(L_1).$ Then the number of $\Lambda_1\in\Lat(V^\flat)$ satisfying i) $\Lambda_1\sub{(1,\epsilon\psi)}\Lambda,$ ii) $\mathrm{int}_2(\Lambda_1)=L_1$ is
    \begin{equation*}
        q^{r-1-f^0(0)}R\TypeC{1}{\psi}{f^0(0)+2}{\epsilon\psi^0(0)}-q^{r-e^0(0)}R\TypeC{1}{\psi}{e^0(0)}{\chi^0(0)}.
    \end{equation*}
\end{proposition}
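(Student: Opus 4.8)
The plan is to reduce both counts to a count of anisotropic lines in a finite-dimensional form space over the residue field, using the parametrization of self-dual lattices from \Cref{LatcircBij}, and then to evaluate that line count with the explicit formula for $R\TypeC{\bullet}{\bullet}{\bullet}{\bullet}$ recalled above. Throughout we are in one of the cases \gens, so $\cfactor=1$ and the condition $\mathrm{typ}(\Lambda/\Lambda_1)=(1,\epsilon\psi)$ means exactly that $\Lambda_1\subseteq\Lambda$ is cut out by a nondegenerate hyperplane of $\bar\Lambda\defeq\Lambda/\varpi\Lambda$. This statement is the non-generic refinement of \Cref{gencomp3} in the case $L=\Lambda$ (where $L_-=\varpi L_0$, $L_+=L_0$, and the hypothesis $L_+\subseteq\varpi L_+^\vee$ amounts to $e^0(0)=0$); without that hypothesis the line count is no longer a fixed power of $q$ and instead records the isometry type of the ambient space, which is what produces the factors $R\TypeC{1}{\psi}{\bullet}{\bullet}$.

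The core reduction I would carry out once, for an arbitrary sublattice $L'$ of $V_2$ with $\varpi L_0\subseteq L'\subseteq L_0$ (so that $L'\cap\varpi\Lambda=\varpi L_0$). Since $\Lambda/\Lambda_1$ is killed by $\varpi$, every $\Lambda_1\sub{(1,\epsilon\psi)}\Lambda$ contains $\varpi\Lambda$, and imposing $L'\subseteq\Lambda_1$ makes $\Lambda_1/(\varpi\Lambda+L')$ a hyperplane of $\bar\Lambda/\overline{L'}$; dualizing inside the nondegenerate space $\bar\Lambda$, the lattice $\Lambda_1$ corresponds to the line $\ell\defeq(\Lambda_1/\varpi\Lambda)^\perp$, which lies in $\overline{L'}^\perp$. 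Comparing the form induced on $\Lambda/\Lambda_1$ with the $\varpi$-rescaled form on $\Lambda_1^\vee/\Lambda$ (this normalization is the source of the twist between $\epsilon\psi$ and $\psi$) shows that $\mathrm{typ}(\Lambda/\Lambda_1)=(1,\epsilon\psi)$ if and only if $\ell$ is anisotropic of sign $\psi$. Thus the number of $\Lambda_1\sub{(1,\epsilon\psi)}\Lambda$ with $L'\subseteq\Lambda_1$ equals the number of anisotropic sign-$\psi$ lines in the form space $\overline{L'}^\perp\subseteq\bar\Lambda$.

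It then remains to identify $\overline{L'}^\perp$. Using the orthogonal decomposition $V^\flat=V_1^\flat\oplus V_2$ together with the level decompositions of $L_0$ and of $L'$, one shows that $\overline{L'}^\perp$ has an isotropic radical $R'$ with a nondegenerate complementary quotient; that $\dim R'=r-e^0(0)$ when $L'=L_0$, and $\dim R'=r-1-f^0(0)$ when $L'=L_1\sub{1}L_0$ (writing $\mathrm{typ}^\natural(L_1)=(f^0,\psi^0)$); and — after tracking all the normalizations — that the number of anisotropic sign-$\psi$ lines in $\overline{L'}^\perp/R'$ equals $R\TypeC{1}{\psi}{e^0(0)}{\chi^0(0)}$ in the first case and $R\TypeC{1}{\psi}{f^0(0)+2}{\epsilon\psi^0(0)}$ in the second. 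The shift by $2$ and the $\epsilon$-twist in the $L_1$ case reflect the way passing to a corank-one sublattice of $V_2$ alters its level structure — a dropped anisotropic direction reappears two levels higher, a dropped isotropic direction at level $i$ produces one summand at level $i-1$ and one at level $i+1$ — so these must be run through individually; the sign analysis is also where one uses that $\psi^0(0)=1$ whenever $f^0(0)=0$, which keeps the answer uniform by forcing the hyperbolic case (matching \Cref{gencomp3}). Granting this, a line of $\overline{L'}^\perp$ not contained in $R'$ is anisotropic of sign $\psi$ iff its image in $\overline{L'}^\perp/R'$ is, and each such image line has exactly $q^{\dim R'}$ preimages; so the count is $q^{\dim R'}\cdot R\TypeC{1}{\psi}{\bullet}{\bullet}$, yielding $q^{r-e^0(0)}R\TypeC{1}{\psi}{e^0(0)}{\chi^0(0)}$ for $L'=L_0$ — the first assertion — and $q^{r-1-f^0(0)}R\TypeC{1}{\psi}{f^0(0)+2}{\epsilon\psi^0(0)}$ for $L'=L_1$.

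Finally, for the second assertion I would use inclusion–exclusion on $\mathrm{int}_2(\Lambda_1)$. Because $\cfactor=1$, for any $\Lambda_1\sub{(1,\epsilon\psi)}\Lambda$ the sublattice $\mathrm{int}_2(\Lambda_1)$ has index $0$ or $1$ in $L_0$; hence for a fixed $L_1\sub{1}L_0$ the set $\{\Lambda_1\sub{(1,\epsilon\psi)}\Lambda:L_1\subseteq\Lambda_1\}$ is the disjoint union of $\{\mathrm{int}_2(\Lambda_1)=L_1\}$ and $\{\mathrm{int}_2(\Lambda_1)=L_0\}$. Subtracting the count of the second set (the first assertion) from the count of the whole set ($q^{r-1-f^0(0)}R\TypeC{1}{\psi}{f^0(0)+2}{\epsilon\psi^0(0)}$) gives precisely the claimed formula. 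I expect the main obstacle to be the identification in the third paragraph: pinning down the radical and, especially, the discriminant of the nondegenerate quotient of $\overline{L'}^\perp$ so that the second entry of $R$ comes out as $\chi^0(0)$ resp. $\epsilon\psi^0(0)$ on the nose, together with the level-shift case analysis for $\overline{L_1}$.
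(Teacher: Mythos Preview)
Your approach is exactly the paper's: your $\overline{L'}^\perp\subseteq\bar\Lambda$ is the paper's $\varpi I'^\vee/\varpi\Lambda$ for $I'=\varpi\Lambda+L'$, the radical/quotient decomposition is the same, and the inclusion--exclusion for the second claim is identical. The one place you diverge is the sign bookkeeping: the paper finds that the dual line $\ell$ has type $(1,\epsilon\psi)$ in $\bar\Lambda$ (not $\psi$ --- your $\varpi$-rescaling explanation of the twist is not what is happening) and that the nondegenerate quotient has type $(e^0(0),\epsilon^{e^0(0)}\chi^0(0))$ resp.\ $(f^0(0)+2,\epsilon^{f^0(0)+1}\psi^0(0))$, then absorbs both $\epsilon$-factors at once via the symmetry $R\TypeC{1}{\chi_1}{n}{\chi_2}=R\TypeC{1}{\epsilon\chi_1}{n}{\epsilon^n\chi_2}$; this also sidesteps the level-shift case analysis you anticipate, since the quotient type is read off directly from $\mathrm{typ}^\natural(L_1)$ with no case split on how $L_1$ sits inside $L_0$.
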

\begin{proof}
    Denote $I\defeq\varpi\Lambda+L_0.$

    For the first claim, we are simply counting the number of $I\subseteq\Lambda_1\sub{(1,\epsilon\psi)}\Lambda.$ By taking duals, we are counting the number of lines of type $(1,\epsilon\psi)$ in $\varpi I^\vee/\varpi\Lambda.$ Thus the answer is
    \begin{equation*}
        q^{\dim(I\cap\varpi I^\vee/\varpi\Lambda)}\cdot R\TypeC{1}{\epsilon\psi}{\mathrm{typ}(\varpi I^\vee/I\cap\varpi I^\vee)}{}.
    \end{equation*}
    Now note that $\dim(I\cap\varpi I^\vee/\varpi\Lambda)=r-e^0(0),$ and $\mathrm{typ}(\varpi I^\vee/I\cap\varpi I^\vee)=(e^0(0),\epsilon^{e^0(0)}\chi^0(0)).$ The first claim then follows from the fact that
    \begin{equation*}
        R\TypeC{1}{\chi_1}{r}{\chi_2}=R\TypeC{1}{\epsilon\chi_1}{r}{\epsilon^r\chi_2}.
    \end{equation*}

    For the second claim, we denote $I_1\defeq\varpi\Lambda+L_1.$ Then we are counting $I_1\subseteq\Lambda_1\sub{(1,\epsilon\psi)}\Lambda$ such that $I\not\subseteq \Lambda.$ Similarly as above, this is
    \begin{equation*}
    \begin{split}
        &q^{\dim(I_1\cap\varpi I_1^\vee/\varpi\Lambda)}\cdot R\TypeC{1}{\epsilon\psi}{\mathrm{typ}(\varpi I_1^\vee/I_1\cap\varpi I_1^\vee)}{}\\
        &\qquad-q^{\dim(I\cap\varpi I^\vee/\varpi\Lambda)}\cdot R\TypeC{1}{\epsilon\psi}{\mathrm{typ}(\varpi I^\vee/I\cap\varpi I^\vee)}{}.
    \end{split}
    \end{equation*}
    Note that
    \begin{equation*}
        \dim(I_1\cap\varpi I_1^\vee/\varpi\Lambda)=r-f^0(0)-1,\quad\text{and}\quad\mathrm{typ}(\varpi I_1^\vee/I_1\cap\varpi I_1^\vee)=(f^0(0)+2,\psi^0(0)\epsilon^{f^0(0)+1}),
    \end{equation*}
    and thus the claim follows as in the above.
\end{proof}
\begin{corollary}\label{Tlessfullformula}
    For $\delta\in\Typ^0_r,$ if we write
    \begin{equation*}
        \mathrm{str}^\natural(\Delta_{1,r}^\natural(\delta))=\sum_{(f^0,\psi^0)\in\Typ^0}a(f^0,\psi^0)\cdot(f^0,\psi^0),
    \end{equation*}
    and denote $\lambda=\lambda_0(\delta),$ then we have
    \begin{equation*}
    \begin{split}
        T_{\le1,r}^{\mathrm{err}}(\delta)=\ &q^{r-\lambda}R\TypeC{1}{\psi}{\lambda}{\chi}\cdot\delta\\
        &+\sum_{(f^0,\psi^0)\in\Typ^0}a(f^0,\psi^0)\cdot\left(q^{r-1-f^0(0)}R\TypeC{1}{\psi}{f^0(0)+2}{\epsilon\psi^0(0)}-q^{r-\lambda}R\TypeC{1}{\psi}{\lambda}{\chi}\right)\cdot(f^0,\psi^0).
    \end{split}
    \end{equation*}
\end{corollary}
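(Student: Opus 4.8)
The plan is to combine \Cref{tdif} with \Cref{comp3} and the ``induced operator'' result of \cite{CoratoZanarella}. Fix $\Lambda^\sharp=\O_Fu\oplus\Lambda^\flat$ representing a class with $\mathrm{typ}^\sharp(\Lambda^\sharp)=\delta$, put $\Lambda\defeq\Lambda^\flat$ and $L_0\defeq\mathrm{int}_2(\Lambda)$, and write $\delta=(e^0,\chi^0)=\mathrm{typ}^\natural(L_0)$, so that $e^0(0)$ and $\chi^0(0)$ are the quantities $\lambda,\chi$ appearing in the statement. By \Cref{tdif} we have $T_{\le1,r}^{\mathrm{err}}(\delta)=\sum\mathrm{typ}^\natural(\mathrm{int}_2(\Lambda^\flat_-))$, the sum over $\Lambda^\flat_-\in\Lat(V^\flat)$ with $\Lambda^\flat_-\sub{(1,\epsilon\psi)}\Lambda$, so the whole computation reduces to a lattice-count inside $V_2$.

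First I would split this sum according to the value of $L_1\defeq\mathrm{int}_2(\Lambda^\flat_-)$, which necessarily satisfies $L_1\subseteq L_0$ of colength $0$ or $1$, i.e.\ either $L_1=L_0$ or $L_1\sub{1}L_0$. When $L_1=L_0$, the first assertion of \Cref{comp3} gives exactly $q^{r-\lambda}R\TypeC{1}{\psi}{\lambda}{\chi}$ admissible $\Lambda^\flat_-$, each contributing $\mathrm{typ}^\natural(L_0)=\delta$; this is the first term of the formula. When $L_1\sub{1}L_0$ with $(f^0,\psi^0)=\mathrm{typ}^\natural(L_1)$, the second assertion of \Cref{comp3} gives precisely $q^{r-1-f^0(0)}R\TypeC{1}{\psi}{f^0(0)+2}{\epsilon\psi^0(0)}-q^{r-\lambda}R\TypeC{1}{\psi}{\lambda}{\chi}$ admissible $\Lambda^\flat_-$, each contributing $(f^0,\psi^0)$. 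Grouping this part by the value $(f^0,\psi^0)=\mathrm{typ}^\natural(L_1)$ turns it into
\[
\sum_{(f^0,\psi^0)}\Bigl(q^{r-1-f^0(0)}R\TypeC{1}{\psi}{f^0(0)+2}{\epsilon\psi^0(0)}-q^{r-\lambda}R\TypeC{1}{\psi}{\lambda}{\chi}\Bigr)\,N(f^0,\psi^0)\,(f^0,\psi^0),
\]
where $N(f^0,\psi^0)\defeq\#\{L_1\in\Lat(V_2)\colon L_1\sub{1}L_0,\ \mathrm{typ}^\natural(L_1)=(f^0,\psi^0)\}$.

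It then remains to identify $N(f^0,\psi^0)$ with the coefficient $a(f^0,\psi^0)$. By definition $N(f^0,\psi^0)$ is the $(f^0,\psi^0)$-coefficient of the image of $\delta$ under the adjoint of the Hecke operator on $\Lat(V_2)$ counting colength-one sublattices, i.e.\ of $T_1^{\natural,*}(\delta)$ in the sense of \cite{CoratoZanarella}. By \cite[\citeinducedThm]{CoratoZanarella} this operator is induced, via $\mathrm{str}^\natural$, by $\Delta^\natural_{1,r}$, and $\Delta^\natural_{1,r}$ coincides with the operator $\Delta_{1,r}^\natural$ of \Cref{deltadif} --- this is the identification recorded in \Cref{DeltaAlternative}, where $\Delta^\natural_{k,r}$ is matched with $\sum_{\lambda_1(\varepsilon)=k}q^{\inv(\varepsilon)}t^2(\varepsilon)$. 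Hence $N(f^0,\psi^0)$ is the $(f^0,\psi^0)$-coefficient of $\mathrm{str}^\natural(\Delta_{1,r}^\natural(\delta))$, that is $N(f^0,\psi^0)=a(f^0,\psi^0)$, and assembling the two cases gives the claimed formula. I expect the only genuinely delicate point to be this last step: one must match the normalization and direction of $T_1^\natural$ from \cite{CoratoZanarella} (colength-one \emph{sub}lattices rather than superlattices) with $\Delta^\natural_{1,r}$, and confirm $\Delta^\natural_{1,r}=\Delta_{1,r}^\natural$; one should also record that the partition of the $\Lambda^\flat_-$-sum is exhaustive and disjoint, i.e.\ that exactly the lattices $L_1=L_0$ and $L_1\sub{1}L_0$ occur as $\mathrm{int}_2(\Lambda^\flat_-)$, with precisely the multiplicities furnished by \Cref{comp3} and no others.
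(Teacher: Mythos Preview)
Your proposal is correct and follows essentially the same approach as the paper's proof, which simply states that the result ``follows at once from the definition of $T_{\le1,r}^{\mathrm{err}}$, \Cref{comp3} and \cite[\citeinducedThm]{CoratoZanarella}.'' You have in fact spelled out more detail than the paper does: the split according to whether $\mathrm{int}_2(\Lambda^\flat_-)=L_0$ or has colength~$1$, the use of each half of \Cref{comp3}, and the identification of $N(f^0,\psi^0)$ with $a(f^0,\psi^0)$ via \cite[\citeinducedThm]{CoratoZanarella} together with \Cref{DeltaAlternative} are exactly the ingredients the paper invokes.
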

\begin{proof}
    This follows at once from the definition of $T_{\le1,r}^{\mathrm{err}},$ \Cref{comp3} and \cite[\citeinducedThm]{CoratoZanarella}.
\end{proof}

\begin{corollary}\label{Terrred}
    Given $\delta=\delta'\star\TypeCC{(0)^\lambda}{\chi}\in\Typ^0_r$ where $\delta'=\Type{e_i}{\psi_i}$ satisfies $e_i\ge1,$ we have that
    \begin{equation*}
        T_{\le1,r}^{\mathrm{err}}(\delta)\equiv q^{r-1+\lambda+\alpha}\frac{q-1}{q^\alpha+1}\mathrm{str}^\natural(\Delta_{1,r-\lambda}^\natural(\delta'))\star\TypeCC{(0)^\lambda}{\chi}+q^{r-\lambda}\delta'\star T_{\le1,\lambda}^{\mathrm{err}}\TypeCC{(0)^\lambda}{\chi}.
    \end{equation*}
\end{corollary}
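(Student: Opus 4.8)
The plan is to compute $T_{\le1,r}^{\mathrm{err}}(\delta)$ directly from its definition, feeding in the explicit formula from \Cref{Tlessfullformula}, and then recognize the two pieces on the right-hand side. First I would write $\lambda=\lambda(\delta')+\lambda$ in the notation of the statement (so the ``$\lambda$'' in \Cref{Tlessfullformula} applied to $\delta$ is our $\lambda$, since $\delta'$ is supported on $\Z_{\ge1}$ and $\delta=\delta'\star\TypeCC{(0)^\lambda}{\chi}$ has $e^0(0)=\lambda$ and $\chi^0(0)=\chi$). Thus the ``diagonal'' term $q^{r-\lambda}R\TypeC{1}{\psi}{\lambda}{\chi}\cdot\delta$ in \Cref{Tlessfullformula} is already in a shape compatible with the claimed formula, and the real content is to understand the sum over $(f^0,\psi^0)$ weighted by the coefficients $a(f^0,\psi^0)$ of $\mathrm{str}^\natural(\Delta_{1,r}^\natural(\delta))$.

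The key observation is that $\Delta_{1,r}^\natural$ has a simple multiplicative behavior with respect to the concatenation $\star$ together with the genericity-type structure coming from \cite[\citeDeltadefinition, \citeinducedThm]{CoratoZanarella}: since $\Delta_{1,r}^\natural$ is the ``move one box by $2/\cfactor$'' operator (only $\lambda_1(\varepsilon)=1$ contributes), applying it to $\delta'\star\TypeCC{(0)^\lambda}{\chi}$ splits, modulo $\Rel^\natural$, into the part where the moved box lies in the ``$\delta'$ block'' and the part where it lies in the ``$(0)^\lambda$ block''. The first part reproduces $q^{?}\,\mathrm{str}^\natural(\Delta_{1,r-\lambda}^\natural(\delta'))\star\TypeCC{(0)^\lambda}{\chi}$ up to an explicit power of $q$ accounting for the $\inv(\varepsilon)$ contribution of the $\lambda$ trailing zero-slots that the moved box must pass over, and the second part, after applying \Cref{Tlessfullformula} again but now to the pure block $\TypeCC{(0)^\lambda}{\chi}$ with ambient rank shifted, reproduces $q^{r-\lambda}\delta'\star T_{\le1,\lambda}^{\mathrm{err}}\TypeCC{(0)^\lambda}{\chi}$. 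The bookkeeping for which $(f^0,\psi^0)$ have $f^0(0)=\lambda$ (moved box stays outside slot $0$) versus $f^0(0)=\lambda-1$ or the box landing into slot $0$ is exactly what matches the $R\TypeC{1}{\psi}{f^0(0)+2}{\epsilon\psi^0(0)}$ versus $R\TypeC{1}{\psi}{\lambda}{\chi}$ discrepancy in \Cref{Tlessfullformula}.

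Concretely, the steps in order: (i) substitute the formula of \Cref{Tlessfullformula} for $T_{\le1,r}^{\mathrm{err}}(\delta)$; (ii) partition the index set $\{(f^0,\psi^0)\}$ appearing in $\mathrm{str}^\natural(\Delta_{1,r}^\natural(\delta))$ according to whether the single box that $\Delta_{1,r}^\natural$ moves starts in the support of $\delta'$ or in one of the $\lambda$ copies of slot $0$, using the explicit description of $\Delta_{1,r}^\natural$ and compatibility with $\star$ modulo $\Rel^\natural$ (as in \cite[\citeinducedThm]{CoratoZanarella}); (iii) in the first case, collect the $q^{\inv(\varepsilon)}$ weight: moving a box from inside the $\delta'$-block forces it to jump over all $\lambda$ trailing zeros, contributing a factor $q^\lambda$ relative to the rank-$(r-\lambda)$ computation, and the ambient $q^{r-1+2\alpha}(q-1)$ in \Cref{deltadif} together with the $q^\alpha(q^\alpha+1)$ from \Cref{tdif} organizes into the stated $q^{r-1+\lambda+\alpha}\frac{q-1}{q^\alpha+1}$; (iv) in the second case, factor out $\delta'$ and recognize the remaining sum — now an $R$-valued quantity attached to $\TypeCC{(0)^\lambda}{\chi}$ inside an ambient space that still ``sees'' rank $r$ only through the prefactor $q^{r-\lambda}$ — as exactly $q^{r-\lambda}\,T_{\le1,\lambda}^{\mathrm{err}}\TypeCC{(0)^\lambda}{\chi}$ by re-applying \Cref{Tlessfullformula} with $r$ replaced by $\lambda$; (v) check that the leftover ``diagonal'' terms $q^{r-\lambda}R\TypeC{1}{\psi}{\lambda}{\chi}$ coming from both \Cref{Tlessfullformula} invocations cancel/combine correctly, which is a short identity in the $R\TypeCC{\cdot}{\cdot}$ values together with the symmetry $R\TypeC{1}{\chi_1}{r}{\chi_2}=R\TypeC{1}{\epsilon\chi_1}{r}{\epsilon^r\chi_2}$ already used in \Cref{comp3}.

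The main obstacle I anticipate is step (iii)–(iv): making the ``the moved box stays in the $\delta'$ block'' versus ``the moved box enters the zero block'' dichotomy precise at the level of $\Rel^\natural$-classes, since $\Delta_{1,r}^\natural$ is only well-behaved modulo straightening and one must be careful that the straightening of $\delta'\star\TypeCC{(0)^\lambda}{\chi}$ does not mix the two blocks in a way that spoils the factorization. I expect this to be handled exactly as in the proof of \cite[\citeinducedThm]{CoratoZanarella}, where the compatibility of $\Delta^\natural_{k,r}$ with $\star$ modulo $\Rel^\natural$ is established; here the additional input is just that the genericity hypothesis $e_i\ge1$ on $\delta'$ guarantees the relevant slots are separated, so no nontrivial straightening occurs across the boundary between $\delta'$ and the trailing zeros except the one already recorded by the $R\TypeCC{\cdot}{\cdot}$ prefactors. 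Everything else is a routine (if somewhat lengthy) manipulation of powers of $q$ and the explicit $R$- and $S$-values from \Cref{finFieldSection}.
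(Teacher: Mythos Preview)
Your approach is essentially the paper's: split
\[
\Delta_{1,r}^\natural(\delta)=q^\lambda\,\Delta_{1,r-\lambda}^\natural(\delta')\star\TypeCC{(0)^\lambda}{\chi}+\delta'\star\Delta_{1,\lambda}^\natural\TypeCC{(0)^\lambda}{\chi},
\]
feed each piece into \Cref{Tlessfullformula}, and recognize the two blocks of the claimed formula.

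One correction to step (iii): the coefficient $q^{r-1+\lambda+\alpha}\frac{q-1}{q^\alpha+1}$ does \emph{not} come from \Cref{deltadif} or \Cref{tdif}; those play no role in this corollary. It comes from the direct $R$-value identity
\[
q^{r-1}R\TypeC{1}{\psi}{\lambda+2}{\epsilon\chi}-q^{r}R\TypeC{1}{\psi}{\lambda}{\chi}=q^{r-1+\lambda+\alpha}\frac{q-1}{q^\alpha+1},
\]
checked case-by-case in \Us and \Os. This is exactly the bracket $\bigl(q^{r-1-f^0(0)}R\TypeC{1}{\psi}{f^0(0)+2}{\epsilon\psi^0(0)}-q^{r-\lambda}R\TypeC{1}{\psi}{\lambda}{\chi}\bigr)$ from \Cref{Tlessfullformula} evaluated at $f^0(0)=\lambda$, $\psi^0(0)=\chi$, then multiplied by the $q^\lambda$ from the splitting. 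Your anticipated obstacle is also a non-issue, and for the same reason: since every term of $\mathrm{str}^\natural(\Delta_{1,r-\lambda}^\natural(\delta'))$ still has all entries $\ge1$, concatenating with $\TypeCC{(0)^\lambda}{\chi}$ leaves $f^0(0)=\lambda$ and $\psi^0(0)=\chi$ unchanged, so no cross-block straightening occurs and no appeal to \cite[\citeinducedThm]{CoratoZanarella} beyond \Cref{Tlessfullformula} itself is needed. The symmetry of $R$ you mention in (v) is likewise not used.
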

\begin{proof}
    We may write
    \begin{equation*}
        \Delta_{1,r}^\natural(\delta)=q^\lambda\Delta_{1,r-\lambda}^\natural(\delta')\star\TypeCC{(0)^\lambda}{\chi}+\delta'\star\Delta_{1,\lambda}^\natural\TypeCC{(0)^\lambda}{\chi}.
    \end{equation*}
    Note that all terms in $\mathrm{str}^\natural(\Delta_{1,r-\lambda}^\natural(\delta'))$ still satisfy that $e_i\ge1.$ Hence, by \Cref{Tlessfullformula}, we have
    \begin{equation*}
    \begin{split}
        T_{\le1,r}^{\mathrm{err}}(\delta)=\ & q^{r-\lambda}R\TypeC{1}{\psi}{\lambda}{\chi}\cdot\delta+\left(q^{r-1}R\TypeC{1}{\psi}{\lambda+2}{\epsilon\chi}-q^{r}R\TypeC{1}{\psi}{\lambda}{\chi}\right)\cdot\mathrm{str}^\natural(\Delta_{1,r-\lambda}^\natural(\delta'))\star\TypeCC{(0)^\lambda}{\chi}\\
        &+q^{r-\lambda}\left(\mathrm{str}^\natural\left(\delta'\star T_{\le1,\lambda}^{\mathrm{err}}\TypeCC{(0)^\lambda}{\chi}\right)-R\TypeC{1}{\psi}{\lambda}{\chi}\cdot\delta\right).
    \end{split}
    \end{equation*}
    Now the claim follows since in both cases \Us and \Os we can compute that
    \begin{equation*}
        q^{r-1}R\TypeC{1}{\psi}{\lambda+2}{\epsilon\chi}-q^{r}R\TypeC{1}{\psi}{\lambda}{\chi}=q^{r-1+\lambda+\alpha}\frac{q-1}{q^\alpha+1}.\qedhere
    \end{equation*}
\end{proof}

\begin{proposition}\label{strred}
     Given $\delta=\delta'\star\TypeCC{(0)^\lambda}{\chi}\in\Typ$ where $\delta'=\Type{e_i}{\psi_i}$ satisfies $e_i\ge1,$ we have that $\mathrm{str}^\sharp(\Delta_{\le1,r}^\flat(\delta))-\mathrm{str}^\flat(\Delta_{\le1,r}^\flat(\delta))$ is equal to
     \begin{equation*}
         q^{r-\lambda+\alpha}(q^\alpha+1)R\TypeC{1}{\psi}{\lambda}{\chi}\cdot\delta'\star\left(\TypeCC{(0)^{\lambda}}{\chi}-\mathrm{str}^\natural\TypeC{(0)^{\lambda-1}}{\psi\chi}{2}{\psi}\right).
     \end{equation*}
\end{proposition}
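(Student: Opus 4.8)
The plan is to exploit the very simple shape of $\Delta^\flat_{\le 1,r}$ in the case $k=1$ together with the fact that $\mathrm{str}^\flat$ and $\mathrm{str}^\sharp$ differ only in the way they straighten negative entries. First I would record that, since we are in a case \gens{} (so $\cfactor=1$), the weight $W_0(\cfactor r-1,\lambda_0(\varepsilon))$ occurring in the definition of $\Delta^\flat_{\le 1,r}$ vanishes unless $\lambda_0(\varepsilon)\ge\cfactor r-1$, i.e.\ unless at most one coordinate of $\varepsilon$ is nonzero. Hence $\Delta^\flat_{\le1,r}$ is a linear combination of the identity and of the single-coordinate translations $t^2(\pm e_j)$ (with $e_j$ the $j$-th standard basis vector), and a direct evaluation of the coefficient — using $W_0(a,a)=1$, $\Dfactor(0)=1$, and $\widetilde{\inv}(-e_j)=j-1$ — shows that the coefficient of $t^2(-e_j)$ is exactly $q^{j-1}$.

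Next I would localize the difference. Both $\Rel^\flat$ and $\Rel^\sharp$ contain $\Rel^\natural$, and $\mathrm{str}^\flat$, $\mathrm{str}^\sharp$ agree on every $\delta(e,\chi)$ with $e_1\ge\cdots\ge e_r\ge0$; hence they agree on the span of all tuples with nonnegative entries, so $(\mathrm{str}^\sharp-\mathrm{str}^\flat)(\Delta^\flat_{\le1,r}(\delta))$ receives contributions only from the terms of $\Delta^\flat_{\le1,r}(\delta)$ carrying a negative entry. Because $\delta=\delta'\star\TypeCC{(0)^\lambda}{\chi}$ with every entry of $\delta'$ at least $1$, a move $t^2(-e_j)$ creates a negative entry in only two ways: (a) it lowers an entry of $\delta'$ equal to $1$ to $-1$, or (b) it lowers one of the $\lambda$ trailing zeros to $-2$; in either case the resulting tuple has a single negative entry. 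Using $\Rel^\natural$ (common to both ideals) to move that entry to the last slot, and then using that $\Rel^\flat$ and $\Rel^\sharp$ are \emph{left} ideals, one may replace the last entry by its nonnegative reduction modulo $\Rel_1^\flat$ resp.\ $\Rel_1^\sharp$; after this substitution the tuple is nonnegative, so $\mathrm{str}^\flat$ and $\mathrm{str}^\sharp$ of the term are computed by $\mathrm{str}^\natural$. Thus $(\mathrm{str}^\sharp-\mathrm{str}^\flat)$ of a term with negative entry $a<0$ equals $\mathrm{str}^\natural$ of that term with $\TypeCC{a}{\cdot}$ replaced by (its $\Rel_1^\sharp$-reduction) minus (its $\Rel_1^\flat$-reduction).

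For the type-(a) terms the two reductions coincide: in both \Us{} and \Os{} one has $\TypeCC{-1}{\chi}\equiv\TypeCC{1}{\chi}$ modulo $\Rel_1^\flat$ and modulo $\Rel_1^\sharp$ (for \Os{} this is the remark after the definition of $\Rel^\sharp$), so the type-(a) terms drop out of the difference. For the type-(b) terms the negative entry is a $-2$ inside the block of zeros, and its coefficient $q^{j-1}$ factors as $q^{r-\lambda}$ times $q^{(\text{position inside the zero block})-1}$; pulling $q^{r-\lambda}\delta'$ out in front, the sum of all type-(b) terms becomes $q^{r-\lambda}$ times $\delta'$ concatenated with the sum of the $-2$-terms of $\Delta^\flat_{\le1,\lambda}(\TypeCC{(0)^\lambda}{\chi})$. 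Combined with the entry-by-entry reduction of the previous step, this reduces the proposition to the case where $\delta'$ is empty and $r=\lambda$, i.e.\ to the identity
\[
(\mathrm{str}^\sharp-\mathrm{str}^\flat)\big(\Delta^\flat_{\le1,\lambda}(\TypeCC{(0)^\lambda}{\chi})\big)=q^{\alpha}(q^{\alpha}+1)\,R\TypeC{1}{\psi}{\lambda}{\chi}\,\big(\TypeCC{(0)^\lambda}{\chi}-\mathrm{str}^\natural\TypeC{(0)^{\lambda-1}}{\psi\chi}{2}{\psi}\big).
\]

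This last identity I would prove by writing out $\Delta^\flat_{\le1,\lambda}(\TypeCC{(0)^\lambda}{\chi})$ explicitly from the first step, straightening each $\TypeCC{-2}{\chi}$ entry via $\Rel^\flat(2)$ resp.\ $\Rel^\sharp(2)$ in case \Us{} and via $\Rel^\flat\TypeCC{2}{\chi}$ resp.\ $\Rel^\sharp\TypeCC{2}{\chi}$ in case \Os{} (the sign flips in the latter producing the $\psi$-twist on the right-hand side), and collecting the resulting sums with the $q$-binomial identities of \Cref{finFieldSection} — the same identities underlying the formulas for $R$, $S$ and $\mathrm{str}^\natural$ — exactly as in the proofs of \Cref{intert} and of \cite[\citeinducedThm]{CoratoZanarella}. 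The hard part will be this bookkeeping: one must follow how the lone $-2$ entry straightens past the block of $\lambda-1$ zeros, which is where the combinatorial factor that assembles into $R\TypeC{1}{\psi}{\lambda}{\chi}$ enters, all the while keeping the powers of $q$ and, in case \Os{}, the sign characters $\psi,\chi$ aligned so that the difference collapses to the clean two-term expression $\TypeCC{(0)^\lambda}{\chi}-\mathrm{str}^\natural\TypeC{(0)^{\lambda-1}}{\psi\chi}{2}{\psi}$; the cases \Us{} and \Os{} are treated separately.
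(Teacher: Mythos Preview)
Your approach is correct and is essentially the same as the paper's. Both arguments hinge on the observation that, since $\Rel^\flat$ and $\Rel^\sharp$ each contain $\Rel^\natural$ and the degree-one relation $\TypeCC{-1}{\chi}\equiv\TypeCC{1}{\chi}$, the maps $\mathrm{str}^\flat$ and $\mathrm{str}^\sharp$ agree on any term of $\Delta^\flat_{\le1,r}(\delta)$ whose entries are all $\ge-1$; this kills your type-(a) terms and reduces the proposition to the case $\delta=\TypeCC{(0)^\lambda}{\chi}$, which is then handled by an explicit case-by-case computation in \Us{} and \Os{}.

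Two small points where the paper is slightly more careful than your sketch. First, when you ``move the $-2$ to the last slot via $\Rel^\natural$,'' extra terms appear: for instance in case \Us{} one has $\typeC{-2}{(0)^n}\equiv(1-(-q_0)^n)\typeC{(0)^{n-1}}{(-1)^2}+(-q_0)^n\typeC{(0)^n}{-2}$ modulo $\Rel^\natural$, and the analogous formula in \Os{} has a sign-twisted version. You should note explicitly that these side terms have all entries $\ge-1$, so they drop out of the difference for the same reason as your type-(a) terms. Second, in the $\lambda$-zero case the paper does not invoke the general $q$-binomial identities of \Cref{finFieldSection}; rather it computes $(\mathrm{str}^\sharp-\mathrm{str}^\flat)\TypeCC{-2}{\chi}$ directly from $\Rel^\flat(2)$ and $\Rel^\sharp(2)$ (resp.\ $\Rel^\flat\TypeCC{2}{\chi}$ and $\Rel^\sharp\TypeCC{2}{\chi}$), obtaining a clean two-term expression, and then sums the resulting geometric-type series over the position of the $-2$ to produce the scalar $q^\alpha(q^\alpha+1)R\TypeC{1}{\psi}{\lambda}{\chi}$. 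The bookkeeping in case \Os{} is indeed the delicate part, exactly as you anticipate.
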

\begin{remark}
    This proposition does not depend on knowing that $\Delta_{\le1,r}^\flat$ preserves $\Rel^\flat_r$ or $\Rel^\sharp_r.$ In fact, $\Delta_{\le1,r}^\flat$ \emph{does not} preserve $\Rel^\sharp_r.$
\end{remark}
\begin{proof}
    First note that if $\varepsilon\in\{-1,0,1\}^r$ is such that $e_1+2\varepsilon_1,\ldots,e_r+2\varepsilon_r\ge0,$ then
    \begin{equation*}
        \mathrm{str}^\sharp(t(2\varepsilon)(\delta))=\mathrm{str}^\natural(t(2\varepsilon)(\delta))=\mathrm{str}^\flat(t(2\varepsilon)(\delta)).
    \end{equation*}
    Similarly, since both $\Rel^\flat$ and $\Rel^\sharp$ contain the element $\Rel^\flat\TypeCC{1}{\chi}=\Rel^\sharp\TypeCC{1}{\chi}=\TypeCC{-1}{\chi}-\TypeCC{1}{\chi},$ we have that if $e_1+2\varepsilon_1,\ldots,e_r+2\varepsilon_r\ge-1,$ then
    \begin{equation*}
        \mathrm{str}^\sharp(t(2\varepsilon)(\delta))=\mathrm{str}^\flat(t(2\varepsilon)(\delta)).
    \end{equation*}
    That is, the proposition reduces to proving the claim for $\delta=\TypeCC{(0)^\lambda}{\chi},$ for which we have
    \begin{equation*}
        \mathrm{str}^\sharp(\Delta_{\le1,\lambda}^\flat(\delta))-\mathrm{str}^\flat(\Delta_{\le1,\lambda}^\flat(\delta))=\sum_{i=0}^{\lambda-1}q^{\lambda-i-1}\TypeCC{(0)^{r-i-1}}{\chi_{\le r-i-1}}\star\left(\mathrm{str}^\sharp(\delta^{(i)})-\mathrm{str}^\flat(\delta^{(i)})\right)
    \end{equation*}
    where $\delta^{(i)}\defeq\TypeCC{-2}{\chi_{r-i}}\star\TypeCC{(0)^{i}}{\chi_{>r-i}}.$
\begin{case}{\Us}
We have
\begin{equation*}
\delta^{(n)}=\typeC{-2}{(0)^n}\equiv(1-(-q_0)^n)\typeC{(0)^{n-1}}{(-1)^2}+(-q_0)^{n}\typeC{(0)^n}{-2}\mod\Rel^\natural.
\end{equation*}
Since
\begin{equation*}
\begin{split}
    \mathrm{str}^\sharp((-1))-\mathrm{str}^\flat((-1))&=0,\\
    \mathrm{str}^\sharp((-2))-\mathrm{str}^\flat((-2))&=(q+1)(0)-q(2)-(1-q_0)(0)-q_0(2)=q_0(q_0+1)((0)-(2)),
\end{split}
\end{equation*}
we conclude that
\begin{equation*}
    \mathrm{str}^\sharp(\delta^{(n)})-\mathrm{str}^\flat(\delta^{(n)})=q_0(q_0+1)(-q_0)^n((0)^n)\star\left((0)-(2)\right).
\end{equation*}
That is,
\begin{equation*}
\begin{split}
    \mathrm{str}^\sharp(\Delta_{\le1,\lambda}^\flat(\delta))-\mathrm{str}^\flat(\Delta_{\le1,\lambda}^\flat(\delta))&=q_0(q_0+1)\left(\sum_{i=0}^{\lambda-1}q^{\lambda-i-1}(-q_0)^i\right)((0)^{\lambda-1})\star\left((0)-(2)\right)\\
    &=(-q_0)^\lambda((-q_0)^\lambda-1)\cdot((0)^{\lambda-1})\star\left((0)-(2)\right).
\end{split}
\end{equation*}
Note that
\begin{equation*}
    q^\alpha(q^\alpha+1)R(1,\lambda)=q_0(q_0+1)\qbinom{\lambda}{1}{-q_0}=(-q_0)^\lambda((-q_0)^\lambda-1).
\end{equation*}
\end{case}

\begin{case}{\Os}
We have
\begin{equation*}
\begin{split}
    \mathrm{str}^\natural\TypeC{-2}{\chi_1}{(0)^n}{\chi_2}=\ &(1-(q\epsilon)^{\lfloor n/2\rfloor}\chi_2(-\epsilon\chi_1)^n)\TypeC{(0)^{n-1}}{\epsilon\chi_1\chi_2}{(-1)^2}{\epsilon}\\
    &+(q\epsilon)^{\lfloor n/2\rfloor}\chi_2(-\epsilon\chi_1)^n\left(\frac{1+q\epsilon}{2}\TypeC{(0)^n}{\chi_2}{-2}{\chi_1}+\frac{1-q\epsilon}{2}\TypeC{(0)^n}{(-1)^n\chi_2}{-2}{(-1)^n\chi_1}\right).
\end{split}
\end{equation*}
Since
\begin{equation*}
\begin{split}
    \mathrm{str}^\sharp\TypeCC{-1}{\chi}-\mathrm{str}^\flat\TypeCC{-1}{\chi}&=0,\\
    \mathrm{str}^\sharp\TypeCC{-2}{\chi}-\mathrm{str}^\flat\TypeCC{-2}{\chi}&=(1+\psi\chi)\TypeCC{0}{\chi}-\psi\chi\TypeCC{2}{\chi}-\TypeCC{2}{\chi}=(1+\psi\chi)\left(\TypeCC{0}{\chi}-\TypeCC{2}{\chi}\right),
\end{split}
\end{equation*}
we conclude that $\mathrm{str}^\sharp\TypeC{-2}{\chi_1}{(0)^n}{\chi_2}-\mathrm{str}^\flat\TypeC{-2}{\chi_1}{(0)^n}{\chi_2}$ is
\begin{equation*}
\begin{split}
    &(q\epsilon)^{\lfloor n/2\rfloor}\chi_2(-\epsilon\chi_1)^n\frac{1+q\epsilon}{2}(1+\psi\chi_1)\TypeCC{(0)^n}{\chi_2}\star\left(\TypeCC{0}{\chi_1}-\TypeCC{2}{\chi_1}\right)\\
    &\qquad+(q\epsilon)^{\lfloor n/2\rfloor}\chi_2(-\epsilon\chi_1)^n\frac{1-q\epsilon}{2}(1+\psi(-1)^n\chi_1)\TypeCC{(0)^n}{(-1)^n\chi_2}\star\left(\TypeCC{0}{(-1)^n\chi_1}-\TypeCC{2}{(-1)^n\chi_1}\right).
\end{split}
\end{equation*}
We can uniformly write this as
\begin{equation*}
    \mathrm{str}^\sharp\TypeC{-2}{\chi_1}{(0)^n}{\chi_2}-\mathrm{str}^\flat\TypeC{-2}{\chi_1}{(0)^n}{\chi_2}=c\TypeCC{n}{\chi_1,\chi_2}\cdot\left(\TypeCC{(0)^{n+1}}{\chi_1\chi_2}-\TypeC{(0)^n}{\psi\chi_1\chi_2}{2}{\psi}\right)
\end{equation*}
for
\begin{equation*}
    c\TypeCC{n}{\chi_1,\chi_2}\defeq(q\epsilon)^{\lfloor n/2\rfloor}\chi_2(-\epsilon\chi_1)^n
    (1+\psi\chi_1(q\epsilon)^{n\text{ odd}}).
\end{equation*}
This means that
\begin{equation*}
    \mathrm{str}^\sharp(\Delta_{\le1,\lambda}^\flat(\delta))-\mathrm{str}^\flat(\Delta_{\le1,\lambda}^\flat(\delta))=\left(\sum_{i=0}^{\lambda-1}q^{\lambda-i-1}c\TypeCC{i}{\chi_{\lambda-i},\chi_{>\lambda-i}}\right)\cdot\left(\TypeCC{(0)^{\lambda}}{\chi}-\TypeC{(0)^{\lambda-1}}{\psi\chi}{2}{\psi}\right).
\end{equation*}

Write
\begin{equation*}
    a(n)\defeq (q\epsilon)^{\lfloor n/2\rfloor}\chi_{>\lambda-i}(-\epsilon\chi_{\lambda-i})^n,\quad b(n)\defeq (q\epsilon)^{\lfloor(n+1)/2\rfloor}\psi\chi_{\ge \lambda-i}(-\epsilon\chi_{\lambda-i})^n,
\end{equation*}
so that $c\TypeCC{n}{\chi_{r-i},\chi_{>r-i}}=a(n)+b(n).$ Note that
\begin{equation*}
    q\cdot a(2i-1)+a(2i)=0,\quad q\cdot b(2i)+b(2i+1)=0,
\end{equation*}
and thus
\begin{equation*}
\begin{split}
    \sum_{i=0}^{\lambda-1}q^{\lambda-i-1}c\TypeCC{i}{\chi_{\lambda-i},\chi_{>\lambda-i}}&=q^{\lambda-1}a(0)+\begin{cases}
        b(\lambda-1) & \text{if }\lambda\text{ is odd,} \\
        a(\lambda-1) & \text{if }\lambda\text{ is even}
    \end{cases}\\
    &=q^{\lambda-1}+(q\epsilon)^{\lfloor(\lambda-1)/2\rfloor}\chi\psi(-\epsilon\psi)^{\lambda-1}.
\end{split}
\end{equation*}
Finally, note that
\begin{equation*}
\begin{split}
    q^\alpha(q^\alpha+1)R\TypeC{1}{\psi}{\lambda}{\chi}&=2R\TypeC{1}{\psi}{\lambda}{\chi}=q^{\lfloor(\lambda-1)/2\rfloor}\frac{\lfloor\lambda/2\rfloor_{q^2}}{\lfloor(\lambda-1)/2\rfloor_{q^2}}\frac{e(\lambda-1,\chi\psi)}{e(\lambda,\chi)}\\
    &=q^{\lfloor(\lambda-1)/2\rfloor}(q^{\lfloor\lambda/2\rfloor}-\epsilon^{\lfloor\lambda/2\rfloor}\chi(-\psi)^\lambda)\\
    &=q^{\lambda-1}+(q\epsilon)^{\lfloor(\lambda-1)/2\rfloor}\chi\psi(-\epsilon\psi)^{\lambda-1}
\end{split}
\end{equation*}
agrees with the above.\qedhere
\end{case}
\end{proof}

To finish the analysis of $T_{\le1,r}^{\sharp},$ we will assume the corresponding result in the \genf case.
\begin{hypothesis}\label{flathyp}
The induced map $\mathrm{str}^{\flat,*}\Delta_{\le1,r}^\flat\colon R[\Typ^0_r]\to R[\Typ^0_r]$ agrees with $T_{\le1,r}^\flat.$
\end{hypothesis}
\begin{remark}
    As remarked in the introduction \Cref{nonlinear}, the validity of this hypothesis will be established in \Cref{flatfinal} without relying on the results of this subsection.
\end{remark}

\begin{theorem}\label{genericsharp}
    Assume \Cref{flathyp}. Then the induced map $\mathrm{str}^{\sharp,*}\Delta_{\le1,r}^\sharp\colon R[\Typ^0_r]\to R[\Typ^0_r]$ agrees with $T_{\le 1,r}^{\sharp,*}.$
\end{theorem}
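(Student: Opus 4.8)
The plan is to establish the ``error identity'' announced in the introduction. Put $\delta=\mathrm{typ}^\sharp(\Lambda^\sharp)=\mathrm{typ}^\flat(\Lambda^\flat)$ for $\Lambda^\sharp=\O_Fu\oplus\Lambda^\flat$; then \Cref{tdif} reads $T_{\le1,r}^{\sharp,*}(\delta)=T_{\le1,r}^{\flat,*}(\delta)+q^{r-1+\alpha}(q^r-1)\delta+q^\alpha(q^\alpha+1)T_{\le1,r}^{\mathrm{err}}(\delta)$, while applying $\mathrm{str}^\sharp$ to the decomposition in \Cref{deltadif} (using that $\mathrm{str}^\sharp$ fixes the canonical lift of $\delta$) gives $\mathrm{str}^\sharp(\Delta_{\le1,r}^\sharp(\delta))=\mathrm{str}^\sharp(\Delta_{\le1,r}^\flat(\delta))+q^{r-1+\alpha}(q^r-1)\delta+(q-1)q^{r-1+2\alpha}\,\mathrm{str}^\sharp(\Delta_{1,r}^\natural(\delta))$. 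Invoking \Cref{flathyp} to replace $T_{\le1,r}^{\flat,*}(\delta)$ by $\mathrm{str}^\flat(\Delta_{\le1,r}^\flat(\delta))$, the scalar terms cancel and the theorem becomes equivalent to
\[
  q^\alpha(q^\alpha+1)\,T_{\le1,r}^{\mathrm{err}}(\delta)
  =\bigl(\mathrm{str}^\sharp-\mathrm{str}^\flat\bigr)\bigl(\Delta_{\le1,r}^\flat(\delta)\bigr)
  +(q-1)q^{r-1+2\alpha}\,\mathrm{str}^\sharp\bigl(\Delta_{1,r}^\natural(\delta)\bigr)
  \qquad\text{for all }\delta\in\Typ^0_r.
\]
Moreover $\Delta_{1,r}^\natural$ only raises the entries of $\delta$, so every monomial in $\Delta_{1,r}^\natural(\delta)$ has nonnegative entries, the relations $\Rel_1^\sharp$ play no role, and $\mathrm{str}^\sharp(\Delta_{1,r}^\natural(\delta))=\mathrm{str}^\natural(\Delta_{1,r}^\natural(\delta))$; this is precisely the quantity $\sum_{(f^0,\psi^0)}a(f^0,\psi^0)\,(f^0,\psi^0)$ of \Cref{Tlessfullformula}.

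Next I would reduce to a base family by stripping the trailing zero-block. Write $\delta=\delta'\star\TypeCC{(0)^\lambda}{\chi}$ with $\delta'=\Type{e_i}{\psi_i}$ satisfying $e_i\ge1$, $\lambda=e^0(0)$, $\chi=\chi^0(0)$. When $\lambda=0$ every monomial of $\Delta_{\le1,r}^\flat(\delta)$ has entries $\ge-1$, so the relation $\TypeCC{-1}{\chi}\equiv\TypeCC{1}{\chi}$ (common to $\Rel^\flat$ and $\Rel^\sharp$) forces $(\mathrm{str}^\sharp-\mathrm{str}^\flat)(\Delta_{\le1,r}^\flat(\delta))=0$, and \Cref{Terrred} (with $\lambda=0$, where $T_{\le1,0}^{\mathrm{err}}$ of the empty element vanishes) immediately gives the identity. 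When $\lambda\ge1$ and $\delta'\neq\varnothing$, I expand the left side by \Cref{Terrred}, the first term on the right by \Cref{strred}, and $\Delta_{1,r}^\natural(\delta)$ by the concatenation formula $\Delta_{1,r}^\natural(\delta)=q^\lambda\Delta_{1,r-\lambda}^\natural(\delta')\star\TypeCC{(0)^\lambda}{\chi}+\delta'\star\Delta_{1,\lambda}^\natural\TypeCC{(0)^\lambda}{\chi}$ from the proof of \Cref{Terrred}. The ``$\delta'$-parts'' on the two sides are both $(q-1)q^{r-1+\lambda+2\alpha}\,\mathrm{str}^\natural(\Delta_{1,r-\lambda}^\natural(\delta'))\star\TypeCC{(0)^\lambda}{\chi}$ (using $q^\alpha(q^\alpha+1)\tfrac{q-1}{q^\alpha+1}=q^\alpha(q-1)$ and that the left factor has entries $\ge1$, so straightening does not cross the boundary) and cancel; since $\mathrm{str}^\natural$ is multiplicative for $\star$ and left multiplication by $\delta'$ is injective on $R[\Typ^{0,\natural}]$, what remains is exactly the above identity for $\TypeCC{(0)^\lambda}{\chi}\in\Typ^0_\lambda$. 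Hence it suffices to treat $\delta=\TypeCC{(0)^r}{\chi}$.

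For $\delta=\TypeCC{(0)^r}{\chi}$ all terms are explicit: $\Delta_{1,r}^\natural$ raises exactly one of the $r$ zero entries to a $2$ with weight $q^{r-i}$, so $\mathrm{str}^\natural(\Delta_{1,r}^\natural(\TypeCC{(0)^r}{\chi}))$ is a $q$-weighted sum of $\Typ^0$-elements of shape $\TypeC{(0)^{r-1}}{}{2}{}$; \Cref{strred} with $\delta'=\varnothing$ gives $(\mathrm{str}^\sharp-\mathrm{str}^\flat)(\Delta_{\le1,r}^\flat(\TypeCC{(0)^r}{\chi}))=q^\alpha(q^\alpha+1)R\TypeC{1}{\psi}{r}{\chi}\bigl(\TypeCC{(0)^r}{\chi}-\mathrm{str}^\natural\TypeC{(0)^{r-1}}{\psi\chi}{2}{\psi}\bigr)$; and \Cref{Tlessfullformula} expresses $T_{\le1,r}^{\mathrm{err}}(\TypeCC{(0)^r}{\chi})$ through the same data and the numbers $R\TypeC{1}{\psi}{\bullet}{\bullet}$. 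Matching the coefficient of $\TypeCC{(0)^r}{\chi}$ is immediate, and matching the coefficient of each element of shape $\TypeC{(0)^{r-1}}{}{2}{}$ reduces to a handful of finite $q$-identities among $q$-binomials and the closed formulas for $R\TypeC{1}{\psi}{\bullet}{\bullet}$, which I would verify separately in the cases \Us and \Os exactly as in the case analysis of the proof of \Cref{strred}. I expect the main obstacles to be organizing the cancellation in the reduction cleanly --- in particular tracking the $\epsilon$- and $\psi$-twists and the sign bookkeeping in the case \Os --- and computing the $\Typ^0$-coefficients of $\mathrm{str}^\natural(\Delta_{1,r}^\natural(\TypeCC{(0)^r}{\chi}))$ precisely enough to close the base case; no new geometric input beyond \Cref{flathyp} is required.
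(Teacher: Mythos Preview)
Your proposal is correct and follows essentially the same route as the paper's proof: you derive the same ``error identity'' from \Cref{tdif}, \Cref{deltadif}, and \Cref{flathyp}; you perform the same reduction to $\delta=\TypeCC{(0)^\lambda}{\chi}$ via \Cref{Terrred}, \Cref{strred}, and the concatenation splitting of $\Delta_{1,r}^\natural$; and you propose the same explicit base-case verification using \Cref{Tlessfullformula}. The only minor point is that your phrase ``$\mathrm{str}^\natural$ is multiplicative for $\star$ and left multiplication by $\delta'$ is injective'' should be read as working in $R[\Typ]/\Rel^\natural$ (where $\Rel^\natural$ is a two-sided ideal), which is how the paper's terse ``the claim is reduced to the case $\delta=\TypeCC{(0)^\lambda}{\chi}$'' is to be understood as well.
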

\begin{proof}
    Given $\delta=\Type{e_i}{\chi_i}\in\Typ$ with $e_1\ge\cdots\ge e_r\ge0,$ we need to prove that
    \begin{equation*}
        T_{\le1.r}^{\sharp,*}(\delta)\isequal\mathrm{str}^\sharp(\Delta_{\le 1,r}^\sharp(\delta)).
    \end{equation*}
    By \Cref{deltadif} and \Cref{tdif}, this is the same as proving
    \begin{equation*}
        T_{\le1,r}^{\flat,*}(\delta)+q^\alpha(q^\alpha+1)T_{\le1,r}^{\mathrm{err}}(\delta)\isequal\mathrm{str}^\sharp(\Delta_{\le1,r}^\flat(\delta))+(q-1)q^{r-1+2\alpha}\mathrm{str}^\natural(\Delta_{1,r}^{\natural}(\delta)).
    \end{equation*}
    By \Cref{flathyp} this is the same as proving
    \begin{equation*}
        q^\alpha(q^\alpha+1)T_{\le1,r}^{\mathrm{err}}(\delta)\isequal(q-1)q^{r-1+2\alpha}\mathrm{str}^\natural(\Delta_{1,r}^{\natural}(\delta))+\mathrm{str}^\sharp(\Delta_{\le1,r}^{\flat}(\delta))-\mathrm{str}^\flat(\Delta_{\le1,r}^{\flat}(\delta)).
    \end{equation*}

    Write $\delta=\delta'\star\TypeCC{(0)^\lambda}{\chi}$ for $\delta'$ satisfying $e_i\ge1.$ Then
    \begin{equation*}
        \Delta_{1,r}^{\natural}(\delta)=q^{\lambda}\Delta_{1,r-\lambda}^{\natural}(\delta')\star\TypeCC{(0)^\lambda}{\chi}+\delta'\star\Delta_{1,\lambda}^{\natural}\TypeCC{(0)^\lambda}{\chi}.
    \end{equation*}
    Together with \Cref{Terrred} and \Cref{strred}, the claim is reduced to the case $\delta=\TypeCC{(0)^\lambda}{\chi}.$ In this case, we can compute all the terms by \Cref{Tlessfullformula} and \Cref{strred}. Using that $q^{\lambda-1+\alpha}\frac{q-1}{q^\alpha+1}=q^{-1}R\TypeC{1}{\psi}{\lambda+2}{\epsilon\chi}-R\TypeC{1}{\psi}{\lambda}{\chi},$ we are reduced to verify that $R\TypeC{1}{\psi}{\lambda}{\chi}\mathrm{str}^\natural\TypeC{(0)^{\lambda-1}}{\psi\chi}{2}{\psi}$ is
    \begin{equation*}
    \begin{split}
        &S\TypeC{1}{}{\lambda}{\chi}\cdot\left(q^{-1}\cdot R\TypeC{1}{\psi}{\lambda+2}{\epsilon\chi}-q\cdot R\TypeC{1}{\psi}{\lambda}{\chi}\right)\cdot\TypeC{(1)^2}{\epsilon}{(0)^{\lambda-2}}{\epsilon\chi}\\
        +&\sum_{\chi'\in\mathrm{Sign}}R\TypeC{1}{\chi'}{\lambda}{\chi}\cdot\left(q^{-1}\cdot R\TypeC{1}{\psi}{\lambda+2}{\epsilon\chi}-R\TypeC{1}{\psi}{\lambda+1}{\epsilon\chi\chi'}\right)\TypeC{2}{\chi'}{(0)^{\lambda-1}}{\chi\chi'}.
    \end{split}
    \end{equation*}
    Collecting terms and with the following straightening relations modulo $\Rel^\natural_2,$
    \begin{equation*}
    \begin{array}{c|c}
    \Ugen&((0)^n,2)\equiv(1-(-q_0)^n)\cdot ((1)^2,(0)^{n-1})+(-q_0)^n\cdot(2,(0)^n)\\
    \hline
    \Ogen&\begin{array}{l}
    \TypeC{(0)^n}{\chi_1}{2}{\chi_2}\equiv(1-(q\epsilon)^{\lfloor n/2\rfloor}\chi_1(-\epsilon\chi_2)^n)\TypeC{(1)^2}{\epsilon}{(0)^{n-1}}{\epsilon\chi_1\chi_2}\\
    \quad\quad+(q\epsilon)^{\lfloor n/2\rfloor}\chi_1(-\epsilon\chi_2)^n
    \left(\frac{1+q\epsilon}{2}\TypeC{2}{\chi_2}{(0)^n}{\chi_1}+\frac{1-q\epsilon}{2}
    \TypeC{2}{(-1)^n\chi_2}{(0)^n}{(-1)^n\chi_1}\right)
    \end{array}
\end{array}
\end{equation*}
this becomes a routine computation. In the case \Ugen, we are verifying that
\begin{equation*}
\begin{split}
    (-q_0)^{\lambda-1}\qbinom{\lambda}{1}{-q_0}(1-(-q_0)^{\lambda-1})&\isequal(q_0+1)\qbinom{\lambda}{2}{-q_0}\left((-q_0)^{\lambda-1}\qbinom{\lambda+2}{1}{-q_0}-(-q_0)^{\lambda+1}\qbinom{\lambda}{1}{-q_0}\right),\\
    (-q_0)^{\lambda-1}&\isequal\left((-q_0)^{\lambda-1}\qbinom{\lambda+2}{1}{-q_0}-(-q_0)^{\lambda}\qbinom{\lambda+1}{1}{-q_0}\right).
\end{split}
\end{equation*}
In the case \Ogen, writing
\begin{equation*}
\begin{split}
    S\TypeC{1}{}{\lambda}{\chi}&=\frac{q^{\lambda-1}-1}{q-1}+\frac{1+(-1)^\lambda}{2}(q^{\lambda/2-1}\epsilon^{\lambda/2}\chi),\\
    R\TypeC{1}{\chi_1}{\lambda}{\chi_2}&=\frac{1}{2}\left(q^{\lambda-1}-(q\epsilon)^{\lfloor(\lambda-1)/2\rfloor}\epsilon\chi_2(-\epsilon\chi_1)^\lambda\right),
\end{split}
\end{equation*}
we are verifying that
\begin{equation*}
\begin{split}
    &\frac{1}{2}\left(q^{\lambda-1}-(q\epsilon)^{\lfloor(\lambda-1)/2\rfloor}\epsilon\chi(-\epsilon\psi)^\lambda\right)\cdot\left(1-(q\epsilon)^{\lfloor(\lambda-1)/2\rfloor}\chi\psi(-\epsilon\psi)^{\lambda-1}\right)\\
    &\qquad\isequal\left(\frac{q^{\lambda-1}-1}{q-1}+\frac{1+(-1)^\lambda}{2}(q^{\lambda/2-1}\epsilon^{\lambda/2}\chi)\right)\cdot\frac{(q\epsilon)^{\lfloor(\lambda-1)/2\rfloor}\epsilon\chi(-\epsilon\psi)^\lambda(q-1)}{2}
\end{split}
\end{equation*}
and, for the term with $\chi'=\pm\psi,$ that
\begin{equation*}
\begin{split}
    &(q\epsilon)^{\lfloor(\lambda-1)/2\rfloor}\psi\chi(-\epsilon\psi)^{\lambda-1}\cdot\begin{cases}\frac{1\pm q\epsilon}{2}&\text{if }\lambda\text{ is even,}\\\frac{1\pm1}{2}&\text{if }\lambda\text{ is odd}\end{cases}\\
    &\qquad\isequal\frac{\pm(q\epsilon)^{\lfloor\lambda/2\rfloor}\psi\chi(-\epsilon\psi)^{\lambda+1}-q^{-1}(q\epsilon)^{\lfloor(\lambda+1)/2\rfloor}\chi(-\epsilon\psi)^{\lambda+2}}{2}.\qedhere
\end{split}
\end{equation*}
\end{proof}
\subsection{Extension argument}
\begin{definition}
    For a $k\ge0,$ We say that $(f,\chi)\in\Typ_r$ is \emph{$k$-generic} if $f_i\ge f_{i+1}+k$ for $1\le i<r,$ and $f_r\ge k.$
\end{definition}
\begin{definition}
    For an operator $\Delta\colon R[\Typ_r]\to R[\Typ_r]$ given by a finite sum $\Delta=\sum_{\varepsilon}a_\varepsilon\cdot t(\varepsilon),$ we say that $T\colon R[\Typ^0_r]\to R[\Typ^0_r]$ is \emph{generically given by $\Delta$} if there exist $k\ge1$ such that for any $\delta$ which is $k$-generic, we have $T(\delta)=\Delta(\delta).$
\end{definition}

\begin{lemma}\label{extcommute}
    Suppose $T_1,T_2\colon R[\Typ^0_r]\to R[\Typ^0_r]$ are two operators which commute. Suppose that $T_1$ resp.\ $T_2$ are generically given by $\Delta_1$ resp.\ $\Delta_2.$ Then $\Delta_1,\Delta_2$ commute.
\end{lemma}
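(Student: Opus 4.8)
The plan is to transport the relation $T_1T_2=T_2T_1$ from $R[\Typ^0_r]$ back down to $R[\Typ_r]$ by evaluating on sufficiently generic basis elements. Write $\Delta_1=\sum_\varepsilon a_\varepsilon\,t(\varepsilon)$ and $\Delta_2=\sum_{\varepsilon'}b_{\varepsilon'}\,t(\varepsilon')$ as finite $R$-linear combinations of translations, let $k_1,k_2\ge1$ be thresholds such that $T_i$ agrees with $\Delta_i$ on $k_i$-generic elements, and let $N$ bound the $\infty$-norms of all shift vectors occurring in $\Delta_1,\Delta_2$. I would take $k$ large relative to $k_1,k_2,N$ (say $k\ge\max(k_1,k_2)+4N$). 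The point of this choice is a bookkeeping fact: if $(f,\chi)$ is $k$-generic then every monomial $\delta(f+\varepsilon',\chi)$ occurring in $\Delta_2(\delta(f,\chi))$ is again $k_1$-generic (its consecutive gaps drop by at most $2N$ and its last entry by at most $N$), symmetrically with $1,2$ swapped, and moreover all monomials $\delta(f+\gamma,\chi)$ with $\lVert\gamma\rVert_\infty\le2N$ remain strictly decreasing, hence are pairwise distinct.

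First I would show that for every $k$-generic $\delta=\delta(f,\chi)$ one has $(T_1T_2)(\delta)=(\Delta_1\Delta_2)(\delta)$ and $(T_2T_1)(\delta)=(\Delta_2\Delta_1)(\delta)$. Indeed, $T_2(\delta)=\Delta_2(\delta)=\sum_{\varepsilon'}b_{\varepsilon'}\,\delta(f+\varepsilon',\chi)$ since $k\ge k_2$; each summand is $k_1$-generic by the bookkeeping above, so $T_1$ acts on it as $\Delta_1$, giving $T_1T_2(\delta)=\sum_{\varepsilon,\varepsilon'}a_\varepsilon b_{\varepsilon'}\,\delta(f+\varepsilon+\varepsilon',\chi)=(\Delta_1\Delta_2)(\delta)$, and likewise for the other composition. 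Since $T_1T_2=T_2T_1$ by hypothesis, $(\Delta_1\Delta_2)(\delta)=(\Delta_2\Delta_1)(\delta)$ for all $k$-generic $\delta$. To finish, I would note that $\Delta_1\Delta_2$ and $\Delta_2\Delta_1$ are both finite $R$-linear combinations of translations, say $\sum_\gamma c_\gamma\,t(\gamma)$ and $\sum_\gamma d_\gamma\,t(\gamma)$ with all $\lVert\gamma\rVert_\infty\le2N$; evaluating on one $k$-generic $\delta(f,\chi)$ produces the pairwise distinct monomials $\delta(f+\gamma,\chi)$, so comparing coefficients forces $c_\gamma=d_\gamma$ for every $\gamma$, i.e. $\Delta_1\Delta_2=\Delta_2\Delta_1$ as operators on all of $R[\Typ_r]$.

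The only genuine content is the genericity bookkeeping in the first step — choosing $k$ so that translating a $k$-generic type by the shift vectors of one operator leaves it generic enough for the generic description of the other to apply — everything else being formal. (In fact the $t(\varepsilon)$ pairwise commute on $R[\Typ_r]$, so $\Delta_1\Delta_2=\Delta_2\Delta_1$ holds for any such pair regardless of the hypothesis on $T_1,T_2$; but the evaluation argument above is the prototype for the more delicate extension arguments that follow, where the conclusion will be drawn by evaluating $T_1T_2=T_2T_1$ at carefully chosen non-generic elements.)
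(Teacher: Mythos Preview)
Your proof is correct and follows the same evaluate-at-a-sufficiently-generic-element approach as the paper: pick $\delta$ generic enough that one application of either $\Delta_i$ keeps it generic for the other, transport $T_1T_2=T_2T_1$ to $\Delta_1\Delta_2(\delta)=\Delta_2\Delta_1(\delta)$, and then read off equality of translation coefficients. Your parenthetical observation --- that the $t(\varepsilon)$ already pairwise commute on $R[\Typ_r]$, so $\Delta_1\Delta_2=\Delta_2\Delta_1$ holds trivially regardless of the hypothesis on $T_1,T_2$ --- is correct and is not something the paper points out.
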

\begin{proof}
Let $s\ge1$ be such that both $\Delta_1$ and $\Delta_2$ only contain translations of magnitude smaller than $s.$ Let $k$ be such that $T_1,T_2$ agree with $\Delta_1,\Delta_2$ for $k$-generic $\delta.$

We choose $\delta\in\Typ^0_r$ such that its entire $s+1$-neighborhood is $k$-generic. Then we have
\begin{equation*}
    \Delta_1(\Delta_2(\delta))=T_1(\Delta(\delta))=T_1(T_2(\delta))=T_2(T_1(\delta))=T_2(\Delta_1(\delta))=\Delta_2(\Delta_1(\delta)).
\end{equation*}
Since both $\Delta_1,\Delta_2$ are a linear combination of translations, this implies that $\Delta_1,\Delta_2$ commute.
\end{proof}

We will repeatedly use the following lemma to perform the extension arguments.
\begin{lemma}\label{extlemma}
    Fix some $?\in\{\flat,\sharp\}.$ Assume that we have two operators $T_1,T_2\colon R[\Typ^0_r]\to R[\Typ^0_r]$ satisfying
    \begin{enumerate}
        \item $T_1,T_2$ commute,
        \item $T_1$ resp.\ $T_2$ is generically given by $\Delta_1$ resp.\ $\Delta_2,$
        \item both $\Delta_1,\Delta_2$ preserve $\Rel^?,$
        \item the induced map $\Delta_2\colon R[\Typ^0_r]\to R[\Typ^0_r]$ agrees with $T_2,$
        \item there exist $s\ge0$ such that if $\delta,\delta'\in\Typ^0_r$ are such that $\delta'$ is in the support of $T(\delta),$ then $\delta,\delta'$ have distance at most $s/2.$
    \end{enumerate}
    Denote $\Typ_{known}\defeq\{\delta\in\Typ^0_r\colon \mathrm{str}^?(\Delta_1(\delta))=T_1(\delta)\}.$ Suppose we are given $\delta\in\Typ^0_r$ and $Z\in R[\Typ^0_r]$ such that:
    \begin{enumerate}[label=(\alph*)]
        \item $Z\in R[\Typ_{known}],$
        \item there exist a non zero divisor $d\in R$ such that we have
        \begin{equation*}
            T_2(Z)=d\cdot \delta +\sum_{\delta'}a_{\delta'}\delta'
        \end{equation*}
        where for every $\delta'\neq\delta$ with $a_{\delta'}\neq0,$ we have either
        \begin{enumerate}[label=(b.\arabic*)]
            \item $\delta'\in \Typ_{known},$ or
            \item the distance between $\delta$ and $\delta'$ is at least $s.$
        \end{enumerate}
    \end{enumerate}
    Then we may conclude that $\delta\in\Typ_{known}.$
\end{lemma}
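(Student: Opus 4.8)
The plan is to evaluate the operator identity $T_1T_2=T_2T_1$ at the element $Z$ and read off the conclusion by comparing supports. Write $\overline{\Delta_i}$ for the endomorphism of $R[\Typ^0_r]\cong R[\Typ_r]/\Rel^?_r$ induced by $\Delta_i$; this is well defined by hypothesis (3), the condition $\delta\in\Typ_{known}$ means exactly $T_1(\delta)=\overline{\Delta_1}(\delta)$, and hypothesis (4) says $T_2=\overline{\Delta_2}$ on all of $R[\Typ^0_r]$. By \Cref{extcommute}, hypotheses (1) and (2) imply that $\Delta_1$ and $\Delta_2$ commute on $R[\Typ_r]$, hence $\overline{\Delta_1}$ and $\overline{\Delta_2}$ commute.

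First I would rewrite $T_2(T_1(Z))$. Since $Z$ is supported on $\Typ_{known}$ by hypothesis (a), linearity gives $T_1(Z)=\overline{\Delta_1}(Z)$; applying (4), then the commutativity of $\overline{\Delta_1},\overline{\Delta_2}$, then (4) again,
\begin{equation*}
    T_2(T_1(Z))=\overline{\Delta_2}\bigl(\overline{\Delta_1}(Z)\bigr)=\overline{\Delta_1}\bigl(\overline{\Delta_2}(Z)\bigr)=\overline{\Delta_1}\bigl(T_2(Z)\bigr).
\end{equation*}
On the other hand, substituting $T_2(Z)=d\,\delta+\sum_{\delta'}a_{\delta'}\delta'$ from hypothesis (b), applying $T_1$ termwise, and using $T_1T_2=T_2T_1$, I obtain
\begin{equation*}
    d\,T_1(\delta)+\sum_{\delta'}a_{\delta'}T_1(\delta')=d\,\overline{\Delta_1}(\delta)+\sum_{\delta'}a_{\delta'}\overline{\Delta_1}(\delta').
\end{equation*}
The terms with $\delta'\in\Typ_{known}$ (case (b.1)) cancel, leaving
\begin{equation*}
    d\,\bigl(T_1(\delta)-\overline{\Delta_1}(\delta)\bigr)=\sum_{\delta'\text{ in case }(b.2)}a_{\delta'}\bigl(\overline{\Delta_1}(\delta')-T_1(\delta')\bigr).
\end{equation*}

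The conclusion then comes from a locality estimate on both sides. By hypothesis (5) the support of $T_1(\eta)$ lies within distance $s/2$ of $\eta$ for every $\eta$; and since $\Delta_1$ is a fixed finite sum of translations agreeing with $T_1$ on the generic range by (2), the support of $\overline{\Delta_1}(\eta)=\mathrm{str}^?(\Delta_1(\eta))$ is likewise confined to a neighborhood of $\eta$ of the same size — the only displacement caused by $\mathrm{str}^?$ beyond the raw translations comes from straightening entries that $\Delta_1$ has pushed below zero, and these are small entries, so the relations in $\Rel^?$ move them only boundedly. Hence the left-hand side of the last identity is supported within distance $s/2$ of $\delta$, while the right-hand side is supported within distance $s/2$ of the various $\delta'$ occurring in case (b.2), each of which lies at distance at least $s$ from $\delta$; these two regions are disjoint. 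Therefore $d\,\bigl(T_1(\delta)-\overline{\Delta_1}(\delta)\bigr)=0$, and since $d$ is a non-zero-divisor and $R[\Typ^0_r]$ is a free $R$-module, multiplication by $d$ is injective, so $T_1(\delta)=\overline{\Delta_1}(\delta)$, i.e. $\delta\in\Typ_{known}$.

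I expect the genuine work to be in that last locality estimate: fixing the distance function on $\Typ^0_r$ (as in \cite{CoratoZanarella}) and checking carefully that both $T_1$ and the straightened operator $\mathrm{str}^?\Delta_1$ have spread bounded by $s/2$, in particular controlling how the straightening relations $\Rel^?$ redistribute terms when a translation in $\Delta_1$ crosses a wall of the cone $\Typ^0_r\subseteq\Typ_r$. Once that is in place, the algebraic core — evaluating $T_1T_2=T_2T_1$ at $Z$ and cancelling the non-zero-divisor $d$ — is immediate.
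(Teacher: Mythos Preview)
Your proposal is correct and follows essentially the same route as the paper: derive $\overline{\Delta_1}(T_2(Z))=T_1(T_2(Z))$ from (1), (2), (3), (4), (a) and \Cref{extcommute}, expand $T_2(Z)$ via (b), cancel the (b.1) terms, and separate the (b.2) terms from the $\delta$-term by the locality bound (5). The paper's proof is simply a terser version of your argument; in particular the paper also does not spell out the locality estimate for $\overline{\Delta_1}$ that you flag in your last paragraph---it invokes ``by (b) and (5)'' and leaves it at that. Your instinct that this is the point requiring care is sound (hypothesis (5) as written refers ambiguously to ``$T$''), but it is not additional work you would need to supply beyond what the paper itself provides: in the applications the distance is measured via $\Sigma$ or coordinatewise bounds, $\Delta_1$ is a finite sum of translations of the same magnitude as those controlled by (5), and the straightening relations in $\Rel^?$ only reflect small negative entries back into the cone, so the spread of $\mathrm{str}^?\Delta_1$ matches that of $T_1$.
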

\begin{proof}
We have
\begin{equation*}
    \Delta_2(\Delta_1(Z))\stackrel{\text{(4)}}{\equiv} T_2(\Delta_1(Z))\stackrel{\text{(a)}}{\equiv}T_2(T_1(Z))\stackrel{\text{(1)}}{=}T_1(T_2(Z))\mod\Rel^?.
\end{equation*}
By (1), (2) and \Cref{extcommute}, we have that $\Delta_2$ and $\Delta_1$ commute, and thus
\begin{equation*}
    \Delta_1(T_2(Z))\stackrel{\text{(3), (4)}}{\equiv}\Delta_1(\Delta_2(Z))=\Delta_2(\Delta_1(Z))\equiv T_1(T_2(Z))\mod\Rel^?.
\end{equation*}
Thus by (b) and (5), we conclude that
\begin{equation*}
    \mathrm{str}(\Delta_1(d\cdot\delta))=T_1(d\cdot\delta).
\end{equation*}
Since $d$ is not a zero divisor, this then means that $\delta\in\Typ_{known}.$
\end{proof}

\subsubsection{Case $\flat$}
\begin{lemma}
    Let $\chi\in\mathrm{Sign}^r$ and consider a box $B\defeq[a_1,b_1]\times\cdots\times[a_r,b_r]$ with $a_i,b_i\in\Z$ and $b_i-a_1\in2\Z_{\ge1}.$ We think of the integral points of $B$ as elements of $\Typ_r$ with the signs $\chi.$ Then there exist $Z_B\in R[\Typ_r]$ such that i) $Z_B$ is supported in the interior of $B,$ ii) $\Delta^\intert_r(Z_B)$ is supported in $\{a_1,\frac{a_1+b_1}{2},b_1\}\times\cdots\times\{a_r,\frac{a_r+b_r}{2},b_r\},$ iii) for $v$ a vertex of $B,$ the coefficient of $\Delta^\intert_r(Z_B)$ at $v$ is $\pm q^{a_v}$ for some positive integer $a_v.$
\end{lemma}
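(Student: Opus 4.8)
The plan is to diagonalize the problem over the $r$ coordinates and then solve a single one–variable problem for each of them. Write $[w]_j$ for the coefficient of the basis element $\delta_j$ in $w$.

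\emph{Reduction to one variable.} Since $t^1(\varepsilon)$ never changes the signs, $\Delta^\intert_r$ carries the $R$–span of the types with signs $\chi$ into itself; identifying this span with $R[\Z]^{\otimes r}$ via $\star$, the exponent $\sum_i(\cfactor r-i+\alpha)\tfrac{1+\varepsilon_i}{2}$ factors over the site(s) feeding each coordinate, and one reads off that $\Delta^\intert_r$ acts as $C_1\otimes\cdots\otimes C_r$, where $C_i$ acts on the $i$-th copy of $R[\Z]$ by
\[
C_i(\delta_k)=\begin{cases}\delta_{k-1}+q^{\,r-i+\alpha}\,\delta_{k+1},&\text{in the cases \Ugen, \Ogen,}\\[4pt]\delta_{k-1}+\mu_i(q+1)\,\delta_k+q\mu_i^{\,2}\,\delta_{k+1},\quad\mu_i\defeq q^{\,2r-2i+1},&\text{in the case \SPgen}\end{cases}
\]
($\delta_k$ denoting the type whose unique entry is $k$, with the prescribed sign). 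Thus it suffices to produce, for each $i$, an element $z^{(i)}\in R[\Z]$ supported in the open interval $(a_i,b_i)$ with $C_i(z^{(i)})$ supported in $\{a_i,m_i,b_i\}$, $m_i\defeq\tfrac{a_i+b_i}{2}$, and with $[C_i(z^{(i)})]_{a_i}$, $[C_i(z^{(i)})]_{b_i}$ each of the form $\pm q^{e}$, $e>0$: then $Z_B\defeq z^{(1)}\star\cdots\star z^{(r)}$ works, since $\Delta^\intert_r(Z_B)=C_1(z^{(1)})\star\cdots\star C_r(z^{(r)})$ is supported in $\prod_i\{a_i,m_i,b_i\}$, and at a vertex $v=(v_1,\dots,v_r)$ (so $v_i\in\{a_i,b_i\}$) has coefficient $\prod_i[C_i(z^{(i)})]_{v_i}=\pm q^{\,\sum_i e_i}$ with $\sum_i e_i>0$.

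\emph{The cases \Ugen and \Ogen.} Put $m_i'\defeq(b_i-a_i)/2\ge1$, $\lambda_i\defeq q^{\,r-i+\alpha}$, and take the telescoping element $z^{(i)}\defeq q\sum_{t=0}^{m_i'-1}(-\lambda_i)^{t}\,\delta_{a_i+1+2t}$, which is supported on $\{a_i+1,a_i+3,\dots,b_i-1\}\subseteq(a_i,b_i)$. A direct check shows the contributions of consecutive summands cancel in pairs, leaving $C_i(z^{(i)})=q\,\delta_{a_i}-(-1)^{m_i'}q\lambda_i^{\,m_i'}\,\delta_{b_i}$, so $C_i(z^{(i)})$ is supported in $\{a_i,b_i\}$ with coefficients $q$ and $(-1)^{m_i'+1}q^{\,1+m_i'(r-i+\alpha)}$, i.e.\ $\pm$ a power of $q$ with positive exponent (an integer, except in the case \Ugen where $\alpha=\tfrac12$ makes it a positive half–integer, i.e.\ a positive power of $q_0$). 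As the coefficients of $z^{(i)}$ are themselves such monomials, $z^{(i)}\in R[\Z]$ for any $R$, irrespective of the box's location.

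\emph{The case \SPgen.} This is the delicate one. Here $C_i$ is a shift composed with two first–order operators, so the recurrence $[C_i(z)]_j=0$ has characteristic roots $-\mu_i$ and $-q\mu_i$. I would build $z^{(i)}$ by gluing two characteristic–root solutions: on $\{a_i,\dots,m_i\}$ the branch vanishing at $a_i$, normalized as $z_j=(-\mu_i)^{j}\tfrac{q^{\,j-a_i}-1}{q-1}$, and on $\{m_i,\dots,b_i\}$ the branch vanishing at $b_i$, with normalization fixed by matching the two branches at $j=m_i$ --- possible and unique because $q\ne1$ keeps the relevant coefficient nonzero. Such $z^{(i)}$ automatically kills $[C_i(z^{(i)})]_j$ for $j\in(a_i,b_i)\setminus\{m_i\}$ (the value at $m_i$ being unconstrained) and is supported in $(a_i,b_i)$. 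One then computes from the explicit branches that the apparent denominator $q-1$ cancels in the two \emph{boundary} values, $z^{(i)}_{a_i+1}=(-\mu_i)^{a_i+1}$ and $z^{(i)}_{b_i-1}=(-1)^{b_i+1}\mu_i^{\,b_i-1}q^{\,m_i'-1}$, whence $[C_i(z^{(i)})]_{a_i}=z^{(i)}_{a_i+1}$ and $[C_i(z^{(i)})]_{b_i}=q\mu_i^{\,2}z^{(i)}_{b_i-1}$ are $\pm$ a power of $q$, with exponents $(a_i+1)(2r-2i+1)$ and $(b_i+1)(2r-2i+1)+m_i'$. Finally the branch formulas show every coefficient of $z^{(i)}$ is a sign times a power of $\mu_i$ times a polynomial $1+q+\cdots+q^{s}$, so $z^{(i)}$ has coefficients in $\Z[q]$ after multiplying by a single power of $q$; replacing $z^{(i)}$ by $q^{\,c_i}z^{(i)}$ for a suitable $c_i\ge0$ (needed only when $a_i<0$, to make both the minimal $q$–exponent occurring in $z^{(i)}$ and the exponent of its vertex coefficient at $a_i$ strictly positive) produces an admissible $z^{(i)}$ over any $R$. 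Assembling the $z^{(i)}$ as above proves the lemma.

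I expect the real obstacle to be this symplectic one–variable problem: although the recurrence defining $z^{(i)}$ has coefficients divisible by $q+1$ and the two characteristic–root branches carry $q-1$ in their denominators, one must verify that the two boundary values $z^{(i)}_{a_i+1}$, $z^{(i)}_{b_i-1}$ --- hence the two vertex coefficients --- emerge as genuine monomials in $q$, and that $z^{(i)}$ itself is polynomial up to an overall power of $q$. Once the tensor–product reduction is in hand, everything else is routine bookkeeping.
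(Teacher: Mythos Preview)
Your argument is correct. The tensor factorization $\Delta^\intert_r=C_1\otimes\cdots\otimes C_r$ is right in all cases, and in the cases \Ugen, \Ogen your telescoping element is exactly the paper's construction (up to the harmless extra factor of $q$ you insert to force the exponent at the ``small'' vertex to be positive).

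The genuine difference is in the case \SPgen. You treat each $C_i$ as a second-order operator on $R[\Z]$ and build $z^{(i)}$ by gluing two characteristic-root branches at the midpoint; this forces you into a separate analysis, a division by $q-1$ that must be shown to cancel, and a rescaling by $q^{c_i}$ when $a_i<0$. The paper instead exploits the alternative expression
\[
\Delta^\intert_r=\sum_{\varepsilon\in\{0,1\}^{\cfactor r}} q^{\sum_j w_j\varepsilon_j}\,t^2(\varepsilon)\,t(-1,\ldots,-1),
\]
i.e.\ it works in the $\cfactor r$-index parameterization rather than the $r$-coordinate one. This has the effect of factoring each of your $C_i$ (in the symplectic case) as a product of two \emph{first}-order shifts with weights $q^{w_{2i-1}},q^{w_{2i}}$ --- precisely your characteristic roots $q\mu_i,\mu_i$ --- and then the same one-line telescoping works uniformly in all three cases: one takes $Z_B=\bigl(\sum_{c}\prod_j(-q^{w_j})^{c_j}t^2(c)\bigr)\delta(a+1,\chi)$ with $c_j\in[0,m_i'-1]$, and a direct expansion shows $\Delta^\intert_r(Z_B)$ is supported on $c_j\in\{0,m_i'\}$, giving a single monomial $\pm q^{\sum w_jc_j}$ at each vertex with no denominators to clear. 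What you gain is a proof entirely inside $R[\Z]^{\otimes r}$ with a transparent recurrence-theoretic picture; what the paper gains is uniformity across the three cases and the complete avoidance of the symplectic special case, the $q-1$ bookkeeping, and the rescaling step.

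(Minor remark: both your proof and the paper's give exponent $0$ at the vertex $(a_1,\ldots,a_r)$ before any rescaling, and half-integer exponents can occur in the case \Ugen; the lemma's ``positive integer'' is a slight imprecision --- all that is used downstream is that the vertex coefficients are non-zero-divisors, which $\pm q^{a_v}$ or $\pm q_0^{a_v}$ certainly are under the standing hypothesis that $q$ is not a zero-divisor.)
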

\begin{proof}
We write $x_1,\ldots,x_{\cfactor r},y_1,\ldots,y_{\cfactor r}$ to be $(x_i,y_i)=(a_i,b_i-2)$ in the cases \Ugen, \Ogen, and $(x_{2i+1},y_{2i+1}),(x_{2i},y_{2i})=(a_i,b_i-2)$ in the case \SPgen.

We will only use that $\Delta^\intert_r$ is of the form $\Delta^\intert_r=\sum_{\varepsilon\in\{0,1\}^{\cfactor r}} q^{\sum_iw_i\varepsilon_i}t^2(\varepsilon)t(-1,\ldots,-1).$ Consider the operator
\begin{equation*}
    T_B\defeq\sum_{\substack{c_1,\ldots,c_{\cfactor r}\\ c_i\in[0,(y_i-x_i)/2]}}\prod_i(-q^{w_i})^{c_i}\cdot t^2(c)
\end{equation*}
Then we have
\begin{equation*}
\begin{split}
    \Delta^\intert_r T_B&=t(-1,\ldots,-1)\sum_{\varepsilon\in\{0,1\}^{\cfactor r}}q^{\sum_iw_i\varepsilon_i}\sum_{\substack{c_1,\ldots,c_{\cfactor r}\\ c_i\in[0,(y_i-x_i)/2]}}\prod_i(-q^{w_i})^{\frac{1}{2}(c_i-x_i-1)}\cdot t^2(c+\varepsilon)\\
    &=t(-1,\ldots,-1)\sum_{\substack{c_1,\ldots,c_{\cfactor}\\c_i\in[0,1+(y_i-x_i)/2]}}(-q)^{\sum w_ic_i}t^2(c)\cdot\sum_{\substack{\varepsilon\in\{0,1\}^{\cfactor r}\\
    c_i-\varepsilon_i\in[0,(y_i-x_i)/2]}}(-1)^{\Sigma(\varepsilon)}.
\end{split}
\end{equation*}
This simplifies to
\begin{equation*}
    \Delta^\intert_r T_B=t(-1,\ldots,-1)\sum_{\substack{c_1,\ldots,c_{\cfactor r}\\ c_i\in\{0,1+(y_i-x_i)/2\}}}(-q)^{\sum w_ic_i}(-1)^{\#\{c_i\neq0\}}t^2(c)
\end{equation*}
and thus we can take $Z_B=T_B\Type{a_i+1}{\chi_i}.$
\end{proof}

\begin{theorem}
    Assume $q$ is not a zero divisor on $R.$ Suppose that we have $T\colon R[\Typ^0_r]\to R[\Typ^0_r]$ satisfying:
    \begin{enumerate}
        \item There is $s\ge1$ such that $T(\delta)$ is supported between $\delta-s$ and $\delta+s.$
        \item $T$ is generically given by some $\Delta.$
        \item $T$ commutes with $T^\intert_r.$
        \item $\Delta$ preserves $\Rel^\flat_r.$
    \end{enumerate}
    Then the induced map $\mathrm{str}^{\flat,*}\Delta\colon R[\Typ^0_r]\to R[\Typ^0_r]$ agrees with $T.$
\end{theorem}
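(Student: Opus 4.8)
The plan is to run the extension argument of \Cref{extlemma} with $T_{\mathrm{small}}=T^\intert_r$ and $T_{\mathrm{big}}=T$. Concretely, I would apply \Cref{extlemma} with $?=\flat$, $T_1=T$, $T_2=T^\intert_r$, $\Delta_1=\Delta$ and $\Delta_2=\Delta^\intert_r$. Its five standing hypotheses are all in hand: (1) is assumption~(3); for (2), $T$ is generically given by $\Delta$ by assumption~(2) and $T^\intert_r$ is given by $\Delta^\intert_r$ \emph{everywhere} by \Cref{intert}; (3) holds since $\Delta$ preserves $\Rel^\flat_r$ by assumption~(4) and $\Delta^\intert_r$ preserves $\Rel^\flat_r$ by \Cref{intertpreserves}; (4) is again \Cref{intert}; and for (5) one takes $s$ to be a common bound on the magnitudes of the translations occurring in $T$, $\Delta$, $\Delta^\intert_r$ and $T^\intert_r$ (finite, since $\Delta$ and $\Delta^\intert_r$ are finite sums of translations and $T$ has bounded propagation by assumption~(1)). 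Writing $\Typ_{known}\defeq\{\delta\in\Typ^0_r:\mathrm{str}^\flat(\Delta(\delta))=T(\delta)\}$, note first that $\Typ_{known}$ contains every $K$-generic element as soon as $K$ exceeds the genericity threshold of $\Delta$: on such $\delta$ every translation appearing in $\Delta$ keeps one inside the dominant chamber, so $\mathrm{str}^\flat$ acts as the identity on $\Delta(\delta)$, while $T(\delta)=\Delta(\delta)$ by assumption~(2). The goal is to bootstrap this to $\Typ_{known}=\Typ^0_r$.

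I would do this by a downward induction on a \emph{genericity defect} of $\delta\in\Typ^0_r$. Identifying $\delta$ with its weakly decreasing tuple $e_1\ge\cdots\ge e_r\ge0$ of entries (with signs carried along), fix $K$ large (at least the genericity threshold of $\Delta$ and at least $2s$) and let the defect count, with multiplicity, the indices $i$ at which the gap $e_i-e_{i+1}$ (resp.\ the bottom entry $e_r$) is smaller than $K$, refined by a secondary statistic making the order well-founded (for instance the height of the lowest small gap). The base case of defect $0$ is exactly $K$-genericity, already handled. For the inductive step, given $\delta$ of positive defect I would invoke the preceding box lemma: choose a box $B=\prod_i[a_i,b_i]$ with $b_i-a_i$ even, taken enormous in the coordinates where $\delta$ is already generic and calibrated in the defect coordinate(s) so that passing to the box's grid strictly lowers the defect, arranged so that $\delta$ is one of the vertices $v$ of $\prod_i\{a_i,\tfrac{a_i+b_i}{2},b_i\}$. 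Then the coefficient of $\delta$ in $\Delta^\intert_r(Z_B)$ is $\pm q^{a_v}$, which is a nonzerodivisor because $q$ is a nonzerodivisor on $R$; I would take $Z=\mathrm{str}^\flat(Z_B)$ and $d=\pm q^{a_v}$ and feed these into \Cref{extlemma}. Verifying its hypotheses (a) and (b) amounts to checking that $Z$ is supported on elements of strictly smaller defect (hence in $\Typ_{known}$ by induction), and that every other element appearing in $T^\intert_r(Z)=\mathrm{str}^\flat(\Delta^\intert_r(Z_B))$ is either of strictly smaller defect or at distance $\ge s$ from $\delta$ — the huge side lengths in the generic coordinates guarantee the ``far'' alternative for all grid points that move in those directions. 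Then \Cref{extlemma} yields $\delta\in\Typ_{known}$, completing the induction, and therefore $\mathrm{str}^{\flat,*}\Delta=T$ on all of $R[\Typ^0_r]$.

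The main obstacle is the box construction in the inductive step, and in particular its interaction with $\mathrm{str}^\flat$. Unlike the case where $\delta$ is already generic, for a $\delta$ with repeated entries no box having $\delta$ as a vertex can be kept entirely inside the dominant chamber, so $\mathrm{str}^\flat$ genuinely acts on the grid points of $B$ through the straightening relations $\Rel^\flat_r$ — including the sign-reflection relations $\Rel^\flat(m)$ (resp.\ $\Rel^\flat\TypeCC{m}{\chi}$) that convert negative entries into combinations of positive ones. One has to choose $B$ (whether $\delta$ sits at the ``bottom'' or ``top'' corner in each coordinate, the exact side lengths, and the sign pattern $\chi$ passed to the box lemma) so that after applying $\mathrm{str}^\flat$ the support of $\mathrm{str}^\flat(Z_B)$, together with the correction terms produced by straightening, all lie in the inductively known locus, while $\delta$ remains pinned with its nonzerodivisor coefficient and no uncontrolled term lands within distance $s$ of $\delta$. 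The remaining work — the bookkeeping of edge cases (zero entries, blocks of equal entries, the bottom gap $e_r$) and of the $\mathrm{Sign}$-labels — is routine, but it is where the details live.
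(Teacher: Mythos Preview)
Your overall strategy matches the paper's: set up \Cref{extlemma} with $T_2=T^\intert_r$, $\Delta_2=\Delta^\intert_r$, and feed in $Z=\mathrm{str}^\flat(Z_B)$ from the box lemma. The verification of the five standing hypotheses is correct, as is your observation that the nonzerodivisor $d=\pm q^{a_v}$ comes from the vertex coefficient.

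The gap is in the box construction itself. You propose to take the box enormous in the coordinates where $\delta$ is already \emph{generic} and calibrated in the defect coordinates. This is backwards from what works. If a coordinate $i$ has large neighbouring gaps and you let $g_i$ range over $[f_i,f_i+2N]$, then in the interior of $B$ the gap $g_{i-1}-g_i$ can become negative, forcing straightening precisely in the coordinates whose large gaps you need to preserve for hypothesis~(a). Your claim that ``$Z$ is supported on elements of strictly smaller defect'' then fails.

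The paper's scheme is an \emph{ordered} relaxation: it drops the gap conditions one position at a time, from $j=1$ upward, so that at each stage the first $j$ coordinates are ``loose'' (no gap requirement in the current $\Typ_{known}$) and the last $r-j$ remain ``tight'' (gaps $\ge n'=n+2s$). The box is
\[
B=\prod_{i\le j}[f_i,\,f_i+2N]\times\prod_{i>j}[f_i-2s,\,f_i]\qquad(N=2sr),
\]
enormous \emph{upward} in the loose prefix and small \emph{downward} in the tight suffix. The tight gaps absorb the $2s$ downward shift, so the interior of $B$ is already weakly decreasing and nonnegative in coordinates $>j$; hence $\mathrm{str}^\flat$ only touches coordinates $1,\ldots,j$, where the inductive hypothesis imposes nothing. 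Since straightening does not decrease the minimum entry among those coordinates, one gets $g_j\ge f_j+1$, which pushes the $j$-th gap from $m-1$ up to $m$ and places $(g,\chi)$ in the previously known region --- this is (a). For (b), any other grid point either has some coordinate $i\le j$ shifted by $\ge N$, giving $\lvert\Sigma(g)-\Sigma(f)\rvert\ge N-2s(r-j)\ge s$, or has all such coordinates fixed and some later one shifted by $s$. So the missing idea is not the box lemma but the asymmetry (up in loose, down in tight) together with the prefix/suffix induction, which confines straightening to where it is harmless --- and in fact helpful, since it nudges $g_j$ upward.
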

\begin{proof}
Note that by \Cref{intert}, we have
\begin{enumerate}\setcounter{enumi}{4}
    \item $\Delta^\intert_r$ preserves $\Rel^\flat_r,$ and the induced map $\Delta^\intert_r\colon R[\Typ^0_r]\to R[\Typ^0_r]$ agrees with $T^\intert_r.$
\end{enumerate}

Denote $\Typ_{known}\subseteq\Typ^0_r$ to be the subset
\begin{equation*}
    \Typ_{known}=\{\delta\in\Typ^0_r\colon T(\delta)\equiv \Delta(\delta)\mod\Rel^\flat_r\}.
\end{equation*}
(2) means that $\Typ_{known}$ contain all $k$-generic $\delta$ for some $k\ge1.$ We will improve this to $\Typ_{known}=\Typ^0_r$ by repeatedly using \Cref{extlemma} for $Z=\mathrm{str}^\flat(Z_B)$ for well-chosen boxes $B.$ Let $s$ be as in the lemma.

We will denote $\Typ_{n,j}\defeq\{(f,\chi)\in\Typ^0_r\colon f_i\ge f_{i+1}+n\text{ for all }j\le i\le r\},$ where if $i=r$ we take $f_{r+1}$ to mean $0.$ Note (2) exactly means $T_{k,0}\subseteq \Typ_{known},$ and note that $\Typ_{0,r}=\Typ^0_r.$ We will prove the following:
\begin{itemize}[label=(Step)]
    \item If $\Typ_{n,j}\subseteq\Typ_{known}$ for some $1\le j\le r,$ then there exist $n'$ (dependent on $n,j,s$) such that $\Typ_{n',j+1}\subseteq\Typ_{known}.$
\end{itemize}

We consider the intermediary sets $\Typ_{n',j+1}^{(m)}=\Typ_{n',j+1}\cap\{(f,\chi)\in\Typ^0\colon f_{j}\ge f_{j+1}+m\}.$ Of course, $\Typ_{n',j+1}^{(0)}=\Typ_{n',j+1},$ and $\Typ_{n,j+1}^{(n)}=\Typ_{n,j}.$ So now it suffices to prove
\begin{enumerate}[label=(Step')]
    \item If $\Typ_{n,j+1}^{(m)}\subseteq\Typ_{known}$ for some $m\ge1,$ then there exist $n'$ with $\Typ_{n',j+1}^{(m-1)}\subseteq\Typ_{known}.$
\end{enumerate}
In fact, we may take $n'=n+2s.$ Consider $\delta=(f,\chi)\in\Typ_{n+a,j+1}^{(m-1)}.$ We look at
\begin{equation*}
    Z\defeq \mathrm{str}^\flat(Z_{[f_1,f_1+2N],\ldots,[f_j,f_j+2N],[f_{j+1}-2s,f_{j+1}],\ldots,[f_{r}-2s,f_{r}]})
\end{equation*}
for $N=2sr.$ Note that the straightening relations can only happen in the indices $[1,j].$ Since the straightening does not decrease the minimum nor increase the maximum, we must have that if $(g,\chi)$ is in the support of $Z,$ then we have $g_j\ge f_j+1\ge (f_{j+1}+(m-1))+1=f_{j+1}+m\ge g_{j+1}+m,$ as well as, for $i\in[j+1,r],$ that $g_i\ge f_i-s\ge(f_{i+1}+n')-2s=f_{i+1}+n\ge g_{i+1}+n.$ Thus we have (a). 

As for $T^\intert_r(Z),$ note that if $g=(g_1,\ldots,g_r)$ is in its support and is not $f,$ then either i) $g_1=f_1,\ldots,g_j=f_j,$ in which case $\Sigma(g)\le \Sigma(f)-s,$ or ii) $\Sigma(g)\ge\Sigma(f)+N-2(r-j)s.$ By our choice of $N,$ in either case we have $\lvert\Sigma(g)-\Sigma(f)\rvert\ge s,$ and thus (b.1) is satisfied.
\end{proof}
\begin{corollary}\label{flatfinal}
    The induced map $\mathrm{str}^{\flat,*}\Delta_{\le k,r}^\flat\colon R[\Typ^0_r]\to R[\Typ^0_r]$ agrees with $T_{\le k,r}^\flat.$
\end{corollary}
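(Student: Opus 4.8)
The plan is to deduce \Cref{flatfinal} directly from the preceding theorem, applied with $\Delta=\Delta^\flat_{\le k,r}$ and with $T$ the endomorphism $T^{\flat,*}_{\le k,r}$ of $R[\Typ^0_r]$ obtained by transporting the adjoint Hecke action along $\mathrm{typ}^\flat$. That theorem requires $q$ to be a non-zero-divisor on $R$, which I would arrange by base change: both $T^{\flat,*}_{\le k,r}$ and $\mathrm{str}^{\flat,*}\Delta^\flat_{\le k,r}$ are obtained, functorially in $R$, from the corresponding operators defined over $\Z$ — or over $\Z[1/2]$ in the case \Ogen (where $2\in R^\times$), in case $\mathrm{str}^\flat$ a priori needs $1/2$ there. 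Indeed the structure constants of $T^{\flat,*}_{\le k,r}$ are counts of lattices, those of $\Delta^\flat_{\le k,r}$ are products of the integers $q^{\widetilde{\inv}(\varepsilon)}$, $\Dfactor(\cdot)$ and $W_0(\cdot,\cdot)$, and $\mathrm{str}^\flat$ has integral structure constants. Since $\Z$ (resp.\ $\Z[1/2]$) is a domain in which $q>0$ is not a zero divisor, it suffices to prove the identity there, whence it holds over $R$. It then remains to verify the four hypotheses of the theorem.

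Hypotheses (1), (2) and (4) are immediate or already established. For (1): $T^\flat_{\le k,r}$ is a fixed finite $R$-linear combination of the operators $T^\flat_{i,r}=\mathrm{char}(K^\flat\varpi^{\underline{t}_i}K^\flat)$ with $0\le i\le k$, and each of these relates lattices at relative position with entries in $\{-1,0,1\}$, hence changes $\mathrm{typ}^\flat$ by at most a bounded amount, so $T^{\flat,*}_{\le k,r}(\delta)$ is supported within a bounded distance of $\delta$. Hypothesis (2), that $T^{\flat,*}_{\le k,r}$ is generically given by $\Delta^\flat_{\le k,r}$, is precisely \Cref{generic}. Hypothesis (4), that $\Delta^\flat_{\le k,r}$ preserves $\Rel^\flat_r$, is \Cref{preserve}.

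The one remaining hypothesis, (3), namely that $T^{\flat,*}_{\le k,r}$ commutes with $T^\intert_r$, is where I expect the only genuine subtlety. I would argue it via the intertwining property of $T^{\circ\bullet}_r$: since $T^{\circ\bullet}_r$ is convolution with $\mathrm{char}(K^{\bullet,\flat}K^\flat)$, associativity of convolution in $C_c^\infty(G^\flat)$ shows that $T^{\circ\bullet}_r$ carries the action of $h\in\mathcal{H}(G^\flat,K^{\bullet,\flat})$ on $\mathcal{S}(X^\flat/K^{\bullet,\flat})$ to the action of $\iota(h)$ on $\mathcal{S}(X^\flat/K^\flat)$, where $\iota\colon\mathcal{H}(G^\flat,K^{\bullet,\flat})\rightiso\mathcal{H}(G^\flat,K^\flat)$ is the canonical isomorphism of these two commutative algebras arising from the Cartan decompositions $G^\flat=K^\flat AK^\flat=K^{\bullet,\flat}AK^{\bullet,\flat}$ with respect to the common maximal split torus $A$; moreover $\iota$ matches $T^{\bullet,\flat}_{i,r}$ with $T^\flat_{i,r}$, and hence the combination defining $T^{\bullet,\flat}_{\le k,r}$ with that defining $T^\flat_{\le k,r}$. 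Taking adjoints, translating through the bijections $\mathrm{typ}^\flat$ and $\mathrm{typ}^{\bullet,\flat}$ — under which $T^{\flat,*}_{\le k,r}$ and its $\bullet$-analogue become the same endomorphism of $R[\Typ^0_r]$, since the relative Cartan datum is the same for $(V^\flat,\langle\cdot,\cdot\rangle^\flat)$ and $(V^\flat,\varpi\langle\cdot,\cdot\rangle^\flat)$ — and using \Cref{intert} to identify $T^{\circ\bullet,*}_r$ with $T^\intert_r$, one obtains that $T^{\flat,*}_{\le k,r}$ commutes with $T^\intert_r$. With all four hypotheses in hand the theorem yields $\mathrm{str}^{\flat,*}\Delta^\flat_{\le k,r}=T^\flat_{\le k,r}$. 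The mathematical content of the argument lies entirely in the already-established \Cref{generic}, \Cref{preserve} and \Cref{intert} together with the extension argument behind the preceding theorem; the main obstacle that remains here is the (routine but fiddly) bookkeeping in (3) relating the self-dual and $\varpi$-modular pictures of the Hecke action.
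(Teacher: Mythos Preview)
Your proposal is correct and takes essentially the same approach as the paper, which applies the preceding extension theorem with hypotheses (2) and (4) supplied by \Cref{generic} and \Cref{preserve}, and handles general $R$ by base change from $\Z[1/\#\mathrm{Sign}]$ (i.e., $\Z$ in cases \Ugen, \SPgen and $\Z[1/2]$ in case \Ogen). The paper's proof leaves hypotheses (1) and (3) implicit; your explicit verification of (3) via the intertwining property of $T^{\circ\bullet}_r$ and the identification of the $\circ$- and $\bullet$-Hecke actions on $R[\Typ^0_r]$ is the intended (if tacit) argument.
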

\begin{proof}
    If $q$ is not a zero divisor of $R,$ this follows by the theorem above and \Cref{preserve} and \Cref{generic}.

    Note that the general case follows immediately from the fact that we can apply the theorem for $R=\Z[\frac{1}{\#\mathrm{Sign}}].$
\end{proof}

\subsubsection{Case $\sharp$}
\begin{theorem}
    Assume $q$ is not a zero divisor on $R.$ Suppose that we have $T\colon R[\Typ^0_r]\to R[\Typ^0_r]$ satisfying:
    \begin{enumerate}
        \item There is $s\ge1$ such that $T(\delta)$ is supported between $\delta-s$ and $\delta+s.$
        \item $T$ is generically given by some $\Delta.$
        \item $T$ commutes with $T_{\le 1,r}^\sharp.$
        \item $\Delta$ preserves $\Rel^\sharp_r.$
    \end{enumerate}
    Then the induced map $\mathrm{str}^{\sharp,*}\Delta\colon R[\Typ^0_r]\to R[\Typ^0_r]$ agrees with $T.$
\end{theorem}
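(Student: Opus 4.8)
\emph{Proof proposal.} The plan is to run the same extension machinery as in the Case $\flat$ theorem, with $T_{\le 1,r}^\sharp$ now playing the part of $T^\intert_r$. The first step is to check that the small operator satisfies the hypotheses of \Cref{extlemma}: by \Cref{preserve} the operator $\Delta_{\le 1,r}^\sharp$ preserves $\Rel^\sharp_r$; by \Cref{genericsharp} the induced map $\mathrm{str}^{\sharp,*}\Delta_{\le 1,r}^\sharp$ agrees with $T_{\le 1,r}^{\sharp,*}$ --- here one uses \Cref{flathyp}, which is \Cref{flatfinal}, established above using only the Case $\flat$ theorem, so the argument is non-circular (this is the non-linearity announced in \Cref{nonlinear}); and $T_{\le 1,r}^{\sharp,*}$ is generically given by $\Delta_{\le 1,r}^\sharp$, since by \Cref{generic} at $k=1$ the two agree on generic $\delta$ while $\mathrm{str}^\sharp$ acts as the identity on $\Delta_{\le 1,r}^\sharp(\delta)$ once $\delta$ is generic enough (the translations occurring in $\Delta_{\le 1,r}^\sharp$ have magnitude $\le 2$). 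It is worth recording at this stage that, in the cases \Us, \Os (so $\cfactor=1$), the factor $W_{1/2}(r-1,\lambda_0(\varepsilon))$ forces $\varepsilon$ to have at most one nonzero entry, so that
\begin{equation*}
    \Delta_{\le 1,r}^\sharp = c_0\cdot\mathrm{id} + \sum_{i=1}^r\bigl(q^{i-1}\,t^2(-e_i) + c_i^+\,t^2(e_i)\bigr),
\end{equation*}
where $e_i$ denotes the $i$-th standard basis vector of $\Z^{\cfactor r}$; that is, $\Delta_{\le 1,r}^\sharp$ only moves a single coordinate by $\pm2$. The crucial feature is that the coefficient $q^{i-1}$ of the downward translation $t^2(-e_i)$ is a monomial in $q$, hence a non-zero-divisor in $R$ by the standing hypothesis.

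Next I would reproduce the bookkeeping of the $\flat$ proof: set $\Typ_{known}\defeq\{\delta\in\Typ^0_r\colon T(\delta)\equiv\Delta(\delta)\bmod\Rel^\sharp_r\}$, which by hypothesis (2) contains all $k$-generic elements for some $k\ge1$; introduce the filtration $\Typ_{n,j}=\{(f,\chi)\in\Typ^0_r\colon f_i\ge f_{i+1}+n\text{ for }j\le i\le r\}$ together with its refinement $\Typ_{n,j+1}^{(m)}=\Typ_{n,j+1}\cap\{f_j\ge f_{j+1}+m\}$; and aim for the implications $\Typ_{n,j+1}^{(m)}\subseteq\Typ_{known}\Rightarrow\Typ_{n+2s,j+1}^{(m-1)}\subseteq\Typ_{known}$, whose chaining (finitely many steps, raising the genericity level at each) carries the generic region $\Typ_{k,0}$ all the way to $\Typ_{0,r}=\Typ^0_r$. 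As in \Cref{flatfinal}, the hypothesis that $q$ be a non-zero-divisor can then be removed by base change to $R=\Z[\tfrac{1}{\#\mathrm{Sign}}]$.

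The one new ingredient is the single-step extension, where \Cref{extlemma} is invoked with $T_1=T$, $\Delta_1=\Delta$, $T_2=T_{\le 1,r}^{\sharp,*}$ and $\Delta_2=\Delta_{\le 1,r}^\sharp$. Here the box lemma of the $\flat$ case is \emph{not} available, because it exploited that $\Delta^\intert_r$ is, up to the global shift $t(-1,\dots,-1)$, a \emph{product} $\prod_i(\mathrm{id}+q^{w_i}t^2(e_i))$ of commuting one-coordinate operators, whereas $\Delta_{\le 1,r}^\sharp$ is only a \emph{sum} of one-coordinate pieces. On the other hand that same sum-structure makes a more direct test element plausible: given a target $\delta\in\Typ_{n+2s,j+1}^{(m-1)}$, take $Z$ to be (the straightened form of) the translate of $\delta$ raising coordinate $j$ by $2$ --- which lies in $\Typ_{n,j+1}^{(m)}\subseteq\Typ_{known}$ --- possibly corrected by further terms supported in the already-known region. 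Applying $\Delta_{\le 1,r}^\sharp$ to $Z$ produces $q^{j-1}\delta$ from the downward shift $t^2(-e_j)$, with $q^{j-1}$ the required non-zero-divisor; the identity term, the upward-$j$ shift, and all the cross-coordinate shifts $t^2(\pm e_i)$ with $i\ne j$ change only coordinates $\ne j$ and keep the generic coordinates $>j$ inside an $n$-generic window, so they land in $\Typ_{known}$; and any term drifting out of this picture differs from $\delta$ by at least $s$ in the $\Sigma$-grading. Hypotheses (a)--(b) of \Cref{extlemma} are then met and $\delta\in\Typ_{known}$.

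The main obstacle will be making that last paragraph precise: one must track carefully how straightening (which enters through $\mathrm{str}^\sharp$ when the translate of $\delta$ is not already sorted, and again when verifying that the cross-coordinate shifts stay inside the chosen window) interacts with the genericity filtration, and must rule out, by the usual $\Sigma$-distance count, any uncontrolled term of $T_{\le 1,r}^{\sharp,*}(Z)$ other than $q^{j-1}\delta$. Granting this, \Cref{extcommute} together with \Cref{extlemma} finishes the proof exactly as in the $\flat$ case.
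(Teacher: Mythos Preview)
Your overall strategy is right and matches the paper: use $T_{\le 1,r}^{\sharp}$ as the small operator (verified via \Cref{flatfinal} and \Cref{genericsharp}, so non-circular), note that $\Delta_{\le 1,r}^\sharp$ moves only one coordinate by $\pm2$ with downward coefficient $q^{i-1}$, and iterate \Cref{extlemma}. But the single-step extension has a genuine gap. Your claim that ``the cross-coordinate shifts $t^2(\pm e_i)$ with $i\ne j$ \ldots\ land in $\Typ_{known}$'' is false: the shift $t^2(e_{j+1})$ raises $f_{j+1}$ by $2$ and hence \emph{decreases} the gap $f_j-f_{j+1}$ by $2$, exactly as $t^2(-e_j)$ does. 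So applying $\Delta_{\le 1,r}^\sharp$ to any single $Z$ produces \emph{two} terms with gap $m-1$ at position $j$, not one; your ``possibly corrected by further terms'' hedge does not address this. The paper handles it by taking $Z=\sum_{l=0}^{s-1}(-1)^l a_l\,\delta(e^{(l)},\chi')$ where $e^{(l)}$ raises \emph{both} coordinates $j$ and $j+1$ (by $2(l+1)$ and $2l$); the bad terms then form a chain $f^{(0)},\ldots,f^{(s)}$ and the coefficients $a_l$ (specific powers of $q$) are chosen so that $f^{(1)},\ldots,f^{(s-1)}$ cancel telescopically, leaving only $\delta=f^{(0)}$ and the far-away $f^{(s)}$. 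The $l=0$ summand also requires care because $t^2(-e_{j+1})$ on $e^{(0)}$ can trigger straightening at position $j+1$, producing several sign-variants $\chi'$; the paper absorbs these into the definition of $Z$.

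A second issue is the direction of your filtration. The paper runs the induction \emph{oppositely} to the $\flat$ case, taking $\Typ_{n,j}=\{f_i-f_{i+1}\ge n\text{ for }1\le i\le j\}$ and relaxing the constraint at the largest index first. This matters for two reasons. First, $j=r$ is then genuinely easy: the gap $f_r-0$ is decreased only by $t^2(-e_r)$ (there is no coordinate $r+1$), so a single $Z=(f_1,\ldots,f_r+2)$ works. Second, when building $e^{(l)}$ for $j<r$, the gap at $j-1$ is already large ($\ge n'$), so raising $f_j,f_{j+1}$ causes no sortedness violation and $Z$ lies in $\Typ^0_r$ without straightening. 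In your $\flat$-direction filtration, $f_{j-1}-f_j$ is unconstrained and may be $0$, so raising $f_j$ by $2$ forces straightening of $Z$ itself via $\Rel^\natural$; the resulting terms need not lie in $\Typ_{known}$, and this is not merely bookkeeping.
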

\begin{proof}
By \Cref{flatfinal} and \Cref{genericsharp} we have
\begin{enumerate}\setcounter{enumi}{4}
    \item $\Delta_{\le1,r}^\sharp$ preserves $\Rel^\sharp_r,$ and the induced map $\Delta_{\le1,r}^\sharp\colon R[\Typ^0_r]\to R[\Typ^0_r]$ agrees with $T_{\le1,r}^\sharp.$
\end{enumerate}

Denote $\Typ_{known}\subseteq\Typ^0_r$ to be the subset
\begin{equation*}
    \Typ_{known}=\{\delta\in\Typ^0_r\colon T(\delta)\equiv \Delta(\delta)\mod\Rel_r^\sharp\}.
\end{equation*}
(2) means that $\Typ_{known}$ contain all $k$-generic $\delta$ for some $k\ge1.$ We will improve this to $\Typ_{known}=\Typ^0_r$ by repeatedly using \Cref{extlemma}. Let $s$ be as in the lemma.

We will denote $\Typ_{n,j}\defeq\{(f,\chi)\in\Typ^0_r\colon f_i\ge f_{i+1}+n\text{ for all }1\le i\le j\},$ where if $i=j=r$ we take $f_{r+1}$ to mean $0.$ Note (2) exactly means $T_{k,r}\subseteq \Typ_{known},$ and note that $\Typ_{0,1}=\Typ^0_r.$

We will prove the following:
\begin{itemize}[label=(Step)]
    \item If $\Typ_{n,j}\subseteq\Typ_{known}$ for some $1\le j\le r,$ then there exist $n'$ (dependent on $n,j,s$) such that $\Typ_{n',j-1}\subseteq\Typ_{known}.$
\end{itemize}

We consider the intermediary sets $\Typ_{n,j-1}^{(m)}=\Typ_{n,j}\cap\{(f,\chi)\colon f_j\ge f_{j+1}+m\}$ for $0\le m\le n.$ Note that $\Typ_{n,j-1}^{(n)}=\Typ_{n,j}$ and $\Typ_{n,j-1}^{(0)}=\Typ_{n,j-1}.$ So it suffices to prove
\begin{enumerate}[label=(Step')]
    \item If $\Typ_{n,j-1}^{(m)}\subseteq\Typ_{known}$ for some $m\ge1,$ then there exist $n'$ with $\Typ_{n',j-1}^{(m-1)}\subseteq\Typ_{known}.$
\end{enumerate}

First, for the case $j=r,$ we may take $n'=n+4.$ Note that if $(f,\chi)\in\Typ_{n+2,r-1}^{(m-1)},$ then we may simply take $Z=(f_1,\ldots,f_r+2)\in\Typ_{n,r-1}^{(m)}.$ Note that except for $(f,\chi),$ all the other terms in $T_{1}^{\sharp}(Z)$ also belong to $\Typ_{n,r-1}^{(m)}.$

Now assume $1\le j<r.$ Let $\delta=(f,\chi)\in\Typ_{n',j-1}^{(m-1)}$ for $n'=n+2s+2.$ For $l\ge0,$ denote
\begin{equation*}
\begin{split}
    e^{(l)}&\defeq(f_1,\ldots,f_{j-1},f_j+2(l+1),f_{j+1}+2l,f_{j+2},\ldots,f_r),\\
    f^{(l)}&\defeq(f_1,\ldots,f_{j-1},f_j+2l,f_{j+1}+2l,f_{j+2},\ldots,f_r).
\end{split}
\end{equation*}
Note that if $0\le l< s,$ we have $e^{(l)}\in\Typ_{n,j-1}^{(m)},$ and we also have
\begin{equation*}
    T_{\le1,r}^\sharp(\delta(e^{(l)},\chi))\equiv q^{j-1}\delta(f^{(l)},\chi)+q^{r-j-1}\delta(f^{(l+1)},\chi)\mod R[\Typ_{n,j-1}^{(m)}]\quad\text{for }1\le l\le s.
\end{equation*}

As for $T_{\le1,r}^\sharp\delta(e^{(0)},\chi),$ let $a\ge1$ be such that $f_{j+a}=f_{j+1}$ but $f_{j+a+1}<f_{j+1}.$ This is so that
\begin{equation*}
    \delta(f,\chi)=\Typeee{f_j}{\chi_j}{(f_{j+1})^a}{\psi_1}{f_{j+a+1}}{\chi_{j+a+1}}
\end{equation*}
for $\psi_1\defeq\prod_{j<i\le j+a}\chi_i.$ Then we have, modulo $R[\Typ_{n,j-1}^{(m)}],$
\begin{equation*}
\begin{split}
    T_{\le1,r}^\sharp(\delta(e^{(0)},\chi))\equiv\ & q^{j-1}\delta(f^{(0)},\chi)+\sum_{\psi_2\in\mathrm{Sign}}\beta_{\psi_2}q^{r-j-1}\Typeee{f_j+2}{\chi_j}{f_{j+1}+2}{\psi_2}{(f_{j+1})^{a-1}}{\psi_1\psi_2}
\end{split}
\end{equation*}
for some $\beta_{\psi_2}\in R$ depending on $(a,\psi_1,\psi_2).$ For example, in the case \Us, this is
\begin{equation*}
    \beta=\begin{cases}
    \#\{j+1\le i\le r\colon f_i=f_{j+1}\} & \text{if }f_{j+1}>0, \\
    2\#\{j+1\le i\le r\colon f_i=f_{j+1}\} & \text{if }f_{j+1}=0.
\end{cases}
\end{equation*}
This is more complicated in the case \Os, but we won't need to know explicitly what the $\beta_{\psi_2}$ are. In fact, we only need to know that
\begin{equation*}
    T_{\le1,r}^\sharp(\delta(e^{(0)},\chi))\equiv q^{j-1}\delta(f^{(0)},\chi)+\sum_{\chi'\in I}\beta_{\chi'}q^{r-j-1}\delta(f^{(1)},\chi')\mod R[\Typ_{n,j-1}^{(m)}]
\end{equation*}
for some finite collection $I$ of $\chi'\in\mathrm{Sign}^r$ and for some coefficients $\beta_{\chi'}\in R.$

We take
\begin{equation*}
    Z=a_0\cdot \delta(e^{(0)},\chi)+\sum_{\chi'\in I}\sum_{l=1}^{s-1}(-1)^la_{l,\chi'}\cdot \delta(e^{(l)},\chi')
\end{equation*}
where
\begin{equation*}
    a_0=q^{2(s-1)j},\quad a_{l,\chi'}=\beta_{\chi'}\cdot q^{2(s-1-l)j+lr}\quad\text{for }l\ge1,\ \chi'\in I.
\end{equation*}
This satisfies (a). By the computations above, note that the only possible terms in the support of $T_{\le1,r}^\sharp(Z)$ that do not satisfy (b.2) are the $\delta(f^{(l)},\chi')$ for $1\le l\le s$ and $\chi'\in I.$ Note that $\delta(f^{(s)},\chi')$ satisfy (b.1). Finally, for $1\le l < s,$ the coefficient of $\delta(f^{(l)},\chi')$ is
\begin{equation*}
    \begin{cases}q^{j-1}\cdot (-1)^la_{l,\chi'}+q^{r-j-1}\cdot (-1)^{l-1}a_{l-1,\chi'}&\text{for }2\le l<s,\\
    -q^{j-1}a_{1,\chi'}+\beta_{\chi'}q^{r-j-1}a_0&\text{for }l=1\end{cases}=0
\end{equation*}
and thus we can indeed apply \Cref{extlemma}.
\end{proof}

\begin{corollary}
    The induced map $\mathrm{str}^{\sharp,*}\Delta_{\le k,r}^\sharp\colon R[\Typ^0_r]\to R[\Typ^0_r]$ agrees with $T_{\le k,r}^\sharp.$
\end{corollary}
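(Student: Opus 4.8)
The plan is to deduce this from the extension theorem just established for case \gens, exactly as \Cref{flatfinal} was deduced from the corresponding extension theorem for case \genf. First I would assume that $q$ is not a zero divisor on $R$ and apply that theorem to the operator $T$ on $R[\Typ^0_r]$ given by the (adjoint of the) Hecke action of $T_{\le k,r}^\sharp$, together with $\Delta\defeq\Delta_{\le k,r}^\sharp$. It then suffices to verify the four hypotheses of the theorem. Hypothesis (1) holds because $T_{\le k,r}^\sharp$ is a finite $R$-linear combination of the $T_{i,r}^\sharp$ with $0\le i\le\cfactor r$, each of which is given by a lattice correspondence relating only lattices $\Lambda_1,\Lambda_2\in\Lat^\circ(V^\sharp)$ whose invariants $\mathrm{typ}^\sharp$ differ by a bounded amount; hence $T_{\le k,r}^{\sharp,*}(\delta)$ is supported in a neighbourhood of $\delta$ of size independent of $\delta$. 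Hypothesis (2) is precisely \Cref{generic} (read, for $\delta$ sufficiently generic, under the tautological identification of $\mathrm{str}^\sharp$ with the identity on the generic region). Hypothesis (3) follows from the commutativity of $\mathcal{H}(G^\sharp,K^\sharp)$: since $T_{\le k,r}^\sharp$ and $T_{\le 1,r}^\sharp$ commute in that algebra, their adjoint actions on $\mathcal{S}(X^\sharp/K^\sharp)\iso R[\Typ^0_r]$ commute as well. Hypothesis (4) is \Cref{preserve}. The extension theorem then yields that $\mathrm{str}^{\sharp,*}\Delta_{\le k,r}^\sharp$ agrees with $T_{\le k,r}^\sharp$.

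To remove the assumption that $q$ is not a zero divisor, I would run the standard base-change argument already used for \Cref{flatfinal}: the operators $T_{\le k,r}^\sharp$ and $\Delta_{\le k,r}^\sharp$, the isomorphism $\mathrm{str}^\sharp$, and the submodule $\Rel^\sharp_r$ are all defined over $\Z[\tfrac{1}{\#\mathrm{Sign}}]$, the only denominators occurring being powers of $\#\mathrm{Sign}$ (which equals $2$ in case \Ogen, where $2\in R^\times$ is assumed, and $1$ otherwise). Since $\Z[\tfrac{1}{\#\mathrm{Sign}}]$ is a domain in which $q$ is not a zero divisor, the previous paragraph applies over it, and the resulting identity of $\Z[\tfrac{1}{\#\mathrm{Sign}}]$-linear operators base-changes to an arbitrary coefficient ring $R$.

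I expect no genuine obstacle at this stage: all the substantive work has already been carried out, namely the generic computation \Cref{generic} (which, via \Cref{genMainProp}, \Cref{TlessSums}, and the finite-field counts feeding into \Cref{genMainLemma}, ultimately rests on the $q$-combinatorial identities of \Cref{finFieldSection}), the compatibility \Cref{genericsharp}, and the preservation statement \Cref{preserve}; the present corollary is then a purely formal consequence of the extension theorem.
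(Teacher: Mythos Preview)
Your proposal is correct and follows essentially the same route as the paper: apply the extension theorem for case \gens to $T=T_{\le k,r}^{\sharp}$ with $\Delta=\Delta_{\le k,r}^\sharp$, checking its hypotheses via \Cref{preserve} and \Cref{generic} and the commutativity of the Hecke algebra, and then descend from $R=\Z[\tfrac{1}{\#\mathrm{Sign}}]$ to arbitrary $R$. The paper's proof is just a terser statement of exactly this.
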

\begin{proof}
    If $q$ is not a zero divisor of $R,$ this follows by the theorem above and \Cref{preserve} and \Cref{generic}.

    Note that the general case follows immediately from the fact that we can apply the theorem for $R=\Z[\frac{1}{\#\mathrm{Sign}}].$
\end{proof}
\subsection{Compatibility with the Satake transform}
Assume that $R$ contains a square root of $q,$ so we may consider the Satake transform
\begin{equation*}
    \mathrm{Sat}\colon\mathcal{H}(G^?,K^?)\to R[\boldsymbol{\mu}_1,\ldots,\boldsymbol{\mu}_{\gamma r}]^{\mathrm{sym}}
\end{equation*}
as defined in \Cref{Satdef}.

\begin{theorem}
    For $0\le k\le \cfactor r,$ consider the elements $S_{k,r}^\flat\in\mathcal{H}(G^\flat,K^\flat)$ and $S_{k,r}^\sharp\in\mathcal{H}(G^\sharp,K^\sharp)$ described by
    \begin{equation*}
        S_k^{\flat,*}\defeq\sum_{\substack{\varepsilon\in\{-1,0,1\}^{\cfactor r}\\\lambda_0(\varepsilon)=\cfactor r-k}}q^{\widetilde{\inv}(\varepsilon)}\frac{\Dfactor(\lambda_1(\varepsilon))\Dfactor(\cfactor r-\lambda_{-1}(\varepsilon))}{\Dfactor(\lambda_0(\varepsilon))}t^2(\varepsilon)
    \end{equation*}
    and
    \begin{equation*}
        S_k^{\sharp,*}\defeq\sum_{\substack{\varepsilon\in\{-1,0,1\}^{\cfactor r}\\\lambda_0(\varepsilon)=\cfactor r-k}}q^{\widetilde{\inv}(\varepsilon)+\lambda_1(\varepsilon)}\frac{\Dfactor(\lambda_1(\varepsilon))\Dfactor(\cfactor r-\lambda_{-1}(\varepsilon))}{\Dfactor(\lambda_0(\varepsilon))}t^2(\varepsilon).
    \end{equation*}
    Then
    \begin{equation*}
        \mathrm{Sat}(S_{k,r}^\flat)=\frac{\Dfactor(\cfactor r)}{\Dfactor(\cfactor r-k)}s_k(\boldsymbol{\mu})\quad\text{and}\quad \mathrm{Sat}(S_{k,r}^\sharp)=q^{\frac{k}{2}}\frac{\Dfactor(\cfactor r)}{\Dfactor(\cfactor r-k)}s_k(\boldsymbol{\mu}).
    \end{equation*}
\end{theorem}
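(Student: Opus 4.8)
The plan is to first reduce the statement to a computation of $\mathrm{Sat}(T_{\le k,r}^?)$, and then to carry that computation out by feeding the Lusztig--Kato formulas into the incidence-algebra identities collected in \Cref{finFieldSection}.

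\textit{Reduction.} Expanding $W_\beta(a,n)$ as a sum over ordered $3$-colourings $\{1,\dots,n\}=E_0\sqcup E\sqcup E_1$ with $\lvert E\rvert=a$, and using $W_\beta(a,a)=1$, one reads off term by term in $\varepsilon$ the unipotent lower-triangular identities
\begin{equation*}
\Delta_{\le k,r}^{\flat}=\sum_{j=0}^{k}\frac{\Dfactor(\cfactor r-j)}{\Dfactor(\cfactor r-k)}W_0(\cfactor r-k,\cfactor r-j)\,S_{j,r}^{\flat,*},\qquad
\Delta_{\le k,r}^{\sharp}=\sum_{j=0}^{k}q^{\frac{k-j}{2}}\frac{\Dfactor(\cfactor r-j)}{\Dfactor(\cfactor r-k)}W_{1/2}(\cfactor r-k,\cfactor r-j)\,S_{j,r}^{\sharp,*},
\end{equation*}
in which the coefficient of $S_{k,r}^{?,*}$ is $1$. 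Since $\{T_{\le j,r}^?\}_{0\le j\le\cfactor r}$ is a basis of $\mathcal H(G^?,K^?)$ — being unipotent-triangularly related to the basis $\{T_{j,r}^?\}_{j}$ — we may invert these identities and define $S_{k,r}^?\in\mathcal H(G^?,K^?)$ as the corresponding combination of the $T_{\le j,r}^?$; by \Cref{flatfinal} and its counterpart for $\sharp$ one has $T_{\le j,r}^{?,*}=\mathrm{str}^{?,*}\Delta_{\le j,r}^?$, hence $S_{k,r}^{?,*}=\mathrm{str}^{?,*}(S_k^{?,*})$, and in particular $S_k^{?,*}$ preserves $\Rel_r^?$ — this is the content tacit in the statement. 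Applying $\mathrm{Sat}$ to the displayed identities, read now inside $\mathcal H(G^?,K^?)$, and using invertibility of the coefficient matrices, the theorem becomes equivalent to
\begin{equation*}
\mathrm{Sat}(T_{\le k,r}^\flat)=\frac{\Dfactor(\cfactor r)}{\Dfactor(\cfactor r-k)}\sum_{j=0}^{k}W_0(\cfactor r-k,\cfactor r-j)\,s_j(\boldsymbol{\mu}),\qquad \mathrm{Sat}(T_{\le k,r}^\sharp)=q^{k/2}\frac{\Dfactor(\cfactor r)}{\Dfactor(\cfactor r-k)}\sum_{j=0}^{k}W_{1/2}(\cfactor r-k,\cfactor r-j)\,s_j(\boldsymbol{\mu}).
\end{equation*}

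\textit{Computation of $\mathrm{Sat}(T_{\le k,r}^?)$.} Substitute the Lusztig--Kato formulas of \Cref{LKform} for $\mathrm{Sat}(T_{i,r}^?)$ into the defining expansion $T_{\le k,r}^\flat=\sum_i S\TypeC{k-i}{}{2r-\frac{2}{\cfactor}i}{\epsilon^{r-i}}T_{i,r}^\flat$ (and its $\sharp$ analogue). Regard the three arrays involved — the array $\mathbf M$ of the $S\TypeC{\cdot}{}{\cdot}{\cdot}$'s, and the left and right arrays $\mathbf C,\mathbf D$ of the Lusztig--Kato formula — as elements of the incidence algebra $I(\{0,\dots,\cfactor r\},\cdot)$ of \Cref{circledastdef}, after the appropriate order-reversing reindexing of the $T_{\le k}$- and Schur-degree parameters. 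Then $\mathrm{Sat}(T_{\le\,\cdot\,,r}^?)$ is the application of $\mathbf M\circledast\mathbf C^{\circleddash}\circledast\mathbf D$ to $(s_j(\boldsymbol{\mu}))_j$, and, up to diagonal rescalings by powers of $q$ (the algebra automorphisms $h\cdot$), the arrays $\mathbf M$, $\mathbf C$, $\mathbf D$ are the functions $A_i^\lambda$, $C_i^\lambda$, $D_i^\lambda$ of \Cref{finFieldSection} — with $(i,\lambda)$ dictated by the case so that $\lambda^2=q$ and the $W_\beta$ inside $B_i^\lambda$ matches the one inside $\Delta_{\le k,r}^?$: namely $i=1$ for \SPf, $i=4$ for \Of, $i=2$ for \Uf, $i=3$ for \Us, and $i=5$ for \Os. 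Now \Cref{circledast1} gives $C_i^\lambda\circledast(A_i^\lambda)^{\circleddash}=E_i^\lambda$, hence $A_i^\lambda\circledast(C_i^\lambda)^{\circleddash}=(E_i^\lambda)^{\circleddash}$; and \Cref{circledast2} gives $E_i^\lambda\circledast B_i^\lambda=D_i^\lambda$, hence $(E_i^\lambda)^{\circleddash}\circledast D_i^\lambda=B_i^\lambda$. Therefore $\mathbf M\circledast\mathbf C^{\circleddash}\circledast\mathbf D$ equals $B_i^\lambda$ up to the accumulated diagonal rescaling, and since $B_i^\lambda$ is exactly $W_0^\lambda$ (resp.\ $W_{1/2}^\lambda$) times a power of $\lambda$, the identities in the previous paragraph follow once the accumulated power of $q$ is collected and shown to be $\Dfactor(\cfactor r)/\Dfactor(\cfactor r-k)$ (together with the extra $q^{k/2}$ in the $\sharp$ cases, which is produced by the powers of $q$ in the $N$-odd rows of \Cref{LKform}).

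\textit{Main obstacle.} The substantive combinatorics has already been isolated in \Cref{circledast1,circledast2}, whose proofs rest on \Cref{qidentityforR,qidentityforR2,qidentityforL,qbinomialformula}; what remains is the bookkeeping of the second step. One must verify that after the correct reindexing each of $\mathbf M,\mathbf C,\mathbf D$ is a \emph{diagonal} rescaling of the relevant $A_i^\lambda,C_i^\lambda,D_i^\lambda$ and not merely triangularly equivalent to it — which, in the orthogonal cases, comes down to checking that the sign factors $e(\,\cdot\,,\chi\epsilon^{\bullet})$ occurring both in the formula for $S\TypeC{\cdot}{}{\cdot}{\cdot}$ and in \Cref{LKform} depend only on the single index in play, so that they telescope. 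One then has to multiply out the powers of $q$ contributed by the several rescalings, together with the factor built into $B_i^\lambda$ itself, and confirm that they collapse to precisely $\Dfactor(\cfactor r)/\Dfactor(\cfactor r-k)$ in the $\flat$ cases and to $q^{k/2}\Dfactor(\cfactor r)/\Dfactor(\cfactor r-k)$ in the $\sharp$ cases. This is routine but requires care.
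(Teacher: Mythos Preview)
Your approach is essentially the same as the paper's. Both proofs reduce the statement to the incidence-algebra identity relating the four arrays $A_i^\lambda,B_i^\lambda,C_i^\lambda,D_i^\lambda$ (with the same case-by-case assignment $i=1,2,3,4,5$ for \SPf, \Uf, \Us, \Of, \Os), and both finish by invoking \Cref{circledast1,circledast2}. The only cosmetic difference is the arrangement of the identity: the paper writes the relations as $T_{\le\cfactor r-k}=A\circledast T$, $T_{\le\cfactor r-k}=B\circledast S$, $C\circledast\mathrm{Sat}(T)=D\circledast(\text{scaled }s)$ and checks directly that $C\circledast A^{\circleddash}\circledast B=D$, whereas you first solve for $\mathrm{Sat}(T_{\le k})$ and check the equivalent identity $A\circledast C^{\circleddash}\circledast D=B$ --- these are the same equation after left-multiplying by $C\circledast A^{\circleddash}$. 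Your ``main obstacle'' (verifying that the rescalings are diagonal and collapse to $\Dfactor(\cfactor r)/\Dfactor(\cfactor r-k)$) is exactly what the paper handles by tabulating $A,B,C,D$ explicitly in all five cases and absorbing the power of $q$ into the target $q^{(\cfactor r-i)\beta}\Dfactor(\cfactor r)/\Dfactor(i)$ on the Schur side.
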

\begin{proof}
Consider the functions $A,B,C,D\in I(\Z_{\ge0},R)$ given by
\begin{center}
\begin{tabular}{c|c|c|c|c}
    &$A(x,y)$&$B(x,y)$&$C(x,y)$&$D(x,y)$\\
    \hline
    \SPf&$(-q;q)_{y-x}\qbinom{y}{x}{q^2}$&$W_{0}(x,y)q^{\frac{1}{2}(y^2+y-x^2-x)}$&$\qbinom{y}{\lfloor\frac{y-x}{2}\rfloor}{q^2}$&$q^{\frac{1}{2}(y^2+y-x^2+x)}\binom{y}{\lfloor\frac{y-x}{2}\rfloor}$\\
    \hline
    \Uf&$(-q_0;q)_{y-x}\qbinom{2y}{2x}{-q_0}$&$W_{0}(x,y)q_0^{y^2-x^2}$&$\qbinom{2y}{y-x}{-q_0}$&$q_0^{y^2-x^2}\binom{y}{\frac{y-x}{2}}$\\
    \hline
    \Us&$(-q_0;q)_{y-x}\qbinom{2y+1}{2x+1}{-q_0}$&$W_{1/2}(x,y)q_0^{y^2+y-x^2-x}$&$\qbinom{2y+1}{y-x}{-q_0}$&$q_0^{y^2+y-x^2-x}\binom{y}{\lfloor\frac{y-x}{2}\rfloor}$\\
    \hline
    \Of&$(-q;q)_{y-x}\qbinom{y}{x}{q^2}\frac{q^x+1}{q^y+1}$&$W_{0}(x,y)q^{\frac{1}{2}(y^2-y-x^2+x)}$&$\qbinom{y}{\frac{y-x}{2}}{q^2}\frac{q^x+1}{q^y+1}$&$q^{\frac{1}{2}(y^2-y-x^2+x)}\binom{y}{\frac{y-x}{2}}$\\
    \hline
    \Os&$(-q;q)_{y-x}\qbinom{y}{x}{q^2}$&$W_{1/2}(x,y)q^{\frac{1}{2}(y^2-x^2)}$&$\qbinom{y}{\frac{y-x}{2}}{q^2}$&$q^{\frac{1}{2}(y^2-x^2)}\binom{y}{\frac{y-x}{2}}$
\end{tabular}
\end{center}
Since $A(x,x)=B(x,x)=C(x,x)=D(x,x)=1,$ they are all invertible by \Cref{circledastinv}.

By the definitions of $T_{\le i,r}^?$ and $\Delta_{\le i,r}^?,$ we have
\begin{equation*}
    T_{\le\cfactor r-k,r}^?=\sum_{i=k}^{\cfactor r}A(k,i)\cdot T_{\cfactor r-i,r}^?\quad\text{and}\quad T_{\le \cfactor r-k,r}^?=\sum_{i=k}^{\cfactor r}B(k,i)\cdot S_{\cfactor r-i,r}^?.
\end{equation*}
Now if we let $\beta=0$ in cases \genf and $\beta=1/2$ in cases \gens, \Cref{LKform} says that
\begin{equation*}
    \sum_{i=k}^{\cfactor r}C(k,i)\cdot \mathrm{Sat}(T_{\cfactor r-i,r}^?)=\sum_{i=k}^{\cfactor r}D(k,i)\cdot q^{(\gamma r-i)\beta}\frac{\Dfactor(\cfactor r)}{\Dfactor(i)}s_{\cfactor r-i}(\boldsymbol{\mu}).
\end{equation*}
Hence the claim is reduced to verifying the following formal implication: for any two functions $T,S$ we have
\begin{equation*}
    A\circledast T= B\circledast S\implies C\circledast T=D\circledast S.
\end{equation*}
It remains to verify the following identity in $I(\Z_{\ge0},R)$:
\begin{equation*}
    C\circledast A^\circleddash\circledast B\isequal D.
\end{equation*}
This follows from \Cref{circledast1} and \Cref{circledast2}.
\end{proof}

\section{Applications}\label{ResultsSection}
\subsection{Explicit bases}
Let $\beta,r\ge1$ be integers.

\begin{definition}
    We consider the following total order: For $e,f\in\Z^r,$ we say $e<f$ if either i) $\Sigma(e)<\Sigma(f)$ or ii) $\Sigma(e)=\Sigma(f)$ and $e$ is lexicographically smaller than $f.$
\end{definition}
We denote
\index{Typ0r<e@$\Typ_r^{0,<e}$}
\begin{equation*}
    \Typ^{0,\le e}_r\defeq\{\delta(f,\chi)\in\Typ^0_r\colon f\le e\},\quad \Typ^{0,< e}_r\defeq\{\delta(f,\chi)\in\Typ^0_r\colon f< e\}.
\end{equation*}

Suppose that we have commuting operators $S_0,\ldots,S_r\colon R[\Typ^0_r]\to R[\Typ^0_r]$ such that their duals are of the form
\begin{equation*}
    \sum_{k=0}^rS_k^*x^k\equiv\sum_{\varepsilon\in\{-1,0,1\}^r}a(\varepsilon)x^{\lambda_1(\varepsilon)+\lambda_{-1}(\varepsilon)}t(\beta\cdot\varepsilon)\mod\Rel^?_r
\end{equation*}
for some $?\in\{\flat,\sharp\}$ and certain coefficients $a(\varepsilon)\in R.$ We assume that $a(\underbrace{-1,\ldots,-1}_{k},0,\ldots,0)=1$ for all $0\le k\le r.$

\begin{proposition}\label{largestInSk}
Let $f=(f_1,\ldots,f_r)$ with $f_1\ge\cdots\ge f_r\ge0.$ Denote $s_k(f)\defeq f+\beta\cdot(\underbrace{1,\ldots,1}_{k},0,\ldots,0).$ Assume $\delta_s(f)\not\in\Rel^?_r.$ Then we have that
\begin{equation*}
    S_k(\delta_s(f))\equiv\delta_s(s_k(f))\mod R[\Typ^{0,<s_k(f)}_r].
\end{equation*}
\end{proposition}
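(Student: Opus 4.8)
The plan is to track which monomials $\delta(g,\chi)$ with $g$ decreasing can appear when we expand $S_k(\delta_s(f))$ via the stated formula for $S_k^*$ and then reduce modulo $\Rel^?_r$. Since we only have the adjoint description, I will actually work with the pairing: for any $\delta(g,\chi)$ the coefficient of $\delta_s(f)$ in $S_k^*$ applied to something is controlled by $\sum_{\varepsilon}a(\varepsilon)x^{\lambda_1(\varepsilon)+\lambda_{-1}(\varepsilon)}t(\beta\varepsilon)$, but the cleaner route is to note that $\Delta^?_k \defeq \sum_{\varepsilon: \lambda_1(\varepsilon)+\lambda_{-1}(\varepsilon)=k} a(\varepsilon) t(\beta\varepsilon)$ is an honest operator on $R[\Typ_r]$ preserving $\Rel^?_r$, and $S_k$ is induced by it. So it suffices to prove the statement for $\Delta^?_k$ acting on $R[\Typ_r]$, i.e. that $\mathrm{str}^?(\Delta^?_k(\delta(f,\chi)))$, once we straighten, has top term (in the total order on the $\Sigma$-then-lex order) equal to $\delta_s(s_k(f))$ and all other terms strictly below $s_k(f)$.

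First I would analyze the ``pre-straightening'' expansion $\Delta^?_k(\delta(f,\chi)) = \sum_{\varepsilon} a(\varepsilon)\,\delta(f+\beta\varepsilon,\chi)$ where the sum is over $\varepsilon\in\{-1,0,1\}^r$ with $\lambda_1(\varepsilon)+\lambda_{-1}(\varepsilon)=k$. Among these terms, the one with the largest $\Sigma$-value is $\varepsilon=(1,\ldots,1,0,\ldots,0)$ (all $+1$'s), contributing $\delta(f+\beta\cdot(1^k,0^{r-k}),\chi)=\delta(s_k(f),\chi)$ with coefficient $1$ — wait, the normalization hypothesis is $a(-1,\ldots,-1,0,\ldots,0)=1$, so I need to be careful: actually the claim $s_k(f)=f+\beta(1^k,0^{r-k})$ increases $\Sigma$ by $\beta k$, which is the maximum possible increase since at most $k$ coordinates are nonzero in $\varepsilon$ and each contributes at most $+\beta$. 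Every other $\varepsilon$ with $\lambda_1+\lambda_{-1}=k$ has $\Sigma(f+\beta\varepsilon)<\Sigma(s_k(f))$ unless it is exactly the all-$+1$ pattern. Hence before straightening, $\delta(s_k(f),\chi)$ is the unique term of maximal $\Sigma$, and since $f$ is already decreasing and $s_k(f)$ is decreasing (adding $\beta$ to the $k$ largest preserves the order because $f_1\ge\cdots\ge f_r$), this term requires no straightening. I must pin down the coefficient of $\delta(s_k(f),\chi)$ in $\Delta^?_k$: it is $a(1,\ldots,1,0,\ldots,0)$. The generating-function identity $\sum_k S_k^* x^k \equiv \sum_\varepsilon a(\varepsilon) x^{\lambda_1+\lambda_{-1}} t(\beta\varepsilon)$ together with $a(-1,\ldots,-1,0,\ldots,0)=1$; by the symmetry $\varepsilon\mapsto-\varepsilon$ built into the shapes of $\Rel^?$ (the straightening relations identify $\delta$ with negative entries to those with positive entries, cf. the relations $\Rel^\flat(1)$, $\Rel^\sharp(1)$, etc.), one shows $a(1,\ldots,1,0,\ldots,0)\equiv 1 \mod\Rel^?_r$ as well — this is where I would invoke the specific form of $\Rel^?_1$ and the fact that $t^2$-translations by the all-positive pattern are ``already straightened.''

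The main step is then: every other monomial $\delta(g,\chi')$ produced, either directly from some $\varepsilon\ne(1^k,0^{r-k})$ or after applying straightening relations to a non-decreasing $f+\beta\varepsilon$, satisfies $g < s_k(f)$ in the total order. For the direct terms this is the $\Sigma$-comparison above. For terms requiring straightening: a straightening relation in $\Rel^?$ replaces a generator $\TypeCC{-m}{\chi}$ (or a similar out-of-order pair) by a combination of $\TypeCC{m'}{\chi'}$ with $|m'|\le m$, and crucially it never increases $\Sigma$ and never increases the lexicographically-largest rearrangement. So starting from $f+\beta\varepsilon$ with $\Sigma\le\Sigma(s_k(f))$, straightening keeps $\Sigma\le\Sigma(s_k(f))$, and if $\Sigma=\Sigma(s_k(f))$ we already saw $\varepsilon$ is forced to be all-$+1$ with $f+\beta\varepsilon$ decreasing, so no straightening occurs. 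Therefore all straightened terms have strictly smaller $\Sigma$, hence lie in $R[\Typ^{0,<s_k(f)}_r]$. Combining, $S_k(\delta_s(f))\equiv \delta_s(s_k(f)) + (\text{terms in } R[\Typ^{0,<s_k(f)}_r])$, which is the assertion. I expect the genuine obstacle to be the bookkeeping in the claim ``straightening does not increase $\Sigma$ nor the lex-max,'' which must be extracted cleanly from the explicit generators of $\Rel^\natural$, $\Rel^\flat$, $\Rel^\sharp$ listed in the excerpt (each generator visibly preserves the multiset-sum $\Sigma$ and only moves weight ``inward,'' but making this a precise monovariant argument for the iterated reduction, uniformly across \Ugen, \Ogen, \SPgen and across $\flat,\sharp$, is the delicate part); the hypothesis $\delta_s(f)\notin\Rel^?_r$ is used precisely to guarantee the top term does not get killed.
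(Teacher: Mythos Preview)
There is a genuine gap stemming from a confusion between $S_k$ and its adjoint $S_k^*$. The hypothesis states that the \emph{duals} satisfy $\sum_k S_k^* x^k \equiv \sum_\varepsilon a(\varepsilon)x^{\lambda_1+\lambda_{-1}}t(\beta\varepsilon)\bmod\Rel^?_r$; that is, $S_k^*$ (not $S_k$) is induced by $\Delta^?_k$. Your claim that ``$S_k$ is induced by $\Delta^?_k$'' is therefore incorrect: what you go on to compute is $S_k^*(\delta_s(f))$, not $S_k(\delta_s(f))$. This matters for the leading coefficient. In your direct expansion the top term $\delta(s_k(f),\chi)$ carries the coefficient $a(1,\ldots,1,0,\ldots,0)$, whereas the hypothesis only gives $a(-1,\ldots,-1,0,\ldots,0)=1$. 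Your attempted patch via ``symmetry $\varepsilon\mapsto-\varepsilon$ built into $\Rel^?$'' is not valid: for the concrete operators to which the proposition is applied (e.g.\ $\Delta^{1/2,\flat}_{k,r}$ in the case \Uf), one has $a(1^k,0^{r-k})=(-1)^k(-q_0)^{kr}$, which is not $1$. So the argument as written does not establish the correct leading coefficient.

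The paper's proof avoids this by working dually from the start: to control the support of $S_k(\delta_{s'}(f))$ one asks, for each candidate $\delta_s(e)$, whether $\delta_{s'}(f)$ lies in the support of $S_k^*(\delta_s(e))=\mathrm{str}^?(\Delta^?_k(\delta_s(e)))$. The analysis then runs on $e+\beta\varepsilon$ (not $f+\beta\varepsilon$), and the \emph{minimum} of $\Sigma$ is relevant: one finds $\Sigma(f)\ge\Sigma(e)-k\beta$, with equality forcing $\varepsilon\in\{-1,0\}^r$ and no use of $\Rel_1^?$. Inverting this gives $\Sigma(e)\le\Sigma(f)+k\beta$, with equality forcing $e\in\prod_i[f_i,f_i+\beta]$, whose unique maximal decreasing point is $s_k(f)$. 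The leading coefficient is then $a(-1,\ldots,-1,0,\ldots,0)=1$, exactly as hypothesized. A secondary issue in your write-up: the assertion that straightening ``never increases $\Sigma$'' is false for $\Rel_1^?$ (e.g.\ $(-1)\equiv(1)$); the paper's dual formulation handles this correctly by noting that any use of $\Rel_1^?$ already forces a strict inequality on $\Sigma$.
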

\begin{proof}
Consider $\delta_s(e)\in\mathrm{Supp}(S_k(\delta_{s'}(f))).$ This is the same as having $\delta_{s'}(f)\in\mathrm{Supp}(S_k^*(\delta_s(e))).$ In the expression for $S_k^*$ above, the terms $\delta_s(e+\beta\cdot\varepsilon)$ get straightened to terms that satisfy either i) $\Sigma(f)>\Sigma(e)-k\beta$ (if $\varepsilon$ contains a $1$ or if a relation in $\Rel_1^?$ was used) or ii) such that $\Sigma(f)=\Sigma(e)=k\beta$ and $f\in[e_1-\beta,e_1]\times\cdots\times[e_r-\beta,e_r].$ Switching this around, we get that either i) $\Sigma(e)<\Sigma(f)+k\beta$ or ii)
\begin{equation*}
    e\in[f_1,f_1+\beta]\times\cdots[f_r,f_r+\beta],\quad \Sigma(e)=\Sigma(f)+k\beta.
\end{equation*}
Now it is easy to see that the point $e$ satisfying the above which is maximal with respect to $\le$ is simply $e=s_k(f).$ Now the claim follows from the assumption that $a(\underbrace{-1,\ldots,-1}_{k},0,\ldots,0)=1.$
\end{proof}

\begin{theorem}\label{basisTheorem}
Consider the set
\begin{equation*}
    \mathcal{B}\defeq \left\{\delta_s(e)\colon s\colon\mathrm{Sign}^r\to\{\pm1\},\ (e_{i}-e_{i+1}-\left(1\text{ if }s_i\neq s_{i+1}\right))\in\left[0,\beta-1\right],\ 0\le e_r<\beta\right\}.
\end{equation*}
Under this basis, $R[\Typ_r^0]$ is a free $R[S_1,\ldots,S_r]$-module.
\end{theorem}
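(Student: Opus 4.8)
The plan is to run a leading-term (unitriangularity) argument with respect to the total order on $\Typ^0_r$ induced, through weakly-decreasing representatives, by the total order on $\Z^r$. First I would fix the bookkeeping: every $\delta\in\Typ^0_r$ has a unique weakly-decreasing nonnegative representative $v(\delta)\in\Z^r$, and I declare $\delta_1\prec\delta_2$ when $v(\delta_1)<v(\delta_2)$; since $\Sigma$ is $\Z_{\ge0}$-valued and there are finitely many partitions of a given size, $\prec$ is a well-order with finite initial segments. Note $S_0=\mathrm{id}$, since the coefficient of $x^0$ in the given expansion of $\sum_kS_k^*x^k$ is exactly $a(0,\dots,0)\,t(0,\dots,0)=\mathrm{id}$. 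The input from the previous subsections is packaged in \Cref{largestInSk}: for $f$ weakly-decreasing nonnegative and $s_k(f)=f+\beta\cdot(1^k,0^{r-k})$, one has $S_k(\delta_s(f))=\delta_s(s_k(f))+(\text{terms strictly }\prec\delta_s(s_k(f)))$, with leading coefficient $1$.

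Two elementary observations make this iterable: (i) $f\mapsto s_k(f)$ is monotone for $<$ (it adds $\beta$ to the top $k$ coordinates, which shifts $\Sigma$ by the constant $k\beta$ and leaves lexicographic comparisons unchanged), and (ii) consequently every term of $S_k(\delta)$ has representative $\le s_k(v(\delta))$, so applying further $S_j$'s keeps the error strictly $\prec$ the new leading term. Hence, using that the $S_k$ (and the $s_k$) commute, for a monomial $S^{\mathbf a}=S_1^{a_1}\cdots S_r^{a_r}$ with $\mathbf a\in\Z_{\ge0}^r$ and $b=\delta_s(e)\in\mathcal B$, iterating \Cref{largestInSk} gives
\[
S^{\mathbf a}\,b=\delta_s\!\bigl(e+\beta\lambda(\mathbf a)\bigr)+(\text{terms strictly }\prec),\qquad \lambda(\mathbf a)_i\defeq\sum_{j\ge i}a_j,
\]
with leading coefficient $1$, where $\mathbf a\mapsto\lambda(\mathbf a)$ is a bijection from $\Z_{\ge0}^r$ onto the set of partitions with at most $r$ parts.

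The combinatorial core is then the claim that $(\mathbf a,b)\mapsto\delta_s(e+\beta\lambda(\mathbf a))$ is a bijection from $\{\mathbf a\}\times\{\text{indices of }\mathcal B\}$ onto $\Typ^0_r$. Via $\mathbf a\leftrightarrow\lambda(\mathbf a)$ this reduces to: every $\delta\in\Typ^0_r$ is hit, uniquely, as $v(\delta)=e+\beta\mu$ with $\mu$ a partition and $(s,e)$ an index of $\mathcal B$. I would prove this by a division-with-remainder recursion on the coordinates from the last upward: $e_r\in[0,\beta)$ and $\mu_r$ are forced by $v(\delta)_r$, and then $e_i-e_{i+1}$ must be the unique element of the length-$\beta$ interval $\bigl[\,(1\text{ if }s_i\neq s_{i+1}),\,\beta-1+(1\text{ if }s_i\neq s_{i+1})\,\bigr]$ congruent to $v(\delta)_i-v(\delta)_{i+1}$ modulo $\beta$, which determines $\mu_i-\mu_{i+1}$ and one checks it is $\ge0$. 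The point requiring care is the compatibility of the sign data in the cases \Of, \Os --- the allotment of signs to individual coordinates must be arranged so the $\mathcal B$-condition and the above congruences hold simultaneously --- and here I would follow the treatment of the analogous statement in \cite{CoratoZanarella}.

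Finally I would assemble the conclusion. Expressing each $S^{\mathbf a}b$ in the $R$-basis $\{\delta\}_{\delta\in\Typ^0_r}$, the transition matrix is column-finite and, after reindexing $\{S^{\mathbf a}b\}$ by its (distinct) leading terms via the above bijection, is unitriangular for the well-order $\prec$; such a matrix is invertible over any $R$, since its inverse is computed by finite back-substitution on each entry. Hence $\{S^{\mathbf a}b\}$ is again an $R$-basis of $R[\Typ^0_r]$, giving a decomposition $v=\sum_{b\in\mathcal B}P_b\,b$ with $P_b\in R[S_1,\dots,S_r]$. For uniqueness one notes the operators $S^{\mathbf a}$ are $R$-linearly independent in $\mathrm{End}(R[\Typ^0_r])$ --- applying a relation $\sum_{\mathbf a}a_{\mathbf a}S^{\mathbf a}=0$ to a fixed $b_0=\delta_s(e_0)\in\mathcal B$ and extracting the $\prec$-leading term forces all $a_{\mathbf a}=0$ --- so $R[S_1,\dots,S_r]$ is a polynomial ring in the $S_k$ and the $P_b$ are determined. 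Thus $R[\Typ^0_r]$ is free over $R[S_1,\dots,S_r]$ with basis $\mathcal B$. The genuinely delicate step is the combinatorial bijection of the third paragraph together with the sign bookkeeping in the cases \Of, \Os; the surrounding leading-term formalism is routine, the heavy representation-theoretic content having already been absorbed into \Cref{largestInSk}.
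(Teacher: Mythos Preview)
Your proof is correct and follows essentially the same approach as the paper's: iterate \Cref{largestInSk} to get the unitriangular expansion $S_1^{a_1}\cdots S_r^{a_r}(\delta_s(f))\equiv\delta_s(e)\bmod R[\Typ^{0,<e}_r]$, establish the combinatorial bijection between pairs $(\mathbf a,b)$ and elements of $\Typ^0_r$, and conclude freeness by triangularity. Your write-up is more explicit about the well-ordering, the monotonicity of $s_k$, and the division-with-remainder argument for the bijection than the paper's terse proof; note also that your formula $\lambda(\mathbf a)_i=\sum_{j\ge i}a_j$ is the correct one (the paper's displayed $e_i=f_i+\beta(a_1+\cdots+a_i)$ appears to be a typo for $a_i+\cdots+a_r$).
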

\begin{proof}
By \Cref{largestInSk}, for any $a_1,\ldots,a_{r}\ge0$ we have
\begin{equation*}
    S_1^{a_1}\cdots S_r^{a_r}(\delta_s(f))\equiv \delta_s(e)\mod\Z[\Typ^{0,\prec e}_r]
\end{equation*}
where $e_i=f_i+\beta\cdot(a_i+\cdots+a_r).$ Conversely, given $e\in\Typ^0$ and $\delta_s(e)$ nonzero, there is a unique $\delta_s(f)\in\mathcal{B}$ and integers $a_1,\ldots,a_r$ as above with $S_1^{a_1}\cdots S_r^{a_r}(\delta_s(f))\equiv\delta_s(e)\mod\Z[\Typ^{\prec e}].$ Since the nonzero $\delta_s(e)$ for $e\in\Typ^0_r$ form an $R$-basis of $R[\Typ_0^r],$ this implies that $\mathcal{B}$ is a basis of $R[\Typ_0^r]$ as a $R[S_1,\ldots,S_r]$-module.
\end{proof}

\begin{corollary}\label{rankThm}
    In the cases \Uf, \Us, \Of, \Os, we have that
    \begin{equation*}
        \mathcal{B}\defeq \left\{\delta_s(e)\colon s\colon\mathrm{Sign}^r\to\{\pm1\},\ (e_{i}-e_{i+1}-\left(1\text{ if }s_i\neq s_{i+1}\right))\in\{0,1\},\ e_r\in\{0,1\}\right\}.
    \end{equation*}
    is a basis of the $\mathcal{H}(G^?,K^?)$-module $\mathcal{S}(X^?/K^?).$ In particular, $\mathcal{S}(X^?/K^?)$ is free of rank $2^r$ in the cases \Uf, \Us and free of rank $4^r$ in the cases \Of, \Os.
\end{corollary}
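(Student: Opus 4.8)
The plan is to deduce the statement from \Cref{basisTheorem}, specialized to $\beta=2$. The key observation is that in all four cases \Uf, \Us, \Of, \Os one has $\cfactor=1$, hence $N=\dim_FV^?\in\{2r,2r+1\}$, $\lfloor N/2\rfloor=r$, and $\mathcal{H}(G^?,K^?)=R[T_{1,r}^?,\dots,T_{r,r}^?]$; moreover via the bijection $\mathrm{typ}^?$ we identify $\mathcal{S}(X^?/K^?)$ with $R[\Typ_r^0]$ as an $\mathcal{H}(G^?,K^?)$-module, the action of a Hecke operator being the one whose dual is the operator computed by the counting problems recalled above (so, by \Cref{flatfinal} and its $\sharp$-counterpart, the dual of the action of $T_{\le k,r}^?$ is $\mathrm{str}^{?,*}\Delta_{\le k,r}^?$). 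First I would take for $S_0,\dots,S_r$ the actions on $R[\Typ_r^0]$ of the Hecke operators $S_{0,r}^?,\dots,S_{r,r}^?$ introduced in the Satake-compatibility subsection (with $S_{0,r}^?$ the unit), which commute since $\mathcal{H}(G^?,K^?)$ is commutative.

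Next I would verify the two hypotheses of \Cref{basisTheorem}. The dual of $S_k$ is the explicit operator $S_k^{?,*}$ defined above, and since $t^2(\varepsilon)=t(2\varepsilon)$ when $\cfactor=1$, summing over $k$ gives
\begin{equation*}
\sum_{k=0}^r S_k^{?,*}\,x^k=\sum_{\varepsilon\in\{-1,0,1\}^r}q^{\widetilde{\inv}(\varepsilon)+\beta'\lambda_1(\varepsilon)}\,\frac{\Dfactor(\lambda_1(\varepsilon))\,\Dfactor(r-\lambda_{-1}(\varepsilon))}{\Dfactor(\lambda_0(\varepsilon))}\,x^{\lambda_1(\varepsilon)+\lambda_{-1}(\varepsilon)}\,t(2\varepsilon),
\end{equation*}
with $\beta'=0$ in the cases $\flat$ and $\beta'=1$ in the cases $\sharp$; this is exactly the shape required for $\beta=2$, and for $\varepsilon=(\underbrace{-1,\dots,-1}_k,0,\dots,0)$ one has $\widetilde{\inv}(\varepsilon)=\lambda_1(\varepsilon)=0$ and $\lambda_0(\varepsilon)=r-k$, so the leading coefficient is $\Dfactor(0)=1$, as demanded. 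Then I would check $R[S_{1},\dots,S_r]=\mathcal{H}(G^?,K^?)$: the defining relation writing $T_{\le k,r}^?$ as a linear combination $\sum_{i\le k}(\text{const})\,T_{i,r}^?$ has top coefficient equal to the number of $0$-dimensional isotropic subspaces of the relevant space, namely $1$, and the relation $T_{\le\cfactor r-k,r}^?=\sum_{i\ge k}B(k,i)\,S_{\cfactor r-i,r}^?$ from the proof of the compatibility theorem has $B(k,k)=1$ (the relevant $W$-value being $1$); both are therefore unitriangular changes of basis, so $R[S_{1,r}^?,\dots,S_{r,r}^?]=R[T_{1,r}^?,\dots,T_{r,r}^?]=\mathcal{H}(G^?,K^?)$ inside $\mathrm{End}(\mathcal{S}(X^?/K^?))$. (Alternatively this follows from the compatibility theorem: by the Lusztig--Kato formulas \Cref{LKform} each $\mathrm{Sat}(T_{j,r}^?)$ is an $R$-combination of $s_0(\boldsymbol{\mu}),\dots,s_r(\boldsymbol{\mu})$, whence the $S_{k,r}^?$, whose Satake transforms are unit multiples of the $s_k(\boldsymbol{\mu})$, generate the image of $\mathrm{Sat}$.)

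With these in hand, \Cref{basisTheorem} at $\beta=2$ immediately yields that $\mathcal{S}(X^?/K^?)\cong R[\Typ_r^0]$ is a free $\mathcal{H}(G^?,K^?)$-module on the basis $\mathcal{B}$ displayed in the statement. Finally I would count $\mathcal{B}$: its elements are indexed by a choice of $s\in\mathrm{Sign}^r$ together with a choice of $e_r\in\{0,1\}$ and, for $1\le i<r$, of $e_i-e_{i+1}-(1\text{ if }s_i\ne s_{i+1})\in\{0,1\}$, hence there are $\lvert\mathrm{Sign}\rvert^r\cdot 2^r$ of them, which is $2^r$ in the cases \Uf, \Us (where $\mathrm{Sign}=\{1\}$) and $4^r$ in the cases \Of, \Os (where $\mathrm{Sign}=\{\pm1\}$). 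I do not expect a real obstacle, since the substantive analytic and combinatorial input --- \Cref{generic}, \Cref{intert}, \Cref{genericsharp}, the extension arguments, and \Cref{basisTheorem} itself --- is already in place; the only points requiring care are matching the duality normalization of \Cref{basisTheorem} with the ``adjoint of the action'' convention used for the starred operators, and the two unitriangularities used to identify $R[S_{1,r}^?,\dots,S_{r,r}^?]$ with $\mathcal{H}(G^?,K^?)$.
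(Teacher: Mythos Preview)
Your proposal is correct and follows essentially the approach the paper intends: apply \Cref{basisTheorem} with $\beta=2$ to the operators $S_{k,r}^?$ introduced in the Satake-compatibility subsection, use the unitriangular relations among the $T_{i,r}^?$, $T_{\le i,r}^?$, and $S_{i,r}^?$ to identify $R[S_{1,r}^?,\dots,S_{r,r}^?]$ with $\mathcal{H}(G^?,K^?)$, and then count $\lvert\mathcal{B}\rvert=\lvert\mathrm{Sign}\rvert^r\cdot 2^r$. The paper records this corollary without proof, and your filling-in of the hypotheses (the coefficient $a(\underbrace{-1,\dots,-1}_{k},0,\dots,0)=1$ and the unitriangularities with top coefficients $S\TypeC{0}{}{\cdots}{\cdots}=1$ and $B(k,k)=1$) is exactly what is needed.
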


\subsection{Case \Ugen} Assume we are in the case \Ugen for this subsection. We will give another treatment of \cite[Theorem 4(1)]{uHflat} and \cite[Theorem 4(1)]{uHsharp}.
\begin{definition}\index{1Deltahalfflat@$\Delta_{k,r}^{1/2,\flat}$}\index{1Deltahalfsharp@$\Delta_{k,r}^{1/2,\sharp}$}
For $0\le k\le r,$ we consider the operators $\Delta_{k,r}^{1/2,\flat},\Delta_{k,r}^{1/2,\sharp}\colon R[\Typ_r]\to R[\Typ_r]$ given by
\begin{equation*}
\begin{split}
    \Delta_{k,r}^{1/2,\flat}&\defeq\sum_{\substack{\varepsilon\in\{-1,0,1\}^r\\\lambda_0(\varepsilon)=r-k}}(-1)^{\lambda_1(\varepsilon)}(-q_0)^{\lambda_1(\varepsilon)(\lambda_0(\varepsilon)+\lambda_1(\varepsilon))+\widetilde{\inv}(\varepsilon)}t(\varepsilon)\\
    \Delta_{k,r}^{1/2,\sharp}&\defeq\sum_{\substack{\varepsilon\in\{-1,0,1\}^r\\\lambda_0(\varepsilon)=r-k}}(-1)^{\lambda_1(\varepsilon)}(-q_0)^{\lambda_1(\varepsilon)(\lambda_0(\varepsilon)+\lambda_1(\varepsilon)+1)+\widetilde{\inv}(\varepsilon)}t(\varepsilon).
\end{split}
\end{equation*}
\end{definition}
\begin{remark}
    Note that when $\lambda_0(\varepsilon)=r-k,$
    \begin{equation*}
        \frac{\lambda_1(\varepsilon)^2+(r-\lambda_{-1}(\varepsilon))^2-(r-k)^2}{2}=\lambda_1(\varepsilon)(\lambda_0(\varepsilon)+\lambda_1(\varepsilon)).
    \end{equation*}
\end{remark}
\begin{proposition}\label{halfpreservesu}
$\Delta_{k,r}^{1/2,\flat}$ preserves $\Rel^\flat_r,$ and $\Delta_{k,r}^{1/2,\sharp}$ preserves $\Rel^\sharp_r.$
\end{proposition}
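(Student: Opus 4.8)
The plan is to follow the template of the proofs of \Cref{preserve} and \Cref{intertpreserves}. Since we are in case \Ugen, both $\Delta_{k,r}^{1/2,\flat}$ and $\Delta_{k,r}^{1/2,\sharp}$ are finite $R$-linear combinations $\sum_\varepsilon a(\varepsilon)\,t(\varepsilon)$ of the elementary translations with $\varepsilon\in\{-1,0,1\}^r$. Writing $\Rel^\flat_r=\Rel^\natural_r+\mathrm{Gr}_r(\Rel_1^\flat)$ and similarly for $\sharp$, and recalling that $\Rel_1^\flat$ is the left ideal generated by the degree-$1$ elements $\Rel^\flat(m)$ ($m\ge 1$), it suffices to verify: (i) that $\Delta_{k,r}^{1/2,?}$ preserves $\Rel^\natural_r$; and (ii) that $\Delta_{k,r}^{1/2,\flat}\bigl(\xi\star\Rel^\flat(m)\bigr)\subseteq\Rel^\flat_r$ for every $\xi\in R[\Typ_{r-1}]$ and every generator $\Rel^\flat(m)$ placed in the last slot, together with the analogous statement for $\sharp$.

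For (i), this proceeds by the method of \cite[\citeDeltapreserves]{CoratoZanarella}: one checks that $\Delta_{k,r}^{1/2,?}$ descends along the straightening map $\mathrm{str}^\natural$, the identities required being among those collected in \Cref{finFieldSection}. (As in \Cref{intertpreserves}, it may be convenient to peel off a translation before doing so.) The one point needing attention is the sign factor $(-1)^{\lambda_1(\varepsilon)}$ together with the appearance of the base $-q_0$ (where $q_0=\sqrt q$ in case \Ugen) in place of $q$; this is handled exactly as the analogous signs are handled in the Lusztig--Kato computations behind \Cref{LKform}.

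For (ii), write $\varepsilon=(\varepsilon',\varepsilon_r)$ with $\varepsilon'\in\{-1,0,1\}^{r-1}$ and group the terms of $\Delta_{k,r}^{1/2,\flat}\bigl(\xi\star\Rel^\flat(m)\bigr)$ according to $\varepsilon'$: each group acts as $t(\varepsilon')$ on $\xi$ tensored with a ``last-slot operator'' applied to $\Rel^\flat(m)$. The constraint $\lambda_0(\varepsilon)=r-k$ forces that if $\lambda_0(\varepsilon')=r-k-1$ then only $\varepsilon_r=0$ contributes, giving a scalar multiple of $t(0)=\mathrm{id}$, which trivially preserves $\Rel_1^\flat$; while if $\lambda_0(\varepsilon')=r-k$ then only $\varepsilon_r\in\{-1,1\}$ contribute. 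In the latter case a one-line computation using $\widetilde{\inv}(\varepsilon',-1)-\widetilde{\inv}(\varepsilon',1)=2\lambda_1(\varepsilon')+\lambda_0(\varepsilon')$ shows the two coefficients are in ratio $-(-q_0)=q_0$, so the last-slot operator is a scalar multiple of $t(-1)+q_0\,t(1)$, which preserves $\Rel_1^\flat$ by \Cref{preservesfs}. Hence every term lies in $R[\Typ_{r-1}]\star\Rel_1^\flat\subseteq\Rel^\flat_r$. The $\sharp$-case is identical: the extra factor $(-q_0)^{\lambda_1(\varepsilon)}$ multiplies the coefficient ratio by a further $-q_0$, giving $q_0\cdot(-q_0)=-q$, so the relevant last-slot operator is $t(-1)-q\,t(1)$, which preserves $\Rel_1^\sharp$ by \Cref{preservesfs}.

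The main obstacle is step (i): although it is structurally ``the same as in \cite{CoratoZanarella}'', actually pinning down the compatibility of $\Delta_{k,r}^{1/2,?}$ with $\mathrm{str}^\natural$ — or identifying $\Delta_{k,r}^{1/2,?}$ with a rescaling of an operator already treated there — requires some care, above all in keeping the signs straight in the unitary normalization. Step (ii), by contrast, is a short coefficient comparison that reduces cleanly to \Cref{preservesfs}, in exact parallel with \Cref{preserve}.
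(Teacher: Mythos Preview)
Your overall plan and your execution of step (ii) match the paper's proof: split into preserving $\Rel^\natural_r$ and preserving the left ideal generated by $\Rel_1^?$, and for the latter compare the last-slot coefficients to reduce to $t(-1)+q_0\,t(1)$ (resp.\ $t(-1)-q\,t(1)$) and invoke \Cref{preservesfs}. Your ratio computation is exactly the paper's exponent identity $\lambda_1(\varepsilon_1)(\lambda_0(\varepsilon_1)+\lambda_1(\varepsilon_1))+\widetilde{\inv}(\varepsilon_1)=\lambda_1(\varepsilon_2)(\lambda_0(\varepsilon_2)+\lambda_1(\varepsilon_2))+\widetilde{\inv}(\varepsilon_2)+1$ for $\varepsilon_1=\varepsilon\star(1)$, $\varepsilon_2=\varepsilon\star(-1)$.

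Where you and the paper part ways is step (i), and your pointers there are off-target. \Cref{finFieldSection} contains $q$-combinatorial identities for finite-field counting and has nothing to do with $\Rel^\natural$; the Lusztig--Kato formulas concern the Satake transform, not straightening; and there is no global translation to peel off here as in \Cref{intertpreserves}, where $\Delta^\intert_r$ genuinely factors as $t(-1,\ldots,-1)\cdot\sum_k\Dfactor(k)\Delta^\natural_{k,r}$. The paper's argument for (i) is short and concrete: because $\Rel^\natural$ is a two-sided ideal generated in degree $2$, and because the weight localizes correctly when one fixes all but two adjacent coordinates of $\varepsilon$, the question reduces to $r=2$. There $k=0$ is trivial, $k=1$ is \cite[\citepreserveshalf]{CoratoZanarella}, and $k=2$ is \cite[\citeDeltapreserves]{CoratoZanarella}. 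The same two citations handle both $\flat$ and $\sharp$; the extra factor $(-q_0)^{\lambda_1(\varepsilon)}$ in $\Delta_{k,r}^{1/2,\sharp}$ introduces no new difficulty at the $\Rel^\natural$ level. So the ``care with signs'' you anticipate is not actually needed once you make the reduction to $r=2$ and invoke those two propositions.
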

\begin{proof}
To see that they preserve $\Rel^\natural_r,$ it reduces to the case $r=2.$ For that, the case $k=1$ follows from \cite[\citepreserveshalf]{CoratoZanarella}. The case $k=2$ follows from \cite[\citeDeltapreserves]{CoratoZanarella}.

As for $\Rel_1^\flat$ and $\Rel_1^\sharp,$ note that if we denote $\varepsilon_1=\varepsilon\star(1)$ and $\varepsilon_2=\varepsilon\star(-1),$ then
\begin{equation*}
\begin{split}
    &\lambda_1(\varepsilon_1)(\lambda_0(\varepsilon_1)+\lambda_1(\varepsilon_1))+\widetilde{\inv}(\varepsilon_1)\\
    &\qquad=(\lambda_1(\varepsilon_2)+1)(\lambda_0(\varepsilon_2)+\lambda_1(\varepsilon_2)+1)+\widetilde{\inv}(\varepsilon_2)-\lambda_0(\varepsilon_2)-2\lambda_1(\varepsilon_2)\\
    &\qquad=\lambda_1(\varepsilon_2)(\lambda_0(\varepsilon_2)+\lambda_1(\varepsilon_2))+\widetilde{\inv}(\varepsilon_2)+1.
\end{split}
\end{equation*}
Thus the claim is reduced to the case $r=k=1.$ This follows from \Cref{preservesfs} as we have $\Delta_{1,1}^{1/2,\flat}=t(-1)+q_0\cdot t(1)$ and $\Delta_{1,1}^{1/2,\sharp}=t(-1)-q\cdot t(1).$
\end{proof}

\begin{definition}\index{Shalfflat@$S_{k,r}^{1/2,\flat}$}\index{Shalfsharp@$S_{k,r}^{1/2,\sharp}$}
For $0\le k\le r$ and $?\in\{\flat,\sharp\},$ denote $S^{1/2,?}_{k,r}\colon R[\Typ^0]\to R[\Typ^0]$ to be the adjoint of $\Delta_{k,r}^{1/2,?}.$ We let $\mathcal{H}^{1/2,?}\defeq R[S^{1/2,?}_{1,r},\ldots,S^{1/2,?}_{r,r}].$
\end{definition}
\begin{corollary}
    For $?\in\{\flat,\sharp\},$ we have that $R[\Typ^0_r]$ is a free $\mathcal{H}^{1/2,?}$-module of rank $1$ generated by $\delta(0,\ldots,0).$
\end{corollary}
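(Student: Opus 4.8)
The plan is to deduce this directly from \Cref{basisTheorem}, applied with $\beta=1$ to the operators $S^{1/2,?}_{0,r},\dots,S^{1/2,?}_{r,r}$ (where $?=\flat$ gives \Uf\ and $?=\sharp$ gives \Us). Note that, in contrast with \Cref{rankThm} where one uses operators whose underlying translations $t^2(\varepsilon)$ have magnitude $2$ (hence $\beta=2$), the operators here involve the translations $t(\varepsilon)$ of magnitude $1$, so $\beta=1$; this is precisely why the resulting module will have rank $1$ rather than $2^r$.

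First I would verify the hypotheses of \Cref{basisTheorem}. By \Cref{halfpreservesu} each $\Delta^{1/2,?}_{k,r}$ preserves $\Rel^?_r$, so it descends to an operator on $R[\Typ^0_r]$ whose adjoint is, by definition, $S^{1/2,?}_{k,r}$; since $\Typ^0_r$ is a finite set, taking adjoints is an anti-involution of $\mathrm{End}_R(R[\Typ^0_r])$, so $(S^{1/2,?}_{k,r})^{*}$ is the descended $\Delta^{1/2,?}_{k,r}$. The operators $\Delta^{1/2,?}_{k,r}$ are finite $R$-linear combinations of translations $t(\varepsilon)$, which commute pairwise on $R[\Typ_r]$; hence the $\Delta^{1/2,?}_{k,r}$ commute, and therefore so do the $S^{1/2,?}_{k,r}$. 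Grouping the defining sum of $\Delta^{1/2,?}_{k,r}$ according to $k=r-\lambda_0(\varepsilon)=\lambda_1(\varepsilon)+\lambda_{-1}(\varepsilon)$ gives
\[
    \sum_{k=0}^{r}(S^{1/2,?}_{k,r})^{*}\,x^{k}\equiv\sum_{\varepsilon\in\{-1,0,1\}^{r}}a(\varepsilon)\,x^{\lambda_1(\varepsilon)+\lambda_{-1}(\varepsilon)}\,t(\varepsilon)\mod\Rel^?_r,
\]
which is exactly the shape required in \Cref{basisTheorem} for $\beta=1$ (so that $t(\beta\cdot\varepsilon)=t(\varepsilon)$), with $a(\varepsilon)$ the coefficient appearing in the definition of $\Delta^{1/2,?}_{k,r}$. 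Finally, for $\varepsilon=(\underbrace{-1,\dots,-1}_{k},0,\dots,0)$ one has $\lambda_1(\varepsilon)=0$ and $\widetilde{\inv}(\varepsilon)=0$, so $a(\varepsilon)=(-1)^{0}(-q_0)^{0}=1$; thus the normalization $a(\underbrace{-1,\dots,-1}_{k},0,\dots,0)=1$ holds for every $0\le k\le r$. This paragraph is all bookkeeping; the only point needing a moment's thought is the commutation, which reduces to the triviality that translation operators commute.

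By \Cref{basisTheorem} it then follows that $R[\Typ^0_r]$ is a free $\mathcal{H}^{1/2,?}=R[S^{1/2,?}_{1,r},\dots,S^{1/2,?}_{r,r}]$-module on the basis $\mathcal{B}$ associated to $\beta=1$. It remains to observe that in the case \Ugen this basis is the singleton $\{\delta(0,\dots,0)\}$: since $\mathrm{Sign}$ is trivial there is no sign datum to vary and the side condition involving ``$s_i\ne s_{i+1}$'' is vacuous, while for $\beta=1$ the constraints $(e_i-e_{i+1})\in[0,\beta-1]=\{0\}$ and $0\le e_r<\beta=1$ force $e=(0,\dots,0)$; moreover $\delta(0,\dots,0)$, corresponding to the type with $e^0(0)=r$ and trivial signs, is a nonzero element of $R[\Typ^0_r]$. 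Hence $R[\Typ^0_r]$ is free of rank $1$ over $\mathcal{H}^{1/2,?}$, generated by $\delta(0,\dots,0)$. In short, all the substance already lies in \Cref{halfpreservesu} (that the half-operators descend to $R[\Typ^0_r]$) and in \Cref{basisTheorem} (the straightening/monomial-basis argument); the corollary merely combines these with the degeneration of the $\beta=1$ basis in the unitary case.
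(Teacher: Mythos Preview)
Your proof is correct and takes the same route as the paper, which simply invokes \Cref{basisTheorem}; you have merely spelled out the verification of its hypotheses for $\beta=1$. One small slip: $\Typ^0_r$ is \emph{not} a finite set (the support of $e^0$ can lie arbitrarily far out in $\Z_{\ge0}$), so your justification that ``taking adjoints is an anti-involution'' needs a word more care---but since each $\Delta^{1/2,?}_{k,r}$ is a finite sum of bounded translations, both it and its adjoint are well-defined (row- and column-finite) operators on $R[\Typ^0_r]$, and the involution property and commutation follow as you claim.
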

\begin{proof}
    This follows from \Cref{basisTheorem}.
\end{proof}

\begin{theorem}[Spherical transform for \Uf and \Us]\label{sphtransu}
    Assume that $R$ contains a square root of $-q_0.$ Define $\mathrm{Sat}^{1/2,?}\colon\mathcal{H}^{1/2,?}\to R[\boldsymbol{\nu}_1,\ldots,\boldsymbol{\nu}_r]^{\mathrm{sym}}$ by
    \begin{equation*}
    \begin{split}
        \mathrm{Sat}^{1/2}(S^{1/2,\flat}_{k,r})&=(-q_0)^{\frac{r^2-(r-k)^2}{2}}s_k(\boldsymbol{\nu})\\
        \mathrm{Sat}^{1/2}(S^{1/2,\sharp}_{k,r})&=(-q_0)^{\frac{r^2+r-(r-k)^2-(r-k)}{2}}s_k(\boldsymbol{\nu}).
    \end{split}
    \end{equation*}
    Then we have the commutative diagrams
    \begin{equation*}
        \begin{tikzcd}
        \mathcal{H}(G^\flat,K^\flat)\arrow[d,hook]\arrow[r,"\mathrm{Sat}"]& R[\boldsymbol{\mu}_1,\ldots,\boldsymbol{\mu}_r]^{\mathrm{sym}}\arrow[d]\\
        \mathcal{H}^{1/2,\flat}\arrow[r,"\mathrm{Sat}^{1/2,\flat}"]& R[\boldsymbol{\nu}_1,\ldots,\boldsymbol{\nu}_r]^{\mathrm{sym}}
        \end{tikzcd}\quad
        \begin{tikzcd}
        \mathcal{H}(G^\sharp,K^\sharp)\arrow[d,hook]\arrow[r,"\mathrm{Sat}"]&R[\boldsymbol{\mu}_1,\ldots,\boldsymbol{\mu}_r]^{\mathrm{sym}}\arrow[d]\\
        \mathcal{H}^{1/2,\sharp}\arrow[r,"\mathrm{Sat}^{1/2,\sharp}"]&R[\boldsymbol{\nu}_1,\ldots,\boldsymbol{\nu}_r]^{\mathrm{sym}}
        \end{tikzcd}
    \end{equation*}
    where the right vertical maps are induced by $\boldsymbol{\mu}_k\mapsto -\boldsymbol{\nu}_k^2-2$ resp.\ $\boldsymbol{\mu}_k\mapsto \boldsymbol{\nu}_k^2+2.$
\end{theorem}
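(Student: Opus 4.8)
The plan is to show that the two ways around each square coincide by reducing, through the already-established module structure of $\mathcal S(X^?/K^?)$, to an explicit identity evaluated at the single element $\delta_0\defeq\delta(0,\dots,0)$, which I then expect to handle with the incidence-algebra machinery of \Cref{finFieldSection}.

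\textbf{Setup.} In the case \Ugen one has $\mathrm{Sign}=\{1\}$ and translation step $\beta=1$, so the basis $\mathcal B$ of \Cref{basisTheorem} (applied with $S_k=S^{1/2,?}_{k,r}$, legitimate by \Cref{halfpreservesu}) is the singleton $\{\delta_0\}$. Hence $R[\Typ^0_r]\iso\mathcal S(X^?/K^?)$ is a free $\mathcal H^{1/2,?}$-module of rank one on $\delta_0$, the ring $\mathcal H^{1/2,?}=R[S^{1/2,?}_{1,r},\dots,S^{1/2,?}_{r,r}]$ is a genuine polynomial ring, and — as $R\ni\sqrt{-q_0}$ — the prescribed images $(-q_0)^{\bullet}s_k(\boldsymbol{\nu})$ are algebraically independent generators of $R[\boldsymbol{\nu}_1,\dots,\boldsymbol{\nu}_r]^{\mathrm{sym}}$, so $\mathrm{Sat}^{1/2,?}$ is a well-defined isomorphism. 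The left vertical arrow is the inclusion arising because the $\mathcal H(G^?,K^?)$-action on $R[\Typ^0_r]$ commutes with the $\mathcal H^{1/2,?}$-action: Hecke operators act through $\mathrm{str}^?$ of honest $R$-linear combinations of coordinate translations of $R[\Typ_r]$ (the operators $S^{?,*}_k$ introduced when computing $\mathrm{Sat}(S^?_{k,r})$, equivalently the $T^{?,*}_k$ of \Cref{Tidef}), the $\Delta^{1/2,?}_{j,r}$ are also such combinations, all of them preserve $\Rel^?_r$ (\Cref{preserve}, \Cref{halfpreservesu}), and translations commute; by rank-one freeness, $f\mapsto(\text{the unique }\tilde f\in\mathcal H^{1/2,?}\text{ with }\tilde f\cdot\delta_0=f\cdot\delta_0)$ is a ring embedding $\iota$ realizing this arrow. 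Since $\mathcal H(G^?,K^?)=R[S^?_{1,r},\dots,S^?_{r,r}]$ (these are related to the $T^?_k$ by the invertible incidence-algebra elements $A,B$ of the Satake-compatibility proof) and $\mathrm{Sat}(S^?_{k,r})=c^?_{k,r}s_k(\boldsymbol{\mu})$ with $c^\flat_{k,r}=\Dfactor(r)/\Dfactor(r-k)$, $c^\sharp_{k,r}=q^{k/2}\Dfactor(r)/\Dfactor(r-k)$, commutativity of the square reduces — writing $\Phi^\flat\colon\boldsymbol{\mu}_k\mapsto-\boldsymbol{\nu}_k^2-2$ and $\Phi^\sharp\colon\boldsymbol{\mu}_k\mapsto\boldsymbol{\nu}_k^2+2$ — to the equalities
\begin{equation*}
\mathrm{Sat}^{1/2,?}\bigl(\iota(S^?_{k,r})\bigr)=c^?_{k,r}\,s_k\bigl(\Phi^?(\boldsymbol{\mu}_1),\dots,\Phi^?(\boldsymbol{\mu}_r)\bigr),\qquad 1\le k\le r.
\end{equation*}

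\textbf{Unwinding.} Via the isomorphism $x\mapsto x\cdot\delta_0$ and the fact that $S^?_{k,r}$ commutes with $\mathcal H^{1/2,?}$, the above becomes, for each $k$, the congruence in $R[\Typ_r]$
\begin{equation*}
S^{?,*}_k(\delta_0)\ \equiv\ c^?_{k,r}\cdot Q^?_k\bigl(\Delta^{1/2,?}_{1,r},\dots,\Delta^{1/2,?}_{r,r}\bigr)(\delta_0)\pmod{\Rel^?_r},
\end{equation*}
where $Q^?_k$ is the explicit polynomial with $(\mathrm{Sat}^{1/2,?})^{-1}\bigl(s_k(\Phi^?(\boldsymbol{\mu}_1),\dots,\Phi^?(\boldsymbol{\mu}_r))\bigr)=Q^?_k(S^{1/2,?}_{1,r},\dots,S^{1/2,?}_{r,r})$. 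One finds $Q^?_k$ by expanding $s_k(\Phi^?(\boldsymbol{\mu}_1),\dots,\Phi^?(\boldsymbol{\mu}_r))$ in the $s_j(\boldsymbol{\nu})$ via the classical identity coming from $1+x\,\Phi^?(\boldsymbol{\mu}_i)=(1\mp2x)\bigl(1\mp\tfrac{x}{1\mp2x}\boldsymbol{\nu}_i^2\bigr)$ together with $\prod_i(1+y\boldsymbol{\nu}_i^2)=\bigl(\sum_a s_a(\boldsymbol{\nu})y^{a/2}\bigr)\bigl(\sum_b(-1)^b s_b(\boldsymbol{\nu})y^{b/2}\bigr)$, then substituting $(\mathrm{Sat}^{1/2,?})^{-1}(s_j(\boldsymbol{\nu}))=(-q_0)^{-w^?_j}S^{1/2,?}_{j,r}$ with $w^\flat_j=\tfrac12(r^2-(r-j)^2)$, $w^\sharp_j=\tfrac12(r^2+r-(r-j)^2-(r-j))$. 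Applying monomials in the $\Delta^{1/2,?}_{j,r}$ to $\delta_0$ is governed by \Cref{largestInSk}, while the left side $S^{?,*}_k(\delta_0)=\sum_{\lambda_0(\varepsilon)=r-k}q^{\widetilde{\inv}(\varepsilon)}\tfrac{\Dfactor(\lambda_1(\varepsilon))\Dfactor(r-\lambda_{-1}(\varepsilon))}{\Dfactor(r-k)}\,\delta(2\varepsilon)$ (with an extra factor $q^{\lambda_1(\varepsilon)}$ when $?=\sharp$) is brought into $R[\Typ^0_r]$ by reducing the entries $2\varepsilon_i\in\{-2,0,2\}$ with the degree-one relations $\Rel^?(1),\Rel^?(2)$. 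What is left is a finite, fully explicit identity.

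\textbf{Main obstacle.} The crux is proving this last identity structurally rather than by brute symmetric-function computation, as it interlaces the $q$-power bookkeeping of $S^{?,*}_k$ (the $\Dfactor$'s, the $\widetilde{\inv}$, the extra $q^{\lambda_1}$ in the $\sharp$-case), the half-integral $(-q_0)$-normalizations defining $\mathrm{Sat}^{1/2,?}$, and the degree-one straightening. I would package it through the incidence algebra of $\Z_{\ge0}$, in the spirit of the Satake-compatibility theorem, with the functions $E_i$ ($i=2$ for \Uf, $i=3$ for \Us) playing the central role: the equality should follow formally from an implication of the shape ``$C_i\circledast T=B_i\circledast S\implies E_i\circledast T=(\text{half-Satake normalizer})\circledast S$'', which — by \Cref{circledast1} ($C_i\circledast A_i^{\circleddash}=E_i$) and \Cref{circledast2} ($E_i\circledast B_i=D_i$) — reduces to identities already proved in \Cref{finFieldSection}. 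Indeed the functions $E_i$, which appear there only as an intermediate step between $A_i,B_i,C_i,D_i$, are exactly the bridge between the ordinary Satake transform and the half-Satake transform $\mathrm{Sat}^{1/2,?}$.
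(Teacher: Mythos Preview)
Your setup and the reduction via rank-one freeness are correct, but the proof stops short at the decisive step. In your ``Main obstacle'' you reduce to an explicit identity and then assert it ``should follow formally'' from an incidence-algebra implication involving the $E_i$ of \Cref{finFieldSection}, without stating that implication precisely or checking it. The $E_i$ were engineered to pass between the $T^?_k$-basis and the $S^?_k$-basis (via $C_i\circledast A_i^\circleddash=E_i$ and $E_i\circledast B_i=D_i$); nothing in \Cref{finFieldSection} relates them to the half-operators $S^{1/2,?}_{k,r}$, so a genuinely new $\circledast$-identity would be required, and you have not supplied one. Moreover, expanding $Q^?_k(\Delta^{1/2,?}_{1,r},\dots)(\delta_0)$ is not controlled by \Cref{largestInSk}, which only gives leading terms; the full expansion of products of the $\Delta^{1/2,?}$'s applied to $\delta_0$, together with the straightening of the negative entries, is precisely what makes this route hard.

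The paper avoids evaluation at $\delta_0$ and straightening entirely by working with generating functions of the adjoint operators as honest translations on $R[\Typ_r]$. Setting $S_r^{1/2,?}(x,y)=\sum_k(-q_0)^{-\frac12 k(2r+\beta-k)}S^{1/2,?}_{k,r}(-x)^k y^{r-k}$ and similarly $S_r^?(x,y)$, one has $\mathrm{Sat}^{1/2,?}(S_r^{1/2,?}(x,y))=\prod_i(y-x\boldsymbol\nu_i)$ and $\mathrm{Sat}(S_r^?(x,y))=\prod_i(y-x\boldsymbol\mu_i)$, so commutativity of the diagram is equivalent to the single operator identity
\[
S_r^{1/2,?}(x,y)\cdot S_r^{1/2,?}(-x,y)=S_r^?\bigl((-1)^{\beta+1}x^2,\,y^2+2x^2\bigr).
\]
This is proved by induction on $r$: the adjoints factor under the concatenation product as $S_{r+1}^{1/2,?,*}=A\star S_r^{1/2,?,*}$ and $S_{r+1}^{?,*}=B\star S_r^{?,*}$ for explicit degree-one factors $A(x,y)$, $B(x,y)$ built from $t(\pm1)$, $t(\pm2)$, and the inductive step is the one-variable check $A(x,y)A(-x,y)=B\bigl((-1)^{\beta+1}x^2,\,y^2+2x^2\bigr)$. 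No quotient by $\Rel^?_r$ is ever taken in this computation.
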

\begin{proof}
Let $\beta=\begin{cases}
    1 & \text{if }?=\sharp, \\
    0 & \text{if }?=\flat.
\end{cases}$
Consider
\begin{equation*}
\begin{split}
    S_r^?(x,y)&\defeq\sum_{k}q_0^{-k(2r+\beta-k)}S_{k,r}^{?}(-x)^ky^{r-k},\\
    S_r^{1/2,?}(x,y)&\defeq\sum_{k}(-q_0)^{-\frac{1}{2}k(2r+\beta-k)}S_{k,r}^{1/2,?}(-x)^ky^{r-k}.
\end{split}
\end{equation*}
Then
\begin{equation*}
    \mathrm{Sat}^?(S_r^?(x,y))=\prod_i(y-x\boldsymbol{\mu}_i),\quad\mathrm{Sat}^{1/2,?}(S_r^{1/2,?}(x,y))=\prod_i(y-x\boldsymbol{\nu}_i).
\end{equation*}
So we need to prove that
\begin{equation*}
    S_r^{1/2,?}(x,y)\cdot S_r^{1/2,?}(-x,y)\isequal S_r^?((-1)^{\beta+1}x^2,y^2+2x^2).
\end{equation*}
We do this by induction on $r.$ For this, we have
\begin{equation*}
    S_r^{1/2,?,*}(x,y)=\sum_{\varepsilon\in\{-1,0,1\}^r}(-1)^{\lambda_1(\varepsilon)}(-q_0)^{w(\varepsilon)}t(\varepsilon)(-x)^{\lambda_1(\varepsilon)+\lambda_{-1}(\varepsilon)}y^{\lambda_0(\varepsilon)}
\end{equation*}
where
\begin{equation*}
    w(\varepsilon)=\frac{1}{2}(\lambda_1(\varepsilon)^2-\lambda_{-1}(\varepsilon)^2)-\lambda_{-1}(\varepsilon)(\lambda_0(\varepsilon)+\lambda_1(\varepsilon))+\frac{1}{2}\beta(\lambda_1(\varepsilon)-\lambda_{-1}(\varepsilon))+\widetilde{\inv}(\varepsilon).
\end{equation*}
Now we note that if $\varepsilon\in\{-1,0,1\}^r,$
\begin{equation*}
\begin{split}
    w(1,\varepsilon)-w(\varepsilon)&=\lambda_1(\varepsilon)+\frac{1}{2}-\lambda_{-1}(\varepsilon)+\frac{1}{2}\beta+\lambda_0(\varepsilon)+2\lambda_{-1}(\varepsilon)=r+\frac{1}{2}(\beta+1),\\
    w(0,\varepsilon)-w(\varepsilon)&=-\lambda_{-1}(\varepsilon)+\lambda_{-1}(\varepsilon)=0,\\
    w(-1,\varepsilon)-w(\varepsilon)&=-\lambda_{-1}(\varepsilon)-\frac{1}{2}-\lambda_0(\varepsilon)-\lambda_1(\varepsilon)-\frac{1}{2}\beta=-r-\frac{1}{2}(\beta+1).
\end{split}
\end{equation*}
Hence
\begin{equation*}
    S_{r+1}^{1/2,?,*}(x,y)=A(x,y)\star S_r^{1/2,?,*}(x,y).
\end{equation*}
where
\begin{equation*}
A(x,y)\defeq(-q_0)^{r+\frac{1}{2}(\beta+1)}x\cdot t(1)+y\cdot t(0)-(-q_0)^{-r-\frac{1}{2}(\beta+1)}x\cdot t(-1).
\end{equation*}
Similarly,
\begin{equation*}
    S_{r+1}^{?,*}(x,y)=B(x,y)\star S_r^{?,*}(x,y).
\end{equation*}
where
\begin{equation*}
    B(x,y)\defeq-q_0^{2r+\beta+1}x\cdot t(2)+y\cdot t(0)-q_0^{-2r-\beta-1}x\cdot t(-2).
\end{equation*}
And now the claim follows since one can check that
\begin{equation*}
    A(x,y)A(-x,y)=B((-1)^{\beta+1}x^2,y^2+2x^2).\qedhere
\end{equation*}
\end{proof}

\begin{remark}
    Similar to \cite[\citeuHsphremark]{CoratoZanarella}, we don't know a priori how the above maps $\mathrm{Sat}^{1/2,\flat}$ and $\mathrm{Sat}^{1/2,\sharp},$ in the case $R=\C,$ compare to the Hironaka--Komori spherical transforms of \cite{uHflat,uHsharp}, and it would be interesting to compare them.
\end{remark}

\subsection{Case \SPgen}
\begin{definition}\index{1Deltahalfflat@$\Delta_{k,r}^{1/2,\flat}$}
For $0\le k\le r,$ we consider the operators $\Delta_{k,r}^{1/2,\flat}\colon R[\Typ]\to R[\Typ]$ given by
\begin{equation*}
\begin{split}
    \Delta_{k,r}^{1/2,\flat}&\defeq\sum_{\substack{\varepsilon\in\{-1,0,1\}^r\\\lambda_0(\varepsilon)=r-k}}q^{2\lambda_1(\varepsilon)(\lambda_0(\varepsilon)+\lambda_1(\varepsilon))+2\widetilde{\inv}(\varepsilon)+\lambda_1(\varepsilon)}t(\varepsilon).
\end{split}
\end{equation*}
\end{definition}
\begin{proposition}
$\Delta_{k,r}^{1/2,\flat}$ preserves $\Rel^\flat_r.$
\end{proposition}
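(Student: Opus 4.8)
The strategy is to mirror the proof of \Cref{halfpreservesu}. Write $w(\varepsilon)\defeq 2\lambda_1(\varepsilon)(\lambda_0(\varepsilon)+\lambda_1(\varepsilon))+2\widetilde{\inv}(\varepsilon)+\lambda_1(\varepsilon)$, so that $\Delta_{k,r}^{1/2,\flat}=\sum_{\lambda_0(\varepsilon)=r-k}q^{w(\varepsilon)}t(\varepsilon)$. Since $\Rel^\flat_r=\Rel^\natural_r+R[\Typ_{r-1}]\star\Rel^\flat_1$ (the left ideal being generated by $\Rel^\natural$ in degree $2$ and by the elements $\Rel^\flat(m)$ in degree $1$, placed in the last coordinate), it suffices to check that $\Delta_{k,r}^{1/2,\flat}$ preserves each of these two summands.

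For $\Rel^\natural_r$: exactly as in the proof of \Cref{halfpreservesu}, because $\Rel^\natural$ is a two-sided ideal generated in degree $2$ and the exponent $w(\varepsilon)$ breaks up into scalar contributions when a block of coordinates is frozen, the statement reduces to the case $r=2$. There, a direct computation gives $\Delta_{1,2}^{1/2,\flat}=t(-1,0)+q^2t(0,-1)+q^5t(0,1)+q^7t(1,0)$ and $\Delta_{2,2}^{1/2,\flat}=t(-1,-1)+q^3t(-1,1)+q^7t(1,-1)+q^{10}t(1,1)$, which are precisely the operators denoted $\varphi$ and $\varphi'$ in the proof that $\mathrm{str}^\flat$ is an isomorphism; hence they preserve $\Rel^\natural_2$ by \cite[\citepreserveshalf]{CoratoZanarella} (for $k=1$) and \cite[\citeDeltapreserves]{CoratoZanarella} (for $k=2$).

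For $R[\Typ_{r-1}]\star\Rel^\flat_1$: write $\varepsilon=\varepsilon'\star\varepsilon_r$ with $\varepsilon'\in\{-1,0,1\}^{r-1}$. Using $\widetilde{\inv}(\varepsilon'\star(1))=\widetilde{\inv}(\varepsilon')$ and $\widetilde{\inv}(\varepsilon'\star(-1))=\widetilde{\inv}(\varepsilon')+2\lambda_1(\varepsilon')+\lambda_0(\varepsilon')$, a short computation shows $w(\varepsilon'\star(1))-w(\varepsilon'\star(-1))=3$, independently of $\varepsilon'$. Pairing the summands of $\Delta_{k,r}^{1/2,\flat}$ according to $\varepsilon_r$, one then gets a decomposition into terms $c\cdot t(\varepsilon')\star\bigl(t(-1)+q^3t(1)\bigr)$ (from the $\varepsilon_r=\pm1$ pairs, where $\lambda_0(\varepsilon')=r-k$) and $c\cdot t(\varepsilon')\star t(0)$ (from the $\varepsilon_r=0$ terms, where $\lambda_0(\varepsilon')=r-k-1$). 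Since $t(0)=\mathrm{id}$ and $t(-1)+q^3t(1)$ preserves $\Rel^\flat_1$ by \Cref{preservesfs} --- equivalently, this is the case $r=k=1$, where $\Delta_{1,1}^{1/2,\flat}=t(-1)+q^3t(1)$ --- applying any such term to a generator $\delta(e',\chi')\star\Rel^\flat(m)$ yields an element of $t(\varepsilon')(\delta(e',\chi'))\star\Rel^\flat_1\subseteq R[\Typ_{r-1}]\star\Rel^\flat_1$, which completes the argument.

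The one step that is not mechanical is the reduction of the $\Rel^\natural_r$-statement to $r=2$: as in \cite{CoratoZanarella} and in \Cref{halfpreservesu}, one must verify that freezing all but two coordinates turns $\Delta_{k,r}^{1/2,\flat}$ into a scalar multiple of a translate of $\Delta_{1,2}^{1/2,\flat}$ or $\Delta_{2,2}^{1/2,\flat}$, so that the cited degree-$2$ results apply verbatim. Everything else is the routine $q$-exponent arithmetic indicated above.
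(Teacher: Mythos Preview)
Your proof is correct and follows essentially the same approach as the paper: reduce the $\Rel^\natural_r$ part to $r=2$ via \cite[\citepreserveshalf, \citeDeltapreserves]{CoratoZanarella}, and handle the $\Rel_1^\flat$ part by computing $w(\varepsilon'\star(1))-w(\varepsilon'\star(-1))=3$ to reduce to $r=k=1$, where $\Delta_{1,1}^{1/2,\flat}=t(-1)+q^3t(1)$ is covered by \Cref{preservesfs}. Your explicit identification of $\Delta_{1,2}^{1/2,\flat}$ and $\Delta_{2,2}^{1/2,\flat}$ with the operators $\varphi,\varphi'$ from the proof of the $\mathrm{str}^\flat$ isomorphism is a nice extra detail, but the paper's argument is otherwise the same.
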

\begin{proof}
To see that they preserve $\Rel^\natural_r,$ it reduces to the case $r=2.$ For that, the case $k=1$ follows from \cite[\citepreserveshalf]{CoratoZanarella}. The case $k=2$ follows from \cite[\citeDeltapreserves]{CoratoZanarella}.

As for $\Rel_1^\flat,$ note that if we denote $\varepsilon_1=\varepsilon\star(1)$ and $\varepsilon_2=\varepsilon\star(-1),$ then
\begin{equation*}
\begin{split}
    &2\lambda_1(\varepsilon_1)(\lambda_0(\varepsilon_1)+\lambda_1(\varepsilon_1))+2\widetilde{\inv}(\varepsilon_1)+\lambda_1(\varepsilon_1)\\
    =\ &2(\lambda_1(\varepsilon_2)+1)(\lambda_0(\varepsilon_2)+\lambda_1(\varepsilon_2)+1)+2\widetilde{\inv}(\varepsilon_2)-2\lambda_0(\varepsilon_2)-4\lambda_1(\varepsilon_2)+\lambda_1(\varepsilon_2)+1\\
    =\ &2\lambda_1(\varepsilon_2)(\lambda_0(\varepsilon_2)+\lambda_1(\varepsilon_2))+2\widetilde{\inv}(\varepsilon_2)+\lambda_1(\varepsilon_2)+3.
\end{split}
\end{equation*}
Thus the claim is reduced to the case $r=k=1.$ This then follows from \Cref{preservesfs}.
\end{proof}

\begin{definition}\index{Shalfflat@$S_{k,r}^{1/2,\flat}$}
For $0\le k\le r,$  denote $S^{1/2,\flat}_{k,r}\colon R[\Typ^0]\to R[\Typ^0_r]$ to be the adjoint of $\Delta_{k,r}^{1/2,\flat}.$ We let $\mathcal{H}^{1/2,\flat}\defeq R[S^{1/2,\flat}_{1,r},\ldots,S^{1/2,\flat}_{r,r}].$
\end{definition}
\begin{corollary}
    $R[\Typ^0_r]$ is a free $\mathcal{H}^{1/2,\flat}$-module of rank $1$ generated by $\delta(0,\ldots,0).$
\end{corollary}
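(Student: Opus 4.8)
The plan is to deduce this directly from \Cref{basisTheorem}, applied with the parameters $\beta = 1$, $? = \flat$, and $S_k = S_{k,r}^{1/2,\flat}$ (so $S_0 = \mathrm{id}$), exactly as for the cases \Uf and \Us treated above.

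First I would verify the hypotheses of \Cref{basisTheorem}. For commutativity: each $\Delta_{k,r}^{1/2,\flat}$ is a finite $R$-linear combination of the translation endomorphisms $t(\varepsilon)$ of $R[\Typ_r]$, and since $t(\varepsilon)\,t(\varepsilon') = t(\varepsilon+\varepsilon') = t(\varepsilon')\,t(\varepsilon)$, the operators $\Delta_{0,r}^{1/2,\flat},\ldots,\Delta_{r,r}^{1/2,\flat}$ commute on $R[\Typ_r]$; by the preceding proposition they preserve $\Rel^\flat_r$, hence descend via $\mathrm{str}^\flat$ to commuting operators on $R[\Typ^0_r]$, and therefore their adjoints $S_{0,r}^{1/2,\flat},\ldots,S_{r,r}^{1/2,\flat}$ commute as well. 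For the required shape, grouping the defining sum of $\Delta_{k,r}^{1/2,\flat}$ by the value $r-k = \lambda_0(\varepsilon)$ gives
\begin{equation*}
    \sum_{k=0}^{r}\Delta_{k,r}^{1/2,\flat}\,x^{k} = \sum_{\varepsilon\in\{-1,0,1\}^{r}} q^{\,2\lambda_1(\varepsilon)(\lambda_0(\varepsilon)+\lambda_1(\varepsilon)) + 2\widetilde{\inv}(\varepsilon) + \lambda_1(\varepsilon)}\,x^{\lambda_1(\varepsilon)+\lambda_{-1}(\varepsilon)}\,t(\varepsilon),
\end{equation*}
which is visibly of the form $\sum_\varepsilon a(\varepsilon)\,x^{\lambda_1(\varepsilon)+\lambda_{-1}(\varepsilon)}\,t(\beta\cdot\varepsilon)$ with $\beta=1$. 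The normalization $a(-1,\ldots,-1,0,\ldots,0)=1$ (with $k$ entries equal to $-1$) is immediate, since for such $\varepsilon$ one has $\lambda_1(\varepsilon)=0$ and $\widetilde{\inv}(\varepsilon)=0$, so the exponent of $q$ vanishes.

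With the hypotheses in place, \Cref{basisTheorem} exhibits a basis $\mathcal{B}$ of $R[\Typ^0_r]$ over $R[S_{1,r}^{1/2,\flat},\ldots,S_{r,r}^{1/2,\flat}] = \mathcal{H}^{1/2,\flat}$, and it remains to unwind $\mathcal{B}$ in the present situation. Taking $\beta=1$, the conditions defining $\mathcal{B}$ collapse to $e_r=0$ together with $e_i - e_{i+1} - (1\text{ if }s_i\neq s_{i+1}) = 0$ for all $i$; since $\mathrm{Sign}=\{1\}$ in the case \SPgen there is only the trivial sign datum $s$, for which $s_i\neq s_{i+1}$ never occurs, so these constraints force $e=(0,\ldots,0)$ and $\mathcal{B}=\{\delta(0,\ldots,0)\}$. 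This yields the corollary.

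I do not expect a genuine obstacle here: the substantive inputs -- that $\Delta_{k,r}^{1/2,\flat}$ preserves $\Rel^\flat_r$ (the preceding proposition) and the module-theoretic machinery of \Cref{basisTheorem} -- are already in hand, and what is left is a verification of hypotheses plus the combinatorial specialization of $\mathcal{B}$. The only point meriting a little care is confirming that the generating-function shape holds with $\beta=1$ (and not $\beta=2$, as for the full operators $S_{k,r}^\flat$), since it is precisely this that collapses $\mathcal{B}$ to a single element.
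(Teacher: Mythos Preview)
Your proposal is correct and follows exactly the paper's approach: the paper's proof is simply ``This follows from \Cref{basisTheorem},'' and you have carefully filled in the verification of its hypotheses (commutativity via translations, the preceding proposition for preservation of $\Rel^\flat_r$, the generating-function shape with $\beta=1$, and the normalization) together with the specialization of $\mathcal{B}$ in case \SPgen where $\mathrm{Sign}=\{1\}$ forces $\mathcal{B}=\{\delta(0,\ldots,0)\}$.
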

\begin{proof}
    This follows from \Cref{basisTheorem}.
\end{proof}

\begin{theorem}[Spherical transform for \SPf]\label{sphtranssp}
    Assume that $R$ contains a square root of $q.$ Define $\mathrm{Sat}^{1/2,\flat}\colon\mathcal{H}^{1/2,\flat}\to R[\boldsymbol{\nu}_1,\ldots,\boldsymbol{\nu}_r]^{\mathrm{sym}}$ by
    \begin{equation*}
        \mathrm{Sat}^{1/2,\flat}(S^{1/2,\flat}_{k,r})=q^{r^2-(r-k)^2+\frac{r-(r-k)}{2}}s_k(\boldsymbol{\nu}).
    \end{equation*}
    Then we have the commutative diagram
    \begin{equation*}
        \begin{tikzcd}
        \mathcal{H}(G^\flat,K^\flat)\arrow[d,twoheadrightarrow]\arrow[r,"\mathrm{Sat}"]&R[\boldsymbol{\mu}_1,\ldots,\boldsymbol{\mu}_{2r}]^{\mathrm{sym}}\arrow[d]\\
        \mathcal{H}^{1/2,\flat}\arrow[r,"\mathrm{Sat}^{1/2,\flat}"]&R[\boldsymbol{\nu}_1,\ldots,\boldsymbol{\nu}_r]^{\mathrm{sym}}
        \end{tikzcd}
    \end{equation*}
    where the right vertical map is induced by: if we write $\boldsymbol{\nu}_i=\boldsymbol{\alpha}_i+1/\boldsymbol{\alpha}_i,$ then $\{\boldsymbol{\mu}_{2i-1},\boldsymbol{\mu}_{2i}\}\mapsto\{\boldsymbol{\alpha}_i\sqrt{q}+1/(\boldsymbol{\alpha}_i\sqrt{q}), \boldsymbol{\alpha}_i/\sqrt{q}+\sqrt{q}/\boldsymbol{\alpha}_i\}.$
\end{theorem}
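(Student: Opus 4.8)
The plan is to follow the strategy of \Cref{sphtransu}. Package the operators into generating series: set
\[
P^{1/2}_r(x,y)\defeq\sum_{k=0}^{r} q^{-k(2r-k+1/2)}\,S^{1/2,\flat}_{k,r}\,(-x)^{k}y^{\,r-k},
\]
and, using the elements $S^\flat_{k,r}\in\mathcal{H}(G^\flat,K^\flat)$ ($0\le k\le 2r$) from the theorem on compatibility with the Satake transform (for which $\mathrm{Sat}(S^\flat_{k,r})=\frac{\Dfactor(2r)}{\Dfactor(2r-k)}s_k(\boldsymbol{\mu})$), set
\[
P^\flat(x,y)\defeq\sum_{k=0}^{2r}\frac{\Dfactor(2r-k)}{\Dfactor(2r)}\,S^\flat_{k,r}\,(-x)^{k}y^{\,2r-k}.
\]
The normalizations are chosen so that $\mathrm{Sat}^{1/2,\flat}\!\bigl(P^{1/2}_r(x,y)\bigr)=\prod_{i=1}^{r}(y-x\boldsymbol{\nu}_i)$ and $\mathrm{Sat}\bigl(P^\flat(x,y)\bigr)=\prod_{i=1}^{2r}(y-x\boldsymbol{\mu}_i)$. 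First I would record the elementary computation — which also recovers the formulas of Theorem~B — that the stated substitution carries $\boldsymbol{\mu}_{2i-1}+\boldsymbol{\mu}_{2i}$ to $(\sqrt q+1/\sqrt q)\boldsymbol{\nu}_i$ and $\boldsymbol{\mu}_{2i-1}\boldsymbol{\mu}_{2i}$ to $\boldsymbol{\nu}_i^2+(\sqrt q-1/\sqrt q)^2$, so that it sends $\prod_{i=1}^{2r}(y-x\boldsymbol{\mu}_i)$ to $\prod_{i=1}^{r}Q_{x,y}(\boldsymbol{\nu}_i)$, where
\[
Q_{x,y}(\boldsymbol{\nu})\defeq x^2\boldsymbol{\nu}^2-(\sqrt q+1/\sqrt q)\,xy\,\boldsymbol{\nu}+\bigl(y^2+(\sqrt q-1/\sqrt q)^2x^2\bigr).
\]

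Unlike the situation of \Cref{sphtransu}, the quadratic $Q_{x,y}$ does not split into linear factors over $R[x,y,\sqrt q]$, but it does after adjoining a square root $w$ of $y^2-4x^2$: one has $Q_{x,y}(\boldsymbol{\nu})=(y_+-x\boldsymbol{\nu})(y_--x\boldsymbol{\nu})$ with $y_\pm\defeq\tfrac12\bigl((\sqrt q+1/\sqrt q)y\pm(\sqrt q-1/\sqrt q)w\bigr)$. Consequently $\prod_i Q_{x,y}(\boldsymbol{\nu}_i)=\mathrm{Sat}^{1/2,\flat}\!\bigl(P^{1/2}_r(x,y_+)\cdot P^{1/2}_r(x,y_-)\bigr)$, and this product is visibly invariant under $w\mapsto-w$, hence lies in $\mathcal{H}^{1/2,\flat}$ and is defined over $R[x,y]$. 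Since the $S^\flat_{k,r}$ generate $\mathcal{H}(G^\flat,K^\flat)$ as an $R$-algebra, the commutativity of the diagram — together with the fact that the left vertical arrow lands in, and is surjective onto, $\mathcal{H}^{1/2,\flat}$ — follows by applying $\mathrm{Sat}^{1/2,\flat}$, once we establish the operator identity
\[
P^\flat(x,y)\ \equiv\ P^{1/2}_r(x,y_+)\cdot P^{1/2}_r(x,y_-)\pmod{\Rel^\flat_r}
\]
on $R[\Typ_r]$, over $R[x,y,w]/(w^2-y^2+4x^2)$.

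To prove this identity I would set up $\star$-recursions exactly as in \Cref{sphtransu}. Splitting off the first coordinate of $P^{1/2}_{r+1}$ — a clean computation once the normalization is absorbed into a symmetrized $q$-exponent and the jumps in $\widetilde{\inv},\lambda_0,\lambda_1,\lambda_{-1}$ are evaluated — gives $P^{1/2,*}_{r+1}(x,y)=A_r(x,y)\star P^{1/2,*}_r(x,y)$ with $A_r(x,y)=y\,t(0)-q^{2r+3/2}x\,t(1)-q^{-2r-3/2}x\,t(-1)$. Since in the symplectic case $t^2(\varepsilon)$ moves a \emph{pair} of $\varepsilon$-slots per $\Typ$-coordinate, the analogous manipulation on $P^\flat$ splits off one such pair at a time, giving $P^{\flat,*}_{2(r+1)}(x,y)=C_r(x,y)\star P^{\flat,*}_{2r}(x,y)$ for an explicit building block $C_r(x,y)$ on $R[\Typ_1]$ supported on $t(0),t(\pm1),t(\pm2)$; establishing this uses the $q$-identities of \Cref{finFieldSection} (notably \Cref{circledast1} and \Cref{circledast2}), just as the compatibility theorem does. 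Iterating these recursions and using the interchange law $(P\cdot P')\star(Q\cdot Q')=(P\star Q)\cdot(P'\star Q')$, the identity is reduced modulo $\Rel^\flat_r$ to the single-coordinate base identities $C_j(x,y)\equiv A_j(x,y_+)\cdot A_j(x,y_-)\pmod{\Rel^\flat_1}$ for $j\ge0$; these are finite computations in which $\Rel^\flat_1$ — recall $\Rel^\flat(1)=(-1)-(q+1)(0)+q(1)$, and the $\Rel^\flat(m)$ for $m>1$ — is used to rewrite the $t(\pm2)$ translations in terms of $t(0),t(\pm1)$, after which the two sides coincide. As in \Cref{flatfinal}, a general coefficient ring is handled by reducing to one in which $q$ is not a zero divisor.

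The main obstacle is the base identity, and, upstream of it, pinning down the building block $C_j$ and the pairing $y\mapsto(y_+,y_-)$. In the unitary cases of \Cref{sphtransu} the two sides of the diagram carry equally many Satake parameters, and one needs only the trivial identity $A(x,y)A(-x,y)=B\bigl((-1)^{\beta+1}x^2,\,y^2+2x^2\bigr)$; here the $\flat$-side Hecke algebra carries $2r$ parameters against $r$ on the $\tfrac12$-side, the recursion consumes one pair of $\varepsilon$-slots against one slot, and — precisely because $Q_{x,y}$ does not split over $R[x,y,\sqrt q]$ — the discrepancy term $(\sqrt q-1/\sqrt q)^2x^2$ must be produced by the straightening relation $\Rel^\flat_1$. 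Tracking this cancellation while ensuring that the auxiliary root $w$ enters only symmetrically is the delicate point; the recursions themselves, the $w\mapsto-w$ descent, and the passage back through $\mathrm{Sat}^{1/2,\flat}$ are routine or already available in the text.
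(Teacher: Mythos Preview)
Your overall strategy---package into generating series, set up $\star$-recursions, reduce to a rank-one identity---matches the paper's. But two of your anticipated difficulties are illusory, and the paper avoids both cleanly.

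First, the auxiliary root $w=\sqrt{y^2-4x^2}$ is unnecessary. The paper instead performs the substitution $(x,y)\mapsto(xy,x^2+y^2)$ \emph{before} trying to factor. Under this substitution $y^2-4x^2$ becomes $(x^2-y^2)^2$, which has a polynomial square root, so your $y_\pm$ become $x^2\sqrt{q}+y^2/\sqrt{q}$ and $x^2/\sqrt{q}+y^2\sqrt{q}$. The identity to prove is then
\[
S_r^\flat(xy,x^2+y^2)\;=\;S_r^{1/2,\flat}(xy,\,x^2\sqrt{q}+y^2/\sqrt{q})\cdot S_r^{1/2,\flat}(xy,\,x^2/\sqrt{q}+y^2\sqrt{q}),
\]
an equality of polynomials in two variables with operator coefficients---no algebraic extension, no $w\mapsto-w$ descent. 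Since this substitution is generically invertible, proving the identity after substitution suffices.

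Second, and more important: the base identity holds \emph{exactly} as an equality of operators on $R[\Typ_1]$, not merely modulo $\Rel^\flat_1$. Both sides are supported on $t(-2),t(-1),t(0),t(1),t(2)$, and the coefficients match on the nose. For instance (in your variables) $y_+y_-+2x^2=y^2+(q+q^{-1})x^2$, which is exactly the $t(0)$-coefficient of $B$; the $t(\pm2)$ terms on the product side come from $t(\pm1)\cdot t(\pm1)$ and match $B$'s $t(\pm2)$ terms directly. So your claim that ``$\Rel^\flat_1$ is used to rewrite the $t(\pm2)$ translations in terms of $t(0),t(\pm1)$'' is mistaken---straightening relations are relations in $R[\Typ_r]$, not relations among translation operators, and they play no role here. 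Relatedly, the $\star$-recursion for $S_r^{\flat,*}$ (your $C_j$) is obtained by a direct bookkeeping computation from the explicit translation form of $S_{k,r}^{\flat,*}$; the incidence-algebra identities from \Cref{finFieldSection} are not needed for this step (they were used earlier, in establishing the Satake transform of $S_{k,r}^\flat$).
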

\begin{proof}
Consider
\begin{equation*}
\begin{split}
    S_r^\flat(x,y)&\defeq\sum_{k}q^{-k(2r+1-k)}S_{k,r}^\flat(-x)^ky^{r-k},\\
    S_r^{1/2,\flat}(x,y)&\defeq\sum_{k}q^{-\frac{1}{2}k(4r+1-2k)}S_{k,r}^{1/2,\flat}(-x)^ky^{r-k}.
\end{split}
\end{equation*}

Then
\begin{equation*}
    \mathrm{Sat}^\flat(S_r^\flat(x,y))=\prod_i(y-x\boldsymbol{\mu}_i),\quad\mathrm{Sat}^{1/2,\flat}(S_r^{1/2,\flat}(x,y))=\prod_i(y-x\boldsymbol{\nu}_i).
\end{equation*}
We consider $S_r^{1/2,\flat}(xy,x^2+y^2).$ Note that
\begin{equation*}
    \mathrm{Sat}^{1/2,\flat}(S_r^{1/2,\flat}(xy,x^2+y^2))=\prod_i(x^2+y^2-xy\boldsymbol{\nu}_i)=\prod_i(y-x\boldsymbol{\alpha}_i)(y-x/\boldsymbol{\alpha}_i).
\end{equation*}
So we need to prove that
\begin{equation*}
    S_r^{\flat}(xy,x^2+y^2)\isequal S_r^{1/2,\flat}(xy\sqrt{q},x^2q+y^2)S_r^{1/2,\flat}(xy/\sqrt{q},x^2/q+y^2).
\end{equation*}
Since $S_r^{1/2,\flat}$ is homogeneous, we may write this as 
\begin{equation*}
    S_r^{\flat}(xy,x^2+y^2)\isequal S_r^{1/2,\flat}(xy,x^2\sqrt{q}+y^2/\sqrt{q})S_r^{1/2,\flat}(xy,x^2/\sqrt{q}+y^2\sqrt{q}).
\end{equation*}

We do this by induction on $r.$ For this,we have
\begin{equation*}
    S_r^{1/2,\flat,*}(x,y)=\sum_{\varepsilon\in\{-1,0,1\}^r}q^{w^{1/2}(\varepsilon)}t(\varepsilon)(-x)^{\lambda_1(\varepsilon)+\lambda_{-1}(\varepsilon)}y^{\lambda_0(\varepsilon)}
\end{equation*}
where
\begin{equation*}
    w^{1/2}(\varepsilon)=\lambda_1(\varepsilon)^2+(\lambda_0(\varepsilon)+\lambda_1(\varepsilon))^2-r^2+\frac{\lambda_1(\varepsilon)-\lambda_{-1}(\varepsilon)}{2}+2\widetilde{\inv}(\varepsilon).
\end{equation*}
Now we note that if $\varepsilon\in\{-1,0,1\}^r,$
\begin{equation*}
\begin{split}
    w^{1/2}(1,\varepsilon)-w^{1/2}(\varepsilon)&=2\lambda_0(\varepsilon)+4\lambda_1(\varepsilon)+2-2r-1+\frac{1}{2}+2\lambda_0(\varepsilon)+4\lambda_{-1}(\varepsilon)=2r+1+\frac{1}{2}\\
    w^{1/2}(0,\varepsilon)-w^{1/2}(\varepsilon)&=2\lambda_0(\varepsilon)+2\lambda_1(\varepsilon)+1-2r-1+2\lambda_{-1}(\varepsilon)=0,\\
    w^{1/2}(-1,\varepsilon)-w^{1/2}(\varepsilon)&=-2r-1-\frac{1}{2}.
\end{split}
\end{equation*}
Hence
\begin{equation*}
    S_{r+1}^{1/2,\flat,*}(x,y)=A(x,y)\star S_{r}^{1/2,\flat,*}(x,y)
\end{equation*}
where
\begin{equation*}
    A(x,y)\defeq -q^{2r+1+\frac{1}{2}}x\cdot t(1)+y\cdot t(0)-q^{-2r-1-\frac{1}{2}}x\cdot  t(-1).
\end{equation*}

Similarly, we have
\begin{equation*}
    S_r^{\flat,*}(x,y)=\sum_{\varepsilon\in\{-1,0,1\}^{2r}}q^{w(\varepsilon)}t(\varepsilon)(-x)^{\lambda_1(\varepsilon)+\lambda_{-1}(\varepsilon)}y^{\lambda_0(\varepsilon)}
\end{equation*}
where
\begin{equation*}
    w(\varepsilon)=\binom{\lambda_1(\varepsilon)+1}{2}-\binom{\lambda_{-1}(\varepsilon)+1}{2}-\lambda_{-1}(\varepsilon)(\lambda_0(\varepsilon)+\lambda_1(\varepsilon))+\widetilde{\inv}(\varepsilon).
\end{equation*}
Now we note that if $\varepsilon\in\{-1,0,1\}^{2r},$
\begin{equation*}
\begin{split}
w(-1,-1,\varepsilon)-w(\varepsilon)&=-2\lambda_{-1}(\varepsilon)-3-2\lambda_0(\varepsilon)-2\lambda_1(\varepsilon)=-4r-3,\\
w(-1,0,\varepsilon)-w(\varepsilon)&=-\lambda_{-1}(\varepsilon)-1-(2r+1)+\lambda_{-1}(\varepsilon)=-2r-2,\\
w(0,-1,\varepsilon)-w(\varepsilon)&=w(-1,0,\varepsilon)-w(\varepsilon)+1=-2r-1,\\
w(-1,1,\varepsilon)-w(\varepsilon)&=\lambda_1(\varepsilon)+1-\lambda_{-1}(\varepsilon)-1-(2r+1)+2\lambda_{-1}(\varepsilon)+\lambda_0(\varepsilon)=-1,\\
w(0,0,\varepsilon)-w(\varepsilon)&=-2\lambda_{-1}(\varepsilon)+2\lambda_{-1}(\varepsilon)=0,\\
w(1,-1,\varepsilon)-w(\varepsilon)&=w(-1,1,\varepsilon)-w(\varepsilon)+2=1,\\
w(0,1,\varepsilon)-w(\varepsilon)&=\lambda_1(\varepsilon)+1-2\lambda_{-1}(\varepsilon)+3\lambda_{-1}(\varepsilon)+\lambda_0(\varepsilon)=2r+1,\\
w(1,0,\varepsilon)-w(\varepsilon)&=w(0,1,\varepsilon)-w(\varepsilon)+1=2r+2,\\
w(1,1,\varepsilon)-w(\varepsilon)&=2\lambda_1(\varepsilon)+3-2\lambda_{-1}(\varepsilon)+2\lambda_0(\varepsilon)+4\lambda_{-1}(\varepsilon)=4r+3,\\
\end{split}
\end{equation*}
and thus
\begin{equation*}
    S_{r+1}^{\flat,*}(x,y)=B(x,y)\star S_{r}^{\flat,*}(x,y)
\end{equation*}
where
\begin{equation*}
\begin{split}
    B(x,y)\defeq\ & q^{4r+3}x^2\cdot t(2)-q^{2r+1}(q+1)xy\cdot t(1)+(y^2+(q+q^{-1})x^2)\cdot t(0)\\
    &-q^{-2r-2}(q+1)xy\cdot t(-1)+q^{-4r-3}x^2\cdot t(-2).
\end{split}
\end{equation*}

Now the claim follows from the computation that
\begin{equation*}
    A(xy,x^2\sqrt{q}+y^2/\sqrt{q})A(xy,x^2/\sqrt{q}+y^2\sqrt{q})=B(xy,x^2+y^2).\qedhere
\end{equation*}
\end{proof}

\begingroup
\bibliographystyle{alpha}
\bibliography{structural/references}
\endgroup
\newcommand{\twoline}[2]{\begin{pmatrix}#1\\#2\end{pmatrix}}
\newcommand{\fourline}[4]{\begin{pmatrix}#1&#3\\#2&#4\end{pmatrix}}
\begin{landscape}
\begin{table}[!htp]
\caption{A summary of the different straightening relations.}
\begin{adjustbox}{max width=\linewidth}
\begin{tabular}{|c|c|c|cc|}
\hline
 & $\Typ^0$ & $\Typ$ & \multicolumn{2}{c|}{$\Rel$} \\ \hline
\SPn & \multirow{2}{*}[-1.2em]{$\{(e_1 \ge \cdots \ge e_r)\}$} & \multirow{5}{*}[-4em]{$\{(e_1, \ldots, e_r)\}$} & \multicolumn{2}{c|}{$\begin{array}{c}(0,1)\equiv(1,0)\\(0,2)+(q^2-1)(1,1)\equiv q^2(2,0)\\\cdots\end{array}$} \\ \cline{1-1} \cline{4-5} 
\Un &  &  & \multicolumn{2}{c|}{$\begin{array}{c}(0,1)\equiv(1,0)\\(0,2)+q_0(2,0)\equiv(1+q_0)(1,1)\\\cdots\end{array}$} \\ \cline{1-2} \cline{4-5} 
\SPf & \multirow{3}{*}[-2.3em]{$\{(e_1 \ge \cdots \ge e_r \ge 0)\}$} &  & \multicolumn{1}{c|}{\multirow{3}{*}[-2.3em]{$\Rel^\natural+$}} & $\begin{array}{c}(-1)+q(1)\equiv(q+1)(0)\\(-2)+q(q^2+1)(0)+q^4(2)\equiv (q+1)(-1)+q^3(q+1)(1)\\\cdots\end{array}$ \\ \cline{1-1} \cline{5-5} 
\Uf &  &  & \multicolumn{1}{c|}{} & $\begin{array}{c}(-1)\equiv(1)\\(-2)+(q_0-1)(0)\equiv q_0(2)\\\cdots\end{array}$ \\ \cline{1-1} \cline{5-5} 
\Us &  &  & \multicolumn{1}{c|}{} & $\begin{array}{c}(-1)\equiv(1)\\(-2)+q(2)\equiv(q+1)(0)\\\cdots\end{array}$ \\ \hline
\On & $\left\{\left(\begin{array}{c}e_1\\\chi_1\end{array}\begin{array}{c}\ge\\\ \end{array}\cdots\begin{array}{c}\ge\\\ \end{array}\begin{array}{c}e_r\\\chi_r\end{array}\right)\right\}$ & \multirow{3}{*}[-4.6em]{$\left\{\left(\begin{array}{c}e_1\\\chi_1\end{array},\cdots,\begin{array}{c}e_r\\\chi_r\end{array}\right)\right\}$} & \multicolumn{2}{c|}{$\begin{array}{c}\fourline{0}{\chi_1}{1}{\chi_2}\equiv\fourline{1}{\chi_2}{0}{\chi_1}\\\fourline{0}{\chi_1}{2}{\chi_2}\equiv\left(\begin{array}{l}(1+\epsilon\chi_1\chi_2)\fourline{1}{\chi_1}{1}{\chi_2}\\-\epsilon\chi_1\chi_2\left(\frac{1+q\epsilon}{2}\fourline{2}{\chi_2}{0}{\chi_1}+\frac{1-q\epsilon}{2}\fourline{2}{-\chi_2}{0}{-\chi_1}\right)\end{array}\right)\\\cdots\end{array}$} \\ \cline{1-2} \cline{4-5} 
\Of & \multirow{2}{*}[-2.3em]{$\left\{\left(\begin{array}{c}e_1\\\chi_1\end{array}\begin{array}{c}\ge\\\ \end{array}\cdots\begin{array}{c}\ge\\\ \end{array}\begin{array}{c}e_r\\\chi_r\end{array}\begin{array}{c}\ge0\\\ \end{array}\right)\right\}$} &  & \multicolumn{1}{c|}{\multirow{2}{*}[-2.3em]{$\Rel^\natural+$}} & $\begin{array}{c}\twoline{-1}{\chi}\equiv\twoline{1}{\chi}\\\twoline{-2}{\chi}\equiv\twoline{2}{\chi}\\\cdots\end{array}$ \\ \cline{1-1} \cline{5-5} 
\Os &  &  & \multicolumn{1}{c|}{} & $\begin{array}{c}\twoline{-1}{\chi}\equiv\twoline{1}{\chi}\\\twoline{-2}{\chi}+\psi\chi\twoline{2}{\chi}\equiv(1+\psi\chi)\twoline{0}{\chi}\\\cdots\end{array}$ \\ \hline
\end{tabular}
\label{reltable}
\end{adjustbox}\end{table}
\end{landscape}
\input{output.ind}
\end{document}